\DeclareMathOperator*{\forkindep}{\raise0.2ex\hbox{\ooalign{\hidewidth$\vert$\hidewidth\cr\raise-0.9ex\hbox{$\smile$}}}}
\newcommand{\tp}{\operatorname{tp}}
\DeclareTextCommand{\DZ}{OT2}{D2}
\newcommand{\dfg}{\mathrm{dfg}}
\newcommand{\fsg}{\mathrm{fsg}}
\newcommand{\id}{\mathrm{id}}
\newcommand{\stab}{\mathrm{stab}}
\newcommand{\img}{\mathrm{img}}
\newcommand{\Lc}{\mathcal{L}}
\newcommand{\cU}{\mathcal{U}}
\newtheorem*{claim-star}{Claim}
\newenvironment{clmproof}[1][\proofname]{\proof[#1]}{\endproof}
\newtheorem*{theorem-non}{Theorem}
\newtheorem{theorem}{Theorem}[section] 
\newtheorem{lemma}[theorem]{Lemma}
\newtheorem{prop-def}[theorem]{Proposition-Definition}
\newtheorem{corollary}[theorem]{Corollary}
\newtheorem{fact}[theorem]{Fact}
\newtheorem{fact-eh}[theorem]{Fact(?)}
\newtheorem{question}[theorem]{Question}
\newtheorem{proposition}[theorem]{Proposition}
\newtheorem{proposition-eh}[theorem]{Proposition(?)}
\newtheorem*{theorem-star}{Theorem}
\newtheorem*{conjecture-star}{Conjecture}
\newtheorem*{lemma-star}{Lemma}
\theoremstyle{definition}
\newtheorem{definition}[theorem]{Definition}
\newtheorem{example}[theorem]{Example}
\newtheorem{remark}[theorem]{Remark}
\theoremstyle{remark}
\newcommand{\fs}{\mathrm{fs}}
\newcommand{\inv}{\mathrm{inv}}
\newcommand{\cM}{\mathcal{M}}
\title{%
    \rotatebox{180}{\reflectbox{Upside down and backwards}}%
}
\author[K. Gannon]{Kyle Gannon}
\thanks{Partially supported by the Fundamental Research Funds for the Central Universities, Peking University, grant no. 7100604835 and by the National Natural Science Fund of China, grant no. 12501001}
\address{Beijing International Center for Mathematical Research (BICMR) \\ Peking University \\ Beijing, China.}
\urladdr{\href{https://orcid.org/0000-0001-5951-5269}{\includegraphics[height=\fontcharht\font`\B]{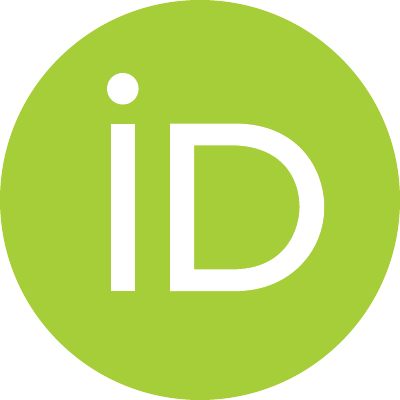} {0000-0001-5951-5269}}\\
\href{http://faculty.bicmr.pku.edu.cn/~kyle/}{http://faculty.bicmr.pku.edu.cn/~kyle/}}
\email{kgannon@bicmr.pku.edu.cn}
\author[T. Rzepecki]{Tomasz Rzepecki}
\address{Instytut Matematyczny \\ Uniwersytet Wrocławski \\ Wrocław, Poland }
\urladdr{\href{https://orcid.org/0000-0001-9786-1648}{\includegraphics[height=\fontcharht\font`\B]{orcidlogo.pdf} {0000-0001-9786-1648}}\\
\href{https://fricas.org/~rzepecki/}{https://fricas.org/~rzepecki/}}
\email{Tomasz.Rzepecki@math.uni.wroc.pl}
\begin{document}

\begin{abstract} We investigate the semigroup of invariant types through the lens of Ellis theory; primarily focusing on definably amenable NIP groups. In this context, we observe that the collection of strong right $f$-generic types forms the unique minimal left ideal and thus, the Ellis subgroups are isomorphic to $G/G^{00}$ via the canonical quotient map. As consequence of the Newelski-Pillay conjecture, the Ellis subgroups of the semigroup of invariant types are \emph{abstractly isomorphic} to the Ellis subgroups of the semigroup of finitely satisfiable types in the definable amenable NIP setting. We are interested in the existence of \emph{natural isomorphisms} from invariant Ellis subgroups to finitely satisfiable Ellis subgroups and we determine when these isomorphisms can be witnessed by variants of the canonical NIP retraction map. Several limiting examples are provided. Outside of the NIP context, we provide an abelian group (and thus definably amenable) with an $\emptyset$-definable (dfg) type
in which the invariant Ellis subgroups and finitely satisfiable Ellis subgroups not isomorphic.
\end{abstract}

\maketitle

\section{Introduction}

A large part of the model theory research involving topological dynamics focuses on the space of global types which are finitely satisfiable in a fixed small model. This is a seemingly appropriate setting because this space of types, equipped with the \emph{Newelski product}, is canonically isomorphic to the Ellis semigroup of a particular group action. However, the Newelski product naturally extends to the space of global types which are invariant over a fixed small model. Similarly, this space of types with the extended product forms a left-continuous compact Hausdorff semigroup and is thus also susceptible to \emph{Ellis theory analysis}. Here, we take this extended product seriously and study the minimal ideals and Ellis subgroups of the semigroup of invariant types as well as their connections to the semigroup of finitely satisfiable types. As stated in the abstract, we primarily focus on definably amenable NIP groups.

In the context of definably amenable NIP groups, the Ellis theory of the semigroup of invariant types is quite well-behaved and easily understood. We observe the following, strengthening and recontextualizing the observations of \cite[Section 2]{pillay2013topological}:

\begin{theorem}\label{theorem:one} Suppose that $T$ is NIP. Let $\cU$ be a monster model of $T$ and $M$ a small elementary substructure of $\cU$. Let $G(x)$ be an $\emptyset$-definable group which is definably amenable. We let $\mathcal{F}_{r}$ denote the collection of types which are strong right $f$-generic over $M$. Then
\begin{enumerate}
    \item $\mathcal{F}_{r}$ is the unique minimal left ideal of $S_{G}^{\inv}(\cU,M)$. As consequence of general theory, $\mathcal{F}_{r}$ is a two-sided ideal and for every strong right $f$-generic $t$, we have that $t * S^{\inv}_{G}(\cU,M)$ is an Ellis subgroup of $S^{\inv}_{G}(\cU,M)$.
    \item The Ellis subgroups of $S^{\inv}_{G}(\cU,M)$ are all isomorphic to $G(\cU)/G^{00}(\cU)$ via the map induced from the canonical quotient. As consequence of the Newelski-Pillay conjecture for definably amenable NIP groups (proved by Chernikov and Simon \cite{CS}), the Ellis subgroups of $(S_{G}^{\inv}(\cU,M),*)$ and $(S_{G}^{\fs}(\cU,M),*)$ are abstractly isomorphic.
    \end{enumerate}
    See Theorem~\ref{theorem:main} for details.
\end{theorem}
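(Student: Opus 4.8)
The plan is to prove (1) first, taking the structure theory of compact left-continuous semigroups (existence of minimal left ideals, idempotents inside them, the kernel and its decomposition into maximal subgroups) as known, and then to derive (2). The model-theoretic content is supplied throughout by the theory of strong $f$-generic types for definably amenable NIP groups (Hrushovski--Pillay; Chernikov--Simon \cite{CS}; and \cite[Section~2]{pillay2013topological}).

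For (1), the first step is a computation with the Newelski product. In the notation where $p * q = \tp(a b / \cU)$ for $b \models q$ and $a \models p|_{\cU b}$, one checks $(q * p)\cdot g = q * (p \cdot g)$ for $g \in G(\cU)$; if $p \in \mathcal{F}_{r}$ then $p \cdot g$ again lies in $S^{\inv}_{G}(\cU,M)$, so $q * (p\cdot g)$ is defined and invariant over $M$, and since $g$ was arbitrary $q * p \in \mathcal{F}_{r}$, i.e.\ $\mathcal{F}_{r}$ is a left ideal. One also records, from the basic theory of strong $f$-generics (using definable amenability, with $M$ taken suitably), that $\mathcal{F}_{r}$ is nonempty and closed. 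The heart of (1) is then that $\mathcal{F}_{r}$ is a \emph{minimal} left ideal --- equivalently that $S^{\inv}_{G}(\cU,M) * p = \mathcal{F}_{r}$ for each $p \in \mathcal{F}_{r}$ --- and that, moreover, every minimal left ideal is contained in $\mathcal{F}_{r}$, whence $\mathcal{F}_{r}$ is the unique one. This is where NIP and definable amenability are genuinely used: that every element of a minimal left ideal is strong right $f$-generic over $M$ rests on the identification of non-forking over $M$ with invariance over $M$ in NIP, together with the behaviour of the ideal of non-$f$-generic formulas under translation, essentially as in \cite[Section~2]{pillay2013topological}; the equality $S^{\inv}_{G}(\cU,M) * p = \mathcal{F}_{r}$ uses the same circle of ideas. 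Granting uniqueness, the general fact that a unique minimal left ideal coincides with the kernel (the minimal two-sided ideal, equal to the union of all minimal left ideals) shows $\mathcal{F}_{r}$ is two-sided; and for any $t \in \mathcal{F}_{r}$, $t * S^{\inv}_{G}(\cU,M) = t * \mathcal{F}_{r}$ is a maximal subgroup of the kernel, i.e.\ an Ellis subgroup.

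For (2), introduce the canonical quotient $\pi\colon G(\cU) \to G(\cU)/G^{00}(\cU)$ and the induced map $\pi_{*}\colon S^{\inv}_{G}(\cU,M) \to G(\cU)/G^{00}(\cU)$, $\pi_{*}(p) := \pi(a)$ for $a \models p$ in an elementary extension. This is well defined because $G^{00}(\cU)$ and its cosets are type-definable over $\cU$ while $G(\cU)$ already surjects onto $G(\cU)/G^{00}(\cU)$, so any automorphism fixing $\cU$ pointwise acts trivially on $G(\cU)/G^{00}(\cU)$ and hence $\pi(a)$ depends only on $\tp(a/\cU)$; and it is a continuous semigroup homomorphism, since for $p*q = \tp(ab/\cU)$ as above $\pi_{*}(p*q) = \pi(ab) = \pi(a)\pi(b) = \pi_{*}(p)\pi_{*}(q)$. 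Fix an Ellis subgroup $H = u * S^{\inv}_{G}(\cU,M)$ with idempotent $u$; then $\pi_{*}|_{H}$ is a group homomorphism to $G(\cU)/G^{00}(\cU)$, and it is onto: with $p_{0}\in\mathcal{F}_{r}$ (nonempty by (1)) the translates $p_{0}\cdot g$ lie in $\mathcal{F}_{r}$ with $\pi_{*}(p_{0}\cdot g) = \pi_{*}(p_{0})\,\pi(g)$, so $\pi_{*}(\mathcal{F}_{r}) = G(\cU)/G^{00}(\cU)$, and $\pi_{*}(u) = \pi_{*}(u)^{2}$ forces $\pi_{*}(u) = e$, whence $\pi_{*}(H) = \pi_{*}(u*\mathcal{F}_{r}) = \pi_{*}(\mathcal{F}_{r})$. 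The crux is injectivity of $\pi_{*}|_{H}$: if $r\in H$ has $\pi_{*}(r) = e$, then $r$ concentrates on $G^{00}(\cU)$, and I would use the rigidity of strong $f$-generics --- that $G^{00}(\cU)$ lies in the right stabiliser $\{g\in G(\cU): p\cdot g = p\}$ of every $p\in\mathcal{F}_{r}$ (a fact resting on $G^{00}=G^{000}$ for definably amenable NIP groups) and that this persists under $M$-invariant base change --- to conclude $r * r = r$; then $r$ is an idempotent of the group $H$, so $r = u$. Hence $\pi_{*}|_{H}$ is an isomorphism $H\cong G(\cU)/G^{00}(\cU)$, plainly induced by the canonical quotient (e.g.\ via the surjection $g\mapsto u\cdot g$ of $G(\cU)$ onto $H$). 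Finally, by Chernikov--Simon's proof of the Newelski--Pillay conjecture \cite{CS}, the Ellis subgroups of $(S^{\fs}_{G}(\cU,M),*)$ are isomorphic to $G(\cU)/G^{00}(\cU)$ as well, so the Ellis subgroups of $(S^{\inv}_{G}(\cU,M),*)$ and $(S^{\fs}_{G}(\cU,M),*)$ are abstractly isomorphic.

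I expect two places to be the real obstacles: in (1), proving that every minimal left ideal lies in $\mathcal{F}_{r}$ (controlling non-forking and the ideal of non-$f$-generic sets), and in (2), the injectivity of $\pi_{*}$ on Ellis subgroups, which reduces to showing that a strong $f$-generic type over $M$ concentrating on $G^{00}(\cU)$ and lying in an Ellis subgroup must be the identity of that group --- precisely the point at which the containment $G^{00}(\cU)\subseteq\{g: p\cdot g = p\}$ for $p\in\mathcal{F}_{r}$, hence definable amenability, is indispensable. The remaining steps are either general semigroup theory or routine type-computation.
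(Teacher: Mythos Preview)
Your argument for (2) is essentially the paper's, and the injectivity step via ``$r$ concentrates on $G^{00}$, hence $r*r=r$'' is correct and even slightly slicker than the paper's phrasing through coset representatives.

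The gap is in (1). You correctly show $\mathcal{F}_r$ is a left ideal, but for uniqueness you gesture at ``non-forking equals invariance'' and the ``ideal of non-$f$-generic formulas'' without giving an argument; you yourself flag this as an obstacle. In fact you already hold the key: the right-stabilizer containment $G^{00}(\cU)\subseteq\stab_r(p)$ for $p\in\mathcal{F}_r$ that you invoke in (2). The paper uses exactly this, but in the \emph{reverse} direction from what you propose. Rather than showing each minimal left ideal is contained in $\mathcal{F}_r$, the paper shows $\mathcal{F}_r$ is contained in \emph{every} minimal left ideal $L_0$: pick an idempotent $u\in L_0$; since $\hat\pi$ is a semigroup homomorphism, $\hat\pi(u)=e$, so any $b\models u$ lies in $G^{00}(\cU)$; then for any $q\in\mathcal{F}_r$, the formula-by-formula computation (your own trick from (2)) gives $q*u=q$, hence $q\in L_0$. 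Since $\mathcal{F}_r$ is a left ideal and $L_0$ is minimal, $\mathcal{F}_r=L_0$. No forking calculus or $f$-generic-ideal bookkeeping is needed.

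So your approach is right in spirit but misallocates the effort: the stabilizer fact you reserved for injectivity in (2) is precisely what dispatches (1) in one line, and once you see that, both halves of the theorem fall out of the same observation (this is the content of the paper's Lemma~\ref{lemma:apply}).
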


In the above result, one notices that the \emph{strong right $f$-generics} play a central role in the analysis. This is in contrast to the majority of the literature where the \emph{strong left $f$-generics} are central. We remark that results involving this kind of \emph{left-right phenomenon} between finitely satisfiable and invariant objects have been implicitly observed in the context of Keisler measures (e.g., see \cite[Theorem 5.1(9)]{chernikov2023definable} or Proposition \ref{prop:implicit}).

By Theorem \ref{theorem:one}, the invariant Ellis subgroups and the finitely satisfiable Ellis subgroups are abstractly isomorphic in the definably amenable NIP setting. What interests us is whether there exists \emph{natural} isomorphisms between some/all of the Ellis subgroups of these semigroups. In the NIP setting, there exists a continuous retraction map (introduced by Simon \cite{simon2015invariant}) from the collection of global $M$-invariant types to the collection of global finitely satisfiable types in $M$ and thus the natural question arises:

\begin{question} How does the retraction map interact with the Newelski product? When does the retraction and/or variants of this map witness an isomorphism between Ellis subgroups?
\end{question}

Surprisingly, the answer is a little complicated. In the NIP $\fsg$ case, the unique minimal left ideal of $S_{G}^{\inv}(\cU,M)$ coincides with the unique minimal left ideal of $S_{G}^{\fs}(\cU,M)$ and thus the retraction map is clearly an isomorphism of Ellis subgroups (i.e., the retraction is equal to the identity on the minimal left ideal). Additionally, we show that when the underlying group is abelian, then one can find Ellis subgroups in which the retraction map is an isomorphism. We then provide an NIP $\dfg$ group where the retraction map does not map invariant Ellis subgroups to finitely satisfiable Ellis subgroups. However, an interesting phenomenon occurs in the NIP $\dfg$ setting: the invariant Ellis subgroups appear \emph{upside down} and \emph{backwards} with respect to the finitely satisfiable Ellis subgroups. More concretely, there exists invariant Ellis subgroups such that the \emph{model theoretic inverse} of the retraction map is an anti-isomorphism while, at the same time, the retraction map may fail to map invariant Ellis subgroups to finitely satisfiable Ellis subgroups. We recall that an anti-isomorphism between two groups $G$ and $H$ is a bijection $f$ such that for every $g,g' \in G$, $f(g \cdot g') = f(g') \cdot f(g)$. Anti-isomorphisms are in one-to-one correspondence with isomorphisms via precomposition with group inversion. We prove the following:

\begin{theorem}\label{theorem:intro} Suppose that $T$ is NIP. Let $\cU$ be a monster model of $T$ and $M$ be a small elementary substructure of $\cU$. Let $G(x)$ be a $\emptyset$-definable group which is definably amenable. We let $F_{M}: S_{G}^{\inv}(\cU,M) \to S_{G}^{\fs}(\cU,M)$ denote the retraction map from invariant types to finitely satisfiable types. We let $K_{M} = F_{M}(-)^{-1}$ where $p^{-1} = \tp(a^{-1}/\cU)$ when $a \models p$. Then
\begin{enumerate}
\item If $G$ is $\fsg$, then $F_{M}$ is (trivially) an isomorphism. In particular, the unique minimal left ideal in the space of invariant types coincides with the unique minimal left ideal in the space of finitely satisfiable types and the retraction map restricted to this ideal is just the identity. (See Proposition~\ref{prop:fsgmain})
\item If $G$ is abelian and $G(M)$ contains coset representatives for each coset of $G^{00}(\cU)$ in $G(\cU)$, then for both the retraction map and the inverted retraction map, there exists pairs of Ellis subgroups (one invariant, one finitely satisfiable) such that the map in question is an isomorphism. (See Theorem~\ref{thm:invert_iso_abelian}.)
\item If $G$ is $\dfg$ and $t$ is a $\dfg$ type over $M$, then $K_{M}|_{t * S^{\inv}_{G}(\cU,M)}$ is a (continuous) anti-isomorphism of Ellis subgroups. If $G$ is also abelian, then $F_{M}|_{t*S_{G}^{\inv}(\cU,M)}$ is an isomorphism of Ellis subgroups. Moreover, $ t * S_{G}^{\inv}(\cU,M)$ and $K_{M}(t * S_{G}^{\inv}(\cU,M))$ are compact groups under the induced topology and $K_{M}|_{t*S_{G}^{\inv}(\cU,M)} \circ \inv$ is an isomorphism of topological groups (where $\inv$ is the group theoretic inverse). In the abelian case, $F_{M}|_{t * S_{G}^{\inv}(\cU,M)}$ is also an isomorphism of compact topological groups. (See Theorem~\ref{theorem:map}, Corollary~\ref{cor:isocompact} and Proposition~\ref{prop:abelian_dfg}. Cf.\ also Theorem~\ref{thm:main_dfg_alternative} for a partial generalization.)
\end{enumerate}
\end{theorem}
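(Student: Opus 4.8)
The plan is to deduce the statement from the structure of the minimal left ideal $\mathcal{F}_{r}$ described in Theorem~\ref{theorem:main}, together with two standard facts about $\dfg$ groups: in a $\dfg$ group every strong right $f$-generic type over $M$ is definable (so $\mathcal{F}_{r}$ consists of definable types), and $G^{00}(\cU)=G^{0}(\cU)$, so that $G(\cU)/G^{00}(\cU)$ is a profinite --- in particular compact Hausdorff --- group. Since $t$ is a $\dfg$ type over $M$ it is strong right $f$-generic, so by Theorem~\ref{theorem:one} the set $\mathcal{G}:=t*S_{G}^{\inv}(\cU,M)$ is an Ellis subgroup and the canonical quotient $\pi_{\inv}\colon S_{G}^{\inv}(\cU,M)\to G(\cU)/G^{00}(\cU)$ restricts to a group isomorphism on $\mathcal{G}$; let $\pi_{\fs}$ be the analogous quotient on $S_{G}^{\fs}(\cU,M)$ and $\inv\colon q\mapsto q^{-1}$ the model-theoretic inverse. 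Pushing forward along $x\mapsto x^{-1}$ shows $\inv$ is a continuous \emph{anti}-automorphism of $(S_{G}^{\fs}(\cU,M),*)$ which carries minimal right ideals to minimal left ideals and satisfies $\pi_{\fs}(q^{-1})=\pi_{\fs}(q)^{-1}$.

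The technical heart --- and the step I expect to be the main obstacle --- is the claim that $F_{M}$ restricts to a continuous \emph{semigroup isomorphism} of the minimal left ideal $\mathcal{F}_{r}$ of $S_{G}^{\inv}(\cU,M)$ onto a minimal \emph{right} ideal $\mathcal{R}$ of $S_{G}^{\fs}(\cU,M)$, intertwining the canonical quotients ($\pi_{\fs}\circ F_{M}=\pi_{\inv}$ on $\mathcal{F}_{r}$). Multiplicativity of $F_{M}$ fails on all of $S_{G}^{\inv}(\cU,M)$, and the content is to extract it on $\mathcal{F}_{r}$: given $p,q\in\mathcal{F}_{r}$ one realizes a Morley sequence of the \emph{definable} type $p\otimes q$ over $\cU$ and reads off $F_{M}(p*q)$ from it, definability of $\dfg$ types forcing the resulting finitely satisfiable type to factor as $F_{M}(p)*F_{M}(q)$; injectivity and translation-equivariance of $F_{M}$ are comparatively routine from the construction of the retraction, and equivariance also shows $F_{M}(\mathcal{F}_{r})$ is a right ideal, which (being semigroup-isomorphic to the left-simple semigroup $\mathcal{F}_{r}$) one checks to be a minimal one.

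Granting this, $F_{M}$ carries the Ellis subgroup $\mathcal{G}\subseteq\mathcal{F}_{r}$ isomorphically onto a maximal subgroup $F_{M}(\mathcal{G})$ of $\mathcal{R}$; composing with $\inv$, the map $K_{M}=\inv\circ F_{M}$ carries $\mathcal{G}$ onto $\mathcal{H}:=\inv(F_{M}(\mathcal{G}))$, a maximal subgroup of the minimal \emph{left} ideal $\inv(\mathcal{R})$ of $S_{G}^{\fs}(\cU,M)$ --- that is, onto an Ellis subgroup --- and, being the composite of the anti-isomorphism $\inv$ with the isomorphism $F_{M}|_{\mathcal{G}}$, it is an \emph{anti}-isomorphism of Ellis subgroups. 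Precomposing with the group inverse $\inv_{\mathcal{G}}$ of $(\mathcal{G},*)$ (an anti-automorphism of $\mathcal{G}$) turns $K_{M}|_{\mathcal{G}}$ into the isomorphism $K_{M}|_{\mathcal{G}}\circ\inv_{\mathcal{G}}$; this $\inv_{\mathcal{G}}$ is the ``group theoretic inverse'' of the statement. When $G$ is abelian, $\inv$ is a semigroup \emph{iso}morphism rather than an anti-isomorphism, so $F_{M}|_{\mathcal{G}}=\inv\circ K_{M}|_{\mathcal{G}}$ is a homomorphism, and (left and right translations coinciding) $\mathcal{R}$ is also a minimal left ideal of $S_{G}^{\fs}(\cU,M)$ (cf.\ Proposition~\ref{prop:abelian_dfg}); hence $F_{M}(\mathcal{G})$ is itself an Ellis subgroup and $F_{M}|_{\mathcal{G}}$ an isomorphism.

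For the topological assertions, use that $G(\cU)/G^{00}(\cU)$ is compact Hausdorff: $\pi_{\inv}|_{\mathcal{G}}$ and $\pi_{\fs}|_{\mathcal{H}}$ are continuous bijections from subspaces of compact Hausdorff type spaces onto it, hence homeomorphisms, so $\mathcal{G}$ and $\mathcal{H}$ are closed (thus compact) and their multiplications are jointly continuous, being conjugate via the $\pi$'s to multiplication on the topological group $G(\cU)/G^{00}(\cU)$; so both are compact Hausdorff topological groups, and, a continuous bijection between compact Hausdorff spaces being a homeomorphism, $K_{M}|_{\mathcal{G}}$ is a topological anti-isomorphism, $K_{M}|_{\mathcal{G}}\circ\inv_{\mathcal{G}}$ a topological isomorphism, and (abelian case) $F_{M}|_{\mathcal{G}}$ a topological isomorphism. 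Finally part~(1) is immediate: for $\fsg$ $G$ every $f$-generic is finitely satisfiable in $M$, so $\mathcal{F}_{r}\subseteq S_{G}^{\fs}(\cU,M)$; being a minimal left ideal of $S_{G}^{\inv}(\cU,M)$ sitting inside the subsemigroup $S_{G}^{\fs}(\cU,M)$ one has $\mathcal{F}_{r}=S_{G}^{\fs}(\cU,M)*f$ for every $f\in\mathcal{F}_{r}$, so $\mathcal{F}_{r}$ is a minimal left ideal there too, on which the retraction is the identity (see Proposition~\ref{prop:fsgmain}); and part~(2) is the separate content of Theorem~\ref{thm:invert_iso_abelian}. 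Thus everything rests on the multiplicativity of $F_{M}$ on $\mathcal{F}_{r}$, which is exactly what uses that the members of the minimal left ideal of a $\dfg$ group are definable types.
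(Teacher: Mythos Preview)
Your argument rests on two claims that are both false, and the fact that the final conclusion happens to be correct does not rescue the reasoning.

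First, the model-theoretic inverse $\inv\colon q\mapsto q^{-1}$ is \emph{not} an anti-automorphism of $(S_G^{\fs}(\cU,M),*)$. If $(a,b)\models p_x\otimes q_y$, then $(p*q)^{-1}$ is realized by $b^{-1}a^{-1}$; for $q^{-1}*p^{-1}$ you would need $(b,a)\models q_y\otimes p_x$, which is generally different from what $(a,b)\models p_x\otimes q_y$ gives you, since the Morley product is not symmetric. The paper explicitly notes (after Example~\ref{example:fsg}) that Ellis subgroups are not preserved under model-theoretic inversion, which would be impossible if $\inv$ were an anti-automorphism. Second, $F_M$ is not multiplicative on $\mathcal{F}_r$: the correct identity, for $p$ right $\dfg$, is
\[
F_M(p*q)=\bigl(F_M(q)^{-1}*F_M(p)^{-1}\bigr)^{-1}
\]
(Lemma~\ref{lemma:compute}), which directly yields $K_M(p*q)=K_M(q)*K_M(p)$ without passing through your two false intermediate claims. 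Third, your assertion that in a $\dfg$ group every strong right $f$-generic is definable is wrong: the paper remarks (before Lemma~\ref{lemma:dfg1}, citing \cite{yao2023minimal}) that $R_{\dfg}$ may be a proper subset of $\mathcal{F}_r$, which is why the paper works with the right ideal $R_{\dfg}$ rather than all of $\mathcal{F}_r$.

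For the topological part, your argument is circular: you claim that $\pi_{\inv}|_{\mathcal{G}}$ is a continuous bijection onto a compact Hausdorff space ``hence a homeomorphism, so $\mathcal{G}$ is closed,'' but a continuous bijection to a compact Hausdorff space is a homeomorphism only when the \emph{domain} is already known to be compact. The paper instead uses that $t$ is definable, so $t*{-}$ is continuous (Fact~\ref{fact:nb}(5)), hence $\mathcal{G}=t*S_G^{\inv}(\cU,M)$ is compact as a continuous image of a compact set; separate continuity of $*$ on $\mathcal{G}$ then upgrades to joint continuity via Ellis's theorem (Corollary~\ref{cor:isocompact}).
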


As limiting examples, we provide an NIP $\dfg$ group such that the retraction map does not map any invariant Ellis subgroups to any finitely satisfiable Ellis subgroups as well as an NIP $\fsg$ group such that the inverted retraction map does not map any invariant Ellis subgroups to any finitely satisfiable Ellis subgroups.

We softly conjecture that the analysis from Theorem \ref{theorem:intro} may be extended to all definably amenable groups in NIP theories. Toward this conjecture, we compute invariant Ellis subgroups and construct explicit isomorphisms for semidirect products of $\fsg$ and $\dfg$ groups. We recall that Pillay and Yao asked whether every definably amenable group (definable in a distal theory) is a definable extension of an $\fsg$ group by a $\dfg$ group \cite[Question 1.19]{pillay2016minimal}. Thus this work in the semidirect product setting is a substantial step toward resolving this broader question. We first give a construction of invariant Ellis semigroups as internal products of the $\dfg$ and $\fsg$ invariant Ellis semigroups. We then build a natural isomorphism from invariant Ellis semigroups to finitely satisfiable ones. Intuitively, the analysis in this particular semidirect product setting arises from Theorem \ref{theorem:intro} -- on the $\fsg$ portion of the group, \emph{do nothing}; on the $\dfg$ portion of the group, \emph{retract and invert}.

Outside of the NIP setting, we provide an example of a group such that the invariant Ellis subgroups and finitely satisfiable Ellis subgroups are not isomorphic. The group in question is the atomless Boolean algebra with symmetric difference as multiplication. This group is clearly abelian and has the independence property (it is also definably amenable, even dfg).

The paper is outlined as follows. In Section 2, we provide some basic background and necessary preliminaries on Ellis theory and model theoretic dynamics. In Section 3, we prove that if $G(x)$ is an definably amenable group in an NIP theory, then $S_{G}^{\inv}(\cU,M)$ has a unique minimal left ideal, namely the collection of strong right $f$-generics (Proposition \ref{prop:unique_min}). By general Ellis theory, this implies that the Ellis subgroups are of the form $p *S_{G}^{\inv}(\cU,M)$ where $p$ is any strong right $f$-generic. Using the fact that the right stabilizer of a strong right $f$-generic is $G^{00}(\cU)$, we then prove that the Ellis subgroups of $S_{G}^{\inv}(\cU,M)$ are isomorphic to $G(\cU)/G^{00}(\cU)$ (Theorem \ref{theorem:main}). In Section 4, we focus on the retraction map and its inverted retraction. We first prove some basic properties connecting the retraction map with the Newelski product. We then give some general criterion for whether the retraction or inverted retraction forms an isomorphism/anti-isomorphism (Lemmas \ref{lemma:codomain}, \ref{lemma:isoequiv} and \ref{lemma:antiequiv}). A straightforward application of these lemmas show that if $G(x)$ is an abelian NIP group and $G(M)$ is rich enough, then both the retraction and the inverted retraction witness isomorphisms from invariant Ellis subgroups to finitely satisfiable Ellis subgroups (Theorem \ref{thm:invert_iso_abelian}). We then analyze the NIP $\fsg$ and $\dfg$ cases. In the NIP $\fsg$ case, the unique minimal left ideals coincide and the retraction map is (trivially) an isomorphism (Proposition \ref{prop:fsgmain}). This particular result follows quite quickly from previously analysis on $\fsg$ groups (e.g., see \cite{hrushovski2012note,pillay2013topological}) and is more or less folklore. In the NIP $\dfg$ case, we prove that if $p \in S_{G}^{\inv}(\cU,M)$ is a $\dfg$ type then the map $K_M|_{p * S_{G}^{\inv}(\cU,M)}$ is an anti-isomorphism of Ellis subgroups (Theorem \ref{theorem:map}). We then argue that if we precompose this map with group inversion, then the resulting map is a topological/algebraic isomorphism between compact groups. More generally, a variant of the statement holds in the NIP setting when the Ellis subgroups are closed (Theorem \ref{thm:main_dfg_alternative}). We then provide the following limiting examples:
\begin{enumerate}
\item An NIP $\dfg$ group such that the retraction map does not send invariant Ellis subgroups to finitely satisfiable Ellis subgroups $(\mathbb{R} \rtimes \mathbb{Z}/2\mathbb{Z})$. Hence the retraction map does not always witness an isomorphism of Ellis subgroups (Example \ref{example:dfg}).
\item An NIP $\fsg$ group such that the inverse retraction map does not send invariant Ellis subgroups to finitely satisfiable Ellis subgroups $(S^{1} \rtimes \mathbb{Z}/2\mathbb{Z})$. Hence the inverse retraction map does not always witness an anti-isomorphism of Ellis subgroups (Example \ref{example:fsg}).
\item An NIP definably amenable group such that neither the retraction nor the inverse retraction map invariant Ellis subgroups to finitely satisfiable Ellis subgroups $((\mathbb{R} \times S^{1}) \rtimes \mathbb{Z}/2\mathbb{Z})$; Example \ref{example:product}).
\item A definable amenable group (abelian, $\dfg$) group such that the invariant Ellis subgroups are not isomorphic to the finitely satisfiable Ellis subgroups (atomless Boolean algebra) (Corollary \ref{cor:different}).
\end{enumerate}

In the final section, we analysis semidirect product of $\fsg$ groups and $\dfg$ groups. We compute invariant Ellis subgroups in the semidirect product in terms of the $\dfg$ and $\fsg$ invariant Ellis subgroups (Corollary \ref{corollary:constuct}). We also give an explicit isomorphism between invariant Ellis subgroups and finitely satisfiable Ellis subgroups in which we \emph{retract and invert} the $\dfg$ portion of the group and \emph{do nothing} on $\fsg$ portion (Theorem \ref{thm:main_semidirect}).

\section*{Acknowledgement} We would like to thank Artem Chernikov, Daniel Hoffmann, Grzegorz Jagiella, Krzysztof Krupi\'{n}ski, Pierre Simon, Ningyuan Yao, and Zhantao Zhang for helpful discussions. We also thank Itay Kaplan for essentially asking \emph{What is known about $(S_{G}^{\inv}(\cU,M),*)$?} during a talk at the Fields institute in Toronto.

\section{Preliminaries}

Our notation is standard and we refer the reader to \cite{Guide} for background on NIP theories. We let $T$ be a first-order theory in a language $\Lc $. We let $\cU$ be a monster model of $T$ and $M$ be a small elementary submodel of $\cU$. We often let $x,y,z$ denote tuples of variables. We let $G(x)$ denote a $\emptyset$-definable group with respect to the theory $T$. If $A \subseteq \cU$ then we let $S_{x}(A)$ denote the collection of complete types (over $A$ in variable(s) $x$).

\subsection{Ellis theory}

We are interested in objects of the form $(X,*)$ where $X$ is a compact Hausdorff topological space and $*$ is a semigroup operation on $X$ which is a left-continuous, i.e., for every $a \in X$, the map $-*a:X \to X$ is continuous. Unimaginatively, we call $(X,*)$ a \emph{left-continuous compact Hausdorff semigroup}. A left (right) ideal $I$ of $X$ is a non-empty subset of $X$ such that $XI \subseteq I$ (reps. $IX \subseteq I$). A minimal left (right) ideal is a left (right) ideal which does not properly contain another left (resp. right) ideal. The next fact is, by now, \emph{well-known} in the model theory community. It describes the structure of minimal left ideals in left-continuous compact Hausdorff semigroups (e.g., see \cite[A.8]{rzepecki2018bounded} or \cite{Glasner:Proximal_flows}).

\begin{fact}\label{fact:Ellis} Suppose that $(X,*)$ is a left-continuous compact Hausdorff semigroup. Then there exists a minimal left ideal $I$. Moreover,
\begin{enumerate}
    \item Every element of $I$ generates $I$ as a left ideal, i.e., for any $p \in I$, $X * p = I$. As consequence, $I$ is closed.
    \item There exists an element $u \in I$ such that $u$ is an idempotent, i.e., $u*u = u$. Moreover, we say that an idempotent $u$ is \textbf{minimal} if there exists some minimal left ideal $J \subseteq X$ such that $u \in J$.
    \item If $u \in I$ is an idempotent, then $u * I$ is a group. We call groups of this form Ellis subgroups of $I$. An \textbf{Ellis subgroup} is an Ellis subgroup of $J$ for some minimal left ideal $J$ of $X$.
    \item $I$ can be written as a disjoint union of Ellis subgroups of $I$, i.e., $I = \bigsqcup_{u \in \id(I)} u *I$ where $\id(I)$ is the collection of idempotents in $I$.
    \item Given two idempotents $u,v \in I$ the map $u*-|_{v*I} \to u*I$ is an (algebraic) isomorphism of groups. Hence every Ellis subgroup of $I$ is isomorphic as abstract groups.
    \item If $J$ is another minimal left ideal, then any Ellis subgroup of $I$ is isomorphic to any Ellis subgroup of $J$. Hence the isomorphism type of the Ellis subgroups are independent of both choice of minimal left ideal and of idempotent. The isomorphism type of any particular Ellis subgroup is called the ideal group.
    \item If $J$ is another minimal left ideal, then for any idempotent $v \in J$ there exists a unique idempotent $u \in I$ such that $u*v = v$ and $v*u = u$.
\end{enumerate}
\end{fact}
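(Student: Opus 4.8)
The plan is to isolate the one genuinely non‑trivial ingredient — Ellis's idempotent lemma — and then to derive everything else from it via Zorn's lemma and elementary semigroup bookkeeping. Only two topological inputs are needed: that $X$ is compact (so a chain of nonempty closed subsets has nonempty intersection, by the finite intersection property), and left‑continuity, which says precisely that for each $a\in X$ the map $\lambda_a\colon x\mapsto x*a$ is continuous; hence $\lambda_a(X)=X*a$ is compact and thus closed, while $\lambda_a^{-1}(\{b\})$ is closed for each $b$.

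\emph{Existence of $I$ and item (1).} First I would note that $X*p$ is a closed left ideal for every $p\in X$, and that $X$ itself is a closed left ideal, so the family of closed left ideals ordered by inclusion is nonempty; a chain in this family has as a lower bound the intersection of its members, which is again a closed left ideal and is nonempty by compactness, so Zorn's lemma furnishes a minimal closed left ideal $I$. For $p\in I$ the set $X*p\subseteq I$ is a closed left ideal, hence $X*p=I$ by minimality — this is (1), and in particular $I=X*p$ is closed. If $J\subseteq I$ is any left ideal and $q\in J$, then $X*q\subseteq J\subseteq I$ is a closed left ideal equal to $I$, whence $J=I$; so $I$ is in fact minimal among all left ideals.

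\emph{Idempotents, item (2).} The crux is the idempotent lemma: every nonempty closed subsemigroup $S\subseteq X$ contains an idempotent. I would prove this by applying Zorn once more (using compactness to bound chains) to get a minimal nonempty closed subsemigroup $S_0\subseteq S$, picking any $u\in S_0$, noting that $S_0*u$ is a closed subsemigroup of $S_0$ and hence equals $S_0$, and concluding that $\{s\in S_0: s*u=u\}$ is a nonempty closed subsemigroup of $S_0$ and so equals $S_0$; in particular $u*u=u$. Since $I$ is itself a closed subsemigroup (a left ideal satisfies $I*I\subseteq X*I\subseteq I$, and $I$ is closed by (1)), it contains an idempotent, and ``minimal idempotent'' is then just the definition. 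I expect this lemma to be the only real obstacle; everything else is soft.

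\emph{Items (3)--(7).} These I expect to be routine transport of structure. Fixing an idempotent $u\in I$, one checks $u*I=\{x\in I: u*x=x\}$, so $u$ is a left identity on $u*I$; given $x\in u*I$, minimality gives $v\in X$ with $v*x=u$, and then $u*v*u\in u*I$ (using that $I$ is a left ideal to see $v*u\in I$) is a left inverse of $x$, so by the elementary fact that a semigroup with a left identity relative to which every element has a left inverse is a group, (3) follows. For (4), each $x\in I$ lies in some $u*I$ because $I*x=I$ by minimality, so $\{z\in I: z*x=x\}$ is a nonempty closed subsemigroup containing an idempotent by the lemma; and two Ellis subgroups sharing a point are seen by a short computation to have the same identity, hence to coincide. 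For (5)--(7) one writes the relevant maps and their inverses in terms of the idempotents involved and verifies the homomorphism and bijectivity properties using minimality of $I$ repeatedly, with (7) obtained by applying the idempotent lemma to a suitable closed subsemigroup of $I$ and checking the two stated identities. Past the idempotent lemma the remainder is bookkeeping of exactly the kind carried out in \cite[A.8]{rzepecki2018bounded} and \cite{Glasner:Proximal_flows}, to which I would refer for full details rather than reprove a classical theorem.
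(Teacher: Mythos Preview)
The paper does not actually prove this fact; it is stated as a well-known result with citations to \cite[A.8]{rzepecki2018bounded} and \cite{Glasner:Proximal_flows}. Your sketch is a correct outline of the standard proof found in those references --- Zorn plus compactness for minimal closed left ideals, the Ellis idempotent lemma as the one substantive step, and routine semigroup manipulations thereafter --- and since you ultimately defer to the same sources, your treatment is entirely in line with (indeed more detailed than) the paper's own handling.
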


What is less commonly known is the structure of the \emph{minimal right ideals} in such kinds of semigroups. The following statements are \emph{folklore-adjacent}.

\begin{fact}\label{fact:right} Let $(X,*)$ be a left-continuous compact Hausdorff semigroup.
\begin{enumerate}
    \item Let $L$ be a left ideal. Every right ideal $R \subseteq X$ contains a right ideal of the form $u*X$ where $u$ is an idempotent in $L$.
    \item Every minimal right ideal is of the form $v* X$ where $v$ is a minimal idempotent.
    \item If $u$ is a minimal idempotent then $u * X$ is a minimal right ideal.
    \item Every right ideal contains a minimal right ideal.
    \item If $u$ is a minimal idempotent and $I$ is a minimal left ideal, then $u * X \cap I$ is an Ellis subgroup of $I$.
    \item Suppose that $u$ is an idempotent and $R = u * X$ is a minimal right ideal. Then $u$ is a minimal idempotent from some minimal left ideal $I$ and $R * u$ is an Ellis subgroup of $I$.
    \item
    Every left [right/two-sided] ideal $\cM$ contains an ideal of the form $X*x$ [$x*X$/$X*x*X$] with $x\in \cM$, in particular, a left [right/two-sided] ideal is minimal if and only if it has no such proper left [right/two-sided] subideals.
    \item
    If $\cM$ is a minimal left [right] ideal and $x\in X$ is arbitrary, then $\cM*x$ [$x*\cM$] is a minimal left [right] ideal.
    \item
    The union of all the minimal left ideals, and the union of all the minimal right ideals are each equal to the minimal (two-sided) ideal.
    \item
    The intersection of any minimal left ideal $L$ and any minimal right ideal $R$ is an Ellis subgroup and it is equal to the product $R*L$.
    \item
    Every minimal right ideal is a disjoint union of Ellis subgroups.
\end{enumerate}
In general, minimal right ideals need not be closed sets.
\end{fact}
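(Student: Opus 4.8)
My plan is to derive the entire list from Fact~\ref{fact:Ellis} by elementary semigroup bookkeeping, organised around one genuinely non‑formal observation, which I also expect to be the main obstacle: right ideals cannot be treated symmetrically with the left‑ideal theory, because a set $x*X$ need not be closed and Zorn's lemma is unavailable for right ideals, so every argument has to be routed through minimal \emph{left} ideals and their idempotents. Before starting I would record the standard preliminaries: inside \emph{any} left ideal $L$ there is a minimal left ideal (apply the Zorn argument underlying Fact~\ref{fact:Ellis} to the closed left subideals $X*p\subseteq L$, $p\in L$), distinct minimal left ideals are disjoint (their intersection, if nonempty, is a left ideal contained in each), and the absorption rules $X*I\subseteq I$ and $R*X\subseteq R$ hold by definition.

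The first real step is (1), which drives everything. Given a left ideal $L$ and a right ideal $R$, I would fix a minimal left ideal $I\subseteq L$ with an idempotent $v\in I$, and note that for any $a\in R$ the element $a*v$ lies in $I$ (since $X*I\subseteq I$) and in $R$ (since $R*X\subseteq R$); thus $I\cap R\neq\emptyset$. A point $c\in I\cap R$ lies in some Ellis subgroup $w*I$ of $I$ by Fact~\ref{fact:Ellis}, and its inverse $c^{-1}$ there gives $w=c*c^{-1}\in R*X\subseteq R$, so $w$ is an idempotent of $L$ with $w*X\subseteq R$. From this, (2)--(4) fall out quickly: for minimal $R$, apply (1) with minimal $L$ and invoke minimality of $R$; for a minimal idempotent $v$ and a right ideal $R\subseteq v*X$, apply (1) with $L$ the minimal left ideal through $v$ to obtain an idempotent $u\in I$ with $u*X\subseteq R\subseteq v*X$, whence $v*u=u$, which forces $u=v$ inside the group $v*I$, so $v\in R$ and $R=v*X$; and (4) is (1) together with (3). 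Parts (7) and (8) I would just verify by hand: $X*x$ (resp.\ $x*X$, $X*x*X$) is a sub‑ideal of $\cM$ of the stated shape for any $x\in\cM$, and pulling a nonempty left subideal of $\cM*x$ back along $m\mapsto m*x$ produces a left subideal of $\cM$, hence all of $\cM$, so $\cM*x$ is minimal when $\cM$ is (and symmetrically on the right).

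The structural heart is (10). For a minimal left ideal $L$ and a minimal right ideal $R$, I would write $R=v*X$ with $v$ a minimal idempotent (by (2)) and take the idempotent $e\in L$ with $e*v=v$ and $v*e=e$ supplied by Fact~\ref{fact:Ellis}. Every $x\in R\cap L$ satisfies $x=v*x$ (as $x\in v*X$), hence $e*x=(e*v)*x=v*x=x$, so $x\in e*L$; with the easy reverse inclusions (and $x=v*x\in R*L$) this yields $R\cap L=e*L=R*L$, an Ellis subgroup of $L$ by Fact~\ref{fact:Ellis}. Then (5) is just (3) followed by (10). For (6), if $R=u*X$ is minimal with $u$ idempotent then $R=v*X$ for a minimal idempotent $v$ by (2), so $v*u=u$; hence $J*u$ (a minimal left ideal by (8)) contains $u=v*u$, exhibiting $u$ as a minimal idempotent, and the short computation $X*u=J*u$ (via $x*u=(x*v)*u$ with $x*v\in J$) identifies $R*u=u*X*u$ with the Ellis subgroup $R\cap(J*u)$ of (10).

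Finally I would do (9) and (11). Put $K=\bigcup\{\text{minimal left ideals}\}$: by (8) it is a two‑sided ideal, and for any two‑sided ideal $N$ and any minimal left ideal $I$, picking $n\in N$ gives $n*I\subseteq I\cap N$, so $I\subseteq N$ by minimality; thus $K$ lies in every two‑sided ideal, i.e.\ $K$ is \emph{the} minimal two‑sided ideal. Running the mirror argument with $R*n$ in place of $n*I$ shows $\bigcup\{\text{minimal right ideals}\}$ is also the minimal two‑sided ideal, so both unions equal $K$, which is (9). For (11), a minimal right ideal $R$ lies in $K$ by (9), so $R=\bigcup_I(R\cap I)$ over minimal left ideals $I$; each nonempty piece is an Ellis subgroup by (10), and distinct pieces are disjoint because distinct minimal left ideals are. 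The closing sentence that minimal right ideals need not be closed is not part of the formal statement and is witnessed by the explicit examples appearing later in the paper.
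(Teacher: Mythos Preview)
Your proof is correct and follows essentially the same strategy as the paper's: route every right-ideal statement through a minimal left ideal and its idempotents, using Fact~\ref{fact:Ellis} as the engine. The minor differences are expository rather than substantive—your pullback argument for (8) is cleaner than the paper's element-chase, your (10) invokes Fact~\ref{fact:Ellis}(7) explicitly where the paper simply posits an idempotent in $L\cap R$, and your (6) supplies details the paper's one-line proof leaves to the reader—but the architecture is the same.
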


\begin{proof} We prove the statements:
\begin{enumerate}
    \item First note that every right ideal contains a principal right ideal, i.e., one of the form $p * X$ for some $p \in X$. Indeed, simply choose $p$ to be any element of $R$. Hence, it suffices to show that any principal right ideal contains one of the form described in the statement. So consider $s *X$ where $s \in X$. Let $L' \subseteq L$ be a minimal left ideal and $q \in L'$. Notice that $s *X \supseteq (s * q) *X$. However, $s * q \in L'$ and then it is an element of some Ellis subgroup, say $v * L'$ where $v \in L'$ is an idempotent. Hence, we claim that $(s * q) * X = v * X$. We conclude that $v * X \subseteq s *X$, completing the proof.
    \item Follows directly from the proof of Statement (1).
    \item Let $L = X * u$. Consider the right ideal $u * X$. Let $R \subseteq u *X$ be a right subideal. By Statement (1), there exists some $v \in L$ such that $v * X \subseteq R$. Thus, $v *X \subseteq R \subseteq u *X$. Since $u,v$ are idempotents in $L$, $u * v = u$ and $v * u = v$ which implies that $v *X = u *X$ and so $R = u *X $.
    \item Directly from (1) and (3).
    \item By (7) of Fact \ref{fact:Ellis}, there exist some $v \in I$ such that $u * v = v$ and $v * u = u$. It follows that $u * X = v * X$ and so $u * X \cap I = v *X \cap X * v= v * X * v$, which is an Ellis subgroup of $I$.
    \item Direct from the observation that $R * u = u * X * u \subseteq I$.
    \item
    Immediate: take any $x\in \cM$.
    \item
    Suppose $\cM$ is a minimal left ideal (the right version is symmetric). Then $\cM*x$ is clearly a left ideal, and for any $y_1,y_2\in \cM$ we have that $x*y_2\in \cM$, so there is (by minimality of $\cM$) some $y_3\in X$ such that $y_3*x*y_2=y_1$, so that $y_3*x*y_2*x=y_1*x$. As $y_1,y_2\in \cM$ are arbitrary, this shows that the ideal $X*y_2*x$ contains $\cM*x$, and so $\cM*x$ is a minimal left ideal.
    \item
    Let $I$ be the union of all minimal left ideals (which exist by Fact~\ref{fact:Ellis}).
    Note that if $\cM$ is any minimal left ideal, then $\cM*X = I$: indeed, containment holds by the preceding statement and reverse containment holds since if $\cM'$ is another minimal left ideal, then $\cM*\cM'=\cM'$. It follows that $I$ is a two-sided ideal, and if $x\in I$ is arbitrary, then $X*x$ is a minimal left ideal, so $X*x*X=I$, which shows that $I$ is a minimal two-sided ideal. The proof of the analogous statement for right ideals is analogous (using (4) for the existence of minimal right ideals). The equality follows from the observation that in any semigroup, if $I_1,I_2$ are (two-sided) ideals, then $I_1*I_2\subseteq I_1\cap I_2$ and $I_1\cap I_2$ is an ideal, so there is at most one minimal ideal.
    \item
    Let $L$ be a minimal left ideal and $R$ be a minimal right ideal. Suppose $u\in L\cap R$ is idempotent. Then (by (7)) $L=X*u$ and $R=u*X$, so $X*u\cap u*X=L\cap R$, so $u$ is an identity element of $L\cap R$. On the other hand, $L\cap R\supseteq R*L$, so it is nonempty, and it is a right ideal of $(L,*)$, so (by 2.1(4)) it is a union of Ellis subgroups. On the other hand, since every idempotent in $L\cap R$ is an identity element of $L\cap R$, there cannot be more than one, so $L\cap R$ is in fact an Ellis subgroup. To see the equality with the product, just notice that $R\cap L=u*L\subseteq R*L\subseteq R\cap L$.
    \item
    Let $R$ be a minimal right ideal, and let $I$ be the minimal (two-sided) ideal (as in (9)). Then
    \begin{equation*}R=R\cap I=R\cap \left(\bigcup_{L \in \mathbb{L}} L \right)=\bigcup_{L \in \mathbb{L}} R\cap L,
    \end{equation*} where $\mathbb{L}$ is the collection of all minimal left ideals. Each $R\cap L$ is an Ellis subgroup by above. \qedhere
\end{enumerate}
\end{proof}

\subsection{Model theoretic dynamics} Here we give some necessary background on invariant types, the Newelski product, and definable amenability. Fix $\cU$ a monster model of a first order theory $T$ and let $M$ be a small elementary submodel. We say that $p$ in $S_{x}(\cU)$ is invariant over $M$ if for every $\Lc $-formula $\varphi(x,y)$ and parameters $a, b \in \cU^{y}$ such that $a \equiv_{M} b$, we have that $\varphi(x,a) \in p$ if and only if $\varphi(x,b) \in p$. We say that $p$ in $S_{x}(\cU)$ is finitely satisfiable in $M$ if whenever $\varphi(x) \in p$, there exists some $a \in M^{x}$ such that $\cU \models \varphi(a)$. It is straightforward to check that if a type is finitely satisfiable in $M$ then it is invariant over $M$. Let $G(x)$ be a $\emptyset$-definable subgroup. We let $S_{G}(\cU) = \{p \in S_{x}(\cU): p \vdash G(x)\}$. Consider the following two spaces of types.

\begin{enumerate}
\item $S_{G}^{\inv}(\cU,M) := \{p \in S_{G}(\cU): p$ is $M$-invariant$\}$.
\item $S_{G}^{\fs}(\cU,M) := \{p \in S_{G}(\cU): p$ is finitely satisfiable in $M\}$.
\end{enumerate}

We remark that $S_{G}^{\fs}(\cU,M) \subseteq S_{G}^{\inv}(\cU,M)$ and that both spaces are compact Hausdorff spaces with the topology induced from the standard stone space topology. We now recall the Newelski product (originally introduced into model theory in \cite{N1}) which turns these spaces into left-continuous compact Hausdorff semigroups.

\begin{definition} Suppose that $p\in S_{G}^{\inv}(\cU,M), q\in S_G(\cU)$. The \textbf{Newelski product} of $p$ with $q$, denoted $p * q$, is defined as follows: For any $\Lc $-formula $\varphi(x,z)$ and parameters $d$ from $\cU^{z}$, we have that $\varphi(x,d) \in p * q$ if and only if $\varphi(x \cdot y,d) \in p_{x} \otimes q_{y}$ if and only if $\varphi(x \cdot b,d) \in p$ where $b \models q|_{Md}$. We recall that `$\otimes$' is the Morley product of invariant types (see \cite[Chapter 2]{Guide} for basic properties).
\end{definition}

It is straightforward to verify the following properties of the Newelski product and so we leave it as an exercise for the reader.

\begin{fact}\label{fact:nb} The following statements are true.
\begin{enumerate}
    \item If $p,q \in S_{G}^{\inv}(\cU,M)$, then $p * q \in S_{G}^{\inv}(\cU,M)$.
       \item If $p,q \in S_{G}^{\fs}(\cU,M)$, then $p * q \in S_{G}^{\fs}(\cU,M)$.
    \item If $p,q \in S_{G}^{\inv}(\cU,M)$ and both $p$ and $q$ are $M$-definable, then $p * q$ is $M$-definable.
    \item For any $q \in S_{G}(\cU)$, the map $-*q :S_{G}^{\inv}(\cU,M) \to S_{G}(\cU)$ is continuous.
    \item If $p \in S_{G}^{\inv}(\cU,M)$ is $M$-definable, then $p*-:S_{G}(\cU) \to S_{G}(\cU)$ is continuous.
    \item If $p,q\in S^\inv_G(\cU,M)$ and $r\in S_G(\cU)$, then $(p*q)*r=p*(q*r)$.
\end{enumerate}
We conclude that both $(S_{G}^{\inv}(\cU,M),*)$ and $(S_{G}^{\fs}(\cU,M),*)$ are left-continuous compact Hausdorff semigroups.
\end{fact}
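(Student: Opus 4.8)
The plan is to reduce each of the six items to a corresponding, standard property of the Morley product $\otimes$ of invariant types (see \cite[Chapter~2]{Guide}), using that the group multiplication $m\colon(x,y)\mapsto x\cdot y$ is $\emptyset$-definable. The key dictionary is this: writing $\psi_\varphi(x,y,z)$ for the $\Lc$-formula $\varphi(x\cdot y,z)$, the definition of the Newelski product reads $\varphi(x,d)\in p*q \iff \psi_\varphi(x,y,d)\in p\otimes q$; equivalently, $p*q$ is the pushforward of $p\otimes q$ along $m$. Since $m$ maps $G\times G$ into $G$, one immediately gets $p*q\vdash G(x)$ whenever $p\vdash G(x)$ and $q\vdash G(y)$, so everything stays inside $S_G$.

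Items (1) and (3) are then formal. If $p,q$ are $M$-invariant then so is $p\otimes q$; since $\psi_\varphi$ has no parameters, $d\equiv_M d'$ yields $\psi_\varphi(x,y,d)\in p\otimes q \iff \psi_\varphi(x,y,d')\in p\otimes q$, i.e.\ $\varphi(x,d)\in p*q \iff \varphi(x,d')\in p*q$, giving (1). If moreover $p,q$ are $M$-definable then $p\otimes q$ is $M$-definable, and $\{d : \psi_\varphi(x,y,d)\in p\otimes q\}$, which is exactly the $\varphi$-definition of $p*q$, is $\Lc(M)$-definable, giving (3). For (2) I would argue directly: assuming $\varphi(x,d)\in p*q$, pick $b\models q|_{Md}$ with $\varphi(x\cdot b,d)\in p$; finite satisfiability of $p$ then yields $a\in G(M)$ with $\models\varphi(a\cdot b,d)$, whence $\varphi(a\cdot y,d)\wedge G(y)\in q$ (legitimate since $a\in M$, so $b\models q|_{Mad}$), and finite satisfiability of $q$ yields $c\in G(M)$ with $\models\varphi(a\cdot c,d)$; since $G(M)$ is a subgroup of $G(\cU)$, the element $a\cdot c\in G(M)$ witnesses $\varphi(x,d)$, so $p*q$ is finitely satisfiable in $M$.

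For the continuity statements I would fix a subbasic clopen $[\varphi(x,d)]\subseteq S_G(\cU)$ and identify its preimage. In (4), choose any $b\in\cU$ with $b\models q|_{Md}$; then for every $p\in S_G^{\inv}(\cU,M)$, $M$-invariance of $p$ gives $\varphi(x,d)\in p*q \iff \varphi(x\cdot b,d)\in p$ (independence of the choice of $b$ being precisely what invariance buys), so $(-*q)^{-1}[\varphi(x,d)]=[\varphi(x\cdot b,d)]\cap S_G^{\inv}(\cU,M)$ is relatively clopen. In (5), assume $p$ is $M$-definable and let $(d_p\psi_\varphi)(y,z)$ be the $\psi_\varphi$-definition of $p$; then for every $q\in S_G(\cU)$, $\varphi(x,d)\in p*q \iff (d_p\psi_\varphi)(y,d)\in q$, so $(p*-)^{-1}[\varphi(x,d)]=[(d_p\psi_\varphi)(y,d)]$ is clopen. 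Finally (6) follows from associativity of $\otimes$, i.e.\ $(p\otimes q)\otimes r=p\otimes(q\otimes r)$ (valid because $p$, $q$, and hence, by (1), $p*q$, are $M$-invariant), together with group associativity $(x\cdot y)\cdot z=x\cdot(y\cdot z)$: unwinding the definitions, both $(p*q)*r$ and $p*(q*r)$ reduce to the single condition $\varphi(x\cdot y\cdot z,d)\in p\otimes q\otimes r$.

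None of this is genuinely difficult, which is why the statement is relegated to an exercise; the only places calling for a little care are the parameter bookkeeping in (2) --- keeping track of which parameters lie in $M$ when invoking finite satisfiability of $q$ --- and, in (4) and (5), confirming that the substituted formula $\varphi(x\cdot b,d)$ and the defining formula $(d_p\psi_\varphi)(y,d)$ are honest $\Lc(\cU)$-formulas, so that the asserted preimages really are (relatively) basic clopen sets.
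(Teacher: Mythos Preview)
Your proposal is correct and follows the standard route; the paper itself does not give a proof, explicitly stating that the properties are ``straightforward to verify'' and leaving them as an exercise for the reader. Your reduction to the Morley product via the $\emptyset$-definable multiplication map is exactly the expected argument, and the care you take in (2) and (4)--(5) is appropriate.
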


\begin{definition} Suppose that $p \in S_{G}(\cU)$. Then a global left translate of $p$ is a type of the form $c \cdot p := \{\varphi(c^{-1} \cdot x): \varphi(x) \in p\}$ where $c \in G(\cU)$. Alternatively, for any $\varphi(x) \in \Lc _{x}(\cU)$ such that $\varphi(x) \vdash G(x)$,
\begin{equation*}
    \varphi(x) \in c \cdot p \Longleftrightarrow \varphi(c \cdot x) \in p.
\end{equation*}
Likewise, a global right translate of $p$ is a type of the form $p \cdot c = \{\varphi(x \cdot c^{-1}): \varphi(x) \in p\}$ and so for any $\varphi(x) \in \Lc _{x}(\cU)$ and $\varphi(x) \vdash G(x)$,
\begin{equation*}
    \varphi(x) \in p \cdot c \Longleftrightarrow \varphi(x \cdot c) \in p.
\end{equation*}
\end{definition}

We now recall several different kinds of global invariant types.

\begin{definition} Suppose that $p \in S_{G}^{\inv}(\cU,M)$. We say that
\begin{enumerate}
    \item $p$ is \textbf{strong left [right] $f$-generic over $M$} if every global left [right] translate of $p$ is $M$-invariant.
    \item $p$ is \textbf{left [right] $\dfg$ over $M$} if $p$ is $M$-definable and every global left [right] translate of $p$ is $M$-definable.
    \item $p$ is \textbf{left [right] $\fsg$ over $M$} if $p$ is finitely satisfiable in $M$ and every global left [right] translate of $p$ is finitely satisfiable in $M$.
\end{enumerate}
\end{definition}

Obviously, both left [right] $\fsg$ and $\dfg$ (over $M$) types are left [resp. right] strong $f$-generic (over $M$). In the NIP setting, a type $p \in S_{G}^{\inv}(\cU,M)$ is left $\fsg$ over $M$ if and only if $p$ is right $\fsg$ over $M$ (\cite[Proposition 8.29]{Guide} or \cite[Proposition 4.2]{NIP1}). This left-right collapse is not true for strong left/right $f$-generics nor left/right $\dfg$ types, i.e., see Example \ref{example:Heisenberg}. The following characterization of \emph{definable amenability} can be found in \cite[Corollary 8.20]{Guide}.

\begin{fact}\label{fact:def_amen} Suppose that $T$ is NIP. Then the following are equivalent:
\begin{enumerate}
\item $G$ is definably amenable, i.e., there exists/for any model $N$ of $T$, there exists a $G(N)$-left invariant Keisler measure $\mu$ such that $\mu(G(x)) = 1$.
\item There exists $p$ in $S_{G}(\cU,M)$ such that $p$ is strong left $f$-generic over $M$.
\end{enumerate}
\end{fact}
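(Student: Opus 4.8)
The plan is to prove the two implications separately, in each case passing between Keisler measures and types using the connected‑component machinery and the Borel‑definability of invariant measures available under NIP.

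For $(2)\Rightarrow(1)$: assume $p$ is strong left $f$-generic over $M$. First I would pin down its left stabilizer $\stab_\ell(p):=\{g\in G(\cU): g\cdot p=p\}$. Since $p$ is $M$-invariant, $\sigma(g\cdot p)=\sigma(g)\cdot\sigma(p)=\sigma(g)\cdot p$ for every $\sigma\in\Aut(\cU/M)$, so $M$-invariance of the translate $g\cdot p$ says exactly that $g^{-1}\sigma(g)\in\stab_\ell(p)$; as $\stab_\ell(p)$ is a subgroup, is $\Aut(\cU/M)$-invariant (because $\sigma(p)=p$), and has bounded index ($G(\cU)/\stab_\ell(p)$ embeds into the bounded set $\{g\cdot p:g\in G(\cU)\}$ of $M$-invariant types), the usual theory of connected components over NIP yields $G^{00}(\cU)\le\stab_\ell(p)$. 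Thus the compact Hausdorff group $\bar G:=G(\cU)/G^{00}(\cU)$ acts transitively on the bounded set $O_p:=\{g\cdot p:g\in G(\cU)\}$ via $\bar g\cdot p$, and — using Borel-definability of each $g\cdot p$ over $M$, which makes $\{\bar g\in\bar G:\varphi\in\bar g\cdot p\}$ Haar-measurable for each $\varphi(x)\vdash G(x)$ — the rule $\mu(\varphi):=h\bigl(\{\bar g\in\bar G:\varphi\in\bar g\cdot p\}\bigr)$, with $h$ the Haar probability measure of $\bar G$, defines a global Keisler measure on $G$ that is left-$G(\cU)$-invariant with $\mu(G(x))=1$; this witnesses definable amenability.

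For $(1)\Rightarrow(2)$: from definable amenability I would obtain a global left-$G(\cU)$-invariant Keisler measure $\mu$ on $G$ and, after replacing $\mu$ by an appropriate coheir/non-forking extension of its restriction to a small model, also arrange that $\mu$ is $f$-generic over some small model, i.e.\ $\mu(\varphi(x,b))>0$ implies $\varphi(x,b)$ does not divide over that model (using that invariant measures in NIP are Borel-definable, together with the counting argument forbidding a $k$-inconsistent indiscernible family of sets of common positive measure). Then a type is extracted from $\supp(\mu)$: if \emph{every} $q\in\supp(\mu)$ had a left translate $g_q\cdot q$ dividing over the base model, witnessed by $\psi_q(g_q\cdot x)\in q$ with $\psi_q$ dividing, then the clopen sets $\{q:\psi(g\cdot x)\in q\}$ — ranging over $g\in G(\cU)$ and $\psi$ dividing over the base — would cover the compact set $\supp(\mu)$, hence finitely many $\{q:\psi_i(g_i\cdot x)\in q\}$, $i\le n$, would cover it, forcing $\mu\bigl(\bigvee_{i\le n}\psi_i(g_i\cdot x)\bigr)=1$, so $\mu(\psi_i(g_i\cdot x))\ge 1/n$ for some $i$; left-$G(\cU)$-invariance of $\mu$ then gives $\mu(\psi_i(x))\ge 1/n>0$, contradicting $f$-genericity of $\mu$. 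So some $q\in\supp(\mu)$ has all its left translates non-dividing over the base, which after absorbing the relevant bounded closure into $M$ gives a strong left $f$-generic type over $M$.

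The step I expect to be the main obstacle is the preparation of $\mu$ in $(1)\Rightarrow(2)$: manufacturing a single global measure that is at once left-$G(\cU)$-invariant and $f$-generic over a small model reconciles two requirements — global translation-invariance versus invariance/definability over a small base — that do not automatically coexist, and this is precisely where NIP is used in an essential way, via the theory of invariant and definable Keisler measures. By comparison the stabilizer computation $G^{00}(\cU)\le\stab_\ell(p)$ and the covering argument on $\supp(\mu)$ are soft.
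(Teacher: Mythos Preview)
The paper does not prove this statement: it is quoted as a fact from \cite[Corollary 8.20]{Guide}, so there is no ``paper's own proof'' to compare against. Your outline is essentially the standard Hrushovski--Pillay/Simon argument, and the overall architecture is correct, but there are two places where the sketch, as written, does not go through.

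In $(2)\Rightarrow(1)$, the properties you establish for $\stab_\ell(p)$ --- that it is an $\Aut(\cU/M)$-invariant subgroup of bounded index --- only yield $G^{000}_M\le\stab_\ell(p)$, not $G^{00}(\cU)\le\stab_\ell(p)$. Since $G^{00}\neq G^{000}$ can occur in NIP (e.g.\ the Conversano--Pillay universal cover of $\SL_2(\Rr)$), there is no ``usual theory of connected components'' that promotes an $\Aut$-invariant bounded-index subgroup to one containing $G^{00}$ without further input. What is actually used here is a second, formula-by-formula application of NIP: for each $\varphi(x;y)$ the local stabilizer $\stab_\varphi(p)$ is an $M$-definable subgroup of finite index (via alternation/UDTFS-type arguments), so $\stab_\ell(p)=\bigcap_\varphi\stab_\varphi(p)$ is type-definable of bounded index and hence contains $G^{00}$. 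Without this, your Haar-measure formula $\mu(\varphi)=h(\{\bar g:\varphi\in\bar g\cdot p\})$ is not well-defined on $G/G^{00}$.

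In $(1)\Rightarrow(2)$, your plan to ``replace $\mu$ by a coheir/non-forking extension of its restriction to a small model'' would indeed make $\mu$ invariant over that model, but it destroys $G(\cU)$-left-invariance (you only retain $G(M)$-invariance), and you need full $G(\cU)$-invariance in the very next line to pass from $\mu(\psi_i(g_i\cdot x))\ge 1/n$ to $\mu(\psi_i(x))\ge 1/n$. The correct step --- and this is exactly the obstacle you flagged --- is not to replace $\mu$ at all, but to prove that any global $G(\cU)$-left-invariant measure is automatically definable (hence invariant) over a small model; this is the generic-stability of invariant measures in NIP, and once you have it, your compactness/covering argument on $\supp(\mu)$ goes through verbatim.
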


$f$-generic types can also be characterized in different ways. One useful characterization to keep in mind is the following (see e.g.\cite[Theorem 1.2]{CS}).
\begin{fact}\label{fct:fgen_small_orbit}
    A type $p\in S^\inv_G(\cU,M)$ is left [right] $f$-generic of and only if $G(\cU)\cdot p$ [$p\cdot G(\cU)$] is small.
\end{fact}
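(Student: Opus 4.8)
The plan is to reduce everything to the left stabilizer $\operatorname{Stab}_\ell(p) := \{g \in G(\cU) : g \cdot p = p\}$; I will only treat the left-handed statement, the right-handed one being symmetric. Two preliminary observations are worth isolating first. Since $p$ is $M$-invariant, $\operatorname{Stab}_\ell(p)$ is invariant under $\Aut(\cU/M)$: if $g\cdot p = p$ and $\sigma\in\Aut(\cU/M)$, then $\sigma(g)\cdot p = \sigma(g)\cdot\sigma(p) = \sigma(g\cdot p) = p$. And the orbit map $g\mapsto g\cdot p$ identifies $G(\cU)\cdot p$ with $G(\cU)/\operatorname{Stab}_\ell(p)$, so "$G(\cU)\cdot p$ is small'' is the same as "$\operatorname{Stab}_\ell(p)$ has bounded index in $G(\cU)$''.

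For the direction "$f$-generic $\Rightarrow$ small orbit'' I would argue by counting. If $p$ is left $f$-generic, then it is $f$-generic over some small model $M_0$, so every left translate $g\cdot p$ does not fork over $M_0$; in an NIP theory a global type that does not fork over $M_0$ is Lascar-invariant over $M_0$, and the collection of global Lascar-invariant-over-$M_0$ types in a fixed small tuple of variables has size bounded independently of $\cU$ (such a type is pinned down by which Lascar strong types $q(y)$ over $M_0$ satisfy $\varphi(x;b)\in p$ for $b\models q$, as $\varphi$ ranges over $\Lc$-formulas, and there are boundedly many Lascar strong types over $M_0$). Hence the orbit $G(\cU)\cdot p$ lies in a bounded set, i.e.\ it is small.

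For the converse, "small orbit $\Rightarrow$ $f$-generic'', the plan is: from smallness, $\operatorname{Stab}_\ell(p)$ is an $\Aut(\cU/M)$-invariant subgroup of bounded index, hence contains the connected component $G^{000}(\cU)$; and — this refinement being part of what \cite{CS} establishes in the NIP setting, using Borel-definability of invariant types to control $\operatorname{Stab}_\ell(p)$ — one in fact gets $\operatorname{Stab}_\ell(p)\supseteq G^{00}(\cU)$. Since $G^{00}(\cU)$ is normal, every translate $g\cdot p$ is then $G^{00}(\cU)$-invariant, because $h\cdot(g\cdot p) = g\cdot((g^{-1}hg)\cdot p) = g\cdot p$ for $h\in G^{00}(\cU)$. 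One then concludes via the standard equivalence in NIP between $G^{00}$-invariance of a global type and $f$-genericity: the orbit of such a type is a continuous image of the bounded quotient $G(\cU)/G^{00}(\cU)$, and the NIP forking calculus upgrades boundedness of the orbit to non-forking of its members over a fixed small model.

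The forward direction is routine counting; the real content, and where I expect the main difficulty to lie, is the converse — specifically the passage from "$\operatorname{Stab}_\ell(p)$ has bounded index'' to "$p$ is $f$-generic over some small model'', which is exactly \cite[Theorem 1.2]{CS}. In practice I would simply cite that theorem; a self-contained argument would have to reconstruct the NIP analysis of $f$-generics, namely Borel-definability of invariant types, the structure of $G^{00}$ (and its relation to $G^{000}$ here), and the forking/measure arguments linking small orbits to non-forking.
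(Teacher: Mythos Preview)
The paper does not prove this fact at all: it is stated as a Fact with the parenthetical ``see e.g.\ \cite[Theorem 1.2]{CS}'' and no further argument. Your proposal ultimately defers to exactly the same citation for the substantive direction, so you and the paper take the same approach; the surrounding sketch you give (bounding the orbit by counting $M$-invariant types for the forward direction, and reducing the converse to the stabilizer containing $G^{00}$) is correct and helpful context, but is additional to anything the paper provides.
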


\begin{definition}\label{def:f} Suppose that $T$ is NIP and $G(x)$ is definably amenable. We let $\mathcal{F}_{r}$ denote the collection of strong right $f$-generics types over $M$.
\end{definition}

\begin{definition}\label{def:mti} Suppose that $p \in S_{G}^{\inv}(\cU,M)$. Then we defined the \emph{model theoretic inverse} of the type $p$ as $p^{-1} := \tp(a^{-1}/\cU)$ where $a \models p$.
\end{definition}

The following fact is left to the reader.

\begin{fact}\label{fact:easy} Let $p \in S_{G}^{\inv}(\cU,M)$. Then
\begin{enumerate}
\item $p$ is strong left $f$-generic if and only if $p^{-1}$ is strong right $f$-generic.
\item $p$ is strong right $f$-generic if and only if $p^{-1}$ is strong left $f$-generic.
\end{enumerate}
\end{fact}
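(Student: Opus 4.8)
The plan is to deduce both equivalences from two elementary facts about the model-theoretic inverse map $p\mapsto p^{-1}$ on $S_G(\cU)$. Since $(p^{-1})^{-1}=p$, statement (2) is just statement (1) applied to $p^{-1}$ in place of $p$, so I would only prove (1).

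The first fact I would record is that, for any $q\in S_G(\cU)$, $q$ is $M$-invariant if and only if $q^{-1}$ is $M$-invariant. This holds because group inversion $a\mapsto a^{-1}$ is $\emptyset$-definable, hence commutes with every automorphism in $\Aut(\cU/M)$: if $b\equiv_M b'$, choose $\sigma\in\Aut(\cU/M)$ with $\sigma(b)=b'$ and a realization $a\models q|_{Mbb'}$; then $\varphi(x,b)\in q^{-1}$ says $\cU\models\varphi(a^{-1},b)$, so $\cU\models\varphi(\sigma(a)^{-1},b')$, and $\sigma(a)\models q|_{Mb'}$ by $M$-invariance of $q$, whence $\varphi(x,b')\in q^{-1}$; the converse is symmetric since $(q^{-1})^{-1}=q$.

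The second fact is that the inverse intertwines the two kinds of translation: $(c\cdot p)^{-1}=p^{-1}\cdot c^{-1}$ for every $c\in G(\cU)$. To see this, note that $a\models p$ implies $c\cdot a\models c\cdot p$ (directly from the definition of a left translate: $c^{-1}\cdot(c\cdot a)=a\models p$), so $(c\cdot a)^{-1}=a^{-1}\cdot c^{-1}$ realizes $(c\cdot p)^{-1}$; and $a^{-1}\models p^{-1}$ gives $a^{-1}\cdot c^{-1}\models p^{-1}\cdot c^{-1}$ from the definition of a right translate (indeed $(a^{-1}\cdot c^{-1})\cdot c=a^{-1}\models p^{-1}$). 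Hence the two complete global types share the realization $a^{-1}c^{-1}$ and are therefore equal.

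Combining these: $p$ is strong left $f$-generic over $M$ iff $c\cdot p$ is $M$-invariant for every $c\in G(\cU)$, iff (by the first fact, applied to $q=c\cdot p$) the type $(c\cdot p)^{-1}=p^{-1}\cdot c^{-1}$ is $M$-invariant for every $c$, iff (reindexing $d=c^{-1}$, which also ranges over all of $G(\cU)$) $p^{-1}\cdot d$ is $M$-invariant for every $d\in G(\cU)$, iff $p^{-1}$ is strong right $f$-generic over $M$. I do not anticipate a genuine obstacle; the only points that need a modicum of care are the well-definedness of $p^{-1}$ independently of the chosen realization (standard, via an automorphism of a larger monster fixing $\cU$) and keeping the left/right translate conventions of Definition~\ref{def:mti} straight throughout the computation.
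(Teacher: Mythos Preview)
Your proof is correct; the paper itself leaves this fact to the reader, so there is no proof to compare against, and your argument supplies exactly the routine verification one would expect (the $\emptyset$-definability of inversion gives preservation of $M$-invariance, and the identity $(c\cdot p)^{-1}=p^{-1}\cdot c^{-1}$ swaps left and right translates). One minor point of presentation: in your first fact, the cleanest phrasing is simply that $\varphi(x,b)\in q^{-1}$ iff $\varphi(x^{-1},b)\in q$, and then $M$-invariance of $q$ immediately gives $\varphi(x^{-1},b)\in q\Leftrightarrow\varphi(x^{-1},b')\in q$; this avoids having to track where the realization $a$ and the automorphism $\sigma$ live.
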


For any small subsets $A$ of $\cU$, we let $G_{A}^{00}(\cU)$ be the smallest $A$-type-definable subgroup of $G(\cU)$ of bounded index.

\begin{definition} If $T$ is NIP, then for every small $A \subseteq \cU$, $G_{A}^{00}(\cU) = G_{\emptyset}^{00}(\cU)$ (see \cite{shelah2008minimal}). We will write ${G}_{\emptyset}^{00}(\cU)$ simply as ${G}^{00}(\cU)$. We let $\pi:G(\cU) \to G(\cU)/G^{00}(\cU)$ be the quotient map and $\hat{\pi}: S_{G}(\cU) \to G(\cU)/G^{00}(\cU)$ be the canonical extension. We recall that $\hat{\pi}|_{S^{\inv}_{G}(\cU,M)}: S^{\inv}_{G}(\cU,M) \to G(\cU)/G^{00}(\cU)$ is a continuous semigroup homomorphism.
\end{definition}

A proof of the next fact can be found in \cite[Lemma 8.18]{Guide} (see \cite[Proposition 5.6]{NIP2} for primary source).

\begin{fact}\label{fact:g00}
Suppose that $p \in S_{G}^{\inv}(\cU,M)$ and $p$ is a strong left $f$-generic type. Then the left stabilizer of $p$ denoted $\stab_{l}(p)$ is precisely $G^{00}(\cU)$. In other words, if we let,
\begin{equation*}
    \stab_{l}(p) := \{ g \in G(\cU): g \cdot p = p\},
\end{equation*}
then $\stab_{l}(p) = G^{00}(\cU)$.
\end{fact}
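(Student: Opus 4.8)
The statement to prove is Fact~\ref{fact:g00}: if $p \in S_G^{\inv}(\cU, M)$ is strong left $f$-generic, then $\stab_l(p) = G^{00}(\cU)$.

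The plan is to show containment in both directions. For the inclusion $G^{00}(\cU) \subseteq \stab_l(p)$, I would argue as follows. The left stabilizer $\stab_l(p) = \{g \in G(\cU) : g \cdot p = p\}$ is a subgroup of $G(\cU)$. The crucial observation is that it has bounded index: since $p$ is left $f$-generic, by Fact~\ref{fct:fgen_small_orbit} the orbit $G(\cU) \cdot p$ is small, i.e., of bounded size, and this orbit is in bijection with $G(\cU)/\stab_l(p)$, so $\stab_l(p)$ has bounded index in $G(\cU)$. Now I would check that $\stab_l(p)$ is moreover type-definable over a small set (e.g., over $M$ together with a realization, or by a compactness/NIP argument over $M$ itself) — this is where NIP enters, since in NIP the stabilizer of an invariant type under translation is type-definable of bounded index, hence contains $G^{00}(\cU)$ by the very definition of $G^{00}$ as the smallest bounded-index type-definable subgroup. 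Concretely, $g \in \stab_l(p)$ iff for every $\varphi(x,y)$ over $\cU$ and every $b$, $\varphi(g \cdot x, b) \in p \leftrightarrow \varphi(x,b) \in p$; using $M$-invariance of $p$ and NIP (honest definitions / the fact that $p$ is definable over a small set in a suitable sense), one shows this condition cuts out a type-definable set.

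For the reverse inclusion $\stab_l(p) \subseteq G^{00}(\cU)$ — which is really the content — I would use the quotient map $\hat\pi \colon S_G(\cU) \to G(\cU)/G^{00}(\cU)$, which is a continuous semigroup homomorphism on $S_G^{\inv}(\cU,M)$, together with the key structural fact about strong $f$-generics: $\hat\pi(p)$ generates $G(\cU)/G^{00}(\cU)$ in a strong sense, or more precisely, that the restriction of $\hat\pi$ to the relevant Ellis structure is "injective enough." The cleanest route: if $g \in \stab_l(p)$, then $g \cdot p = p$, so applying $\hat\pi$ (and using that $\hat\pi$ intertwines left translation by $g$ with multiplication by $\pi(g)$) gives $\pi(g) \cdot \hat\pi(p) = \hat\pi(p)$ in the group $G(\cU)/G^{00}(\cU)$; cancelling yields $\pi(g) = e$, i.e., $g \in G^{00}(\cU)$. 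This direction is actually easy given the homomorphism property of $\hat\pi$; one just needs $\hat\pi(g \cdot p) = \pi(g) \cdot \hat\pi(p)$, which follows from the definition of the canonical extension.

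The main obstacle is the first inclusion, specifically establishing that $\stab_l(p)$ is type-definable over a small set so that it must contain $G^{00}(\cU)$ — this is the genuinely NIP-dependent step and the reason the hypothesis "$T$ is NIP" appears. One approach: since $p$ is $M$-invariant and, in NIP, invariant types over models are "Borel-definable" / definable over a small set in the sense that membership $\varphi(x,b)\in p$ depends only on $\tp(b/M)$, the condition defining $\stab_l(p)$ becomes a small conjunction (over all formulas $\varphi$ and all types over $M$ in the relevant variables, of which there are boundedly many) of conditions on $g$, each of which is type-definable; intersecting boundedly many type-definable sets stays type-definable. Alternatively, I might invoke directly that $\stab_l(p)$, being a bounded-index subgroup whose cosets are each unions of "$p$-equivalence classes" which are themselves type-definable by NIP (via the small orbit), is an intersection of definable sets. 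I would also double-check that the statement is exactly dual to the more commonly cited right-stabilizer version and cite \cite[Proposition 5.6]{NIP2} / \cite[Lemma 8.18]{Guide} as the authority for the NIP step rather than reproving it in full.
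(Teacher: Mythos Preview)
The paper does not prove Fact~\ref{fact:g00}; it states it as a fact and cites \cite[Lemma 8.18]{Guide} and \cite[Proposition 5.6]{NIP2}. Your closing suggestion --- to cite those references rather than reprove the NIP step --- is thus exactly what the paper does, and in that sense your proposal matches the paper.

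On the substance of your sketch: the inclusion $\stab_l(p)\subseteq G^{00}(\cU)$ via $\hat\pi$ is correct and needs nothing beyond the fact that $\hat\pi$ is well-defined on $S_G(\cU)$ and respects left translation.

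For $G^{00}(\cU)\subseteq\stab_l(p)$ there is a gap in the justification. You correctly obtain bounded index from Fact~\ref{fct:fgen_small_orbit}, but neither of your proposed routes to type-definability works as written: Borel-definability of $p$ over $M$ does not by itself make the stabilizer condition type-definable, and the ``one condition per formula and per type over $M$'' decomposition you describe yields $\Aut(\cU/M)$-invariant conditions on $g$, not type-definable ones --- a bounded conjunction of $M$-invariant conditions is $M$-invariant, not type-definable in general. What your analysis actually establishes is that $\stab_l(p)$ is an $\Aut(\cU/M)$-invariant subgroup of bounded index, hence contains $G^{000}(\cU)$; the missing ingredient is then the NIP fact that $G^{000}(\cU)=G^{00}(\cU)$, which you do not mention. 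So the outline is salvageable, but the step you flag as ``this is where NIP enters'' is mislocated: NIP enters via $G^{000}=G^{00}$ (or an equivalent argument in the cited references), not via making the stabilizer visibly type-definable from Borel-definability alone.
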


\begin{corollary}\label{fact:stabiliser_right_gen_g00}
Suppose that $p \in S_{G}^{\inv}(\cU,M)$ and $p$ is strong right $f$-generic type. Then the right stabilizer of $p$ denoted $\stab_{r}(p)$ is precisely $G^{00}(\cU)$. In other words, if we let,
\begin{equation*}
    \stab_{r}(p) := \{ g \in G(\cU): p \cdot g = p \},
\end{equation*}
then $\stab_{r}(p) = G^{00}(\cU)$.
\end{corollary}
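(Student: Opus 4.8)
The plan is to deduce this from \Cref{fact:g00} by passing to the model-theoretic inverse, using \Cref{fact:easy}. The key point is the elementary but slightly fiddly bookkeeping identity relating right translates of $p$ to left translates of $p^{-1}$: for any $g \in G(\cU)$,
\[
(p \cdot g)^{-1} = g^{-1} \cdot p^{-1}.
\]
First I would verify this. If $a \models p$, then unwinding the definitions of the translates gives $a \cdot g \models p \cdot g$ and, for any invariant $q$ with $b \models q$, $c \cdot b \models c \cdot q$; combined with the definition of the model-theoretic inverse (if $a \models p$ then $a^{-1} \models p^{-1}$), both sides of the displayed equation are realised by $g^{-1} \cdot a^{-1}$, hence they are equal as global types.

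Next I would use this identity to transport stabilisers: $p \cdot g = p$ if and only if $(p\cdot g)^{-1} = p^{-1}$ if and only if $g^{-1} \cdot p^{-1} = p^{-1}$, i.e.\ $g^{-1} \in \stab_l(p^{-1})$. Equivalently, $\stab_r(p) = \bigl(\stab_l(p^{-1})\bigr)^{-1}$. Now by \Cref{fact:easy}(2), since $p$ is strong right $f$-generic, $p^{-1}$ is strong left $f$-generic (and it is still $M$-invariant, as the group inversion is $\emptyset$-definable), so \Cref{fact:g00} applies and gives $\stab_l(p^{-1}) = G^{00}(\cU)$. Since $G^{00}(\cU)$ is a subgroup, it is closed under inversion, so $\stab_r(p) = \bigl(G^{00}(\cU)\bigr)^{-1} = G^{00}(\cU)$, as required.

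I do not anticipate a genuine obstacle here; the only thing requiring care is keeping the conventions for $c \cdot p$, $p \cdot c$ and $p^{-1}$ straight so that the identity $(p\cdot g)^{-1} = g^{-1}\cdot p^{-1}$ comes out with the inverses on the correct sides. It may be cleanest to phrase the whole argument in terms of a realisation $a \models p$ in a bigger monster model, so that all the translate/inverse manipulations become manipulations of the single element $a$, and then read off the equalities of types at the end.
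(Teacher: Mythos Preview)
Your proof is correct and follows essentially the same approach as the paper: both reduce to \Cref{fact:g00} via \Cref{fact:easy} by relating right translates of $p$ to left translates of $p^{-1}$. Your packaging via the single identity $(p\cdot g)^{-1}=g^{-1}\cdot p^{-1}$ and the resulting equality $\stab_r(p)=(\stab_l(p^{-1}))^{-1}$ is in fact a bit cleaner than the paper's two separate chains of formula-level equivalences for the two inclusions, but the underlying argument is the same.
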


\begin{proof} Follows directly from Facts \ref{fact:easy} and \ref{fact:g00}. Indeed, fix $p$ a strong right $f$-generic type. For any formula $\varphi(x) \in \Lc _{x}(\cU)$ such that $\varphi(x) \vdash G(x)$ and $g \in G^{00}(\cU)$, we have that
\begin{align*}
\varphi(x) \in p \cdot g \Longleftrightarrow \varphi(x \cdot g) \in p &\Longleftrightarrow \varphi( (g^{-1} \cdot x^{-1})^{-1}) \in p \Longleftrightarrow \varphi(g^{-1} \cdot x^{-1}) \in p^{-1}\\ &\Longleftrightarrow \varphi(x^{-1}) \in g^{-1} \cdot p^{-1} \Longleftrightarrow \varphi(x^{-1}) \in p^{-1} \Longleftrightarrow \varphi(x) \in p.
\end{align*}
Thus $G^{00}(\cU) \subseteq \stab_{r}(p)$.

On the other hand, if $\varphi(x)$ is as above, $g \in \stab_{r}(p)$, and $\psi(x) := \varphi(x^{-1})$, then
\begin{align*}
\varphi(x) \in g \cdot p^{-1} \Longleftrightarrow \varphi(g \cdot x) \in p^{-1} &\Longleftrightarrow \varphi(g \cdot x^{-1}) \in p \Longleftrightarrow \psi(x \cdot g^{-1}) \in p \\
&\Longleftrightarrow \psi(x) \in p \cdot g^{-1} \Longleftrightarrow \psi(x) \in p \Longleftrightarrow \varphi(x) \in p^{-1},
\end{align*}
and so $\stab_{r}(p) \subseteq \stab_{l}(p^{-1}) = G^{00}(\cU)$.
\end{proof}

Our final fact is the Newelski-Pillay conjecture which was resolved by Chernikov and Simon (see \cite[Theorem 5.7]{CS}).
\begin{fact}\label{fact:NC} Suppose that $T$ is NIP and $G(x)$ is definably amenable. Let $I$ be a minimal left ideal of $S_{G}^{\fs}(\cU,M)$. Then for any idempotent $u \in I$ we have that the map $\hat{\pi}|_{u * I}: u * I \to G(\cU)/G^{00}(\cU)$ is a group isomorphism.
\end{fact}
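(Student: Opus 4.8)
This is the Newelski--Pillay conjecture for definably amenable NIP groups, due to Chernikov and Simon \cite{CS}; I will describe the shape of an argument. Throughout write $\pi$ for the quotient $G(\cU)\to G(\cU)/G^{00}(\cU)$ and $\hat\pi$ for its continuous extension to $S_G(\cU)$, recalling that $\hat\pi$ restricts to a continuous semigroup homomorphism on $S_G^{\inv}(\cU,M)$ and hence on $S_G^{\fs}(\cU,M)$. The first, soft, step is to note that $\hat\pi|_{u*I}$ is a \emph{group} homomorphism: since $u*u=u$, the element $\hat\pi(u)$ is idempotent in the group $G(\cU)/G^{00}(\cU)$, hence equal to its identity, and $u$ is the identity of the group $u*I$, so $\hat\pi|_{u*I}$ preserves identities and products. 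The remaining content is that this homomorphism is bijective, and I would split this into surjectivity and injectivity.

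Surjectivity is again essentially formal. From $\hat\pi(u*r)=\hat\pi(u)\hat\pi(r)=\hat\pi(r)$ for all $r\in I$ one gets $\hat\pi(u*I)=\hat\pi(I)$. Next, $\hat\pi(S_G^{\fs}(\cU,M))$ is a compact subsemigroup of the compact Hausdorff group $G(\cU)/G^{00}(\cU)$ (compact since $\hat\pi$ is continuous and $S_G^{\fs}(\cU,M)$ compact), hence a closed subgroup $H$; and $\hat\pi(I)$ is a nonempty subset with $H\cdot\hat\pi(I)=\hat\pi(S_G^{\fs}(\cU,M)*I)\subseteq\hat\pi(I)$ because $I$ is a left ideal, so $\hat\pi(I)$ is a left ideal of the group $H$ and therefore equals $H$. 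It then remains to identify $H$ with all of $G(\cU)/G^{00}(\cU)$, i.e.\ to know that $\hat\pi$ is already surjective on $S_G^{\fs}(\cU,M)$; I would take this as standard — for instance, $S_G^{\fs}(\cU,M)$ is the closure of $\{\tp(a/\cU):a\in G(M)\}$, so $\hat\pi(S_G^{\fs}(\cU,M))=\overline{\pi(G(M))}$, and a density argument (using $M\prec\cU$ and that $\emptyset$-type-definable subgroups of $G(\cU)$ containing $G(M)$ are all of $G(\cU)$) shows $\pi(G(M))$ is dense in $G(\cU)/G^{00}(\cU)$. Combined with the previous paragraph this gives $\hat\pi(u*I)=G(\cU)/G^{00}(\cU)$.

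Injectivity is the hard part, and it is where the genuine theory of definably amenable NIP groups must enter: invariant Keisler measures, $f$-genericity, and generic compact domination — precisely the apparatus of \cite{CS}. Using the group structure of $u*I$, it is enough to show that the kernel is trivial: if $p\in u*I$ and $\hat\pi(p)$ is the identity, equivalently $p\vdash x\in G^{00}(\cU)$, then $p=u$. The plan would be: (i) recall that in the definably amenable NIP setting every element of a minimal left ideal of $S_G^{\fs}(\cU,M)$ is $f$-generic; (ii) use that the relevant stabilizer of an $f$-generic type is exactly $G^{00}(\cU)$ — the finitely-satisfiable counterpart of Fact~\ref{fact:g00} and Corollary~\ref{fact:stabiliser_right_gen_g00} — to pin down how $p$ sits inside its $G^{00}(\cU)$-coset; and (iii) use generic compact domination of $G^{00}(\cU)$ by $G(\cU)/G^{00}(\cU)$ with respect to the relevant invariant measure, together with $f$-genericity, to conclude that an $f$-generic finitely satisfiable type lying in a fixed Ellis subgroup is determined by its $\hat\pi$-image; applying this to $p$ and $u$, which lie on the same (trivial) coset, yields $p=u$. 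I expect step (iii) to be the genuine obstacle: the homomorphism property and surjectivity are pure Ellis theory, whereas controlling the fibre of an $f$-generic type over $G^{00}(\cU)$ appears to require the full measure-theoretic machinery of Chernikov--Simon and to admit no soft shortcut.
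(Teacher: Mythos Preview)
The paper does not prove this statement; it merely cites it as \cite[Theorem~5.7]{CS} (the Chernikov--Simon resolution of the Newelski--Pillay conjecture). Your proposal goes further than the paper by sketching the architecture of an argument, and your identification of the three pieces --- the formal homomorphism step, the essentially soft surjectivity step, and the deep injectivity step requiring the measure-theoretic machinery of \cite{CS} --- is accurate and well organized.

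One caution about your step~(i) in the injectivity sketch: the claim that every element of a minimal left ideal of $S_G^{\fs}(\cU,M)$ is $f$-generic is not obviously correct in the general definably amenable NIP setting (this holds in the fsg case, but in general the almost periodic types are only known to be weakly generic). The actual Chernikov--Simon argument proceeds somewhat differently, leveraging the invariant Keisler measure and generic compact domination directly rather than reducing to a stabilizer computation for individual types in the minimal ideal. Your instinct that step~(iii) is where the genuine content lives is entirely right, and your honest acknowledgment that a soft shortcut is unlikely is appropriate --- this is exactly why the paper treats the fact as a black box.
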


\section{Ellis analysis of definably amenable groups in NIP theories}

In this section, we fix an NIP theory $T$ and let $G(x)$ be a $\emptyset$-definable group which is definably amenable. We let $\cU$ be a monster model of $T$ and $M$ be a small elementary submodel of $\cU$. We first prove that that the collection of strong right $f$-generic types (over $M$) is the unique minimal left ideal in $S_{G}^{\inv}(\cU,M)$. Using this observation and the fact that the right stabilizer of any strong right $f$-generic is $G^{00}(\cU)$, we prove that the Ellis subgroups of $S^{\inv}_{G}(\cU,M)$ are all isomorphic to $G(\cU)/G^{00}(\cU)$ using the canonical extension of the standard quotient map, i.e., $\hat{\pi}$. To be frank, the hardest part of the proof is realizing that the strong right $f$-generics are the object of interest.

\begin{proposition}\label{prop:fgen_ideal}
    The set of strong right $f$-generics (over $M$) is a left ideal in $S_{G}^{\inv}(\cU,M)$.
\end{proposition}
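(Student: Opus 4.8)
The plan is to show that if $p \in S_G^{\inv}(\cU,M)$ is arbitrary and $q \in \mathcal{F}_r$ is strong right $f$-generic over $M$, then $p * q$ is again strong right $f$-generic over $M$. By the characterization in Fact~\ref{fct:fgen_small_orbit}, it suffices to show that the right orbit $(p*q)\cdot G(\cU)$ is small, given that $q\cdot G(\cU)$ is small. (Alternatively, one can argue directly that every right translate of $p*q$ is $M$-invariant; I will sketch the orbit version since it is cleanest.)

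The key computation is that right translation commutes with the Newelski product on the right factor: for any $c \in G(\cU)$, we have $(p*q)\cdot c = p*(q\cdot c)$. This is a direct unwinding of the definitions: if $\varphi(x)\vdash G(x)$ and $b\models q|_{Mc\cdot\text{(params)}}$, then $\varphi(x)\in (p*q)\cdot c$ iff $\varphi(x\cdot c)\in p*q$ iff $\varphi(x\cdot b\cdot c)\in p$, and one checks $b\cdot c \models (q\cdot c)|_{M\cdot\text{(params)}}$, giving $\varphi(x)\in p*(q\cdot c)$. Hence the map $c \mapsto (p*q)\cdot c$ factors as $c\mapsto q\cdot c \mapsto p*(q\cdot c)$, so $(p*q)\cdot G(\cU) = \{p * r : r \in q\cdot G(\cU)\}$ is the image of the small set $q\cdot G(\cU)$ under the map $r\mapsto p*r$, and is therefore small. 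By Fact~\ref{fct:fgen_small_orbit}, $p*q$ is strong right $f$-generic over $M$. Since $p*q\in S_G^{\inv}(\cU,M)$ by Fact~\ref{fact:nb}(1), this shows $S_G^{\inv}(\cU,M)*\mathcal{F}_r \subseteq \mathcal{F}_r$, i.e., $\mathcal{F}_r$ is a left ideal; it is nonempty by Fact~\ref{fact:easy} combined with Fact~\ref{fact:def_amen} (definable amenability provides a strong left $f$-generic, whose model-theoretic inverse is strong right $f$-generic).

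The main obstacle I anticipate is the bookkeeping around parameters in the identity $(p*q)\cdot c = p*(q\cdot c)$: one must be careful that the witnessing realization $b$ of $q$ is taken over a model containing $c$ and all relevant parameters, so that $b\cdot c$ correctly realizes the appropriate restriction of $q\cdot c$. This is routine but is the only place where an error could creep in. Everything else — that the Newelski product lands back in the invariant types, and that smallness of orbits characterizes right $f$-genericity — is quoted from the preliminaries.
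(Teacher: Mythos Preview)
Your proof is correct and shares the same key identity with the paper: $(p*q)\cdot c = p*(q\cdot c)$. The paper's argument is a hair more direct, though: rather than passing through Fact~\ref{fct:fgen_small_orbit} and smallness of orbits, it simply observes that since $q$ is strong right $f$-generic over $M$, the type $q\cdot c$ is $M$-invariant, and hence so is $p*(q\cdot c)$ by Fact~\ref{fact:nb}(1). This is exactly the ``direct'' argument you mention as an alternative but chose not to write out. The payoff is that the paper's version does not invoke Fact~\ref{fct:fgen_small_orbit} (which is an NIP result), and indeed the paper remarks immediately afterward that this proposition does not require NIP, only the existence of a strong $f$-generic. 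Your orbit argument is fine in the NIP context of Section~3, but the direct version is both shorter and more general.
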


\begin{proof} By Fact \ref{fact:def_amen} and Fact \ref{fact:easy}, $S_{G}^{\inv}(\cU,M)$ contains a strong right $f$-generic type. If $p$ is a strong right $f$-generic, $r \in S_{G}^{\inv}(\cU,M)$, and $g \in G(\cU)$, then
\begin{equation*} (r * p) \cdot g = r * (p \cdot g),
\end{equation*}
by associativity of the Newelski product. Since $p$ is right $f$-generic, $p \cdot g$ is $M$-invariant, and so $r * ( p \cdot g)$ is also $M$-invariant. So any global right translate of $(r * p)$ is $M$-invariant and thus $(r * p)$ is a strong right generic.
\end{proof}

The previous proposition does not requite NIP, only the existence of strong $f$-generics. Our next proposition does require NIP.

\begin{proposition}[NIP]\label{prop:unique_min} The set of right $f$-generics (over $M$) is the unique minimal left ideal in $S_{G}^{\inv}(\cU,M)$.
\end{proposition}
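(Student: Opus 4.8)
The plan is to leverage Proposition~\ref{prop:fgen_ideal}, which already gives that $\mathcal{F}_r$ is a nonempty left ideal, and to reduce everything to a single computation: \emph{every idempotent of $S_G^{\inv}(\cU,M)$ is a right identity for $\mathcal{F}_r$}, i.e.\ $p * u = p$ whenever $p \in \mathcal{F}_r$ and $u = u * u \in S_G^{\inv}(\cU,M)$. Granting this, the proposition follows formally: let $I$ be any minimal left ideal (one exists by Fact~\ref{fact:Ellis}), and fix an idempotent $u \in I$ (Fact~\ref{fact:Ellis}(2)). For every $p \in \mathcal{F}_r$ we then have $p = p * u \in S_G^{\inv}(\cU,M) * u = I$, the last equality by Fact~\ref{fact:Ellis}(1). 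Hence $\mathcal{F}_r \subseteq I$; but $\mathcal{F}_r$ is a nonempty left ideal and $I$ is minimal, so $\mathcal{F}_r = I$. As $I$ was an arbitrary minimal left ideal, $\mathcal{F}_r$ is the unique one.

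It remains to prove $p * u = p$ for $p \in \mathcal{F}_r$ and $u$ idempotent. By definition of the Newelski product, for an $\Lc$-formula $\varphi(x,y)$ and $d \in \cU^{y}$ we have $\varphi(x,d) \in p * u$ if and only if $\varphi(x \cdot \beta, d) \in p$, where $\beta \models u|_{Md}$; since $\cU$ is $|Md|^{+}$-saturated we may take $\beta \in \cU$. The key point is that such a $\beta$ lies in $G^{00}(\cU)$: as $u$ is idempotent, $\hat{\pi}(u) = \hat{\pi}(u)^{2}$ in the group $G(\cU)/G^{00}(\cU)$ forces $\hat{\pi}(u)$ to be the identity coset, which is to say that $u$ contains the partial type over $\emptyset$ defining $G^{00}(x)$ (here we use that $G^{00}$ is $\emptyset$-type-definable, $T$ being NIP); that partial type is then also contained in the restriction $u|_{Md}$, so $\beta \in G^{00}(\cU)$. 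Now Corollary~\ref{fact:stabiliser_right_gen_g00} gives $\stab_{r}(p) = G^{00}(\cU)$, so $p \cdot \beta = p$, which unwinds to $\varphi(x \cdot \beta, d) \in p \Leftrightarrow \varphi(x,d) \in p$. Therefore $\varphi(x,d) \in p * u \Leftrightarrow \varphi(x,d) \in p$, i.e.\ $p * u = p$.

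The only thing requiring care is the routine bookkeeping with the Newelski product and global right translates: that the defining realization $\beta$ of $u|_{Md}$ may be taken inside $\cU$, and that membership of $\varphi(x\cdot\beta,d)$ in $p$ is governed by the right translate $p\cdot\beta$. The one genuinely load-bearing input is the right-handed stabilizer computation $\stab_{r}(p) = G^{00}(\cU)$ of Corollary~\ref{fact:stabiliser_right_gen_g00}; this is precisely why the whole analysis is organized around strong \emph{right} $f$-generics, even though strong \emph{left} $f$-generics are the more familiar objects. No further appeal to NIP is needed beyond what is already packaged into Corollary~\ref{fact:stabiliser_right_gen_g00} and the $\emptyset$-type-definability of $G^{00}$.
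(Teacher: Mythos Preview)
Your proof is correct and follows essentially the same approach as the paper's: show that $\mathcal{F}_r$ is contained in every minimal left ideal by proving $p*u=p$ whenever $p\in\mathcal{F}_r$ and $u$ is an idempotent, using that $\hat\pi(u)$ is the identity (since $\hat\pi$ is a semigroup homomorphism into a group) together with $\stab_r(p)=G^{00}(\cU)$ from Corollary~\ref{fact:stabiliser_right_gen_g00}. The only cosmetic difference is that you state the key computation for arbitrary idempotents while the paper applies it directly to an idempotent in a given minimal left ideal; the argument is identical.
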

\begin{proof}
By Proposition \ref{prop:fgen_ideal}, the collection of strong right $f$-generic types forms a left ideal. By Fact \ref{fact:Ellis}, the semigroup $(S_{G}^{\inv}(\cU,M),*)$ contains a minimal left ideal. It suffices to show that the ideal of strong right $f$-generics is contained in every minimal left ideal. Suppose that $L_0$ is a minimal left ideal. Then $L_0$ contains an idempotent, say $u$. Recall that by Fact~\ref{fact:stabiliser_right_gen_g00}, the right-stabilizer of any strong right $f$-generic type is $G^{00}(\cU)$. Thus, for any strong right $f$-generic type $q$, $\Lc (\cU)$-formula $\varphi(x,c)$, and $b \models u|_{Mc}$, we have that $b \in G^{00}(\cU)$ [since $\hat{\pi}:S_{G}^{\inv}(\cU,M) \to G(\cU)/G^{00}(\cU)$ is a homomorphism of semigroups and so $\hat{\pi}(u)$ must be the identity] and so,
\begin{equation*}
    \varphi(x,c) \in (q * u) \Longleftrightarrow \varphi(x\cdot b,c) \in q \Longleftrightarrow \varphi(x,c) \in q \cdot b \Longleftrightarrow \varphi(x,c) \in q.
\end{equation*}
Thus $q \in L_0$. Hence $L_0$ contains all strong right $f$-generics and the statement holds.
\end{proof}

The following fact is proved for the reader's convenience.

\begin{fact}\label{fact:equiv} Let $(X,*)$ be a left-continuous compact Hausdorff semigroup. Then the following are equivalent:
\begin{enumerate}
\item There exists a unique minimal left ideal, $I$.
\item Minimal right ideals are precisely Ellis subgroups (and vice versa).
\item There exists a minimal left ideal $I$ which is also a right ideal.
\end{enumerate}
\end{fact}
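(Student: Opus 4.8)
The plan is to establish the three conditions equivalent cyclically, $(1)\Rightarrow(3)\Rightarrow(2)\Rightarrow(1)$, reducing everything to Facts~\ref{fact:Ellis} and~\ref{fact:right}. For $(1)\Rightarrow(3)$: if $I$ is the unique minimal left ideal, then by Fact~\ref{fact:right}(8) each $I*x$ with $x\in X$ is again a minimal left ideal, hence equals $I$; taking the union over $x\in X$ gives $I*X=I$, so $I$ is also a right ideal.

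For $(3)\Rightarrow(2)$, assume the minimal left ideal $I$ is also a right ideal, hence two-sided. First note $I$ is then the \emph{unique} minimal left ideal: for any minimal left ideal $J$, the intersection $I\cap J$ is a nonempty left ideal (it contains $I*J$) contained in the minimal left ideal $J$, so $I\cap J=J$, i.e.\ $J\subseteq I$, and then $J=I$ as $I$ is also minimal. The same argument with the sides swapped, using this time that $I$ is a right ideal, shows every minimal right ideal $R$ satisfies $R\subseteq I$. Now, given a minimal right ideal $R$, we have $R=R\cap I$, and this is an Ellis subgroup by Fact~\ref{fact:right}(10). Conversely, any Ellis subgroup has the form $E=u*I$ for an idempotent $u\in I$ (uniqueness of the minimal left ideal); since $u\in I$ and $I$ is a right ideal, $u*x\in I$ and hence $u*x=u*(u*x)\in u*I$ for every $x\in X$, so $u*X\subseteq u*I$; as the reverse inclusion is clear, $E=u*I=u*X$, which is a minimal right ideal by Fact~\ref{fact:right}(3) since $u$ is a minimal idempotent. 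Thus the Ellis subgroups are exactly the minimal right ideals.

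For $(2)\Rightarrow(1)$, let $I$ and $J$ be minimal left ideals and fix an idempotent $u\in I$. Then $u*I\subseteq I$ is an Ellis subgroup, hence by hypothesis a minimal right ideal, and by Fact~\ref{fact:right}(10) the set $(u*I)\cap J$ is an Ellis subgroup, in particular nonempty. Any $x$ in it lies in both $I$ and $J$, so $X*x=I$ and $X*x=J$ by Fact~\ref{fact:Ellis}(1), whence $I=J$. The only steps with any content are the identity $u*X=u*I$ and the two short intersection computations; everything else is a direct appeal to the cited facts, so I do not anticipate a genuine obstacle here.
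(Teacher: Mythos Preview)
Your proof is correct. The paper argues the cycle in the opposite order, $(1)\Rightarrow(2)\Rightarrow(3)\Rightarrow(1)$: for $(1)\Rightarrow(2)$ it shows a minimal right ideal $R$ lies in $I$ (via Fact~\ref{fact:right}(11)), writes $R=v*X$, and argues $R$ contains only one idempotent, hence is a single Ellis subgroup; $(2)\Rightarrow(3)$ is immediate since $I$ is then a union of right ideals; and $(3)\Rightarrow(1)$ uses that distinct minimal left ideals are disjoint while $I*L\subseteq I\cap L$. Your route is a bit more uniform: you repeatedly exploit Fact~\ref{fact:right}(10) and the observation $u*X=u*I$ when $I$ is two-sided, rather than the ``unique idempotent'' counting argument the paper uses for $(1)\Rightarrow(2)$. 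Both arguments are short and rest on the same stock of facts; neither offers a real advantage over the other.
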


\begin{proof} $(1) \to (2)$. Fix a minimal right ideal $R$. By Fact \ref{fact:right}(7), $R$ is the union of Ellis subgroups. Thus $R$ is a subset of $I$. By Fact \ref{fact:right}(1), we have that $R = v *X$ for some $v \in I$. We claim that for any other idempotent $u \in I$ such that $u \neq v$, $u \not \in v *X$. Notice that if $u \in v *X$, then $u = v * q$ for some $q \in X$. Thus $u = u *v = v *q * v \in v * X * v$, but this is impossible since $v$ is the identity of the Ellis subgroup $v *X *v$. Thus, $R$ contains at most one idempotent and so $R$ must be an Ellis subgroup of $I$. It follows that every Ellis subgroup is a minimal right ideal by Fact \ref{fact:right}(3) and the analysis above.

$(2) \to (3)$. Let $I$ be any minimal left ideal. Then $I$ is the union of right ideals and thus a right ideal.

$(3) \to (1)$. Suppose that there exists another minimal left ideal, $L$. Since $I$ and $L$ are both minimal left ideals, it is easy to check that $I \cap L = \emptyset$. However, since $I$ is a right ideal, we have that $\emptyset \neq I * L \subseteq I \cap L$, a contradiction.
\end{proof}

\begin{lemma}\label{lemma:apply} Suppose that $p \in S_{G}^{\inv}(\cU,M)$ is a strong right $f$-generic. Then for any $q_1,q_2 \in S_{G}(\cU)$, if $\hat{\pi}(q_1) = \hat{\pi}(q_2)$, then $p * q_1 = p * q _2$. In particular, if $g \in G(\cU)$ and $\pi(g) = \hat{\pi}(q_1)$ then $p * q_1 = p \cdot g$.
\end{lemma}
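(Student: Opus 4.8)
The plan is to reduce everything to the statement about a single formula $\varphi(x,c)$ and a realization of $q_i$ over a suitable small set, and then exploit that the right-stabilizer of a strong right $f$-generic is $G^{00}(\cU)$ (Corollary~\ref{fact:stabiliser_right_gen_g00}). Concretely, suppose $\hat{\pi}(q_1) = \hat{\pi}(q_2)$; I want to show $\varphi(x,c) \in p * q_1$ iff $\varphi(x,c) \in p * q_2$ for every $\Lc(\cU)$-formula $\varphi(x,z)$ with $\varphi(x,z)\vdash G(x)$ and every $c$ from $\cU^z$. Pick $b_1 \models q_1|_{Mc}$ and $b_2 \models q_2|_{Mc}$, so that by the definition of the Newelski product, $\varphi(x,c) \in p * q_i$ iff $\varphi(x,c) \in p \cdot b_i^{-1}$ (unwinding: $\varphi(x\cdot b_i, c)\in p$, i.e.\ $\varphi(x, c) \in p\cdot b_i$ --- let me be careful and just write $\varphi(x,c)\in p*q_i \iff \varphi(x\cdot b_i, c)\in p$). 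Since $\hat{\pi}(q_1) = \hat{\pi}(q_2)$ and $\hat{\pi}$ extends $\pi$ compatibly with realizations, we have $\pi(b_1) = \pi(b_2)$, i.e.\ $b_1 b_2^{-1} \in G^{00}(\cU)$.

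The key computation is then: $\varphi(x\cdot b_1, c) \in p$ iff $\varphi(x \cdot b_2, c) \in p$. Write $g := b_2 b_1^{-1} \in G^{00}(\cU)$ (the set $G^{00}(\cU)$ is a normal subgroup, so this is harmless). Then $\varphi(x\cdot b_1, c)\in p$ iff $\varphi((x\cdot b_1) , c)\in p$; substituting and using that $p\cdot h = p$ for all $h\in G^{00}(\cU)$ by Corollary~\ref{fact:stabiliser_right_gen_g00}, one gets $\varphi(x\cdot h \cdot b_1, c)\in p$ for any $h\in G^{00}(\cU)$, and choosing $h$ so that $h\cdot b_1 = b_2$ (namely $h = b_2 b_1^{-1}\in G^{00}(\cU)$) gives $\varphi(x\cdot b_2, c)\in p$. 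Hence $p * q_1 = p * q_2$. For the ``in particular'' clause: given $g \in G(\cU)$ with $\pi(g) = \hat{\pi}(q_1)$, note that the constant type $\tp(g/\cU)$ (a principal/realized type, trivially invariant over $M$ if $g\in M$ --- but in general it is still in $S_G(\cU)$ and the Newelski product $p * \tp(g/\cU)$ makes sense since $p$ is $M$-invariant, with realization $b = g$ over any set) satisfies $\hat{\pi}(\tp(g/\cU)) = \pi(g) = \hat{\pi}(q_1)$, so the first part applies to give $p * q_1 = p * \tp(g/\cU)$; and $p * \tp(g/\cU) = p\cdot g$ directly from the definition of the Newelski product (the realization of the constant type over $Mc$ is just $g$ itself, so $\varphi(x,c)\in p*\tp(g/\cU) \iff \varphi(x\cdot g, c)\in p \iff \varphi(x,c)\in p\cdot g$).

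The main obstacle is purely bookkeeping: keeping track of which side the translation acts on and making sure the substitution $x \mapsto x\cdot b_i$ interacts correctly with left/right translates and with the fact that $G^{00}(\cU)$ stabilizes $p$ on the \emph{right}. There is a small subtlety in that $b_1, b_2$ realize $q_i$ only over $Mc$, not over all of $\cU$, but this is exactly enough: the membership $\varphi(x\cdot b_i, c)\in p$ depends only on $\tp(b_i/Mc)$ because... wait, that is not literally true --- $p$ being $M$-invariant means $\varphi(x\cdot y, c)\in p_x\otimes q_y$ is well-defined, i.e.\ depends only on $q = \tp(b_i/\cU)\supseteq q_i|_{Mc}$, but the definition of $p*q_i$ via a realization $b_i\models q_i|_{Mc}$ is well-posed precisely because of $M$-invariance of $p$. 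So the clean way is to phrase the whole argument at the level of Morley products: $p * q_i$ is determined by $p_x \otimes (q_i)_y$ restricted to the formula $\varphi(x\cdot y, z)$, and $\hat\pi((q_1)) = \hat\pi((q_2))$ together with right-$G^{00}$-invariance of $p$ forces these to agree. I expect the referee-facing version to be three or four lines once the right-stabilizer fact is invoked.
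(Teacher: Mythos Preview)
Your proposal is correct and follows essentially the same approach as the paper: fix a formula, pick realizations $b_i\models q_i|_{Mc}$, observe they lie in the same $G^{00}(\cU)$-coset, and invoke Corollary~\ref{fact:stabiliser_right_gen_g00} to conclude $p\cdot b_1=p\cdot b_2$; the ``in particular'' clause is handled identically via $p*\tp(g/\cU)=p\cdot g$. The paper's write-up is more compact (it omits your self-corrections and the detour through the Morley product formulation), but the mathematical content is the same.
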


\begin{proof}
    Fix an $\Lc (\cU)$-formula $\varphi(x,b)$ such that $\varphi(x,b) \vdash G(x)$. For $i=1,2$, let $g_i\models q_i|_{Mb}$. Then
    \[g_1G^{00}(\cU)=\hat\pi(q_1)=\hat\pi(q_2)=g_2G^{00}(\cU),\] hence $p\cdot g_1=p\cdot g_2$.
    Using this observation and the definition of $*$ we have
    \[
        \varphi(x,b)\in p*q_1 \Longleftrightarrow \varphi(x,b)\in p\cdot g_1=p\cdot g_2 \Longleftrightarrow \varphi(x,b)\in p*q_2.
    \]
    For the \emph{in particular} part, we remark that $p \cdot g = p * \tp(g/\cU)$.
\end{proof}

We note that parts of the following theorem, namely the fact that in the right $f$-generics form a left-simple compact left topological semigroup, the right orbits are its Ellis subgroups and that they are isomorphic to $G(\cU)/G^{00}(\cU)$, were observed in \cite[Section 2]{pillay2013topological}.
\begin{theorem}\label{theorem:main} Suppose that $p \in S_{G}^{\inv}(\cU,M)$ is a strong right $f$-generic. Then
\begin{enumerate}
    \item $p * S_{G}^{\inv}(\cU,M)$ is an Ellis subgroup of $S_{G}^{\inv}(\cU,M)$.
    \item $p * S_{G}^{\inv}(\cU,M) = p \cdot G(\cU)=p*S_G(\cU)$.
    \item Let $B \subseteq G(\cU)$ be a set of (unique) representatives of $G^{00}(\cU)$-cosets in $G(\cU)$. Then $p \cdot G(\cU) = p \cdot B$.
    \item The map $\hat{\pi} : p \cdot B \to G(\cU)/G^{00}(\cU)$ is a group isomorphism.
\end{enumerate}
As consequence, the map $\hat{\pi}|_{p * S_{G}^{\inv}(\cU,M)}$ is a group isomorphism.
\end{theorem}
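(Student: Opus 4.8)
The plan is to establish (2), (3), and (4) first --- together they yield the final \emph{as consequence} claim --- and then to deduce (1) from them together with the Ellis-theoretic structure of $\mathcal{F}_{r}$. Write $X = S_{G}^{\inv}(\cU,M)$ and $I = \mathcal{F}_{r}$. I will rely on three things already available: $I$ is the unique minimal left ideal of $X$ (Proposition~\ref{prop:unique_min}), hence also a right ideal, so a two-sided ideal (Fact~\ref{fact:equiv}, or Fact~\ref{fact:right}(9)); $\stab_{r}(p) = G^{00}(\cU)$ (Corollary~\ref{fact:stabiliser_right_gen_g00}); and $p * q = p \cdot g$ whenever $\hat{\pi}(q) = \pi(g)$, with in particular $p \cdot g = p * \tp(g/\cU)$ (Lemma~\ref{lemma:apply}). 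I also record that $\hat{\pi}(p \cdot g) = \hat{\pi}(p)\,\pi(g)$ for every $g \in G(\cU)$, a direct computation from the normality of $G^{00}(\cU)$.

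Part (2) follows from the cycle of inclusions $p * X \subseteq p * S_{G}(\cU) \subseteq p \cdot G(\cU) \subseteq p * X$: the first is trivial, the second is Lemma~\ref{lemma:apply}, and for the third I first observe that $\hat{\pi}|_{X}$ is onto $G(\cU)/G^{00}(\cU)$ --- indeed the types $p \cdot g$ ($g \in G(\cU)$) all lie in $X$ since $p$ is strong right $f$-generic, and $\hat{\pi}(p \cdot g) = \hat{\pi}(p)\pi(g)$ ranges over the whole quotient as $g$ varies --- so for each $g$ I may pick $q \in X$ with $\hat{\pi}(q) = \pi(g)$ and conclude $p \cdot g = p * q \in p * X$ by Lemma~\ref{lemma:apply}. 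For part (3): given $g \in G(\cU)$, normality of $G^{00}(\cU)$ puts $g$ in a unique coset $bG^{00}(\cU)$ with $b \in B$, so $g = bh$ with $h \in G^{00}(\cU) = \stab_{r}(p)$; since right translation obeys $(p \cdot h) \cdot b = p \cdot (bh)$ and $p \cdot h = p$, this gives $p \cdot g = p \cdot b$, whence $p \cdot G(\cU) = p \cdot B$.

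For part (4), I first note that $p \cdot B$ is closed under $*$: for $b_{1}, b_{2} \in B$ the type $p \cdot b_{1}$ is again strong right $f$-generic, so Lemma~\ref{lemma:apply} gives $(p \cdot b_{1}) * (p \cdot b_{2}) = (p \cdot b_{1}) \cdot g = p \cdot (g b_{1})$ for a suitable $g$, which lies in $p \cdot G(\cU) = p \cdot B$ by (3). Thus $(p \cdot B, *)$ is a sub-semigroup of $X$, and $\hat{\pi}|_{p \cdot B}$ is a semigroup homomorphism into the group $G(\cU)/G^{00}(\cU)$; it is surjective because $\hat{\pi}(p \cdot b) = \hat{\pi}(p)\pi(b)$ and $\pi|_{B}$ is a bijection onto the quotient, and injective by the same identity together with left-cancellation in the quotient. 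A semigroup admitting a bijective homomorphism onto a group is itself a group (transport the identity and inverses), so $(p \cdot B, *)$ is a group and $\hat{\pi}|_{p \cdot B}$ is a group isomorphism; this is (4), and combining with (2) and (3) we get that $p * X = p \cdot B$ is a group on which $\hat{\pi}$ restricts to a group isomorphism --- the final claim.

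Finally, for (1): $p * X = p \cdot B$ is a group by the above; let $e$ be its identity, so $e$ is idempotent, and $e \in p * X \subseteq I$ (as $p \in I$ and $I$ is two-sided), so $e$ is a minimal idempotent and $e * I$ is an Ellis subgroup of $I$ by Fact~\ref{fact:Ellis}(3). Then $p * X = e * I$: if $q \in X$ then $p * q \in p * X \subseteq I$ and $e * (p * q) = p * q$ since $e$ is the identity of $p * X$, so $p * q \in e * I$; conversely, writing $e = p * q_{0}$ with $q_{0} \in X$, for $q \in I$ we have $e * q = p * (q_{0} * q) \in p * X$. Hence $p * X$ is an Ellis subgroup, proving (1). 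The only step that needs real care --- everything else being left/right bookkeeping on top of Proposition~\ref{prop:unique_min}, Corollary~\ref{fact:stabiliser_right_gen_g00} and Lemma~\ref{lemma:apply} --- is the reverse inclusion in (2): one cannot simply invoke $p \cdot g = p * \tp(g/\cU)$, since $\tp(g/\cU)$ need not be $M$-invariant, which is exactly why the surjectivity of $\hat{\pi}|_{X}$ is brought in.
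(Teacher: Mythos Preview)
Your overall strategy is sound and close to the paper's (the paper assumes without loss of generality that $p$ is idempotent and proves (1) first via pure Ellis theory, whereas you establish (2)--(4) for arbitrary $p$ and then extract (1); your route avoids the WLOG reduction and handles injectivity in (4) via left-cancellation rather than $\hat\pi(p)=e$, which is a small improvement). However, you have a recurring left/right slip in how right translation composes. From the definition $p\cdot c=\tp(ac/\cU)$ for $a\models p$, one has
\[
(p\cdot h)\cdot b \;=\; p\cdot(hb),\qquad\text{not}\quad p\cdot(bh),
\]
and likewise $(p\cdot b_1)\cdot g=p\cdot(b_1 g)$, not $p\cdot(gb_1)$. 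This affects your argument for (3) and the closure-under-$*$ step in (4). The fixes are local: in (3), use normality of $G^{00}(\cU)$ to write $g=h'b$ with $h'\in G^{00}(\cU)$ (rather than $g=bh$), so that $p\cdot g=p\cdot(h'b)=(p\cdot h')\cdot b=p\cdot b$; in (4), write $(p\cdot b_1)\cdot g=p\cdot(b_1 g)\in p\cdot G(\cU)=p\cdot B$. With these corrections the proof goes through.
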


\begin{proof} Without loss of generality, suppose that $p$ is an idempotent.
\begin{enumerate}
    \item Since $S_{G}^{\inv}(\cU,M)$ has a unique minimal left ideal, i.e., the strong right $f$-generics over $M$ (Proposition \ref{prop:unique_min}), the minimal right ideals are Ellis subgroups of this ideal. By Fact \ref{fact:right}(3), $p * S_{G}^{\inv}(\cU,M)$ is a minimal right ideal and by Fact \ref{fact:equiv}, $p * S_{G}^{\inv}(\cU,M)$ is an Ellis subgroup.
    \item Note that $\hat \pi$ and its restriction to $S_G^\inv(\cU,M)$ are onto $G(\cU)/G^{00}(\cU)$, then apply Lemma~\ref{lemma:apply}.
    \item By Fact \ref{fact:stabiliser_right_gen_g00}, $\stab_{r}(p) = G^{00}(\cU)$. Let $c \in G(\cU)$. We want to show that $p \cdot c \in p \cdot B$. Consider the unique element $a \in B$ such that $\pi(a) = \pi(c)$. We claim that $p \cdot a = p \cdot c$. Indeed, since they are in the same coset, there exists some element $\xi$ of $G^{00}(\cU)$ such that $\xi \cdot c = a$. Then $\theta(x) \in p \cdot a$ iff $\theta(x \cdot a) \in p$ iff $\theta(x \cdot \xi \cdot c) \in p$ iff $\theta(x \cdot c) \in p$ iff $\theta(x) \in p \cdot c$. Hence $p \cdot G(\cU) \subseteq p \cdot B$. The other direction is obvious.
    \item The map $\hat{\pi}$ is a homomorphism from $S_{G}^{\inv}(\cU,M)$ to $G(\cU)/G^{00}(\cU)$. It suffices to prove that this map restricted to $p \cdot B$ is a bijection.
    \begin{enumerate}
        \item Injective: Let $a_1,a_2 \in B$ and suppose that $\hat{\pi}(p \cdot a_1) = \hat{\pi}(p \cdot a_2)$. Then $\pi(a_1) = e \cdot \pi(a_1) = \hat{\pi}(p) \cdot \pi(a_1) = \hat{\pi}(p \cdot a_1) = \hat{\pi}(p \cdot a_2) = \ldots = \pi(a_2)$. Then $a_1$ and $a_2$ are in the same coset, but this is impossible since $B$ is a collection of unique reps.
        \item Surjective: $\hat{\pi}$ maps the appropriate coset representative to the appropriate coset. \qedhere
    \end{enumerate}
\end{enumerate}
\end{proof}

After having proved this result, the second author and Daniel Hoffmann proved a similar theorem in the context of semigroups of invariant types coming from the actions of automorphism groups -- see \cite[Theorem 4.10]{HR25}.

\subsection{Left, right and bi \texorpdfstring{$f$}{f}-generics} Here we take the opportunity to make some observation involving left, right, and bi $f$-generics. We do not assume NIP in this subsection.
\begin{proposition}\label{prop:left_f_generic}
    Suppose $G(x)$ is a definable group. Then the set of strong left $f$-generics in $S^\inv_G(\cU,M)$, if it is nonempty, is a right ideal.
\end{proposition}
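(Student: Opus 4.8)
The plan is to mirror the proof of Proposition~\ref{prop:fgen_ideal}. Assuming the set of strong left $f$-generics over $M$ is nonempty, it suffices to show it is closed under right multiplication by $S^\inv_G(\cU,M)$: fix a strong left $f$-generic $p$ and an arbitrary $r\in S^\inv_G(\cU,M)$, and show $p*r$ is again strong left $f$-generic, i.e.\ that $g\cdot(p*r)$ is $M$-invariant for every $g\in G(\cU)$. As in Proposition~\ref{prop:fgen_ideal}, everything reduces to an associativity-type identity, this time on the left:
\[ g\cdot(p*r) \;=\; (g\cdot p)*r . \]
Granting this, we are done: since $p$ is strong left $f$-generic, $g\cdot p$ is $M$-invariant, so $(g\cdot p)*r$ is a genuine Newelski product in $S^\inv_G(\cU,M)$ and is therefore $M$-invariant by Fact~\ref{fact:nb}(1); hence so is $g\cdot(p*r)$, and as $g$ was arbitrary $p*r$ is strong left $f$-generic. (The nonemptiness hypothesis enters only to guarantee that the resulting right ideal is nonempty, as the definition of a right ideal demands.)

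The one place this is not a verbatim transcription of Proposition~\ref{prop:fgen_ideal} --- and the one place the strong-left-$f$-generic hypothesis on $p$ is consumed --- is the justification of the displayed identity. In Proposition~\ref{prop:fgen_ideal} the analogue $(r*p)\cdot g = r*(p\cdot g)$ is literally associativity, because right translation by $g$ is right multiplication by $\tp(g/\cU)$ and the right argument of the Newelski product is unconstrained; here, left translation by $g$ would be left multiplication by $\tp(g/\cU)$, which is generally not $M$-invariant, so Fact~\ref{fact:nb}(6) does not apply on the nose. I would simply verify the identity by unwinding the definitions: for an $\Lc(\cU)$-formula $\varphi(x,c)$ with $\varphi(x,c)\vdash G(x)$ one has $\varphi(x,c)\in g\cdot(p*r)$ iff $\varphi(g\cdot x\cdot b,c)\in p$ for $b\models r|_{Mgc}$, whereas $\varphi(x,c)\in(g\cdot p)*r$ iff $\varphi(x\cdot b',c)\in g\cdot p$ --- equivalently $\varphi(g\cdot x\cdot b',c)\in p$ --- for $b'\models r|_{Mc}$. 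Any realization of $r|_{Mgc}$ realizes $r|_{Mc}$, and each of the two conditions is independent of the realization chosen (the first by $M$-invariance of $p$, the second by $M$-invariance of $g\cdot p$), so feeding a common witness into both shows they agree. Alternatively, one can pass to a small model $M'\succ M$ with $g\in M'$: then $\tp(g/\cU)$, $p$ and $r$ all lie in $S^\inv_G(\cU,M')$, there $\tp(g/\cU)*q = g\cdot q$, the products $p*r$ and $(g\cdot p)*r$ are computed the same way over $M$ as over $M'$, and Fact~\ref{fact:nb}(6) applies in $S^\inv_G(\cU,M')$.

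The main obstacle is thus just this bookkeeping around the left--right asymmetry of the Newelski product --- only the left argument must be invariant --- so that one keeps $g\cdot p$ (invariant by the hypothesis on $p$), and not $\tp(g/\cU)$, in the role of the invariant left factor. Everything past the identity is formal.
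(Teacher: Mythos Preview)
Your proposal is correct and follows essentially the same approach as the paper: both reduce to the identity $g\cdot(p*r)=(g\cdot p)*r$, and your second justification (passing to a larger model containing $g$ and applying associativity there) is exactly the paper's argument, carried out via realizations over such a model $N\supseteq Mg$. Your first justification via unwinding the defining scheme is a minor variant that amounts to the same thing.
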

\begin{proof}
    Take $p,q\in S^\inv_G(\cU,M)$ with $p$ a strong left $f$-generic, and any $g\in G(\cU)$. Then $g\cdot (p*q)=(g\cdot p)*q$, hence $p*q$ is a strong left $f$-generic.

    Indeed, fix any $N\preceq \cU$ containing $M$ and $g$, $g_2\models q|N$ and $g_1\models p|Ng_2$. Then $gg_1\models (g\cdot p)|Ng_2$ (so $gg_1g_2\models ((g\cdot p)*q)|N$) and $gg_1g_2\models (g\cdot (p*q))|N$.
\end{proof}

\begin{corollary}
    If there are left (equivalently, right) $f$-generic types in $S^\inv_G(\cU,M)$, then there is an Ellis subgroup consisting of types which are both left and right $f$-generic.
\end{corollary}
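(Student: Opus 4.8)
The plan is to obtain the desired Ellis subgroup as the intersection of a suitable minimal left ideal with a suitable minimal right ideal, using the two one-sided ideal results already established. Write $\mathcal{F}_{r}$ (resp.\ $\mathcal{F}_{l}$) for the set of strong right (resp.\ strong left) $f$-generic types in $S^{\inv}_{G}(\cU,M)$. By hypothesis one of these sets is nonempty, and then so is the other by Fact~\ref{fact:easy} (model-theoretic inversion interchanges them). By Proposition~\ref{prop:fgen_ideal}, $\mathcal{F}_{r}$ is a left ideal of $S^{\inv}_{G}(\cU,M)$; by Proposition~\ref{prop:left_f_generic}, $\mathcal{F}_{l}$ is a right ideal.

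Next I would locate a minimal left ideal $I \subseteq \mathcal{F}_{r}$ and a minimal right ideal $R \subseteq \mathcal{F}_{l}$. For $I$: fix any minimal left ideal $I_{0}$ (which exists by Fact~\ref{fact:Ellis}) and any $x \in \mathcal{F}_{r}$; by Fact~\ref{fact:right}(8) the set $I := I_{0}*x$ is again a minimal left ideal, and since $\mathcal{F}_{r}$ is a left ideal we have $I \subseteq S^{\inv}_{G}(\cU,M)*x \subseteq \mathcal{F}_{r}$. Dually, fix any minimal right ideal $R_{0}$ (which exists by Fact~\ref{fact:right}(4) applied to $X=S^{\inv}_{G}(\cU,M)$) and any $y \in \mathcal{F}_{l}$; by Fact~\ref{fact:right}(8) the set $R := y*R_{0}$ is a minimal right ideal, and since $\mathcal{F}_{l}$ is a right ideal we have $R \subseteq y*S^{\inv}_{G}(\cU,M) \subseteq \mathcal{F}_{l}$.

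Finally, apply Fact~\ref{fact:right}(10): the intersection $R \cap I$ equals the product $R*I$, is nonempty, and is an Ellis subgroup of $S^{\inv}_{G}(\cU,M)$. Since $R \cap I \subseteq I \subseteq \mathcal{F}_{r}$ and $R \cap I \subseteq R \subseteq \mathcal{F}_{l}$, every type in this Ellis subgroup is simultaneously strong left and strong right $f$-generic, as required. There is essentially no serious obstacle here; the only mildly delicate point is the production of a minimal left (resp.\ right) ideal sitting \emph{inside} $\mathcal{F}_{r}$ (resp.\ $\mathcal{F}_{l}$), which is handled by first taking an arbitrary minimal one-sided ideal and pushing it into the relevant ideal by multiplying with one of its elements. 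Note that no appeal to NIP is needed — definable amenability and NIP entered the earlier results only to guarantee the existence of $f$-generics, which is now built into the hypothesis.
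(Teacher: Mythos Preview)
Your proposal is correct and follows essentially the same approach as the paper: identify $\mathcal{F}_r$ as a left ideal and $\mathcal{F}_l$ as a right ideal, pass to a minimal left ideal inside the former and a minimal right ideal inside the latter, and then take their intersection via Fact~\ref{fact:right}(10). The only cosmetic difference is that the paper appeals directly to Fact~\ref{fact:right}(4) (and its left analogue) to find the minimal one-sided ideals, whereas you obtain them by translating an arbitrary minimal ideal into $\mathcal{F}_r$ or $\mathcal{F}_l$ via Fact~\ref{fact:right}(8); both routes are equally valid.
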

\begin{proof}
    By Proposition \ref{prop:left_f_generic}, the left $f$-generics in $S^\inv_G(\cU,M)$ form a right ideal in $S^\inv_G(\cU,M)$, whereas by Proposition \ref{prop:fgen_ideal}, the right $f$-generics in $S^\inv_G(\cU,M)$ form a left ideal.
    Using Fact \ref{fact:right}(4), we conclude that there is a minimal right ideal $R$ consisting of left $f$-generics, and a minimal left ideal $L$ consisting of strong right $f$-generics, and $R\cap L$ is an Ellis subgroup (Fact \ref{fact:right}(10)). (Under NIP, we have by Proposition \ref{prop:unique_min} that in fact $L$ is the set of all strongly right $f$-generic types and $R\subseteq L$, so the Ellis subgroup is $R=R\cap L$.)
\end{proof}

\begin{example}\label{example:Heisenberg}
    In general, not every strong left $f$-generic is a strong right $f$-generic (and vice versa).

    Let $M$ be a real closed field and let $G$ be the Heisenberg group, i.e.\ $G$ consists of matrices of the form
    \[
        [a,b,c]\coloneqq \begin{pmatrix}1&a&b\\0&1&c\\0&0&1\end{pmatrix},
    \]
    with $a,b,c$ in the field.

    Let $t_{+\infty}$ be the $\emptyset$-definable type of an infinite element in the field. For $(x,y,z)\models t_{+\infty}^{\otimes 3}$ (so that $x\gg y\gg z\gg \cU$), let $p=\tp([x,y,z]/\cU)\in S_G(\cU)$. Clearly, $p$ is $\emptyset$-definable (in particular, it is in $S_G^\inv(\cU,M)$).

    Now, given any $[a,b,c]\in G(\cU)$, direct computation shows that
    \[
        [a,b,c]\cdot [x,y,z]=[a+x,b+y+az,c+z]\equiv_\cU [x,y,z],
    \]
    so $p$ is a left $G(\cU)$-invariant, and hence a strongly left $f$-generic (in fact, dfg) type. Since the left stabilizer of $p$ is $G(\cU)$, we see by Fact \ref{fact:g00} that $G^{00}(\cU)=G(\cU)$.

    On the other hand, for example
    \[
        [x,y,z]\cdot [0,0,1]=[x,x+y,z+1]\not\equiv [x,y,z],
    \]
    so $G^{00}(\cU)=G(\cU)$ is not the right stabilizer of $p$, so $p$ is a left $\dfg$ type which is not right $f$-generic. (Symmetrically, $p^{-1}=\tp([-x,-y+xz,-z]/\cU)$ is a right $\dfg$ type which is not left $f$-generic.)

Note that $p*p^{-1}$ is $\tp([x,y-cx,z]/\cU)$ where $x\gg y\gg z\gg c\gg \cU$, in particular, this is a global $\emptyset$-definable, $G(\cU)$ (left and right) invariant type.
\end{example}

\begin{remark}
    The situation here is not dissimilar to the classical context --- for an amenable locally compact group $G$, in general, there is always a (left and right) invariant mean on $L^\infty(G)$, but in general (for example, for the Heisenberg group, as for any connected nilpotent Lie group with a non-compact conjugacy class), there are in $L^\infty(G)$ left-invariant means which are not right-invariant. For more details, see \cite{pier_amenable}, in particular Theorem~4.19 and Proposition~22.15.
\end{remark}

\section{Retraction and inversion}

By Chernikov and Simon's resolution to the Newelski-Pillay conjecture (Fact~\ref{fact:NC}), we know that the Ellis subgroups of $S_{G}^{\fs}(\cU,M)$ are also isomorphic to $G(\cU)/G^{00}(\cU)$ in the definably amenable NIP setting. Thus, the invariant Ellis subgroups and the finitely satisfiable Ellis subgroups are \emph{abstractly isomorphic}. There is a \emph{mysterious retraction}\footnote{We quote Simon, \guillemotleft It is still slightly mysterious why such a retraction exists, but it turns out to be rather useful.\guillemotright } map which maps invariant types to finitely satisfiable types in NIP theories. One might (we did) conjecture that this retraction map induces an isomorphism on Ellis subgroups. This conjecture is a little too simple, too na\"{i}ve. In this section, we do the following:
\begin{enumerate}
\item We first recall Simon's retraction map, introduce a \emph{inverted variant} of said map, and prove serval results. In particular, we characterize when the retraction is an isomorphism and when the inverted retraction is an anti-isomorphism.
\item We then restrict to several cases of interest: abelian, fsg, and dfg.
\begin{enumerate}
\item In the abelian case, the retraction and inverted retraction witness isomorphisms, provided the small model contains enough coset representatives.
\item In the $\fsg$ case, the retraction map is the identity on the minimal left ideal (and thus trivially witnesses an isomorphism).
\item In the $\dfg$ case, the inverted retraction map witnesses an anti-isomorphism of Ellis subgroups. Precomposing with group inversion gives an isomorphism of compact topological groups.
\end{enumerate}
\end{enumerate}
The next section is dedicated to \emph{limiting examples}, or examples which limit extending these the above theorems to broader contexts.

Throughout this entire section, we will assume that our underlying theory $T$ is NIP.

\begin{remark} We are interested in the intrinsic relation between invariant Ellis subgroups and finitely satisfiable Ellis subgroups. Thus the retraction map (and its inversion) are obvious choices of maps to consider. Using Theorem~\ref{theorem:main} and Fact~\ref{fact:NC}, for any Ellis groups $E'$ in $S_G^\inv(\cU,M)$ and $E$ in $S_G^\fs(\cU,M)$, we can define an isomorphism by composing the isomorphisms $E'\to G/G^{00}$ and $G/G^{00}\to E$. There are also other, slightly less obvious choices: If $t$ is an idempotent strong right $f$-generic and $u$ is a minimal idempotent in $S_{G}^{\fs}(\cU,M)$, then -- again by Theorem~\ref{theorem:main} and Fact~\ref{fact:NC} -- the formula $p\mapsto t*p(=t*p*t)$ defines an explicit group isomorphism between the Ellis groups $u * S_{G}^{\fs}(\cU,M) * u$ and $t *S^{\inv}_{G}(\cU,M)$, with inverse
$u * F_{M}(-)|_{t *S^{\inv}_{G}(\cU,M)} * u: t *S^{\inv}_{G}(\cU,M) \to u * S_{G}^{\fs}(\cU,M) * u$ (where $F_M$ is the retraction defined in Definition~\ref{dfn:retract}). However, this map is as useful as an abstract isomorphism and tells us nothing about how these subgroups relate to one another as subsets of the space of invariant types. Moreover, these isomorphisms are not continuous, unlike the retraction and its inverted variant we consider below.

On the other hand, if we do not assume NIP and definable amenability, then we lose access to Theorem~\ref{theorem:main} and Fact~\ref{fact:NC}, and it makes sense to ask whether (or to what extent) these formulas still work.
\end{remark}

\begin{question}
    Let $G$ be a definable group, and let $t\in S^{\inv}_{G}(\cU,M)$ and $u\in S^{\fs}_{G}(\cU,M)$, write $E'$ for $t*S^{\inv}_{G}(\cU,M)*t$ and $E$ for $u* S^{\fs}_{G}(\cU,M)*u$. Then consider the functions:
    \begin{itemize}
        \item
        $E\to E'$ given by $p\mapsto t*p*t$,
        \item
        $E'\to E$ given by $p\mapsto u*F_M(p)*u$ (assuming NIP; here $F_M$ is given by Definition~\ref{dfn:retract}).
    \end{itemize}
    Given that for definably amenable NIP groups, $E$ and $E'$ via these functions (and they are inverse to each other, for any choice of $t$ and $u$), what are some other reasonable assumptions (about $T,G,t$, and $u$) that are sufficient for these functions to be e.g.\ injective, surjective, homomorphic, inverse to one another?
\end{question}
Note that in general, even assuming abelianity and existence of invariant definable types (in particular, dfg and definable extreme amenability), the second function may not be well-defined, and the first function may not be injective, see Corollary~\ref{cor:different}.

\subsection{The retraction map: the basics} We briefly recall the construction of Simon's retraction map which was first defined in \cite{simon2015invariant}. We refer the reader to \cite{chernikov2014external} for an interesting application.

\begin{definition}\label{dfn:retract} Suppose $T$ is NIP. Consider the language $\Lc ' = \Lc  \cup \{\mathbf{P}(x)\}$ where $\mathbf{P}(x)$ is a new unary predicate. Let $M \prec M'$ where $M'$ is $|M|^{+}-$saturated and consider the structure $(M',M)$ in the language $\Lc '$ where the interpretation for $\mathbf{P}(x)$ is $M$. Consider $(M',M) \prec (N',N)$ where $(N',N)$ is a $|M'|^{+}$-saturated extension. Then for every $p \in S_{\bar{x}}^{\inv}(\cU,M)$, we can associated to $p$ a unique type in $S_{\bar{x}}^{\fs}(\cU,M)$, known as the \textbf{retraction} of $p$ to $M$ and denoted by $F_{M}(p)$. Indeed, for any formula $\theta(\bar{x},b) \in \Lc _{\bar{x}}(\cU)$, we have that $\theta(\bar{x},b) \in F_{M}(p)$ if and only if the collection of formulas
\begin{equation*}
    \{\theta(\bar{x},b)\} \cup \mathbf{P}(\bar{x}) \cup p|_{N}(\bar{x}),
\end{equation*}
is consistent. We also define the \textbf{inverted retraction map} $K_{M} := F_{M} \circ ^{-1}$ where $^{-1}: S_{G}^{\inv}(\cU,M) \to S_{G}^{\inv}(\cU,M)$ is the model theoretic inverse map, i.e., see Definition \ref{def:mti}.
\end{definition}

The following can be found in \cite[Lemma 3.1, Lemma 3.7]{simon2015invariant}.

\begin{fact}\label{fact:rest_facts} The retraction map $F_{M}$ from $S_{x}^{\inv}(\cU,M)$ to $S_{x}^{\fs}(\cU,M)$ has the following properties: Let $p,q \in S_{x}^{\inv}(\cU,M)$, then
\begin{enumerate}
    \item $F_{M}(p)|_{M} = p|_{M}$,
    \item $F_{M}$ is continuous,
    \item If $p$ is finitely satisfiable in $M$, then $F_{M}(p) = p$,
    \item For any $M$-definable function $f$, $f(F_{M}(p)) = F_{M}(f(p))$.
    \item If $q$ is finitely satisfiable in $M$, then $F_{M}(q_{x} \otimes p_{y}) = q_{x} \otimes F_{M}(p_{y})$.
\end{enumerate}
\end{fact}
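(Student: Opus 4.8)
All five clauses are due to Simon (\cite[Lemma 3.1, Lemma 3.7]{simon2015invariant}); the plan is to recall how they follow from the explicit description of $F_M$ in Definition~\ref{dfn:retract}. Everything is extracted from that description, so the only substantial point is that the description yields a well-defined \emph{complete} type lying in $S_{\bar x}^{\fs}(\cU,M)$; granting that, clauses (1)--(5) are short. Two facts get used repeatedly: first, $\mathbf P$ is named as an elementary submodel, so $M\preceq N\preceq\cU$ as $\Lc$-structures; second, being a complete type over the model $N$, the restriction $p|_N$ is automatically finitely satisfiable in $N$, so the partial type $\mathbf P(\bar x)\cup p|_N(\bar x)$ is consistent.

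The heart of the matter, and the main obstacle, is well-definedness: that for every $\theta(\bar x,b)$ exactly one of $\{\theta(\bar x,b)\}\cup\mathbf P(\bar x)\cup p|_N$ and $\{\neg\theta(\bar x,b)\}\cup\mathbf P(\bar x)\cup p|_N$ is consistent, and that the type so obtained is finitely satisfiable in $M$. That \emph{at least} one is consistent is a compactness argument: otherwise finite $p_0,p_1\subseteq p|_N$ would witness that every tuple of $\mathbf P$ realizing $p_0$ satisfies $\neg\theta(\bar x,b)$ while every tuple of $\mathbf P$ realizing $p_1$ satisfies $\theta(\bar x,b)$, contradicting that $p_0\cup p_1$ is realized in $\mathbf P$. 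That \emph{at most} one is consistent, and that the resulting type lands in $S_{\bar x}^{\fs}(\cU,M)$, is where NIP enters, through the honest-definition machinery (bounded alternation of $\theta$ along indiscernible sequences); for this I would cite \cite[Lemma 3.1]{simon2015invariant} rather than reproduce the argument. Everything that follows takes this for granted.

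Granting well-definedness, the clauses follow. (1) If $b\in M$, one of $\theta(\bar x,b),\neg\theta(\bar x,b)$ already lies in $p|_M\subseteq p|_N$, so the relevant consistency test collapses to that of $\mathbf P(\bar x)\cup p|_N$, which holds; hence $F_M(p)$ and $p$ agree on $\Lc(M)$. (3) If $p$ is finitely satisfiable in $M$ it is finitely satisfiable in $N$; so for $\theta(\bar x,b)\in p$ every finite subset of $\{\theta(\bar x,b)\}\cup p|_N$ is realized in $\mathbf P=N$, whence $\theta(\bar x,b)\in F_M(p)$, and, $F_M(p)$ being complete, $F_M(p)=p$. (2) By completeness, $\theta(\bar x,b)\in F_M(p)$ iff $\{\neg\theta(\bar x,b)\}\cup\mathbf P(\bar x)\cup p|_N$ is inconsistent, iff (compactness) some finite $p_0\subseteq p$ over $N$ forces $\theta(\bar x,b)$ on every tuple of $\mathbf P$ realizing $p_0$; as the latter condition does not mention $p$, the fibre $F_M^{-1}[\theta(\bar x,b)]$ is a union of basic clopen sets $\{p:p_0\subseteq p\}$, hence open, and $F_M$ is continuous. (4) Since $\mathbf P=N\supseteq M$ is closed under $M$-definable functions, applying $f$ to a witness of the consistency test for a formula of $F_M(p)$ produces a witness for the corresponding formula of $F_M(f(p))$, giving $f(F_M(p))\subseteq F_M(f(p))$; equality follows as both are complete types. (5) The same bookkeeping, using that $q$ is finitely satisfiable in $M$ to control the $q$-coordinate, gives one inclusion between $F_M(q\otimes p)$ and $q\otimes F_M(p)$, hence equality by completeness; see \cite[Lemma 3.7]{simon2015invariant} for the details.
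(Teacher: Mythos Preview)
The paper does not prove this statement at all: it is stated as a \emph{Fact} with the one-line attribution ``The following can be found in \cite[Lemma 3.1, Lemma 3.7]{simon2015invariant}'' and no further argument. Your proposal therefore already exceeds what the paper does, and your sketches of (1)--(4) are correct and match the standard arguments; your deferral to Simon for the NIP-dependent well-definedness and for (5) is exactly what the paper does implicitly by citing the Fact wholesale.
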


We now give several basic results connecting the Newelski product with the retraction map. The following sequence of lemmas and propositions will help us with more complicated computations in later subsections.

\begin{proposition}\label{prop:basic} Suppose that $p \in S_{x}^{\inv}(\cU,M)$ and $q \in S_{y}^{\inv}(\cU,M)$ such that $p$ is definable. Then $F_{M}(p_x \otimes q_y)|_{M} = (F_{M}(q_{y}) \otimes F_{M}(p_x))|_{M}$.
\end{proposition}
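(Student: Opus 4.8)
The plan is to strip the retraction off both sides using only the identity $F_M(r)|_M = r|_M$ (Fact~\ref{fact:rest_facts}(1)), thereby reducing the proposition to a single ``commutation under restriction'' statement for Morley products, which I then settle by a short finite‑satisfiability argument.

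First I would record the elementary observation that, for an $M$‑invariant type in the variables $x$ and an $M$‑invariant type in the variables $y$, the restriction to $M$ of their $\otimes$‑product depends only on the \emph{first} factor (entirely) and on the $M$‑restriction of the \emph{second}: for $\varphi(x,y)$ over $M$ one has $\varphi(x,y)\in r_x\otimes s_y$ iff $\varphi(x,b)\in r$ for $b\models s|_M$ (well‑defined by $M$‑invariance of $r$), and symmetrically $\varphi(x,y)\in s_y\otimes r_x$ iff $\varphi(a,y)\in s$ for $a\models r|_M$. Combining this with $F_M(q)|_M=q|_M$ and $F_M(p)|_M=p|_M$, the left‑hand side of the proposition rewrites as
\[
F_M(p_x\otimes q_y)|_M=(p_x\otimes q_y)|_M=(p_x\otimes F_M(q)_y)|_M,
\]
and the right‑hand side rewrites as $(F_M(q)_y\otimes F_M(p)_x)|_M=(F_M(q)_y\otimes p_x)|_M$. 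So it suffices to prove
\[
(p_x\otimes F_M(q)_y)|_M=(F_M(q)_y\otimes p_x)|_M,
\]
i.e.\ that the $M$‑definable type $p$ and the finitely satisfiable type $F_M(q)$ commute once restricted to $M$.

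To prove this, I would fix $\varphi(x,y)$ over $M$, let $(d_p\varphi)(y)$ be its $p$‑definition (a formula over $M$), and unwind both sides. On the one hand $\varphi(x,y)\in(p_x\otimes F_M(q)_y)|_M$ iff $(d_p\varphi)(y)\in F_M(q)$; on the other hand $\varphi(x,y)\in(F_M(q)_y\otimes p_x)|_M$ iff $\varphi(a,y)\in F_M(q)$ for any $a\in\cU$ with $a\models p|_M$ — a condition independent of the choice of such $a$, by $M$‑invariance of $F_M(q)$. Hence it is enough to show $F_M(q)\vdash(d_p\varphi)(y)\leftrightarrow\varphi(a,y)$. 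If this failed, then $\neg\bigl((d_p\varphi)(y)\leftrightarrow\varphi(a,y)\bigr)\in F_M(q)$, and since $F_M(q)$ is finitely satisfiable in $M$ this formula would be realized by some $c\in M$; but for $c\in M$ both $\models(d_p\varphi)(c)$ and $\models\varphi(a,c)$ are equivalent to $\varphi(x,c)\in p$ (the former by definition of the $p$‑definition, the latter because $\tp(a/M)=p|_M$), so $\models(d_p\varphi)(c)\leftrightarrow\varphi(a,c)$, contradicting the choice of $c$. This yields the displayed equality, and hence the proposition.

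The one step carrying real content is this last equality: interchanging $q$ with the definable type $p$ inside the Morley product is compensated \emph{exactly} by retracting $q$, and the only property of $F_M(q)$ used is that it is finitely satisfiable in $M$, which forces it to agree with the $p$‑definition on parameters from $M$. Everything else is bookkeeping with Fact~\ref{fact:rest_facts}(1) and the definitions of the Newelski and Morley products; in particular Fact~\ref{fact:rest_facts}(5) is not needed. (Throughout, ``$p$ is definable'' is understood as ``$p$ is definable over $M$'', so that the $p$‑definition $d_p\varphi$ is over $M$, as elsewhere in the paper.)
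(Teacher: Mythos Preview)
Your proof is correct and follows essentially the same route as the paper's: both reduce, via $F_M(r)|_M=r|_M$, to the single commutation $(p_x\otimes F_M(q)_y)|_M=(F_M(q)_y\otimes p_x)|_M$. The only difference is that the paper invokes the standard fact ``definable types commute with finitely satisfiable types'' as a black box, whereas you unpack that step with an explicit finite-satisfiability argument using the $p$-definition; this is the same underlying mechanism, just spelled out.
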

\begin{proof} For any $\theta(x,y) \in \Lc (M)$, notice
\begin{align*}
    \theta(x,y) \in F_{M}(q_y) \otimes F_{M}(p_x) &\Longleftrightarrow \theta(x,y) \in F_{M}(q_y) \otimes p_x \\ &\Longleftrightarrow \theta(x,y) \in p_{x} \otimes F_{M}(q_{y}) \\ &\Longleftrightarrow \theta(x,y) \in p_{x} \otimes q_{y} \\ &\Longleftrightarrow \theta(x,y) \in F_{M}(p_{x} \otimes q_{y}).
\end{align*}
We provide the following basic justifications:
\begin{enumerate}
    \item $F_{M}(p_x)|_M = p_x|_M$ and $\theta(x,y) \in \Lc (M)$.
    \item Definable types commutes with finitely satisfiable types.
    \item $F_{M}(q_y)|_{M} = q_x|_M$ and $\theta(x,y) \in \Lc (M)$.
    \item $F_{M}(p_x \otimes q_y)|_M = p_x \otimes q_y|_M$ and $\theta(x,y) \in \Lc (M)$. \qedhere
\end{enumerate}
\end{proof}

It turns out that the previous lemma is true in a more general setting than the one described above. Originally, we conjectured that as long as $p$ is definable $q$ is invariant, then $F_{M}(p_{x} \otimes q_{y}) = F_{M}(q_{y}) \otimes F_{M}(p_{x})$. Our original proof ran into some technical difficulties. A correct proof of the following was provided to us by Simon in private communication. It relies on the following fact, Exercise~2.74 from \cite{Guide}. For our purposes, we do not need the following fact, but we record the result for completeness.

\begin{fact}[$T$ NIP]\label{fact:coheir} If $p \in S_{x}^{\inv}(\cU,M)$ and $p$ is definable over $M$, then $p|_{M}$ admits a unique global coheir.
\end{fact}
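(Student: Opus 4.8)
\textbf{Proof proposal for Fact~\ref{fact:coheir}.}
The plan is to deduce uniqueness of the global coheir from definability of $p$ together with NIP, via the standard correspondence between coheirs of $p|_M$ and the space of finitely satisfiable extensions, and the exchange-type behaviour forced by NIP. First I would fix $q \in S_x^{\fs}(\cU, M)$ extending $p|_M$ and show $q = p$. The key observation is that since $p$ is definable over $M$, for every $\Lc$-formula $\varphi(x,y)$ there is an $\Lc(M)$-formula $d_p\varphi(y)$ with $\varphi(x,b) \in p \iff \cU \models d_p\varphi(b)$ for all $b \in \cU^y$; so it suffices to show that for each such $\varphi$ and each $b$, the formula $\varphi(x,b) \in q$ is determined.

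The main step is to compare $p \otimes q$ with $q \otimes p$ (as types in $S_{xy}^{\inv}(\cU,M)$). Since $p$ is definable and $q$ is finitely satisfiable in $M$, definable types commute with finitely satisfiable types over $M$, so $p_x \otimes q_y|_M = q_y \otimes p_x|_M$; in fact, using that $q|_M = p|_M$, I would argue that $p_x \otimes p_y|_M$ and $q_x \otimes q_y|_M$ and the mixed products all agree on $M$. The point is then to leverage NIP: if $q \neq p$, pick $\varphi(x,b)$ with $\varphi(x,b) \in q$ but $\neg\varphi(x,b) \in p$ (or vice versa), realize $q|_{Mb}$ by a point $a$ in $M$ (finite satisfiability), and by iterating — building a Morley-like sequence alternating between realizations from $M$ (witnessing $q$) and the definable data of $p$ — produce a sequence on which $\varphi$ alternates, contradicting that $\varphi$ has finite VC dimension. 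Concretely, I would use the indiscernible sequence extracted from $(p|_M)^{\otimes \omega}$ together with the fact that a coheir of a definable type over a model must cohere with the definable scheme on any such sequence, so that any deviation produces an alternation of arbitrary length.

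Alternatively — and this is probably the cleaner route, since the excerpt flags this as Exercise~2.74 of \cite{Guide} — I would invoke the characterization that over a model $M$, a definable type $p|_M$ has a unique coheir precisely because its unique heir is its unique coheir: in NIP, an invariant type $r$ over $\cU$ extending $p|_M$ that is finitely satisfiable in $M$ must be both an heir and a coheir of $p|_M$; but the heir of a definable type is unique and definable, hence equals $p$, so $r = p$. The work here is in showing a coheir of a definable type over $M$ is also an heir, which is again where NIP enters (symmetry of the forking/Morley product under the relevant hypotheses).

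The hard part will be the NIP-alternation bookkeeping in the direct approach — setting up the right Morley sequence so that a single discrepancy between $q$ and $p$ propagates into arbitrarily long alternations of a fixed formula. If that gets delicate, I would fall back on the heir/coheir symmetry argument, which packages the same content more economically, at the cost of citing the heir uniqueness of definable types. Since the excerpt explicitly says this fact is not needed for the main results and is recorded only for completeness, I would keep the argument short and lean on the cited exercise rather than reproving it in full.
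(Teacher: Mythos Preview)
There is a genuine error: both of your approaches aim to show that any global coheir $q$ of $p|_M$ equals $p$, but this is false in general, since an $M$-definable type need not be finitely satisfiable in $M$. Take $T=\mathrm{DLO}$ and $p=t_{+\infty}$; then $p$ is $\emptyset$-definable, yet the unique coheir of $p|_M$ is the cut $t_+$ just above $M$, not $t_{+\infty}$. Your heir/coheir route breaks at exactly this point: $t_+$ contains $x<b$ for any $b>M$, a formula with no witness $b'\in M$ such that $x<b'\in p|_M$, so $t_+$ is \emph{not} an heir of $p|_M$, and the conclusion ``$r=p$'' is simply wrong. The correct target is that any two coheirs $q_1,q_2$ of $p|_M$ agree. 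Your alternation idea can be repaired toward this: alternate realizations $c_i$ of $q_1$ and $q_2$ over $Mb\,c_{<i}$; since $p$ is definable and each $q_j$ is finitely satisfiable, commutation gives $q_j\otimes p^{\otimes n}|_M=p^{\otimes(n+1)}|_M$, so by induction $(c_i)_i$ has the type over $M$ of a Morley sequence in $p$ and is therefore $M$-indiscernible; a discrepancy $\varphi(x,b)\in q_1\setminus q_2$ then makes $\varphi(c_i,b)$ alternate on an indiscernible sequence, contradicting NIP.

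Note also that the paper does not supply its own proof of this fact --- it is recorded only as a citation to Exercise~2.74 of \cite{Guide} --- so there is no paper argument to compare against.
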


\begin{fact} Let $p,q \in S_{x}^{\inv}(\cU,M)$. If $p$ is definable over $M$, then $F_{M}(p_{x} \otimes q_{y}) = F_{M}(q_{y}) \otimes F_{M}(p_{x})$.
\end{fact}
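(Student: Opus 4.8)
The plan is to reduce the statement to the already-proved Proposition~\ref{prop:basic} together with the coheir uniqueness given by Fact~\ref{fact:coheir}. Both sides of the claimed equality, $F_{M}(p_{x}\otimes q_{y})$ and $F_{M}(q_{y})\otimes F_{M}(p_{x})$, are global types in $S_{xy}^{\fs}(\cU,M)$: the left-hand side because $F_{M}$ has image in the finitely satisfiable types, and the right-hand side because $F_{M}(q_{y})$ and $F_{M}(p_{x})$ are finitely satisfiable in $M$ and the Morley product of two types finitely satisfiable in $M$ is again finitely satisfiable in $M$ (Fact~\ref{fact:nb}(2), applied to variables rather than group elements, or the standard fact about $\otimes$). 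So it suffices to show that these two global finitely satisfiable types agree.

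First I would observe that by Proposition~\ref{prop:basic} the two types have the same restriction to $M$: indeed $F_{M}(p_{x}\otimes q_{y})|_{M}=(F_{M}(q_{y})\otimes F_{M}(p_{x}))|_{M}$. The plan is then to upgrade this agreement over $M$ to agreement over $\cU$ by a uniqueness-of-coheir argument. The key point is that $F_{M}(q_{y})\otimes F_{M}(p_{x})$ is the \emph{unique} global coheir of its restriction to $M$: since $p$ is definable over $M$, so is $F_{M}(p_{x})$ (by Fact~\ref{fact:rest_facts}(1) its restriction to $M$ equals $p|_{M}$, which is definable, and one checks the $\otimes$-factor stays definable — in fact $F_M(p_x)$ is the unique global coheir of $p|_M$ by Fact~\ref{fact:coheir}), and the Morley product of a finitely satisfiable type with a definable type is again finitely satisfiable and, crucially, a definable-over-$M$ amount of data is retained so that the product type is still determined over $M$. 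Concretely, the cleanest route is: $F_{M}(p_{x})$ is the unique coheir of $p|_{M}$ (Fact~\ref{fact:coheir}); hence $F_{M}(q_{y})\otimes F_{M}(p_{x})$ is the unique coheir of $q|_{M}\otimes p|_{M}=(F_{M}(q_{y})\otimes F_{M}(p_{x}))|_{M}$, because a coheir of the latter must restrict correctly in each variable and the definable factor pins down its second coordinate. On the other hand $F_{M}(p_{x}\otimes q_{y})$ is \emph{a} global coheir of the same $M$-type by Proposition~\ref{prop:basic}. By uniqueness, the two coincide.

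The main obstacle I anticipate is making precise the claim that ``$F_{M}(q_{y})\otimes F_{M}(p_{x})$ is the unique coheir of its restriction to $M$'' — Fact~\ref{fact:coheir} is stated for a single definable type, not for a Morley product of a coheir with a definable type. The fix is to argue directly: suppose $r\in S_{xy}^{\fs}(\cU,M)$ with $r|_{M}=(F_{M}(q_{y})\otimes F_{M}(p_{x}))|_{M}$; then the $x$-restriction $r_{x}:=r|_{x}$ is a global coheir of $p|_{M}$, hence equals $F_{M}(p_{x})$ by Fact~\ref{fact:coheir}; now for any $b\in\cU^{y}$ and $\theta(x,b)$, realise $b$-part first — more carefully, pick $(a,c)\models r|_{Mb}$ with $a$ in some sense generic over $bc$; using that $\tp(a/\cU)=F_{M}(p_{x})$ is definable over $M$ and $c$ realises $F_{M}(q_{y})|_{Ma}$ one recovers $\theta(x,y)\in r \iff \theta(x,y)\in F_{M}(q_{y})\otimes F_{M}(p_{x})$. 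This is the argument Simon supplied; spelling out which type is realised over which parameters in the correct order is the only delicate step, and it is routine once one commits to the coheir-uniqueness of the $x$-coordinate. Since, as the excerpt notes, this fact is not needed elsewhere, I would keep the write-up brief and cite Fact~\ref{fact:coheir} for the crucial uniqueness input.
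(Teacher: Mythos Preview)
The paper does not actually prove this Fact: it records it ``for completeness'', crediting Simon via private communication, and explicitly notes that the authors' own original attempt ``ran into some technical difficulties''. So there is no proof in the paper to compare against; I can only assess the correctness of your argument.

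Your argument has a genuine gap at exactly the point you flag as the ``main obstacle''. The core claim --- that $F_{M}(q_{y})\otimes F_{M}(p_{x})$ is the \emph{unique} global coheir of its restriction to $M$ --- is not established, and your attempted justification rests on a false premise. You write that $F_{M}(p_{x})$ is definable over $M$, but this is wrong in general: $F_{M}(p_{x})$ is finitely satisfiable in $M$, and in an unstable NIP theory a nonrealized type over $M$ cannot be both $M$-definable and finitely satisfiable in $M$. (Concretely: in DLO with $M=\mathbb{Q}$ and $p$ the $\emptyset$-definable type at $+\infty$, the retraction $F_{M}(p)$ is the cut just above $\mathbb{Q}$, which is not definable over $\mathbb{Q}$.) Being the unique coheir of $p|_{M}$ does not make $F_{M}(p_{x})$ definable. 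Once that claim falls, there is no reason for $(F_{M}(q_{y})\otimes F_{M}(p_{x}))|_{M}$ to have a unique coheir: $q$ is only $M$-invariant, so $q|_{M}$ may have many global coheirs, and the product type over $M$ inherits this non-uniqueness.

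Your ``fix'' does not repair this. Knowing $r|_{x}=F_{M}(p_{x})$ constrains only the $x$-marginal of $r$; it says nothing about $r|_{y}$, and a type in two variables is not determined by its marginals. The assertion that ``$c$ realises $F_{M}(q_{y})|_{Ma}$'' is exactly the missing content --- you have no control over which coheir of $q|_{M}$ the $y$-coordinate of $r$ extends, nor over how the two coordinates are correlated. In short, the reduction to Proposition~\ref{prop:basic} plus Fact~\ref{fact:coheir} is the natural first idea, but (as the authors themselves apparently found) it does not close; a correct proof needs to use the construction of $F_M$ more directly rather than only coheir-uniqueness for a single variable.
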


The next lemma is quite helpful for computations. We will often use it without reference.

\begin{lemma}\label{lem:triv} For any $p,q \in S_{G}^{\inv}(\cU,M)$, we have
\begin{enumerate}
    \item $F_{M}(p^{-1}) = F_{M}(p)^{-1}$.
    \item For any $g \in G(M)$, $F_{M}(g \cdot p) = g \cdot F_{M}(p)$ and $F_{M}(p \cdot g) = F_{M}(p) \cdot g$.
    \item For any $\Lc $-formula $\theta(x,z)$, $b \in \cU^{z}$, we have that $\theta(x,b) \in F_{M}(p*q)$ if and only if $ \theta(x\cdot y, b) \in F_{M}(p_{x}\otimes q_{y})$.
\end{enumerate}
\end{lemma}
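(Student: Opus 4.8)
The plan is to obtain all three parts as instances of a single principle: the retraction $F_M$ commutes with pushforward along $M$-definable functions, which is exactly Fact~\ref{fact:rest_facts}(4). The real work is just in recognizing each of $p^{-1}$, $g\cdot p$, $p\cdot g$ and $p*q$ as such a pushforward, where we write $f_*(r)$ for the pushforward of a type $r$ under an ($M$-)definable function $f$ (so $\psi\in f_*(r)\iff \psi\circ f\in r$, equivalently $f(a)\models f_*(r)$ whenever $a\models r$).

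For (1), group inversion $\iota\colon G\to G$, $\iota(x)=x^{-1}$, is $\emptyset$-definable, and $p^{-1}=\iota_*(p)$ directly from Definition~\ref{def:mti}; likewise $F_M(p)^{-1}=\iota_*(F_M(p))$. Hence $F_M(p^{-1})=F_M(\iota_*(p))=\iota_*(F_M(p))=F_M(p)^{-1}$ by Fact~\ref{fact:rest_facts}(4). For (2), fix $g\in G(M)$; then $\ell_g\colon x\mapsto g\cdot x$ and $\rho_g\colon x\mapsto x\cdot g$ are $M$-definable (their parameter $g$ lies in $M$), and unwinding the definition of the global left/right translates gives $g\cdot p=(\ell_g)_*(p)$ and $p\cdot g=(\rho_g)_*(p)$. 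Applying Fact~\ref{fact:rest_facts}(4) to $\ell_g$ and to $\rho_g$ yields $F_M(g\cdot p)=g\cdot F_M(p)$ and $F_M(p\cdot g)=F_M(p)\cdot g$.

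For (3), consider the $\emptyset$-definable multiplication $m\colon G\times G\to G$, $m(x,y)=x\cdot y$. By the definition of the Newelski product, for any $\Lc$-formula $\varphi(x,z)$ and $d\in\cU^{z}$ we have $\varphi(x,d)\in p*q\iff\varphi(x\cdot y,d)\in p_x\otimes q_y$, i.e.\ $p*q=m_*(p_x\otimes q_y)$, where $p_x\otimes q_y\in S^{\inv}_{xy}(\cU,M)$. Applying Fact~\ref{fact:rest_facts}(4) with $f=m$ to the invariant type $p_x\otimes q_y$ gives
\[
F_M(p*q)=F_M\bigl(m_*(p_x\otimes q_y)\bigr)=m_*\bigl(F_M(p_x\otimes q_y)\bigr).
\]
Unwinding the last pushforward, for any $\Lc$-formula $\theta(x,z)$ and $b\in\cU^{z}$ we get $\theta(x,b)\in F_M(p*q)\iff\theta(x\cdot y,b)\in F_M(p_x\otimes q_y)$, which is the claim.

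I do not anticipate a genuine obstacle here; the statement is essentially a bookkeeping consequence of Fact~\ref{fact:rest_facts}(4). The only points requiring a little care are: (a) checking the direction of the translation conventions so that $g\cdot p$ and $p\cdot g$ really are the pushforwards under $x\mapsto gx$ and $x\mapsto xg$ respectively, rather than their inverses; and (b) that Fact~\ref{fact:rest_facts}(4) applies to multi-variable definable maps like $m$, so that $F_M$ of the Morley product $p_x\otimes q_y$ --- a type in the two variables $(x,y)$ --- behaves correctly under the substitution of $x\cdot y$ for the single main variable.
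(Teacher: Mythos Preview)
Your proof is correct and follows essentially the same approach as the paper: both derive all three parts from Fact~\ref{fact:rest_facts}(4) by recognizing $p^{-1}$, $g\cdot p$, $p\cdot g$, and $p*q$ as pushforwards under the $M$-definable maps $\iota$, $\ell_g$, $\rho_g$, and $m$ respectively. The paper is terser on (1) and (2), calling them ``obvious modulo this fact,'' while you spell out the pushforward interpretation explicitly; for (3), both proofs use the multiplication map $m$ in exactly the same way.
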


\begin{proof} These statements follow from (4) of Fact \ref{fact:rest_facts}. The first two statements are obvious modulo this fact. For the third, let $m: G(\cU) \times G(\cU) \to G(\cU)$ be group multiplication. Then
\begin{align*}
\theta(x,b) \in F_{M}(p * q) &\Longleftrightarrow \theta(x,b) \in F_{M}(m(p_{x} \otimes q_{y})) \\ &\Longleftrightarrow \theta(x,b) \in m(F_{M}(p_{x} \otimes q_{y})) \\
&\Longleftrightarrow \theta(x \cdot y,b) \in F_{M}(p_{x} \otimes q_{y}). \qedhere
\end{align*}
\end{proof}

Recall that $\mathcal{F}_{r}$ denotes the collection of strong right $f$-generic types (Definition \ref{def:f}). The next proposition says that the retraction of the strong right $f$-generic types forms a left ideal. This allows us to find a minimal left ideal of finitely satisfiable types such that each element is the restriction of a strong right $f$-generic.

\begin{proposition} If $q \in S_{G}^{\fs}(\cU,M)$ and $p \in S_{G}^{\inv}(\cU,M)$, then $q * F_{M}(p) = F_{M}(q * p)$. Hence, the retract of any left ideal in $S^\inv_G(\cU,M)$ is a left ideal in $S^\fs_G(\cU,M)$. In particular, if $G$ is definably amenable then $F_{M}(\mathcal{F}_{r})$ is a left ideal in $S_{G}^{\fs}(\cU,M)$.
\end{proposition}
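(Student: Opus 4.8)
The plan is to prove the identity $q * F_M(p) = F_M(q*p)$ directly from the definition of the Newelski product together with the commutation property (5) of Fact~\ref{fact:rest_facts}, namely that for $q$ finitely satisfiable in $M$ one has $F_M(q_y \otimes p_x) = q_y \otimes F_M(p_x)$. First I would unpack both sides at the level of an arbitrary formula $\theta(x,b)$ with $b \in \cU^z$. Using Lemma~\ref{lem:triv}(3), the right-hand side satisfies $\theta(x,b) \in F_M(q*p)$ iff $\theta(x\cdot y, b) \in F_M(q_x \otimes p_y)$. Applying Fact~\ref{fact:rest_facts}(5) (with the roles of the variables set so that $q$, being finitely satisfiable, is the left factor), this is equivalent to $\theta(x\cdot y, b) \in q_x \otimes F_M(p)_y$, which by the definition of the Morley product unwinds to $\theta(x \cdot b', b) \in q$ where $b' \models F_M(p)|_{Mb}$ — and this is exactly the statement that $\theta(x,b) \in q * F_M(p)$, since $F_M(p) \in S_G^{\fs}(\cU,M) \subseteq S_G^{\inv}(\cU,M)$ so the Newelski product $q * F_M(p)$ is defined. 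So the first assertion follows once the bookkeeping of which variable is which is done carefully.

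For the consequence about left ideals: let $L \subseteq S_G^{\inv}(\cU,M)$ be a left ideal, so $S_G^{\inv}(\cU,M) * L \subseteq L$, and consider $F_M(L) \subseteq S_G^{\fs}(\cU,M)$ (the containment in the finitely satisfiable types is immediate from the definition of $F_M$). To see $F_M(L)$ is a left ideal of $S_G^{\fs}(\cU,M)$, take any $q \in S_G^{\fs}(\cU,M)$ and any $F_M(p)$ with $p \in L$; by the identity just proved, $q * F_M(p) = F_M(q * p)$, and $q * p \in L$ because $q \in S_G^{\fs}(\cU,M) \subseteq S_G^{\inv}(\cU,M)$ and $L$ is a left ideal of the latter. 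Hence $q * F_M(p) \in F_M(L)$, as required. The \emph{in particular} clause is then just the special case $L = \mathcal{F}_r$, which is a left ideal of $S_G^{\inv}(\cU,M)$ by Proposition~\ref{prop:fgen_ideal} (valid since definable amenability guarantees $\mathcal{F}_r \neq \emptyset$).

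The main obstacle I anticipate is purely notational: Fact~\ref{fact:rest_facts}(5) is stated as $F_M(q_x \otimes p_y) = q_x \otimes F_M(p_y)$ with the finitely satisfiable type on the \emph{left}, whereas in the Newelski product $p * q$ the $M$-invariant factor governing the ``outer'' variable is on the left. One must therefore track carefully that in $F_M(q * p)$ it is $q$ (finitely satisfiable) that plays the role of the left tensor factor in $q_x \otimes p_y$, so that the hypothesis of Fact~\ref{fact:rest_facts}(5) genuinely applies; getting the variable assignment backwards would make the argument collapse. No new ideas beyond the cited facts are needed, but the order of the tensor factors and the substitution $x \mapsto x \cdot y$ from Lemma~\ref{lem:triv}(3) must be threaded through consistently.
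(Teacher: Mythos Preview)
Your proposal is correct and follows exactly the paper's approach: the paper's proof reads in its entirety ``The first statement follows directly from Fact~\ref{fact:rest_facts}(5) and Lemma~\ref{lem:triv}(3). The rest follows trivially.'' You have simply unpacked these citations with the appropriate variable bookkeeping, and your derivation of the left-ideal consequence is the intended triviality.
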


\begin{proof} The first statement follows directly from Fact \ref{fact:rest_facts}(5) and Lemma \ref{lem:triv}(3). The rest follows trivially.
\end{proof}

Which elements are retracted to an idempotent? The final proposition of this section demonstrates that only idempotents are retracted to an idempotent in the definably amenable NIP setting.

\begin{proposition} Suppose that $G(x)$ is definably amenable and $u \in F_{M}(\mathcal{F}_{r})$ is an idempotent. Then every $t \in \mathcal{F}_{r}$ such that $F_{M}(t) = u$ is an idempotent.
\end{proposition}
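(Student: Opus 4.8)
The plan is to exploit the fact that, by Theorem~\ref{theorem:main}, the Ellis subgroup $t*S_G^\inv(\cU,M)$ containing (or at least in the orbit of) $t$ is governed by $\hat\pi$, and that $\hat\pi$ factors through the retraction in a controlled way. Concretely, fix $t\in\mathcal{F}_r$ with $F_M(t)=u$ and $u$ idempotent. I would first record the key compatibility: since $\hat\pi$ restricted to $S_G^\inv(\cU,M)$ is a homomorphism onto $G(\cU)/G^{00}(\cU)$, and since (by Fact~\ref{fact:rest_facts}(1), or more robustly since $\mathbf{P}$-definable passage to the retraction does not change the coset modulo $G^{00}$) one has $\hat\pi(F_M(p))=\hat\pi(p)$ for all $p\in S_G^\inv(\cU,M)$. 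Granting this, $\hat\pi(t)=\hat\pi(F_M(t))=\hat\pi(u)=e$, because $u$ is a minimal idempotent in $S_G^\fs(\cU,M)$ and Fact~\ref{fact:NC} (the Newelski--Pillay conjecture) says $\hat\pi$ is an isomorphism on the Ellis subgroup $u*I$, hence sends the identity $u$ to $e$.

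Next I would pass from $\hat\pi(t)=e$ to idempotency of $t$ using the structure of the invariant minimal left ideal. By Proposition~\ref{prop:unique_min}, $\mathcal{F}_r$ is the unique minimal left ideal of $S_G^\inv(\cU,M)$, so $t$ lies in some Ellis subgroup $v*\mathcal{F}_r$ with $v\in\mathcal{F}_r$ idempotent, and by Theorem~\ref{theorem:main}(4) the map $\hat\pi$ restricted to this Ellis subgroup is a group \emph{isomorphism} onto $G(\cU)/G^{00}(\cU)$. Since $v$ is the identity of $v*\mathcal{F}_r$ we have $\hat\pi(v)=e=\hat\pi(t)$; injectivity of $\hat\pi$ on $v*\mathcal{F}_r$ then forces $t=v$, so $t$ is idempotent.

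The step I expect to be the main obstacle is justifying $\hat\pi(F_M(p))=\hat\pi(p)$. The cleanest route is to observe that $G^{00}(\cU)$ is $\emptyset$-type-definable and of bounded index, so membership of (a realization of) a type in a fixed coset $gG^{00}(\cU)$ is determined by a partial type over $M$ together with $g$; since $F_M(p)|_M=p|_M$ by Fact~\ref{fact:rest_facts}(1), and since $\hat\pi(p)$ depends only on which coset a realization of $p$ lands in — which in turn is detected by $M$-type-definable data in the appropriate sense — the two retractions agree modulo $G^{00}$. If one prefers to avoid a delicate argument here, an alternative is: $\hat\pi$ is continuous and $\hat\pi\circ F_M$ is continuous; on $S_G^\fs(\cU,M)$ the two agree by Fact~\ref{fact:rest_facts}(3); and one can check directly that $F_M$ does not move the $G^{00}$-class because $\pi$ is an $M$-invariant (indeed $\emptyset$-invariant) map factoring through $\acl^{\eq}$-type data preserved by the retraction. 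I would write the first version as the main line, with the continuity remark as a fallback. Everything else is bookkeeping with the facts already assembled.
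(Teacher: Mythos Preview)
Your argument is correct but takes a slightly longer path than the paper's at two points.

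First, to obtain $\hat\pi(u)=e$ you invoke Fact~\ref{fact:NC} (Newelski--Pillay), asserting that $u$ is a \emph{minimal} idempotent. The statement does not give minimality of $u$; it only says $u\in F_M(\mathcal F_r)$ is idempotent. This is harmless, though, since the conclusion $\hat\pi(u)=e$ follows for the trivial reason that $\hat\pi$ is a semigroup homomorphism into a group: $\hat\pi(u)^2=\hat\pi(u*u)=\hat\pi(u)$ forces $\hat\pi(u)=e$. The paper does exactly this, without any appeal to Fact~\ref{fact:NC}.

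Second, for the final step the paper uses Lemma~\ref{lemma:apply} directly rather than the Ellis subgroup decomposition and Theorem~\ref{theorem:main}: once $\hat\pi(t)=e=\pi(e_G)$, Lemma~\ref{lemma:apply} applied to the strong right $f$-generic $t$ yields $t*t=t*\tp(e_G/\cU)=t\cdot e_G=t$. Your route---locate $t$ in an Ellis subgroup $v*\mathcal F_r$ and use injectivity of $\hat\pi$ there to force $t=v$---is valid but uses more machinery.

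The shared crux is $\hat\pi(F_M(t))=\hat\pi(t)$, and both arguments justify it the same way: $G^{00}$ is $\emptyset$-type-definable and $F_M(t)|_M=t|_M$ by Fact~\ref{fact:rest_facts}(1), so the $G^{00}$-coset of a realization is unchanged by the retraction. Your worry about this step is unfounded; the paper treats it as the one-line observation it is.
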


\begin{proof} Let $t \in \mathcal{F}_{r}$ and $F_{M}(t) = u$. Since $u$ is idempotent, it follows that $\hat{\pi}(u)$ is the identity in $G(\cU)/G^{00}(\cU)$ since $\hat{\pi}$ is a semigroup homomorphism. Since $G^{00}(-)$ is type definable over the empty-set, it follows that $\hat{\pi}(u) = \hat{\pi}(t)$. Since $t$ is a strong right $f$-generic over $M$, it follows by Lemma \ref{lemma:apply} that
\begin{equation*}
t * t = t * \tp(e/\cU) = t,
\end{equation*}
and so $t$ is also idempotent.
\end{proof}

\subsection{When is the retraction an isomorphism?} We now present some results which characterize when the retraction/inverse retraction is an isomorphism/anti-isomorphism for definably amenable NIP groups.

\begin{lemma}\label{lemma:codomain} Suppose that $T$ is NIP and $G(x)$ is definably amenable. Let $I$ be a minimal left ideal contained in $F_{M}(\mathcal{F}_{r})$. Fix an idempotent $u \in I$. Consider $t \in \mathcal{F}_{r}$ such that $F_{M}(t) = u$.
\begin{enumerate}
\item If $\img \left(F_{M}|_{t * S_{G}^{\inv}(\cU,M)} \right) \subseteq u *I $, then $F_{M}|_{t*S_{G}^{\inv}(\cU,M)}$ is an isomorphism of Ellis subgroups.
\item If $t^{-1} \in \mathcal{F}_{r}$ and $\img \left(K_{M}|_{t^{-1} * S_{G}^{\inv}(\cU,M)} \right) \subseteq u *I $, then $K_{M}|_{t^{-1}*S_{G}^{\inv}(\cU,M)}$ is an anti-isomorphism of Ellis semigroups.
\end{enumerate}
\end{lemma}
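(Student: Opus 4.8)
The plan is to reduce everything to the structural facts about minimal left ideals and Ellis subgroups recorded in Facts~\ref{fact:Ellis} and~\ref{fact:right}, together with the multiplicativity of the retraction against finitely satisfiable types on the left (the preceding proposition: $q*F_M(p)=F_M(q*p)$ for $q\in S^\fs_G$) and the elementary properties of $F_M$ in Lemma~\ref{lem:triv}.

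For part (1): by Theorem~\ref{theorem:main}, since $t\in\mathcal{F}_r$ is a strong right $f$-generic, $E':=t*S^\inv_G(\cU,M)$ is an Ellis subgroup of the (unique) minimal left ideal $\mathcal{F}_r$, with identity element $t$ (after noting $F_M(t)=u$ is idempotent forces $t$ idempotent by the last proposition of the previous subsection). On the finitely satisfiable side, $E:=u*I$ is an Ellis subgroup of $I$ with identity $u$. First I would check that $F_M$ restricted to $E'$ is a semigroup homomorphism into $S^\fs_G(\cU,M)$: given $p_1,p_2\in E'$, write $p_1=t*p_1$ and use that $F_M(t)=u$ is finitely satisfiable, so by the multiplicativity proposition $F_M(p_1*p_2)=F_M((t*p_1)*p_2)=u*F_M(p_1*p_2)$ — more carefully, one wants $F_M(p_1*p_2)=F_M(p_1)*F_M(p_2)$, which I would derive by writing $F_M(p_1)=F_M(t*p_1')=u*F_M(p_1')$ (finitely satisfiable) and then applying the multiplicativity proposition again with the first factor now finitely satisfiable: $F_M(p_1)*F_M(p_2)=(u*F_M(p_1'))*F_M(p_2)=u*F_M(p_1'*\,?\,)$; the cleanest route is to observe $F_M(p_1*p_2)=F_M(p_1*(t*p_2))$ since $t$ is the identity of $E'$, hence $=F_M((p_1*t)*p_2)$, and $p_1*t=p_1\in E'$ has finitely satisfiable retract, so this is not yet a homomorphism directly — instead I would use that $F_M(p_1)=F_M(p_1*t)$ and $p_1*t\in\mathcal F_r$, and that multiplicativity holds when the \emph{left} factor is finitely satisfiable, applied to $F_M(p_1)*F_M(p_2)$ where now $F_M(p_1)\in S^\fs_G$: this equals $F_M(F_M(p_1)*p_2)$, and I would then show $F_M(p_1)*p_2$ has the same retract as $p_1*p_2$ using Fact~\ref{fact:rest_facts}(1) ($F_M$ fixes $M$-restriction) together with the fact that both $t$-fixed types sit in the same $G^{00}$-coset, invoking Lemma~\ref{lemma:apply}. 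Granting homomorphicity, $F_M|_{E'}\colon E'\to E$ (the codomain being $u*I$ by hypothesis) sends the identity $t$ to the idempotent $u=F_M(t)$, so it is a group homomorphism between the Ellis subgroups; since both are isomorphic to $G(\cU)/G^{00}(\cU)$ via $\hat\pi$ (Theorem~\ref{theorem:main}, Fact~\ref{fact:NC}) and $\hat\pi\circ F_M=\hat\pi$ on invariant types (as $F_M$ preserves $G^{00}$-cosets, being identity on $M$ and $G^{00}$ being $\emptyset$-type-definable), the composite $\hat\pi|_E\circ F_M|_{E'}=\hat\pi|_{E'}$ is an isomorphism, forcing $F_M|_{E'}$ to be an isomorphism.

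For part (2): by Fact~\ref{fact:easy}, $t^{-1}$ strong right $f$-generic means $t$ was strong left $f$-generic, and conversely here $t^{-1}\in\mathcal F_r$ by hypothesis, so $E'':=t^{-1}*S^\inv_G(\cU,M)$ is again an Ellis subgroup of $\mathcal F_r$. Now $K_M=F_M\circ{}^{-1}$, and $(-)^{-1}$ is an \emph{anti}-automorphism of $(S^\inv_G(\cU,M),*)$: for $p_1,p_2$ invariant, $(p_1*p_2)^{-1}=p_2^{-1}*p_1^{-1}$ (from $(ab)^{-1}=b^{-1}a^{-1}$ in $G$, realizing the Morley product by an appropriate independent pair). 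Moreover $(-)^{-1}$ maps $E''=t^{-1}*S^\inv_G$ bijectively onto $t*S^\inv_G=E'$ (since $(t^{-1}*p)^{-1}=p^{-1}*t$ and $p^{-1}*t=p^{-1}$ for $p\in$ the appropriate orbit — more simply, inversion carries right orbits of $t^{-1}$ to left-type data and one checks $((t^{-1})*S^\inv_G)^{-1}=S^\inv_G*t$, and under NIP with $t$ a strong left $f$-generic this equals the left ideal which coincides with $\mathcal F_r$; I would make this precise using Lemma~\ref{lem:triv}(1) and Theorem~\ref{theorem:main}(2)). Then $K_M|_{E''}=F_M|_{E'}\circ({}^{-1}|_{E''})$ is the composite of an anti-isomorphism (group inversion between the two Ellis subgroups, which is an anti-isomorphism since these are groups) with the homomorphism $F_M|_{E'}$, and by part (1) applied to $E'$ — whose image hypothesis we must transfer — this composite is an anti-isomorphism onto $u*I$, provided the image lands in $u*I$, which is exactly the stated hypothesis $\img(K_M|_{t^{-1}*S^\inv_G})\subseteq u*I$.

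The main obstacle I anticipate is the verification that $F_M|_{E'}$ is multiplicative, i.e.\ $F_M(p_1*p_2)=F_M(p_1)*F_M(p_2)$ for $p_1,p_2\in E'$. The multiplicativity proposition only gives this when the left factor is finitely satisfiable; the trick is that $F_M(p_1)$ \emph{is} finitely satisfiable, so one should compare $F_M(p_1*p_2)$ with $F_M(F_M(p_1)*p_2)=F_M(p_1)*F_M(p_2)$. To close the gap one needs $F_M(p_1*p_2)=F_M(F_M(p_1)*p_2)$; since $p_1,F_M(p_1)\in E'\cup E$ agree on $M$ (Fact~\ref{fact:rest_facts}(1)) and — crucially — agree modulo $G^{00}(\cU)$, Lemma~\ref{lemma:apply} (using that $p_1*p_2=(p_1*t)*p_2$ with $p_1*t$ a strong right $f$-generic) shows $p_1*p_2$ and $F_M(p_1)*p_2$ depend only on the $G^{00}$-coset of the first argument, hence are equal, whence the same holds after applying $F_M$. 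I would write this step out carefully as it is the technical heart; everything else is formal manipulation of Facts~\ref{fact:Ellis}, \ref{fact:right}, \ref{fact:rest_facts} and Theorem~\ref{theorem:main}.
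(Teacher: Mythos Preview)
Your proposal does eventually arrive at the paper's argument for part (1) --- the observation that $\hat\pi|_{u*I}\circ F_M|_{E'}=\hat\pi|_{E'}$ with both $\hat\pi$ restrictions being group isomorphisms --- but you bury it under an unnecessary and problematic attempt to prove multiplicativity of $F_M|_{E'}$ directly. The point is that once the image lands in $u*I$, the identity $\hat\pi\circ F_M=\hat\pi$ lets you write $F_M|_{E'}=(\hat\pi|_{u*I})^{-1}\circ \hat\pi|_{E'}$, which is a composition of group isomorphisms and hence \emph{automatically} a group isomorphism; no separate verification of $F_M(p_1*p_2)=F_M(p_1)*F_M(p_2)$ is needed. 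Your attempted direct argument for multiplicativity also misapplies Lemma~\ref{lemma:apply}: that lemma says a strong right $f$-generic $p$ satisfies $p*q_1=p*q_2$ when $\hat\pi(q_1)=\hat\pi(q_2)$, i.e.\ insensitivity in the \emph{second} argument, whereas you invoke it to vary the \emph{first} argument (comparing $p_1*p_2$ with $F_M(p_1)*p_2$).

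For part (2) there is a genuine error: the model-theoretic inversion $(-)^{-1}$ is \emph{not} an anti-automorphism of $(S_G^\inv(\cU,M),*)$. If $(a,b)\models p\otimes q$ then $(p*q)^{-1}=\tp(b^{-1}a^{-1}/\cU)$, but to realize $q^{-1}*p^{-1}$ one needs $a^{-1}\models p^{-1}|_\cU$ and then $b^{-1}\models q^{-1}|_{\cU a}$, which reverses the Morley-product order; in general these differ (cf.\ Proposition~\ref{prop:semi}(2), where even in the abelian case $(p*q)^{-1}=p^{-1}*q^{-1}$, not $q^{-1}*p^{-1}$). So your decomposition $K_M|_{E''}=F_M|_{E'}\circ({}^{-1}|_{E''})$ does not compose an isomorphism with an anti-isomorphism. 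The paper's fix is to bypass this entirely and run the same $\hat\pi$ argument: since $\hat\pi(p^{-1})=\hat\pi(p)^{-1}$, one has $\hat\pi\circ K_M=\inv\circ\hat\pi$, whence $\hat\pi|_{t^{-1}*S_G^\inv}=\inv\circ(\hat\pi|_{u*I})\circ K_M|_{t^{-1}*S_G^\inv}$, exhibiting $K_M|_{t^{-1}*S_G^\inv}$ as a composition of two isomorphisms and one anti-isomorphism.
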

\begin{proof} If condition $(1)$ above holds, then
\begin{equation*}
\hat{\pi}|_{t * S^{\inv}_{G}(\cU,M)} = (\hat{\pi}|_{u * I}) \circ F_{M}|_{t * S^{\inv}_{G}(\cU,M)},
\end{equation*}
since the image of $\hat{\pi}$ only depends on the type over $M$ and $p|_{M} = F_{M}(p)|_{M}$. Since the maps $\hat{\pi}|_{t * S_{G}^{\inv}(\cU,M)}$ and $\hat{\pi}|_{u * I}$ are group isomorphisms (the former by Theorem \ref{theorem:main} and the latter by Fact \ref{fact:NC}), we conclude that $F_{M}|_{t * S^{\inv}_{G}(\cU,M)}$ is also a group isomorphism.

Similarly, if condition $(2)$ holds, then
\begin{equation*}
\hat{\pi}|_{t^{-1} * S^{\inv}_{G}(\cU,M)} = \inv \circ (\hat{\pi}|_{u * I}) \circ K_{M}|_{t^{-1} * S^{\inv}_{G}(\cU,M)},
\end{equation*}
where $\inv\colon G(\cU)/G^{00}(\cU) \to G(\cU)/G^{00}(\cU) $ is the group inverse map. Hence $K_{M}|_{t^{-1}* S_{G}^{\inv}(\cU,M)}$ can be written as the composition of two isomorphism and an anti-isomorphism and thus is also an anti-isomorphism.
\end{proof}

\begin{lemma}\label{lemma:isoequiv} Suppose that $T$ is NIP and $G(x)$ is definably amenable. Let $I$ be a minimal left ideal contained in $F_{M}(\mathcal{F}_{r})$. Fix an idempotent $u \in I$. Consider $t \in \mathcal{F}_{r}$ such that $F_{M}(t) = u$. Suppose that every coset of $G^{00}(\cU)$ in $G(\cU)$ has a representative in $G(M)$. Then the following are equivalent:
\begin{enumerate}
\item For every $g \in G(M)$, $u \cdot g \in u *I$.
\item $F_M|_{t * S_{G}^{\inv}(\cU,M)}: t * S_{G}^{\inv}(\cU,M) \to u *I$ is an isomorphism of Ellis subgroups.
\end{enumerate}
\end{lemma}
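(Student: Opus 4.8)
\textbf{Proof plan for Lemma~\ref{lemma:isoequiv}.}

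The plan is to deduce the equivalence from Lemma~\ref{lemma:codomain}(1) together with the structural description of the Ellis subgroup $t * S_{G}^{\inv}(\cU,M)$ from Theorem~\ref{theorem:main}. First I would dispense with the direction $(2)\Rightarrow(1)$: if $F_M|_{t * S_{G}^{\inv}(\cU,M)}$ is an isomorphism onto $u * I$, then in particular its image lands in $u * I$; now for $g \in G(M)$ we have $t \cdot g \in t * S_{G}^{\inv}(\cU,M)$ (as $t \cdot g = t * \tp(g/\cU)$ and $t * S_{G}^{\inv}(\cU,M) = t \cdot G(\cU)$ by Theorem~\ref{theorem:main}(2)), so $F_M(t \cdot g) \in u * I$. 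But $F_M(t\cdot g) = F_M(t)\cdot g = u \cdot g$ by Lemma~\ref{lem:triv}(2), since $g \in G(M)$. Hence $u \cdot g \in u * I$, which is $(1)$.

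For the substantive direction $(1)\Rightarrow(2)$, by Lemma~\ref{lemma:codomain}(1) it suffices to show $\img\!\left(F_M|_{t * S_{G}^{\inv}(\cU,M)}\right) \subseteq u * I$. The key point is that every element of $t * S_{G}^{\inv}(\cU,M)$ can be written as $t \cdot g$ for some $g \in G(\cU)$ (Theorem~\ref{theorem:main}(2)), and moreover, by Theorem~\ref{theorem:main}(3), we may take $g$ to range over a fixed set $B$ of coset representatives of $G^{00}(\cU)$ in $G(\cU)$. Under the hypothesis that every coset has a representative in $G(M)$, we can choose $B \subseteq G(M)$. So a typical element is $t \cdot g$ with $g \in G(M)$, and again $F_M(t \cdot g) = F_M(t)\cdot g = u \cdot g$ by Lemma~\ref{lem:triv}(2). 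By hypothesis $(1)$, $u \cdot g \in u * I$. Since $\{t \cdot g : g \in B\}$ exhausts $t * S_{G}^{\inv}(\cU,M)$, we get $\img\!\left(F_M|_{t * S_{G}^{\inv}(\cU,M)}\right) = \{u \cdot g : g \in B\} \subseteq u * I$, as needed, and Lemma~\ref{lemma:codomain}(1) then gives that $F_M|_{t * S_{G}^{\inv}(\cU,M)}$ is an isomorphism of Ellis subgroups. (Surjectivity onto $u * I$, if one wants it separately rather than reading it off from Lemma~\ref{lemma:codomain}, also follows: $\hat\pi$ restricted to $u * I$ is a bijection onto $G(\cU)/G^{00}(\cU)$ by Fact~\ref{fact:NC}, and $\hat\pi(u \cdot g) = \hat\pi(u)\cdot \pi(g) = \pi(g)$, so the $u \cdot g$ for $g \in B$ already hit every coset.)

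I do not expect a serious obstacle here: the lemma is essentially a bookkeeping repackaging of Theorem~\ref{theorem:main} and Lemma~\ref{lemma:codomain}, with the richness hypothesis on $G(M)$ being exactly what is needed to replace the abstract coset representatives $B \subseteq G(\cU)$ by ones inside $G(M)$, so that Lemma~\ref{lem:triv}(2) applies and turns $F_M(t\cdot g)$ into the explicit element $u \cdot g$. The one mild subtlety to be careful about is that $u * I$ need not a priori contain all the translates $u \cdot g$ — that is precisely the content of condition $(1)$ — and that $u \cdot g$ does genuinely equal $F_M(t \cdot g)$ rather than merely agreeing with it on $M$; this is where invoking Lemma~\ref{lem:triv}(2) (which gives equality of the full global types, via Fact~\ref{fact:rest_facts}(4) applied to the $M$-definable function of right multiplication by $g \in G(M)$) rather than just Fact~\ref{fact:rest_facts}(1) is essential.
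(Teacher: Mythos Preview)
Your proposal is correct and follows essentially the same approach as the paper: both directions reduce to the computation $F_M(t\cdot g)=F_M(t)\cdot g=u\cdot g$ for $g\in G(M)$ via Lemma~\ref{lem:triv}(2), with $(1)\Rightarrow(2)$ then invoking Lemma~\ref{lemma:codomain}(1). The only cosmetic difference is that the paper phrases the reduction through Lemma~\ref{lemma:apply} (choosing $g\in G(M)$ with $\pi(g)=\hat\pi(p)$ so that $t*p=t\cdot g$), whereas you phrase it through Theorem~\ref{theorem:main}(2),(3) (choosing $B\subseteq G(M)$); these are equivalent.
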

\begin{proof}
$(1) \to (2)$. By Lemma \ref{lemma:codomain}, it suffices to prove that the image of $F_{M}|_{t*S_{G}^{\inv}(\cU,M)}$ is a subset of $u *I$. Fix $p \in S_{G}^{\inv}(\cU,M)$ and choose $g \in G(M)$ such that $\pi(g) = \hat{\pi}(p)$. Then
\begin{equation*}
F_{M}(t * p) = F_{M}(t * \tp(g/\cU)) = F_{M}(t) *\tp(g/\cU) = u \cdot g \in u * I.
\end{equation*}
where the first equality follows from Lemma \ref{lemma:apply} and the second from Lemma \ref{lem:triv}.

$(2) \to (1)$. Notice that if $g \in G(M)$, then
\begin{equation*}
u \cdot g = F_{M}(t) * \tp(g/\cU) = F_{M}(t * \tp(g/\cU)) \in u * I. \qedhere
\end{equation*}
\end{proof}

\begin{remark}\label{rem:isomorphism_shortcut} More generally, Suppose that $T$ is NIP and $G(x)$ is definably amenable. Let $\hat{E}$ be an Ellis subgroup of $S_{G}^{\inv}(\cU,M)$, $E$ be an Ellis subgroup of $S_{G}^{\fs}(\cU,M)$, and $f:\hat{E} \to E$. If the following diagram commutes
\begin{center}
\begin{tikzcd}[cramped, column sep=0]
\hat{E} \arrow[rr, "f"] \arrow[dr, "\pi|_{\hat{E}}"'] && E \arrow[ld, "\pi|_{E}"] \\
&G(\cU)/G^{00}(\cU) &
\end{tikzcd}
\end{center}

\noindent then $f$ is a group isomorphism.
\end{remark}

\begin{lemma}\label{lemma:antiequiv} Suppose that $T$ is NIP and $G(x)$ is definably amenable. Let $I$ be a minimal left ideal contained in $F_{M}(\mathcal{F}_{r})$. Fix an idempotent $u \in I$. Consider $t \in \mathcal{F}_{r}$ such that $F_{M}(t) = u$. Suppose that every coset of $G^{00}(\cU)$ in $G(\cU)$ has a representative in $G(M)$. Assume moreover that $t^{-1} \in \mathcal{F}_{r}$. Then the following are equivalent.
\begin{enumerate}
\item For every $g \in G(M)$, $g \cdot u \in u *I$.
\item $K_M|_{t^{-1} * S_{G}^{\inv}(\cU,M)}: t ^{-1}* S_{G}^{\inv}(\cU,M) \to u *I$ is an anti-isomorphism of Ellis subgroups.
\end{enumerate}
\end{lemma}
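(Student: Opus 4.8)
The plan is to mirror the proof of Lemma~\ref{lemma:isoequiv} as closely as possible, replacing the retraction $F_M$ by the inverted retraction $K_M = F_M\circ{}^{-1}$ and tracking where the group inversion forces a left-right swap. The key computational identity will be: for $g\in G(M)$, using Lemma~\ref{lem:triv}(1) and (2) together with Lemma~\ref{lemma:apply} applied to the strong right $f$-generic $t^{-1}$, we should obtain $K_M(t^{-1}*p) = g\cdot u$ (rather than $u\cdot g$), where $g\in G(M)$ is chosen with $\pi(g)=\hat\pi(p)$. Concretely: $K_M(t^{-1}*p) = F_M((t^{-1}*p)^{-1}) = F_M(p^{-1}*(t^{-1})^{-1}) = F_M(p^{-1}*t)$. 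Now $t$ is strong \emph{left} $f$-generic by Fact~\ref{fact:easy} (since $t^{-1}\in\mathcal F_r$ means $t$ is strong left $f$-generic), but to use Lemma~\ref{lemma:apply} I instead want to act on the left of $t^{-1}$; so I rewrite $(t^{-1}*p)^{-1}$ and use that $t^{-1}$ is strong right $f$-generic with $\hat\pi(p^{-1}) = \hat\pi(p)^{-1}$, picking $g\in G(M)$ with $\pi(g)=\hat\pi(p)$, to get $F_M(t^{-1}*p) = F_M(t^{-1})\cdot g^{-1}\cdot(\text{something})$; the cleanest route is probably to first compute $K_M(t^{-1}*p)$ directly as $F_M(t^{-1}*p)^{-1}$ and feed in $F_M(t^{-1}*p) = F_M(t^{-1})\cdot g = u\cdot g$ via Lemma~\ref{lemma:isoequiv}'s computation applied to $t^{-1}\in\mathcal F_r$, whence $K_M(t^{-1}*p) = (u\cdot g)^{-1} = g^{-1}\cdot u^{-1} = g^{-1}\cdot u$ (as $u$ is idempotent, $u^{-1}=u$ in the Ellis subgroup — though one must be slightly careful, see below). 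As $g$ ranges over $G(M)$ so does $g^{-1}$, so the condition ``$g^{-1}\cdot u\in u*I$ for all $g\in G(M)$'' is the same as condition (1).

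For the direction $(1)\to(2)$, I would invoke Lemma~\ref{lemma:codomain}(2): it suffices to show $\img(K_M|_{t^{-1}*S_G^\inv(\cU,M)})\subseteq u*I$, and the computation above shows every element of the image has the form $g^{-1}\cdot u$ for some $g\in G(M)$, which lies in $u*I$ by hypothesis~(1). For the direction $(2)\to(1)$, I would run the computation in reverse: given $g\in G(M)$, the element $\tp(g^{-1}/\cU)$ lies in $S_G^\inv(\cU,M)$, so $t^{-1}*\tp(g^{-1}/\cU)\in t^{-1}*S_G^\inv(\cU,M)$, and by the identity its image under $K_M$ is $g\cdot u$ (taking care of which inverse lands where); so if $K_M$ maps $t^{-1}*S_G^\inv(\cU,M)$ into $u*I$, then $g\cdot u\in u*I$. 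Re-indexing $g\mapsto g^{-1}$ over $G(M)$ then yields condition~(1) as stated.

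The main obstacle is bookkeeping rather than mathematics: I must be precise about the distinction between the group-theoretic inverse $\inv$ on $u*I$ (where $u$ is the identity and $u^{-1}=u$ makes sense) versus the \emph{model-theoretic inverse} ${}^{-1}$ on all of $S_G^\inv(\cU,M)$ (where $u^{-1}$ is a priori just $\tp(a^{-1}/\cU)$ for $a\models u$, and need \emph{not} equal $u$ — indeed $F_M(u^{-1})=F_M(u)^{-1}=u^{-1}$ in the Ellis group sense only after we know $u^{-1}\in u*I$, which is the content of $t^{-1}\in\mathcal F_r$). So the clean statement to extract is: $K_M(t^{-1}*p)=F_M(p^{-1}*t)$, and then apply the $F_M$-computation of Lemma~\ref{lemma:isoequiv} to the strong right $f$-generic $t^{-1}$ \emph{and} the invariant type $p^{-1}$, using $\hat\pi(p^{-1})=\hat\pi(p)^{-1}$ to choose the coset representative. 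This gives $F_M(p^{-1}*t)=F_M((t^{-1}*p)^{-1})$; but $F_M(t^{-1}*p)=F_M(t^{-1})\cdot h = u\cdot h$ where $\pi(h)=\hat\pi(p)$, so $K_M(t^{-1}*p)=(u\cdot h)^{-1}$, and the point is that $(u\cdot h)^{-1}$, computed as a model-theoretic inverse, equals $h^{-1}\cdot u^{-1}=h^{-1}\cdot u$ after noting $u^{-1}=u$ holds \emph{because} $t^{-1}\in\mathcal F_r$ forces $\hat\pi(u^{-1})=e$ hence (as in the final proposition of the preceding subsection) $u^{-1}=u$. So the hypothesis $t^{-1}\in\mathcal F_r$ is used exactly twice: once to make sense of the anti-isomorphism target, and once to collapse $u^{-1}$ to $u$. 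Everything else is the same diagram-chase as in Lemmas~\ref{lemma:codomain} and~\ref{lemma:isoequiv}, now with $\inv$ inserted so that a homomorphism becomes an anti-homomorphism.

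\begin{proof}
$(1)\to(2)$. By Lemma~\ref{lemma:codomain}(2), it suffices to show that $\img\bigl(K_M|_{t^{-1}*S_G^\inv(\cU,M)}\bigr)\subseteq u*I$. Fix $p\in S_G^\inv(\cU,M)$ and choose $g\in G(M)$ with $\pi(g)=\hat\pi(p)$. Since $t^{-1}\in\mathcal F_r$ is a strong right $f$-generic, Lemma~\ref{lemma:apply} and Lemma~\ref{lem:triv}(2) give
\begin{equation*}
F_M(t^{-1}*p)=F_M(t^{-1}*\tp(g/\cU))=F_M(t^{-1})\cdot g = u\cdot g.
\end{equation*}
Hence, using Lemma~\ref{lem:triv}(1),
\begin{equation*}
K_M(t^{-1}*p)=F_M\bigl((t^{-1}*p)^{-1}\bigr)=F_M(t^{-1}*p)^{-1}=(u\cdot g)^{-1}=g^{-1}\cdot u^{-1}.
\end{equation*}
Now $\hat\pi(u)$ is the identity of $G(\cU)/G^{00}(\cU)$, and since $G^{00}(-)$ is $\emptyset$-type-definable we get $\hat\pi(u^{-1})=\hat\pi(u)^{-1}$ is also the identity, so $\hat\pi(u^{-1})=\hat\pi(u)$; as $u^{-1}$ is again a strong right $f$-generic (Fact~\ref{fact:easy}), Lemma~\ref{lemma:apply} applied to $u^{-1}$ and $\tp(e/\cU)$ yields $u^{-1}=u$. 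Therefore $K_M(t^{-1}*p)=g^{-1}\cdot u$, which lies in $u*I$ by hypothesis~(1) (as $g^{-1}\in G(M)$).

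$(2)\to(1)$. Let $g\in G(M)$. Then $\tp(g^{-1}/\cU)\in S_G^\inv(\cU,M)$, so $t^{-1}*\tp(g^{-1}/\cU)\in t^{-1}*S_G^\inv(\cU,M)$, and the computation above (with $p=\tp(g^{-1}/\cU)$, so that we may take the coset representative to be $g^{-1}$) gives
\begin{equation*}
K_M\bigl(t^{-1}*\tp(g^{-1}/\cU)\bigr)=(g^{-1})^{-1}\cdot u=g\cdot u.
\end{equation*}
By hypothesis~(2), this element lies in $u*I$, so $g\cdot u\in u*I$. As $g\in G(M)$ was arbitrary, condition~(1) holds.
\end{proof}
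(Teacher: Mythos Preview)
Your overall strategy is the same as the paper's, and the $(2)\to(1)$ direction is fine. But the $(1)\to(2)$ computation contains an error that you then try to patch with a false claim.

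The slip is in the line
\[
F_M(t^{-1}*p)=F_M(t^{-1})\cdot g = u\cdot g.
\]
We are given $F_M(t)=u$, so by Lemma~\ref{lem:triv}(1) we have $F_M(t^{-1})=F_M(t)^{-1}=u^{-1}$, \emph{not} $u$. With this correction the computation finishes immediately:
\[
K_M(t^{-1}*p)=F_M(t^{-1}*p)^{-1}=(u^{-1}\cdot g)^{-1}=g^{-1}\cdot u,
\]
which lies in $u*I$ by hypothesis~(1). No further argument is needed.

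Your attempt to repair the mismatch by proving $u^{-1}=u$ does not work. First, we are not told that $u$ (a finitely satisfiable type) is a strong right $f$-generic, so Fact~\ref{fact:easy} does not apply; and even if $u$ were strong right $f$-generic, Fact~\ref{fact:easy} would only give that $u^{-1}$ is strong \emph{left} $f$-generic. Second, the appeal to Lemma~\ref{lemma:apply} does not yield $u^{-1}=u$: that lemma compares $p*q_1$ with $p*q_2$, not two candidate $p$'s. Finally, the claim $u^{-1}=u$ is simply false in general: in Example~\ref{example:dfg} the idempotent $u=(t_+,1)$ has model-theoretic inverse $(t_-,1)\neq u$.

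The paper's proof avoids this pitfall by inverting one step earlier, writing $K_M(t^{-1}*p)=F_M\bigl(\tp(g^{-1}/\cU)*t\bigr)=g^{-1}\cdot F_M(t)=g^{-1}\cdot u$, using that $(t^{-1}\cdot g)^{-1}=g^{-1}\cdot t$ for $g\in G(M)$ together with Lemma~\ref{lem:triv}(2). Either route is fine once the sign is tracked correctly; just delete the $u^{-1}=u$ paragraph and fix the line above.
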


\begin{proof} The proof is similar to the proof of Lemma \ref{lemma:isoequiv}.

$(1) \to (2)$. By Lemma \ref{lemma:codomain}, it suffices to prove that the image of $K_{M}|_{t^{-1}*S_{G}^{\inv}(\cU,M)}$ is a subset of $u *I$. Fix $p \in S_{G}^{\inv}(\cU,M)$ and choose $g \in G(M)$ such that $\pi(g) = \hat{\pi}(p)$. Then
\begin{align*}
K_{M}(t^{-1} * p) &= F_{M}(t^{-1} * \tp(g/\cU))^{-1} = F_{M}( [t^{-1} *\tp(g/\cU)]^{-1}) \\ &= F_{M}(\tp(g^{-1}/\cU) * t) = \tp(g^{-1}/\cU) * F_{M}(t) = g^{-1} \cdot u \in u * I,
\end{align*}
$(2) \to (1)$. Consider the following:
\begin{align*}
g \cdot u &= \tp(g/\cU) * F_{M}(t) = (F_{M}(t)^{-1} * \tp(g^{-1}/\cU))^{-1}\\
 &= (F_{M}(t^{-1} * \tp(g^{-1}/\cU))^{-1}
=K_{M}(t^{-1} * \tp(g^{-1}/\cU)) \in u * I.
\end{align*}
The above sequence of equalities uses several instances of Lemma \ref{lem:triv}.
\end{proof}

\subsection{Abelian NIP groups} We prove that both the retraction and the inverted retraction witness isomorphisms of Ellis subgroups in the abelian NIP setting. We recall that all abelian groups are amenable and thus definably amenable. Hence the collection of global strong right $f$-generic types over $M$ is non-empty.

\begin{proposition}\label{prop:up} Let $t \in S_{G}^{\inv}(\cU,M)$ and suppose that $t$ is idempotent. Then the following are equivalent:
\begin{enumerate}
    \item $t$ is strong left $f$-generic.
    \item $t$ is strong right $f$-generic.
    \item $t^{-1}$ is strong left $f$-generic.
    \item $t^{-1}$ is strong right $f$-generic.
\end{enumerate}
\end{proposition}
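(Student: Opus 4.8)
The plan. The equivalences (1)$\Leftrightarrow$(4) and (2)$\Leftrightarrow$(3) are exactly Fact~\ref{fact:easy}, and they require neither idempotency nor NIP; so everything reduces to connecting one left-hand condition with one right-hand condition, and I will prove (1)$\Leftrightarrow$(2). If $G$ is abelian there is nothing to do, since $g\cdot p=p\cdot g$ for every $p\in S_G(\cU)$ and $g\in G(\cU)$, so that ``strong left $f$-generic'' and ``strong right $f$-generic'' literally coincide; so the substance is the general definably amenable NIP case. Note that idempotency is genuinely needed: by Example~\ref{example:Heisenberg} there are left $\dfg$ (hence strong left $f$-generic) types which are not right $f$-generic, but those types are not idempotent.

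For (1)$\Rightarrow$(2): since $\hat\pi$ is a group homomorphism, $t=t*t$ forces $\hat\pi(t)=\hat\pi(t)^2$ in $G(\cU)/G^{00}(\cU)$, hence $\hat\pi(t)=e$, and therefore also $\hat\pi(t^{-1})=\hat\pi(t)^{-1}=e$. Assume $t$ is strong left $f$-generic. By Fact~\ref{fact:easy}, $t^{-1}$ is strong right $f$-generic, so Lemma~\ref{lemma:apply} applies with $p=q_1=q_2=t^{-1}$: taking $g=e$ (which has $\pi(e)=e=\hat\pi(t^{-1})$) yields $t^{-1}*t^{-1}=t^{-1}\cdot e=t^{-1}$. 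Thus $t^{-1}$ is an idempotent strong right $f$-generic, so it lies in the unique minimal left ideal $\mathcal{F}_r$ of $S^\inv_G(\cU,M)$ (Proposition~\ref{prop:unique_min}); in particular $t^{-1}$ is a minimal idempotent and $t^{-1}*S^\inv_G(\cU,M)=t^{-1}\cdot G(\cU)$ is an Ellis subgroup (Theorem~\ref{theorem:main}).

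It remains to show $t$ itself is strong right $f$-generic — equivalently (Fact~\ref{fact:easy}) that $t^{-1}$ is strong left $f$-generic, equivalently that $t\in\mathcal{F}_r$, i.e.\ that the idempotent $t$ is minimal. My plan is to exploit that the strong left $f$-generics form a \emph{closed} right ideal of $S^\inv_G(\cU,M)$: it is a right ideal by Proposition~\ref{prop:left_f_generic}, it equals $(\mathcal{F}_r)^{-1}$ by Fact~\ref{fact:easy}, and it is closed since $\mathcal{F}_r$ is closed (Fact~\ref{fact:Ellis}) and the model-theoretic inverse map is continuous. Then $t*S^\inv_G(\cU,M)$ is a right ideal of strong left $f$-generics containing $t=t*t$, hence it contains a minimal right ideal; by Fact~\ref{fact:equiv} (uniqueness of the minimal left ideal) this minimal right ideal is an Ellis subgroup $v*S^\inv_G(\cU,M)$ with $v\in\mathcal{F}_r$ idempotent, and one checks $t*v=v$ (write $v=t*r$ and use $t*t=t$) and $v*t=v$ (Lemma~\ref{lemma:apply}, as $v$ is strong right $f$-generic and $\hat\pi(t)=e$). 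The step I expect to be the main obstacle is upgrading this to $t=v$, i.e.\ ruling out that $t$ is a strictly non-minimal idempotent; I would attack it by assuming $t\notin v*S^\inv_G(\cU,M)$ and deriving a contradiction from the interaction of $t$ with the idempotent $w:=t*t^{-1}\in\mathcal{F}_r$ (which has $t*w=w*t=w$ and $\hat\pi(w)=e$, hence is the identity of its Ellis subgroup) together with the symmetry supplied by the model-theoretic inverse, which places both $t^{-1}$ and $w$ inside $\mathcal{F}_r$ and inside the closed set $(\mathcal{F}_r)^{-1}$ of strong left $f$-generics. Once $t\in\mathcal{F}_r$ is established, (1)$\Rightarrow$(2) is immediate, and (2)$\Rightarrow$(1) follows by the symmetric argument, using Fact~\ref{fact:g00} and Corollary~\ref{fact:stabiliser_right_gen_g00} with the roles of left and right interchanged (i.e.\ a left-handed analogue of Lemma~\ref{lemma:apply}); this closes the cycle (1)$\Leftrightarrow$(2)$\Leftrightarrow$(3)$\Leftrightarrow$(4).
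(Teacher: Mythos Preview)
You have misread the scope of the proposition. It sits in the subsection \emph{Abelian NIP groups}, so the standing hypothesis throughout is that $G$ is abelian (and NIP). In that setting your own first observation already finishes the proof: for abelian $G$ one has $g\cdot p=p\cdot g$ for all $g\in G(\cU)$ and $p\in S_G(\cU)$, so ``strong left $f$-generic'' and ``strong right $f$-generic'' are literally the same notion, giving (1)$\Leftrightarrow$(2); and (1)$\Leftrightarrow$(4), (2)$\Leftrightarrow$(3) are Fact~\ref{fact:easy}. This is exactly the paper's one-line proof (``Follows directly from Fact~\ref{fact:easy}''), and in fact idempotency is not even used --- it is stated only because that is the form needed in Theorem~\ref{thm:invert_iso_abelian}.

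Your attempt to prove the statement for an arbitrary definably amenable NIP group is therefore not what is being asked, and as you yourself flag, it has a genuine gap: you never establish $t=v$ (equivalently, that the idempotent $t$ is minimal). The sketch you give --- passing to $w=t*t^{-1}$ and invoking the closedness of $(\mathcal F_r)^{-1}$ --- does not supply the missing implication; you only obtain $t*v=v=v*t$, which is exactly the relation any idempotent $t$ above a minimal idempotent $v$ satisfies, and does not force $t=v$. So the general-case (1)$\Rightarrow$(2) remains unproved in your write-up. If you want to pursue the general statement as a separate exercise, you would need a different idea to rule out a strictly non-minimal idempotent strong left $f$-generic; nothing in the paper claims or needs this.
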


\begin{proof} Follows directly from Fact \ref{fact:easy}.
\end{proof}

\begin{theorem}\label{thm:invert_iso_abelian} Suppose that $T$ is NIP, $G(x)$ is abelian, and $G(M)$ contains representatives for each coset of $G^{00}(\cU)$ in $G(\cU)$. Let $I$ be a minimal left ideal of $F_{M}(\mathcal{F}_{r})$. Fix an idempotent $u \in I$. Consider $t \in \mathcal{F}_{r}$ such that $F_{M}(t) = u$. Then
\begin{enumerate}
\item The map $F_{M}|_{t * S^{\inv}_{G}(\cU,M)}: {t * S^{\inv}_{G}(\cU,M)} \to u *I$ is an isomorphism of Ellis subgroups.
\item The map $K_{M}|_{t^{-1} * S^{\inv}_{G}(\cU,M)}: {t^{-1} * S^{\inv}_{G}(\cU,M)} \to u *I$ is an isomorphism of Ellis subgroups.
\end{enumerate}
\end{theorem}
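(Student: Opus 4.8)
The plan is to deduce the statement from Lemmas~\ref{lemma:isoequiv} and \ref{lemma:antiequiv}, whose common standing hypothesis --- that $G(M)$ meets every coset of $G^{00}(\cU)$ in $G(\cU)$ --- is exactly our assumption, and then to verify the combinatorial criterion appearing in those lemmas using commutativity. (Recall $G$ abelian implies $G$ definably amenable, so $\mathcal{F}_r\neq\emptyset$ and the relevant machinery is available.)

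First I would observe that the minimal left ideal $I$ of $F_M(\mathcal{F}_r)$ is also a minimal left ideal of $X := S_G^{\fs}(\cU,M)$. Here $F_M(\mathcal{F}_r)$ is a closed left ideal of $X$ --- a left ideal by the proposition $q*F_M(p)=F_M(q*p)$ (for finitely satisfiable $q$), and closed since $\mathcal{F}_r$ is closed and $F_M$ is continuous --- and it is a subsemigroup, since for $p,q\in\mathcal{F}_r$ that same proposition gives $F_M(p)*F_M(q)=F_M(F_M(p)*q)$ with $F_M(p)*q\in\mathcal{F}_r$ (as $\mathcal{F}_r$ is a left ideal of $S_G^{\inv}(\cU,M)$). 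Applying Fact~\ref{fact:Ellis} inside $F_M(\mathcal{F}_r)$ gives $I=F_M(\mathcal{F}_r)*u$ for the idempotent $u\in I$, whence $X*I=X*F_M(\mathcal{F}_r)*u\subseteq F_M(\mathcal{F}_r)*u=I$; so $I$ is a left ideal of $X$, and it is minimal there because any $X$-left ideal inside $I$ is in particular an $F_M(\mathcal{F}_r)$-left ideal inside $I$. Hence Fact~\ref{fact:NC}, Theorem~\ref{theorem:main}, and Lemmas~\ref{lemma:isoequiv}--\ref{lemma:antiequiv} are all available for this $I$, $u$, $t$. I would also record that, since $G$ is abelian, left and right translates of a type coincide, so $\mathcal{F}_r$ is also the set of strong left $f$-generics; by Fact~\ref{fact:easy} this gives $t^{-1}\in\mathcal{F}_r$, so $t^{-1}*S_G^{\inv}(\cU,M)$ is an Ellis subgroup by Theorem~\ref{theorem:main}.

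Next I would prove the key claim: $u\cdot g\in u*I$ for every $g\in G(M)$. Since $I=X*u$ (Fact~\ref{fact:Ellis}), the Ellis subgroup $u*I$ equals $u*X*u=\{y\in X : u*y=y=y*u\}$. Now $u\cdot g=u*\tp(g/\cU)$ lies in $X$ because $\tp(g/\cU)\in S_G^{\fs}(\cU,M)$ (it is realized in $M$); moreover $u*(u\cdot g)=(u*u)*\tp(g/\cU)=u\cdot g$ by associativity and idempotency; and, using $\tp(g/\cU)*p=g\cdot p$ together with $g\cdot u=u\cdot g$ (commutativity), $(u\cdot g)*u=u*(\tp(g/\cU)*u)=u*(g\cdot u)=u*(u\cdot g)=u\cdot g$. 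Hence $u\cdot g\in u*X*u=u*I$.

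Finally, part (1) follows at once from Lemma~\ref{lemma:isoequiv}, whose condition (1) is precisely the claim just proved. For part (2), the claim also yields $g\cdot u=u\cdot g\in u*I$ for all $g\in G(M)$, so Lemma~\ref{lemma:antiequiv} gives that $K_M|_{t^{-1}*S_G^{\inv}(\cU,M)}$ is an anti-isomorphism of Ellis subgroups; but by Theorem~\ref{theorem:main} and Fact~\ref{fact:NC} both $t^{-1}*S_G^{\inv}(\cU,M)$ and $u*I$ are isomorphic to the abelian group $G(\cU)/G^{00}(\cU)$, and an anti-isomorphism between abelian groups is an isomorphism, so $K_M|_{t^{-1}*S_G^{\inv}(\cU,M)}$ is in fact an isomorphism. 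I do not expect a serious obstacle here: the only real input is the commutative identity $g\cdot u=u\cdot g$, which trivializes the criterion of both lemmas; the points needing mild care are the bookkeeping that $I$ is genuinely a minimal left ideal of $S_G^{\fs}(\cU,M)$ and, in part (2), that $t^{-1}$ is still strongly right $f$-generic and that an anti-isomorphism upgrades to an isomorphism --- all immediate from commutativity.
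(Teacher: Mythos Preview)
Your proof is correct and follows essentially the same approach as the paper: verify the criterion of Lemmas~\ref{lemma:isoequiv} and~\ref{lemma:antiequiv} using commutativity (so that $u\cdot g=g\cdot u=u*\tp(g/\cU)*u\in u*I$), note $t^{-1}\in\mathcal{F}_r$ by abelianness, and upgrade the resulting anti-isomorphism to an isomorphism since the target group is abelian. Your additional bookkeeping---checking that a minimal left ideal of $F_M(\mathcal{F}_r)$ is already a minimal left ideal of $S_G^{\fs}(\cU,M)$---is a point the paper leaves implicit, so your version is if anything slightly more careful.
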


\begin{proof} To prove statement $(1)$, we apply Lemma \ref{lemma:isoequiv}. Since $G(x)$ is abelian and $u$ is idempotent, we have that for any $g \in G(M)$,
\begin{equation*}
g \cdot u = \tp(g/\cU) * u *u = u * \tp(g/\cU) * u \in u * I.
\end{equation*}
To prove statement $(2)$, we first realize that $t^{-1} \in \mathcal{F}_{r}$ by Proposition \ref{prop:up}. Since $G(x)$ is abelian, $G(\cU)/G^{00}(\cU)$ is also abelian. Since any anti-isomorphism between abelian groups is automatically an isomorphism, it suffices to prove that $K_{M}|_{t^{-1} * S^{\inv}(\cU,M)}$ is an anti-isomorphism. To do so, we will apply Lemma \ref{lemma:antiequiv}. Again, since $G(x)$ is abelian and $u$ is idempotent, we have that for any $g \in G(M)$,
\begin{equation*}
u \cdot g = u * u \cdot \tp(g/\cU) = u * \tp(g/\cU) * u \in u * I.
\end{equation*}
Hence, the statement holds.
\end{proof}

\subsection{NIP fsg groups} Here we briefly consider the case of NIP $\fsg$ groups. In this setting, the unique minimal left ideal of the semigroups $S_{G}^{\fs}(\cU,M)$ and $S_{G}^{\inv}(\cU,M)$ coincide -- hence the retraction map restricted to this set is the identity. Thus the retraction map witnesses an isomorphism of Ellis subgroups. We remark that this result is more or less folklore, and can be massaged out by results already present in the literature.

We recall that a group $G(x)$ is $\fsg$ (over $M$) if there exists some $p \in S_{G}^{\inv}(\cU,M)$ such that $p$ is left $\fsg$. We recall that $\fsg$ does not dependent on the choice of the small model $M$ (\cite[Remark 4.4]{NIP1}).

\begin{proposition}\label{prop:fsgmain} Suppose that $G(x)$ is $\fsg$. Then $\mathcal{F}_{r}$ is the unique minimal left ideal in $S_{G}^{\fs}(\cU,M)$. As consequence, the retraction map is the identity on $\mathcal{F}_{r}$ and thus is an isomorphism when restricted to Ellis subgroups.
\end{proposition}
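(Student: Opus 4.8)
The plan is to reduce the statement to one classical fact about $\fsg$ groups --- that in an $\fsg$ NIP group every $f$-generic type is finitely satisfiable in every small model --- after which everything is bookkeeping inside the Ellis theory already developed. First I would establish the inclusion $\mathcal{F}_{r}\subseteq S_{G}^{\fs}(\cU,M)$: a strong right $f$-generic type over $M$ is in particular right $f$-generic, and in the $\fsg$ setting the left and right $f$-generic types coincide with the generic types, each of which is finitely satisfiable in every small model (see \cite{hrushovski2012note}, or \cite[Chapter~8]{Guide}), hence in $M$. I expect this to be the only genuine obstacle, and even then only in the sense of pinning down the reference (or, for a self-contained argument, reproving that generic types in $\fsg$ groups are finitely satisfiable in $M$); everything afterwards is formal. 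Note also that $\fsg$ implies definably amenable, so Proposition~\ref{prop:unique_min} and Theorem~\ref{theorem:main} are available.

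Next I would upgrade ``$\mathcal{F}_{r}$ is the unique minimal left ideal of $S_{G}^{\inv}(\cU,M)$'' to $S_{G}^{\fs}(\cU,M)$. By general Ellis theory (Fact~\ref{fact:right}, cf.\ also Theorem~\ref{theorem:one}) $\mathcal{F}_{r}$ is a two-sided ideal of $S_{G}^{\inv}(\cU,M)$; since $S_{G}^{\fs}(\cU,M)\subseteq S_{G}^{\inv}(\cU,M)$ and, by the first step, $\mathcal{F}_{r}\subseteq S_{G}^{\fs}(\cU,M)$, it is also a two-sided ideal of $S_{G}^{\fs}(\cU,M)$, and in particular contains every minimal left ideal of $S_{G}^{\fs}(\cU,M)$ (Fact~\ref{fact:right}(9)). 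For the reverse containment I fix a minimal left ideal $I$ of $S_{G}^{\fs}(\cU,M)$ and an idempotent $u\in I$ (Fact~\ref{fact:Ellis}); then $u\in I\subseteq\mathcal{F}_{r}$, so $\stab_{r}(u)=G^{00}(\cU)$ (Corollary~\ref{fact:stabiliser_right_gen_g00}) and $\hat{\pi}(u)$ is the identity, whence the computation in the proof of Proposition~\ref{prop:unique_min} gives $q*u=q$ for every $q\in\mathcal{F}_{r}$. Since such $q$ lies in $S_{G}^{\fs}(\cU,M)$, we get $q=q*u\in S_{G}^{\fs}(\cU,M)*I\subseteq I$, so $\mathcal{F}_{r}\subseteq I$ and hence $I=\mathcal{F}_{r}$. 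Thus $\mathcal{F}_{r}$ is the unique minimal left ideal of $S_{G}^{\fs}(\cU,M)$.

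Finally, since $\mathcal{F}_{r}\subseteq S_{G}^{\fs}(\cU,M)$, Fact~\ref{fact:rest_facts}(3) gives $F_{M}(p)=p$ for every $p\in\mathcal{F}_{r}$, i.e.\ $F_{M}$ is the identity on $\mathcal{F}_{r}$. The Newelski product on $S_{G}^{\fs}(\cU,M)$ is the restriction of the one on $S_{G}^{\inv}(\cU,M)$, and the two semigroups have the same minimal left ideal $\mathcal{F}_{r}$; consequently they have the same minimal idempotents $u$ and the same Ellis subgroups $u*\mathcal{F}_{r}$, and $F_{M}$ restricts to the identity from each invariant Ellis subgroup onto the (equal) finitely satisfiable Ellis subgroup, which is trivially a group isomorphism. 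The only nontrivial input, as flagged above, is the $\fsg$ fact used to get $\mathcal{F}_{r}\subseteq S_{G}^{\fs}(\cU,M)$.
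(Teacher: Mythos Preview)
Your proposal is correct and follows essentially the same approach as the paper: establish $\mathcal{F}_{r}\subseteq S_{G}^{\fs}(\cU,M)$ via the classical $\fsg$ fact, observe that $\mathcal{F}_{r}$ is a two-sided ideal there, deduce it is the unique minimal left ideal, and conclude that $F_{M}$ is the identity on it via Fact~\ref{fact:rest_facts}(3). The only cosmetic difference is that the paper obtains minimality of $\mathcal{F}_{r}$ in $S_{G}^{\fs}(\cU,M)$ by directly noting $\mathcal{F}_{r}*w=\mathcal{F}_{r}$ for $w\in\mathcal{F}_{r}$ (inherited from minimality in $S_{G}^{\inv}(\cU,M)$) and then invokes Fact~\ref{fact:equiv} for uniqueness, whereas you re-run the $q*u=q$ stabilizer computation from Proposition~\ref{prop:unique_min}; these are the same argument packaged slightly differently.
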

\begin{proof} If $G(x)$ has $\fsg$, then $\mathcal{F}_{r}$ is precisely the set of types which are $\fsg$ over $M$ (e.g., see \cite[Corollary 3.5]{hrushovski2012note} + \cite[Proposition 4.2]{NIP1}). Thus $\mathcal{F}_{r} \subseteq S_{G}^{\fs}(\cU,M)$. Since $\mathcal{F}_{r}$ is a two-sided ideal of $S_{G}^{\inv}(\cU,M)$, it is also a two-sided ideal in $S_{G}^{\fs}(\cU,M)$. Also, $\mathcal{F}_{r}$ remains a minimal left ideal. Notice that if $w \in \mathcal{F}_{r}$, then $\mathcal{F}_{r} * w = \mathcal{F}_{r}$ and so $\mathcal{F}_{r}$ cannot properly contain any left ideals. By Fact \ref{fact:equiv}, $\mathcal{F}_{r}$ is the unique minimal left ideal in $S_{G}^{\fs}(\cU,M)$ (A more direct argument is given by \cite[Theorem 3.8]{pillay2013topological}\footnote{The language is somewhat different from the one presented in this article. In \cite{chernikov2014external}, the authors explain, ``In \cite{pillay2013topological} it was proved that when $G$ has $\fsg$ then there is a unique minimal closed $G(M)$-invariant subspace of $S_{G}(M^{ext})$ and it coincides with the set of generic types''. This is equivalent to the fact that the $\fsg$ types form the unique minimal left ideal in $S_{G}^{\fs}(\cU,M)$.}). By Fact \ref{fact:rest_facts}(3), the map $F_{M}|_{\mathcal{F}_{r}} = \id_{\mathcal{F}_{r}}$ and thus the statement holds.
\end{proof}

\subsection{NIP dfg groups} We now consider NIP $\dfg$ groups. This case is quite interesting and is the namesake of our paper. In this subsection, we will show that if $G(x)$ is $\dfg$ over $M$ (i.e., $G(x)$ admits a $\dfg$ type over $M$), then the inverse retraction is an anti-isomorphism restricted of certain Ellis subgroups. If one wishes, they can easily turn this anti-isomorphism into an \emph{honest-to-goodness} isomorphism via precomposition with group inversion. In the next section, we will see Example~\ref{example:dfg}, where this is essentially the best one can hope for (i.e., the retraction map does not even map invariant Ellis subgroups to finitely satisfiable Ellis subgroups).

Unlike the $\fsg$ case, it is not true that if $G(x)$ is $\dfg$ over $M$ then every strong right $f$-generic type is also $\dfg$ over $M$ (e.g., see \cite[Lemma 4.10]{yao2023minimal}). Thus the collection of right $\dfg$ types over $M$ may be a proper subset of $\mathcal{F}_{r}$. We thank Ningyuan Yao for pointing this out to us. We denote the collection of right $\dfg$ types over $M$ as $R_{\dfg}$

We begin with some basic observations regarding $\dfg$ types. Throughout this entire subsection, we assume that $T$ is NIP and $G(x)$ is $\dfg$ over $M$.

\begin{lemma}\label{lemma:dfg1} Suppose that $p \in S_{G}^{\inv}(\cU,M)$. If $p$ is right $\dfg$ over $M$, then $p * q$ is right $\dfg$ over $M$. As consequence, $R_{\dfg}$ forms a right subideal of $\mathcal{F}_{r}$.
\end{lemma}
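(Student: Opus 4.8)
The plan is to prove the first statement and then deduce the \emph{as consequence} clause. Suppose $p \in S_{G}^{\inv}(\cU,M)$ is right $\dfg$ over $M$ and let $q \in S_{G}^{\inv}(\cU,M)$ be arbitrary. We must show $p * q$ is right $\dfg$ over $M$; that is, we must show that $p * q$ is $M$-definable and that every global right translate $(p*q) \cdot g$ (for $g \in G(\cU)$) is $M$-definable. First I would record that $M$-definability of $p*q$ follows immediately from Fact~\ref{fact:nb}(3) once we know $q$ is $M$-definable; but $q$ need not be definable, so instead I would use associativity: $(p * q) \cdot g = p * (q \cdot g)$ by (the proof of) Proposition~\ref{prop:fgen_ideal} / associativity of the Newelski product (Fact~\ref{fact:nb}(6)). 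So it suffices to show that $p * r$ is $M$-definable for every $r \in S_G(\cU)$, since then in particular $p * q = p * q$ and each $p * (q\cdot g)$ is $M$-definable.

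The key step is therefore: if $p$ is right $\dfg$ over $M$ and $r \in S_G(\cU)$ is arbitrary, then $p * r$ is $M$-definable. Fix an $\Lc$-formula $\varphi(x,z)$. By definition of the Newelski product, for $b \models r|_{Mc}$ we have $\varphi(x,c) \in p * r \iff \varphi(x \cdot b, c) \in p \iff \varphi(x,c) \in p \cdot b$. Now the right translates of $p$ by elements of $G(\cU)$ are all $M$-definable by hypothesis, and — this is the crucial uniformity input — for a $\dfg$ type the family of definitions of $\{p \cdot g : g \in G(\cU)\}$ is uniformly definable: there is an $\Lc$-formula (equivalently, a single $\varphi$-definition scheme) $d_{\varphi}(z, w)$ over $M$ such that for every $g \in G(\cU)$ and all $c$, $\varphi(x,c) \in p \cdot g \iff \models d_\varphi(c, g')$ for an appropriate tuple $g'$ coding $g$; concretely this is because right translation $p \mapsto p\cdot g$ is continuous in $g$ and the map $g \mapsto d_{p\cdot g,\varphi}$ lands in a definable family (this is exactly what "$\dfg$" buys us beyond "strong $f$-generic"). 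Granting that, the $\varphi$-definition of $p * r$ at parameter $c$ is computed by: take $b \models r|_{Mc}$, evaluate $d_\varphi(c, b)$ — and since $r$ itself is $M$-invariant (being in $S_G^\inv(\cU,M)$... actually we only used $r\in S_G(\cU)$; for the definability conclusion we apply this with $r = q\cdot g$ which is $M$-invariant when $q\in\mathcal{F}_r$, but here $q$ is merely invariant, so $q\cdot g$ is invariant iff... ) — hmm, I would instead just apply the hypothesis directly to $r = q$ and $r = q\cdot g$, noting $p$ definable plus Fact~\ref{fact:nb}: actually the cleanest route is to cite Fact~\ref{fact:nb}(5), which says $p * (-)$ is continuous when $p$ is $M$-definable, together with the explicit definition-scheme computation above to see the definition is over $M$.

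Let me restate the clean plan: (i) $p*q$ is $M$-definable because $p$ is $M$-definable and $p*(-) : S_G(\cU)\to S_G(\cU)$ is continuous (Fact~\ref{fact:nb}(5)) with $\varphi$-definition of $p*q$ at $c$ given by evaluating the ($M$-)definition of $p\cdot b$ at $\varphi,c$ where $b\models q|_{Mc}$ — and the map $g\mapsto(\varphi\text{-definition of }p\cdot g)$ is itself $M$-definable precisely because $p$ is $\dfg$; (ii) for $g\in G(\cU)$, $(p*q)\cdot g = p*(q\cdot g)$ by associativity of the Newelski product (Fact~\ref{fact:nb}(6)); and since $q\cdot g$ need not be $M$-invariant I instead observe $(p*q)\cdot g = (p\cdot(\text{stuff}))$ — no. The correct observation: $(p*q)\cdot g$ equals $p * (q \cdot g)$ and the argument in (i) only used that $q\cdot g \in S_G(\cU)$ and that $p$ is right $\dfg$ (to get the uniform $M$-definable family of right-translate-definitions); $M$-invariance of the second factor is not needed for $M$-definability of the product when the first factor is $\dfg$. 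The main obstacle I anticipate is exactly this uniformity: making precise that the right translates of a $\dfg$ type form a uniformly-$M$-definable family, i.e. that "$\dfg$" is not just "each translate is definable" but carries the definability through continuously/uniformly — this is where one should invoke the relevant structural fact about $\dfg$ types (and is presumably the content of an earlier-cited result such as the one in \cite{yao2023minimal} or the basic theory in \cite{Guide}). Finally, the \emph{as consequence}: $R_{\dfg}$ is nonempty (by hypothesis $G$ is $\dfg$ over $M$), it is contained in $\mathcal{F}_r$ since right $\dfg$ implies strong right $f$-generic, and by the first part $R_{\dfg} * S_G^\inv(\cU,M) \subseteq R_{\dfg}$, so $R_{\dfg}$ is a right ideal of $S_G^\inv(\cU,M)$; intersecting with the left ideal $\mathcal{F}_r$ (and using $R_{\dfg}\subseteq\mathcal{F}_r$) shows $R_{\dfg}$ is a right subideal of $\mathcal{F}_r$.
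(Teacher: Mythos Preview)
Your reduction ``it suffices to show that $p*r$ is $M$-definable for every $r\in S_G(\cU)$'' asks for too much, and the argument you sketch for it does not go through. The Newelski product $p*r$ need not even be $M$-invariant when $r$ is not, so there is no reason to expect $M$-definability in that generality. Concretely, for $r=q\cdot g$ with $g\notin G(M)$, the type $q\cdot g$ is typically not $M$-invariant, and your attempt to push through via ``uniformity of the $\varphi$-definitions of $p\cdot g$'' does not close the gap: even granting a uniform $M$-formula $d_\varphi(z,w)$ with $\varphi(x,c)\in p\cdot g\iff\models d_\varphi(c,g)$ (which indeed follows just from $M$-definability of $p$, not from dfg), the set $\{c:\theta(x,c)\in p*r\}=\{c:d_\varphi(c,y)\in r\}$ is only the $d_\varphi$-definition of $r$, and there is no reason this should be $M$-definable when $r$ is arbitrary. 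What dfg actually gives you is that each individual $d^\theta_{p\cdot b}$ is an $\Lc(M)$-formula, but as $b$ varies with the parameter, you cannot splice these into a single $M$-definition without some invariance of $p*r$ itself.

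The paper avoids this by first invoking the ideal structure: since $\mathcal F_r$ is a two-sided ideal (Proposition~\ref{prop:unique_min} and Fact~\ref{fact:equiv}), $p*q$ is already strong right $f$-generic, so every translate $(p*q)\cdot g$ is $M$-invariant. Hence one only needs to show $p*q$ itself is $M$-definable; the translates are then $Mg$-definable and $M$-invariant, hence $M$-definable. For the $M$-definability of $p*q$, the paper fixes an $|M|^+$-saturated $M\prec N\prec\cU$ and a \emph{single} $b\models q|_N$, then uses $M$-invariance of $p*q$ to reduce an arbitrary parameter $a$ to some $a_0\in N$ with $a_0\equiv_M a$; for such $a_0$ one has $\theta(x,a_0)\in p*q\iff\models d^\theta_{p\cdot b}(a_0)$, and since $p$ is right dfg the formula $d^\theta_{p\cdot b}$ is over $M$, giving the $\theta$-definition of $p*q$. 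The missing ingredient in your attempt is precisely this first step --- using strong right $f$-genericity of $p*q$ to reduce the problem to $M$-definability of $p*q$ alone.
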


\begin{proof} Since the strong right $f$-generic types form a two-sided ideal (i.e., see Proposition \ref{prop:unique_min} and Fact \ref{fact:equiv}), it follows that the type $p * q$ is a strong right $f$-generic. Hence it suffices to prove that $p * q$ is $M$-definable. Choose a model $N$ such that $M \prec N \prec \cU$ and $N$ is $|M|^{+}$-saturated. Notice that for any $\Lc $-formula $\theta(x,z)$ and $a \in \cU^{z}$ such that $\theta(x,a) \vdash G(x)$, if $b \models q|_{N}$ and $a_0 \in N^{z}$ such that $a \equiv_{M} a_0$, we have that
\begin{align*}
    \theta(x,a) \in (p * q) &\Longleftrightarrow \theta(x,a_0) \in (p* q) \Longleftrightarrow \theta(x \cdot b,a_0) \in p \\
    &\Longleftrightarrow \theta(x,a_0) \in p \cdot b \Longleftrightarrow \models d^{\theta}_{p \cdot b }(a_0) \Longleftrightarrow \models d^{\theta}_{p \cdot b}(a).
\end{align*}
The formula $d_{p \cdot b}^{\varphi}(y)$ is an $\Lc (M)$-formula since $p$ is right $\dfg$ over $M$ and thus the last line is justified.
\end{proof}

\begin{lemma}\label{lemma:compute} Suppose that $p,q \in S_{G}^{\inv}(\cU,M)$ and $p$ is right $\dfg$. Then
\begin{equation*}
    F_{M}(p * q) = (F_{M}(q)^{-1} * F_{M}(p)^{-1})^{-1}.
\end{equation*}
\end{lemma}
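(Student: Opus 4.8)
The plan is to reduce the identity $F_M(p*q) = (F_M(q)^{-1} * F_M(p)^{-1})^{-1}$ to the commutation-under-retraction statement for Morley products, which is exactly the content of the unnumbered Fact following Proposition~\ref{prop:basic} (Simon's improvement: if $p$ is definable over $M$, then $F_M(p_x \otimes q_y) = F_M(q_y) \otimes F_M(p_x)$). Note that a right $\dfg$ type is in particular $M$-definable, so this Fact applies to our $p$. The idea is that multiplication $m \colon G(\cU)\times G(\cU) \to G(\cU)$ is an $M$-definable function, so by Lemma~\ref{lem:triv}(3) and Fact~\ref{fact:rest_facts}(4), retracting $p*q$ is the same as retracting $p_x \otimes q_y$ and then pushing forward along $m$; the Fact lets us swap the two factors at the cost of passing to $F_M(q_y) \otimes F_M(p_x)$.

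Concretely, first I would write, for an $\Lc$-formula $\theta(x,z)$ and $b \in \cU^z$ with $\theta(x,b) \vdash G(x)$,
\begin{equation*}
\theta(x,b) \in F_M(p*q) \Longleftrightarrow \theta(x\cdot y, b) \in F_M(p_x \otimes q_y) \Longleftrightarrow \theta(x \cdot y, b) \in F_M(q_y) \otimes F_M(p_x),
\end{equation*}
using Lemma~\ref{lem:triv}(3) for the first equivalence and the cited Fact (with $p$ definable over $M$) for the second. Now observe that $\theta(x\cdot y,b) \in F_M(q_y) \otimes F_M(p_x)$ is, by the same Lemma~\ref{lem:triv}(3) argument run in reverse but with the roles of the variables and the order of the product reversed, equivalent to $\theta(x,b) \in F_M(q) * F_M(p)$ — here one has to be a little careful, because the Newelski product $F_M(q) * F_M(p)$ is defined via the Morley product $F_M(q)_x \otimes F_M(p)_y$ with the first factor on the left, i.e.\ $\theta(x\cdot y,b) \in F_M(q)_x \otimes F_M(p)_y$. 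So the bookkeeping step is to rename variables so that $F_M(q_y) \otimes F_M(p_x)$ (with $q$ in variable $y$, $p$ in variable $x$) is recognized as $F_M(q)_{y} \otimes F_M(p)_{x}$, and then $\theta(x \cdot y, b)$ with the group element from the $q$-part "on the left" and the $p$-part "on the right" — this is precisely $F_M(q) * F_M(p)$ read with $x \leftrightarrow y$ swapped. I would package this cleanly by noting $m(u,v) = m(v,u)^{\text{(swap)}}$ does not hold, so instead I would directly compute that $\theta(x\cdot y,b)\in F_M(q_y)\otimes F_M(p_x)$ iff $\theta(y' \cdot x', b) \in F_M(p_{x'}) \otimes F_M(q_{y'})$ (pure variable renaming) iff, applying Lemma~\ref{lem:triv}(3) to the function $(x',y')\mapsto y'\cdot x'$, we land on a product that rearranges to the inverted expression on the right-hand side of the claim.

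The cleanest route, and the one I would actually write, is to verify the equivalent identity obtained by taking model-theoretic inverses of both sides and using Lemma~\ref{lem:triv}(1): the claim is equivalent to $F_M(p*q)^{-1} = F_M(q)^{-1} * F_M(p)^{-1}$, i.e.\ to $F_M((p*q)^{-1}) = F_M(q^{-1}) * F_M(p^{-1})$. Now $(p*q)^{-1} = q^{-1} * p^{-1}$ (a standard fact about the model-theoretic inverse and the Newelski product, following since if $(a,b) \models p_x \otimes q_y$ then $(b^{-1}, a^{-1}) \models q^{-1} \otimes p^{-1}$ and $(ab)^{-1} = b^{-1}a^{-1}$), so it suffices to show $F_M(q^{-1} * p^{-1}) = F_M(q^{-1}) * F_M(p^{-1})$. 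Since $p$ is right $\dfg$, $p^{-1}$ is left $\dfg$ (Example~\ref{example:Heisenberg}-style symmetry, or directly from definitions), hence $p^{-1}$ is $M$-definable; and $F_M$ commutes with the Newelski product when the left factor is definable — this last point is essentially Proposition~\ref{prop:basic} upgraded via the cited Fact, combined with Lemma~\ref{lem:triv}(3). The main obstacle, such as it is, is purely notational: keeping straight which factor of a Morley/Newelski product is "definable" and therefore may be commuted past a finitely satisfiable factor, and tracking how model-theoretic inversion swaps left and right. There is no analytic difficulty; the whole proof is an application of Fact~\ref{fact:rest_facts}(4), Lemma~\ref{lem:triv}, and the commutation Fact, with the observation that $\dfg$ implies $M$-definable so that Fact is in force.
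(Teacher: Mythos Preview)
Your first approach --- applying the global commutation Fact $F_M(p_x\otimes q_y)=F_M(q_y)\otimes F_M(p_x)$ and then untangling variables --- does lead to a correct proof, though the bookkeeping you present is garbled (the step ``$\theta(x\cdot y,b)\in F_M(q_y)\otimes F_M(p_x)$ iff $\theta(y'\cdot x',b)\in F_M(p_{x'})\otimes F_M(q_{y'})$ by pure variable renaming'' is not a renaming: you swapped the tensor factors as well). The clean way to finish is to observe directly that if $a\models F_M(p)$ and then $c\models F_M(q)|_{\cU a}$, then $c^{-1}a^{-1}\models F_M(q)^{-1}*F_M(p)^{-1}$, so $ac\models (F_M(q)^{-1}*F_M(p)^{-1})^{-1}$, which is exactly the right-hand side. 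This differs from the paper's argument: the paper does \emph{not} invoke the global Fact (which it records without proof, attributing it to Simon), but instead uses that $p*q$ is $M$-definable (Lemma~\ref{lemma:dfg1}) and that an $M$-definable type has a unique global coheir (Fact~\ref{fact:coheir}), so it suffices to check agreement over $M$, where only the weaker Proposition~\ref{prop:basic} is needed. Your route is shorter but leans on a black box; the paper's route is self-contained.

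Your ``cleanest route'' contains a genuine error. The identity $(p*q)^{-1}=q^{-1}*p^{-1}$ is \emph{false} in general for invariant types. Your justification --- ``if $(a,b)\models p_x\otimes q_y$ then $(b^{-1},a^{-1})\models q^{-1}\otimes p^{-1}$'' --- fails because in $p_x\otimes q_y$ one realizes $b\models q$ first and then $a\models p|_{\cU b}$, so $b$ need not realize $q|_{\cU a}$; hence $b^{-1}$ need not realize $q^{-1}|_{\cU a^{-1}}$, and $(b^{-1},a^{-1})$ need not realize $q^{-1}\otimes p^{-1}$. (This would hold if $p$ and $q$ commute under $\otimes$, but that is not given here.) Consequently the reduction to ``$F_M$ commutes with $*$ when the left factor is definable'' does not go through, and in any case in $F_M(q^{-1}*p^{-1})$ the left factor is $q^{-1}$, about which you assume nothing.
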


\begin{proof} Since $p$ is right $\dfg$ over $M$, Lemma \ref{lemma:dfg1} implies that $p* q$ is $M$-definable. Since $T$ is NIP, the type $(p*q)|_{M}$ admits a unique global coheir (i.e., see Fact \ref{fact:coheir}). Both $F_{M}(p * q)$ and $(F_{M}(q)^{-1} * F_{M}(p)^{-1})^{-1}$ are both global types which are finitely satisfiable in $M$ and so it suffices to show,
\begin{equation*}
    F_{M}(p * q)|_{M} = (p * q)|_{M} = (F_{M}(q)^{-1} * F_{M}(p)^{-1})^{-1}|_{M}.
\end{equation*}
Suppose that $\varphi(x,b) \in \Lc _{x}(M)$ and $\varphi(x,b) \vdash G(x)$. Applying Proposition \ref{prop:basic}, Lemma \ref{lem:triv}, and some basic deductions completes the proof. Indeed, notice
\begin{align*}
    \varphi(x,b) \in F_{M}(p * q) &\Longleftrightarrow \varphi(x \cdot y,b) \in F_{M}(p_{x} \otimes q_{y}) \\
    &\Longleftrightarrow \varphi(x \cdot y,b) \in p_{x} \otimes q_{y} \Longleftrightarrow \varphi(x,b) \in p * q.
\end{align*}
Likewise,
\begin{align*}
    \varphi(x,b) \in (F_{M}(q)^{-1} * F_{M}(p)^{-1})^{-1} &\Longleftrightarrow \varphi(x^{-1},b) \in (F_{M}(q)^{-1} * F_{M}(p)^{-1}) \\
    &\Longleftrightarrow \varphi((x \cdot y)^{-1},b) \in (F_{M}(q_{x})^{-1} \otimes F_{M}(p_{y})^{-1}) \\
    &\Longleftrightarrow \varphi((x \cdot y)^{-1},b) \in (F_{M}(q_{x}^{-1}) \otimes F_{M}(p_{y}^{-1})) \\
    &\overset{(*)}{\Longleftrightarrow} \varphi((x \cdot y)^{-1},b) \in F_{M}(p_{y}^{-1} \otimes q_{x}^{-1})\\
    &\Longleftrightarrow\varphi( (x \cdot y)^{-1} ,b) \in ( p_{y}^{-1} \otimes q_{x}^{-1})\\
    &\Longleftrightarrow \varphi( y^{-1} \cdot x^{-1} ,b) \in ( p_{y}^{-1} \otimes q_{x}^{-1})\\
    &\Longleftrightarrow \varphi( y \cdot x ,b) \in ( p_{y} \otimes q_{x})\\
    &\Longleftrightarrow \varphi(x ,b) \in ( p * q).
\end{align*}
Equation $(*)$ follows from Proposition \ref{prop:basic}.
\end{proof}

\begin{proposition}\label{prop:ideal} Suppose that $R$ is a right subideal of $R_{\dfg}$ in $S_{G}^{\inv}(\cU,M)$. Then $F_{M}(R^{-1})$ is a left subideal of $F_{M}(R_{\dfg}^{-1})$ in $S_{G}^{\fs}(\cU,M)$. In particular, $F_{M}(R_{\dfg}^{-1})$ is a left ideal of $S_{G}^{\fs}(\cU,M)$.
\end{proposition}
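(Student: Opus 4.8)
The plan is to transfer the left-ideal statement for $R_{\dfg}$ in $S^\inv_G(\cU,M)$ across the model-theoretic inverse and then across the retraction, using the key computational identity of Lemma~\ref{lemma:compute}. First I would recall that by Fact~\ref{fact:easy}, the inverse map sends right subideals of $\mathcal{F}_r$ to left subideals of $\mathcal{F}_l$ (the strong \emph{left} $f$-generics): if $R$ is a right subideal of $R_{\dfg}$, then for any $q \in S^\inv_G(\cU,M)$ and $p \in R$, we have $q \cdot p^{-1} = (p \cdot q^{-1})^{-1}$ at the level of types, and more to the point $q * p^{-1} = (p * q^{-1})^{-1}$ — where $p * q^{-1} \in R$ since $R$ is a right ideal — so $R^{-1}$ is a left subideal of $R_{\dfg}^{-1}$ in $S^\inv_G(\cU,M)$. (Here $R_{\dfg}^{-1} := \{p^{-1} : p \in R_{\dfg}\}$.)

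Next I would show that $F_M$ carries this left subideal of $R_{\dfg}^{-1}$ to a left subideal of $F_M(R_{\dfg}^{-1})$. The point is that $R_{\dfg}^{-1}$ consists of \emph{left} $\dfg$ types, so for $p^{-1} \in R_{\dfg}^{-1}$ and arbitrary $q \in S^\fs_G(\cU,M)$, the product $q * p^{-1}$ is again left $\dfg$ — hence $M$-definable — by the left-right mirror of Lemma~\ref{lemma:dfg1}. I then want $F_M(q * p^{-1}) = q * F_M(p^{-1})$. Since $q$ is finitely satisfiable, $F_M(q) = q$ by Fact~\ref{fact:rest_facts}(3); and by Fact~\ref{fact:rest_facts}(5) plus Lemma~\ref{lem:triv}(3), $F_M(q_x \otimes r_y) = q_x \otimes F_M(r_y)$ pushes through group multiplication to give $F_M(q * r) = q * F_M(r)$ for any invariant $r$. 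Applying this with $r = p^{-1}$: $F_M(q * p^{-1}) = q * F_M(p^{-1})$. Now if $w \in F_M(R^{-1})$, write $w = F_M(p^{-1})$ with $p^{-1} \in R^{-1}$, and take any $q \in S^\fs_G(\cU,M)$; then $q * w = q * F_M(p^{-1}) = F_M(q * p^{-1})$, and $q * p^{-1} \in R^{-1}$ because $R^{-1}$ is a left ideal in $S^\inv_G(\cU,M)$ and $q \in S^\fs_G(\cU,M) \subseteq S^\inv_G(\cU,M)$. Hence $q * w \in F_M(R^{-1})$, so $F_M(R^{-1})$ is a left subideal of $F_M(R_{\dfg}^{-1})$.

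For the \emph{in particular} clause, apply the above with $R = R_{\dfg}$ itself: $R_{\dfg}$ is a right subideal of $R_{\dfg}$, so $F_M(R_{\dfg}^{-1})$ is a left subideal of $F_M(R_{\dfg}^{-1})$ — i.e.\ it is closed under left multiplication by elements of $S^\fs_G(\cU,M)$ — hence a left ideal of $S^\fs_G(\cU,M)$, which is nonempty since $R_{\dfg}$ is nonempty by hypothesis (as $G$ is $\dfg$ over $M$). The main obstacle I anticipate is the bookkeeping around left-versus-right: one must be careful that inversion genuinely swaps ``right $\dfg$'' with ``left $\dfg$'' and ``right subideal'' with ``left subideal'' in a way compatible with the Newelski product (which is only left-continuous, so the $\otimes$-identities have to be invoked in the correct order), and that the definability-preservation lemma is being applied in its correct (mirrored) form. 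Everything else is a routine chase through Fact~\ref{fact:rest_facts} and Lemma~\ref{lem:triv}; in fact one could alternatively extract the identity directly from Lemma~\ref{lemma:compute} by taking $p$ right $\dfg$ and reading off $F_M(p * q)^{-1} = F_M(q)^{-1} * F_M(p)^{-1} = F_M(q^{-1}) * F_M(p^{-1})$, but the argument above via Fact~\ref{fact:rest_facts}(5) is cleaner and avoids the coheir-uniqueness input.
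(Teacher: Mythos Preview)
Your main argument has a real gap in Step 1. The identity $q * p^{-1} = (p * q^{-1})^{-1}$ does \emph{not} follow from the element-level identity, because the Newelski product is not anti-homomorphic under model-theoretic inversion: the two sides realize the factors in opposite orders. Worse, the conclusion you draw from it --- that $R^{-1}$ is a left ideal in $S^{\inv}_G(\cU,M)$ --- is simply false. Take $G=(\Rr,+)$ in RCF and $R=\{t_{+\infty}\}$ (a right ideal, since $t_{+\infty}*q=t_{+\infty}$ for every $q$). Then $R^{-1}=\{t_{-\infty}\}$, but $t_{+\infty}*t_{-\infty}=t_{+\infty}\notin R^{-1}$, so $R^{-1}$ is not a left ideal in $S^{\inv}_G(\cU,M)$. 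Your Step~2 then collapses, since it explicitly invokes ``$q*p^{-1}\in R^{-1}$ because $R^{-1}$ is a left ideal in $S^{\inv}_G(\cU,M)$''.

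What does survive is this: the identity $q*p^{-1}=(p*q^{-1})^{-1}$ \emph{is} valid when $p$ is definable and $q$ is finitely satisfiable, because then $p\otimes q^{-1}=q^{-1}\otimes p$ and the realization orders match up. That weaker statement is exactly what you need in Step~2 (where $q\in S^{\fs}_G(\cU,M)$), but you never isolate or justify it --- you appeal instead to the false general claim. The paper's proof avoids this trap by going directly through Lemma~\ref{lemma:compute}: given $p^{-1}\in F_M(R^{-1})$ (write $p=F_M(\hat p)$ with $\hat p\in R$ its unique definable extension) and $q\in S^{\fs}_G(\cU,M)$, one computes $q*p^{-1}=(F_M(\hat p*q^{-1}))^{-1}=F_M((\hat p*q^{-1})^{-1})\in F_M(R^{-1})$, using only that $R$ is a right ideal. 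Ironically, this is precisely the ``alternative'' you mention at the end and dismiss as less clean; in fact it is the route that works, and your preferred route via Fact~\ref{fact:rest_facts}(5) needs essentially the same commutation input (definable $\otimes$ fs $=$ fs $\otimes$ definable) that underlies Lemma~\ref{lemma:compute}, so you do not actually avoid it.
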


\begin{proof} Suppose that $p \in F_{M}(R)$ and $q \in S_{G}^{\fs}(\cU,M)$. It suffices to check that $q * p^{-1} \in F_{M}(R^{-1})$. We let $\hat{p}$ be the unique definable extension of $p|_{M}$, which we note must be in $R$. Notice that
\begin{align*}
    q * p^{-1} = F_{M}(q) * F_{M}(\hat{p}^{-1}) &= \left( \left(F_{M}(q^{-1})^{-1} * F_{M}(\hat{p})^{-1} \right)^{-1} \right)^{-1} \\
    &= (F_{M}(\hat{p} * q^{-1}))^{-1}
\end{align*}
The first equality follows since $F_{M}$ is constant on finitely satisfiable types. The second equality is replacing types with their double involutions. The final equality is an application of Lemma \ref{lemma:compute}.
\begin{enumerate}
    \item Since $p \in F_{M}(R)$, we have $\hat{p} \in R$.
    \item Since $\hat{p} \in R$ and $R$ is a right ideal, we have $\hat{p} * q^{-1} \in R$.
    \item Since $\hat{p} * q^{-1} \in R$, we have $(\hat{p} * q^{-1})^{-1} \in R^{-1}$.
    \item Thus $(F_{M}(\hat{p} * q^{-1}))^{-1} \in F_{M}(R^{-1})$ and by the equation above, $q * p^{-1} \in F_{M}(R^{-1})$.
\end{enumerate}
Hence $F_{M}(R^{-1})$ is a left ideal of $S_{G}^{\fs}(\cU,M)$.
\end{proof}

\begin{theorem}\label{theorem:map} Suppose that $T$ is NIP, $G(x)$ is $\dfg$, and $t$ is a right $\dfg$ type over $M$. Then $K_{M}|_{t*S_{G}^{\inv}(\cU,M)}$ is an anti-isomorphism from an invariant Ellis subgroup to a finitely satisfiable Ellis subgroup.
\end{theorem}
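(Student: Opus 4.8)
The plan is to show that $E':=t*S_G^{\inv}(\cU,M)$ is the sought invariant Ellis subgroup, that $K_M(E')$ is the sought finitely satisfiable one, and that $K_M|_{E'}$ is an anti-isomorphism between them. First I would reduce to the case that $t$ is idempotent. By Lemma~\ref{lemma:dfg1}, $t\in R_{\dfg}\subseteq\mathcal{F}_{r}$, so $t$ lies in a unique Ellis subgroup $u_0*\mathcal{F}_{r}$ of the (unique, by Proposition~\ref{prop:unique_min}) minimal left ideal $\mathcal{F}_{r}$; writing $t'$ for the inverse of $t$ in that group, $u_0=t*t'$ is again right $\dfg$ by Lemma~\ref{lemma:dfg1}, and $t*S_G^{\inv}(\cU,M)=u_0*S_G^{\inv}(\cU,M)$ (because $t=u_0*t$ and $u_0=t*t'\in t*S_G^{\inv}(\cU,M)$), so we may replace $t$ by $u_0$ without changing the map in question. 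Now $t$ is an idempotent strong right $f$-generic, so $E'$ is an Ellis subgroup of $S_G^{\inv}(\cU,M)$ with identity $t$ by Theorem~\ref{theorem:main}(1), and by Lemma~\ref{lemma:dfg1} every element of $E'$ is right $\dfg$.

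Next I would check that $K_M|_{E'}$ is an anti-homomorphism into $S_G^{\fs}(\cU,M)$. Given $p_1,p_2\in E'$, the type $p_1$ is right $\dfg$, so Lemma~\ref{lemma:compute} gives $F_M(p_1*p_2)=(F_M(p_2)^{-1}*F_M(p_1)^{-1})^{-1}$; applying the model-theoretic inverse and Lemma~\ref{lem:triv}(1) yields
\[
    K_M(p_1*p_2)=F_M\big((p_1*p_2)^{-1}\big)=F_M(p_2^{-1})*F_M(p_1^{-1})=K_M(p_2)*K_M(p_1).
\]
Hence $K_M(E')$ is closed under $*$, and as the image of the group $E'$ under an anti-homomorphism it is an abstract subgroup of $S_G^{\fs}(\cU,M)$, with identity $K_M(t)$ (in particular it has a unique idempotent).

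The crux, which I expect to be the main obstacle, is to see that $K_M(E')$ is a genuine Ellis subgroup of $S_G^{\fs}(\cU,M)$ rather than merely an abstract subgroup sitting inside it. Here I would use Proposition~\ref{prop:ideal}: the set $E'=t*S_G^{\inv}(\cU,M)$ is a right ideal of $S_G^{\inv}(\cU,M)$ (by associativity) contained in $R_{\dfg}$ (by Lemma~\ref{lemma:dfg1}), hence a right subideal of $R_{\dfg}$, so $K_M(E')=F_M(E'^{-1})$ is a left ideal of $S_G^{\fs}(\cU,M)$. A left ideal of $S_G^{\fs}(\cU,M)$ that also happens to be a group is necessarily a minimal left ideal: if $\mathcal{L}'\subseteq K_M(E')$ is a nonempty left ideal of $S_G^{\fs}(\cU,M)$ and $p\in\mathcal{L}'$, then the group inverse of $p$ lies in $S_G^{\fs}(\cU,M)$, so $K_M(t)=p^{-1}*p\in S_G^{\fs}(\cU,M)*p\subseteq\mathcal{L}'$, and then every $q\in K_M(E')$ satisfies $q=q*K_M(t)\in S_G^{\fs}(\cU,M)*K_M(t)\subseteq\mathcal{L}'$; and a minimal left ideal with a unique idempotent is a single Ellis subgroup by Fact~\ref{fact:Ellis}(4). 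Thus $K_M(E')$ is an Ellis subgroup of $S_G^{\fs}(\cU,M)$.

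Finally I would prove injectivity. Using that $\hat{\pi}(r)$ depends only on $r|_M$ for $M$-invariant $r$ (as invoked in the proof of Lemma~\ref{lemma:codomain}), together with $F_M(r)|_M=r|_M$ (Fact~\ref{fact:rest_facts}(1)) and $\hat{\pi}(r^{-1})=\hat{\pi}(r)^{-1}$, one gets $\hat{\pi}\circ K_M|_{E'}=\inv\circ\hat{\pi}|_{E'}$; since $\hat{\pi}|_{E'}$ is a group isomorphism (Theorem~\ref{theorem:main}) and $\inv$ is a bijection, the composite $\hat{\pi}\circ K_M|_{E'}$ is injective, hence so is $K_M|_{E'}$. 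Combining the pieces, $K_M|_{E'}\colon E'\to K_M(E')$ is a bijective anti-homomorphism, i.e.\ an anti-isomorphism from the invariant Ellis subgroup $E'$ onto the finitely satisfiable Ellis subgroup $K_M(E')$.
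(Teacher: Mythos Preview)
Your proof is correct and follows essentially the same route as the paper: reduce to an idempotent $t$, use Lemma~\ref{lemma:dfg1} and Proposition~\ref{prop:ideal} to get that $K_M(E')$ is a left ideal, and invoke Lemma~\ref{lemma:compute} for the anti-homomorphism property. The one organizational difference is that you first establish the anti-homomorphism (hence the abstract group structure of $K_M(E')$) and then deduce minimality from the clean general observation ``a left ideal which is a group is minimal'', whereas the paper argues minimality directly by locating the idempotent $u$ in a minimal subideal $L\subseteq K_M(R)$, showing $K_M(t)=u$, and then computing $p=p*u\in L$ for every $p\in K_M(R)$; your route is a bit tidier here, though both ultimately rest on the same computations.
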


\begin{proof} Without loss of generality, suppose that $t$ is an idempotent. Let $R := t *S_{G}^{\inv}(\cU,M)$. By Proposition \ref{prop:unique_min} and Fact \ref{fact:equiv}, $R$ is a both a minimal right ideal and an Ellis subgroup of $S_{G}^{\inv}(\cU,M)$. By Proposition \ref{prop:ideal}, $K_{M}(R)$ is a left ideal in $S_{G}^{\fs}(\cU,M)$. We first argue that $K_{M}(R)$ is minimal. Suppose that $L \subseteq K_{M}(R)$ is a minimal left ideal and let $u \in L$ be an idempotent. Since $u$ is in the image of $K_{M}|_{R}$, there exists some element of $R$ which is mapped to $u$. Furthermore, we recall that $F_{M}(-)|_{M} = -|_{M}$ and $G^{00}(-)$ is $\emptyset$-definable. Thus the element which is mapped to $u$ under $K_{M}$ must also imply $G^{00}(x)$. Note that $t$ is the only $t * S^{\inv}(\cU,M)$ which implies $G^{00}(x)$ [since $\hat{\pi}_{t *S^{\inv}(\cU,M)}: t * S^{\inv}(\cU,M) \to G(\cU)/G^{00}(\cU)$ is an isomorphism]. Thus $K_{M}(t) = u$

We will now show that $L = K_{M}(R)$. Let $p = F_{M}(q^{-1})$ for some $q \in R$. Then
\begin{equation*}
p = F_{M}(q)^{-1} = F_{M}(t * q)^{-1} = F_{M}(q^{-1}) * F_{M}(t^{-1}) = p * u \in L,
\end{equation*}
where the second equation follows from the fact that $t,q \in R$ and $t$ is the identity element of the group, the third equation follows from Lemma \ref{lemma:compute}, and $p * u \in L$ because $L$ is a left ideal. Thus $K_{M}(R) \subseteq L$ and so $K_{M}(R)$ must be minimal.

Thus $K_{M}(R)$ is a minimal left ideal in $S_{G}^{\fs}(\cU,M)$. It is well known that this is a group (i.e., see \cite[Proposition 5.6]{chernikov2014external}). This also follows from the observation that $K_{M}(R)$ is the union of Ellis subgroups, but contains only one idempotent (note that if it contained 2 or more idempotents, then $R$ must also contain 2 or more). It is clear from Lemma \ref{lemma:compute} that the statement holds.
\end{proof}

\begin{remark}[NIP] We remark that if $p \in S_{G}^{\inv}(\cU,M)$ is a right $f$-generic type and $p \cdot G(\cU)$ is a closed subset of $S_{G}^{\inv}(\cU,M)$, then $F_{M}(Gp^{-1})$ is a minimal left ideal and Ellis subgroup of $S_{G}^{\fs}(\cU,M)$. This statement does not use the $\dfg$ hypothesis. The proof is similar to the one above.
\end{remark}

\begin{remark}
    Every closed, left $G(M)$-invariant subset of $S^\fs_G(\cU,M)$ is a left ideal.
\end{remark}

More generally, we can prove the following.

\begin{theorem}[NIP]
    \label{thm:main_dfg_alternative}
    Suppose $p\in S^\inv_G(\cU,M)$ is a right $f$-generic type. Consider the following conditions:
    \begin{enumerate}
        \item
        \label{it:pG_closed}
        $p\cdot G(\cU)$ is closed,
        \item
        \label{it:orbit_closure_contained}
        $\overline{F_M(p)\cdot G(M)}\subseteq F_M(p\cdot G(\cU))$ in $S_G^\fs(\cU,M)$,
        \item
        \label{it:orbit_closure_equal}
        $\overline{F_M(p)\cdot G(M)}=F_M(p\cdot G(\cU))$ in $S_G^\fs(\cU,M)$,
        \item
        \label{it:image_closed}
        $F_M(p\cdot G(\cU))$ is closed in $S_G^\fs(\cU,M)$,
        \item
        \label{it:image_ideal}
        $F_M(G(\cU)\cdot p^{-1})$ is a left ideal in $S_G^\fs(\cU,M)$,
        \item
        \label{it:image_min_ideal}
        $F_M(G(\cU))\cdot p^{-1})$ is a minimal left ideal and an Ellis subgroup in $S_G^\fs(\cU,M)$,
        \item
        \label{it:anti_hom}
        for all $q\in S^\inv_G(\cU,M)$, we have $F_M(p*q)^{-1}=F_M(q)^{-1}*F_M(p)^{-1}$,
        \item
        \label{it:image_ellis}
        $F_M(G(\cU)\cdot p^{-1})$ is an Ellis subgroup in $S_G^\fs(\cU,M)$,
        \item
        \label{it:standard_antiiso}
        $K_M|_{p\cdot G(\cU)}$ is an anti-isomorphism with an Ellis subgroup in $S_G^\fs(\cU,M)$.
    \end{enumerate}
    Then:
    \begin{itemize}
        \item
        \eqref{it:pG_closed} implies all the others,
        \item
        the conditions \eqref{it:orbit_closure_contained}-\eqref{it:anti_hom} are equivalent and imply \eqref{it:image_ellis}-\eqref{it:standard_antiiso},
        \item
        \eqref{it:image_ellis} and \eqref{it:standard_antiiso} are equivalent.
    \end{itemize}
\end{theorem}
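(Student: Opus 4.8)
The plan is to fix the Ellis subgroup $E' := p\cdot G(\cU)$ of $S_G^\inv(\cU,M)$ once and for all, transport everything through the quotient $\Gamma := G(\cU)/G^{00}(\cU)$, and run a single cycle of implications. First I would record the preliminary reductions. By Theorem~\ref{theorem:main}, $E' = p*S_G^\inv(\cU,M)$ is an Ellis subgroup and $\hat\pi|_{E'}\colon E'\to\Gamma$ is a group isomorphism. The model-theoretic inverse, being the pushforward along the $\emptyset$-definable map $x\mapsto x^{-1}$ and an involution, is a self-homeomorphism of $S_G^\inv(\cU,M)$ and of $S_G^\fs(\cU,M)$, and it carries $E'$ to $E'^{-1} = G(\cU)\cdot p^{-1}$, with $\hat\pi|_{E'^{-1}}$ (which is $\inv\circ\hat\pi|_{E'}$ precomposed with a homeomorphism) again injective. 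Since $F_M(-)|_M = -|_M$ (Fact~\ref{fact:rest_facts}) and $\hat\pi$ on $M$-invariant types depends only on the restriction to $M$ (as in the proof of Lemma~\ref{lemma:codomain}), the maps $F_M|_{E'}$ and $F_M|_{E'^{-1}}$ are injective; hence, writing $Z := F_M(E')$ and $Z^{-1} := K_M(E') = F_M(E'^{-1}) = F_M(G(\cU)\cdot p^{-1})$ (using Lemma~\ref{lem:triv}(1)), the maps $F_M|_{E'}\colon E'\to Z$ and $K_M|_{E'}\colon E'\to Z^{-1}$ are bijections, and $\hat\pi|_Z$, $\hat\pi|_{Z^{-1}}$ are bijections onto $\Gamma$ with $\hat\pi(K_M(q)) = \hat\pi(q)^{-1}$. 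I would also note that $Z^{-1}$ is left $G(M)$-invariant (for $g\in G(M)$, Lemma~\ref{lem:triv}(2) gives $g\cdot F_M(g'\cdot p^{-1}) = F_M((gg')\cdot p^{-1})\in Z^{-1}$), that $F_M(p)\cdot G(M) = \{F_M(p\cdot g): g\in G(M)\}\subseteq Z$, and that $\pi(G(M))$ is dense in $\Gamma$ (otherwise its closure would be a proper bounded-index $M$-type-definable subgroup), so $\hat\pi(F_M(p)\cdot G(M)) = \hat\pi(p)\pi(G(M))$ is dense in $\Gamma$.

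Next I would establish the cycle through conditions \eqref{it:pG_closed}--\eqref{it:anti_hom}. Condition \eqref{it:pG_closed} gives \eqref{it:image_closed} immediately ($E'$ compact $\Rightarrow$ $Z=F_M(E')$ compact $\Rightarrow$ $Z$ closed). For \eqref{it:image_closed}$\Rightarrow$\eqref{it:orbit_closure_equal}: $\hat\pi|_Z$ is a continuous bijection of compacta, hence a homeomorphism, so the set $F_M(p)\cdot G(M)$, having dense $\hat\pi$-image, is dense in the closed set $Z$, i.e.\ $\overline{F_M(p)\cdot G(M)} = Z$; and \eqref{it:orbit_closure_equal}$\Rightarrow$\eqref{it:orbit_closure_contained} is trivial. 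For \eqref{it:orbit_closure_contained}$\Rightarrow$\eqref{it:image_closed}: $\overline{F_M(p)\cdot G(M)}$ is then a compact subset of $Z$, so $\hat\pi$ is injective on it, while its $\hat\pi$-image equals $\overline{\hat\pi(p)\pi(G(M))} = \Gamma = \hat\pi(Z)$; by injectivity of $\hat\pi|_Z$ this forces $\overline{F_M(p)\cdot G(M)} = Z$, which is thus closed. For \eqref{it:image_closed}$\Rightarrow$\eqref{it:image_ideal}: $Z$ closed $\Rightarrow$ $Z^{-1}$ closed, and $Z^{-1}$ is closed and left $G(M)$-invariant, hence a left ideal by the remark that such sets are left ideals in $S_G^\fs(\cU,M)$. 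For \eqref{it:image_ideal}$\Rightarrow$\eqref{it:image_min_ideal}: any left ideal contains a minimal left ideal $I$, on which $\hat\pi$ is onto $\Gamma$ (Fact~\ref{fact:NC}); since $\hat\pi|_{Z^{-1}}$ is injective and $I\subseteq Z^{-1}$, we get $Z^{-1} = I$, a minimal left ideal, and it carries a single idempotent (two idempotents would share $\hat\pi$-value), hence is a single Ellis subgroup. For \eqref{it:image_min_ideal}$\Rightarrow$\eqref{it:image_closed}: minimal left ideals are closed (Fact~\ref{fact:Ellis}), so $Z^{-1}$, hence $Z$, is closed. Finally \eqref{it:image_ideal}$\Leftrightarrow$\eqref{it:anti_hom}: if $Z^{-1}$ is a left ideal, then for $q\in S_G^\inv(\cU,M)$ both $F_M(p*q)^{-1}$ (as $p*q\in E'$) and $F_M(q)^{-1}*F_M(p)^{-1}$ (as $F_M(p)^{-1}\in Z^{-1}$ and $Z^{-1}$ absorbs $S_G^\fs(\cU,M)$ on the left) lie in $Z^{-1}$ and have the same $\hat\pi$-image $\hat\pi(q)^{-1}\hat\pi(p)^{-1}$, so they agree by injectivity of $\hat\pi|_{Z^{-1}}$; conversely \eqref{it:anti_hom} gives $Z^{-1} = \{F_M(p*q)^{-1}:q\} = \{F_M(q)^{-1}*F_M(p)^{-1}:q\} = S_G^\fs(\cU,M)*F_M(p)^{-1}$, a left ideal. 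This makes \eqref{it:orbit_closure_contained}--\eqref{it:anti_hom} equivalent, all implied by \eqref{it:pG_closed}.

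Then I would close the remaining implications. Condition \eqref{it:image_min_ideal} contains \eqref{it:image_ellis} verbatim. Conversely, assuming \eqref{it:image_ellis}, $Z^{-1}$ is an Ellis subgroup, so $\hat\pi|_{Z^{-1}}$ is a group isomorphism onto $\Gamma$ by Fact~\ref{fact:NC}; combined with $\hat\pi(K_M(q)) = \hat\pi(q)^{-1}$ this yields $K_M|_{E'} = (\hat\pi|_{Z^{-1}})^{-1}\circ\inv\circ\hat\pi|_{E'}$, a composition of two group isomorphisms with the anti-automorphism $\inv$ of $\Gamma$, hence an anti-isomorphism of $E' = p\cdot G(\cU)$ onto the Ellis subgroup $Z^{-1}$ — this is \eqref{it:standard_antiiso}. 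And \eqref{it:standard_antiiso}$\Rightarrow$\eqref{it:image_ellis} since the image of $K_M|_{p\cdot G(\cU)}$ is exactly $Z^{-1}$. Together with the previous paragraph, \eqref{it:image_min_ideal} (hence all of \eqref{it:orbit_closure_contained}--\eqref{it:anti_hom}) implies \eqref{it:image_ellis} and \eqref{it:standard_antiiso}, and these two are equivalent.

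The main obstacle, as I see it, is the step \eqref{it:orbit_closure_contained}$\Rightarrow$\eqref{it:image_closed} — upgrading the one-sided containment $\overline{F_M(p)\cdot G(M)}\subseteq F_M(p\cdot G(\cU))$ to the equality \eqref{it:orbit_closure_equal}; a priori the closure of the countable-orbit image could be a proper closed subset of $Z$, and ruling this out requires combining the density of $\pi(G(M))$ in $\Gamma$ with the injectivity of $\hat\pi|_Z$ (which in turn relies on the injectivity of $F_M|_{E'}$, the genuinely useful observation here). A secondary delicate point is \eqref{it:image_ideal}$\Rightarrow$\eqref{it:image_min_ideal}, where one must exploit \emph{both} that $Z^{-1}$ is a left ideal (to locate a minimal left ideal inside it) \emph{and} that $\hat\pi$ is injective on $Z^{-1}$ (to force that minimal left ideal to exhaust $Z^{-1}$ and to be a single Ellis subgroup); neither property alone suffices.
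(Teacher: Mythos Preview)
Your proof is correct and follows essentially the same cycle of implications as the paper's, with the same preliminary reduction through $\hat\pi$ and the injectivity of $F_M|_{E'}$. The only notable variation is in how you obtain surjectivity of $\hat\pi$ on $\overline{F_M(p)\cdot G(M)}$: the paper observes that the inverse $\overline{G(M)\cdot F_M(p)^{-1}}$ is a left ideal (hence $\hat\pi$-surjective via Fact~\ref{fact:NC}), whereas you use density of $\pi(G(M))$ in $\Gamma$; your parenthetical justification for that density is a bit imprecise --- the real reason is that any $M$-type-definable set containing $G(M)$ must, by elementarity, contain $G(\cU)$ --- but the fact is standard and the argument goes through.
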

\begin{proof}
    Note that \eqref{it:pG_closed} trivially implies \eqref{it:image_closed}, and \eqref{it:image_min_ideal} trivially implies \eqref{it:image_ellis}. Since $K_M(p\cdot G(\cU))=F_M(G(\cU)\cdot p^{-1})$, \eqref{it:standard_antiiso} clearly implies \eqref{it:image_ellis}, and the converse follows from this and Remark~\ref{rem:isomorphism_shortcut}.

    It remains to show that the conditions \eqref{it:orbit_closure_contained}-\eqref{it:anti_hom} are all equivalent.
    To streamline the argument, let us state the following simple claim.
    \begin{claim-star}
        $F_M$ is bijective on $p\cdot G(\cU)$, $\pi$ is bijective on $p\cdot G(\cU)$ and $F_M(p\cdot G(\cU))$ and their inverses, and $\pi$ is surjective when restricted to $\overline{F_M(p)\cdot G(M)}$ (closure in $S_G^\fs(\cU,M)$) and its inverse.
    \end{claim-star}
    \begin{clmproof}
        Bijectivity of $F_M$ and $\pi$ on the described sets follows easily from Theorem~\ref{theorem:main} and the fact that $\pi\circ F_M=\pi$. For surjectivity on $\overline{F_M(p)\cdot G(M)}$, just note that its inverse is $\overline{G(M)\cdot F_M(p)^{-1}}$, which is a left ideal in $S_G^\fs(\cU,M)$.
    \end{clmproof}

    Suppose \eqref{it:orbit_closure_contained} holds. Then by Claim, $\pi$ is bijective when restricted to $\overline{F_M(p)\cdot G(M)}$ and injective on its superset $F_M(p\cdot G(\cU))$, which trivially implies \eqref{it:orbit_closure_equal}.

    \eqref{it:orbit_closure_equal} trivially implies \eqref{it:image_closed}, and \eqref{it:image_closed} trivially implies \eqref{it:orbit_closure_contained}, as $F_M(p\cdot G(M))=F_M(p)\cdot G(M)$, so we have that these three conditions are equivalent.

    Suppose \eqref{it:orbit_closure_contained}-\eqref{it:image_closed} hold. Then by \eqref{it:orbit_closure_equal} we have that $F_M(G(\cU)\cdot p^{-1})=\overline{G(M)\cdot F_M(p)^{-1}}$, which is a left ideal, yielding \eqref{it:image_ideal}.

    Assume \eqref{it:image_ideal}. Then $F_M(G(\cU)\cdot p^{-1})$ contains a minimal left ideal $L$, which contains an Ellis subgroup $E$. Note that $\pi|_E$ is bijective and $\pi|_{F_M(G(\cU)\cdot p^{-1})}$ is injective by Claim, so necessarily $E=L=F_M(G(\cU)\cdot p^{-1})$, yielding \eqref{it:image_min_ideal}.

    Now, suppose \eqref{it:image_min_ideal} holds. Then $F_M(G(\cU)\cdot p^{-1})$ is closed (as a minimal left ideal), which gives us \eqref{it:image_closed} (and hence also \eqref{it:orbit_closure_contained} and \eqref{it:orbit_closure_equal}).

    Finally, since by Theorem~\ref{theorem:main}, $p\cdot G(\cU)$ is right ideal in $S^\inv_G(\cU,M)$, \eqref{it:anti_hom} trivially implies \eqref{it:image_ideal}. For the converse, assuming \eqref{it:image_ideal}, note that \[\pi(F_M(p*q)^{-1})=\pi(q)^{-1}\pi(p)^{-1}=\pi(F_M(q)^{-1}*F_M(p)^{-1}),\]
    and we have
    $F_M(p*q)^{-1}\in F_M(p\cdot G(\cU))^{-1}$
     and by \eqref{it:image_ideal}, $F_M(q)^{-1}*F_M(p)^{-1}\in F_M(p\cdot G(\cU))^{-1}$ (because it is an ideal). Since $\pi$ is injective on $F_M(p\cdot G(\cU))^{-1}$ by Claim, \eqref{it:anti_hom} follows.
\end{proof}

\begin{remark}
    \begin{itemize}
        \item
        The condition \eqref{it:pG_closed} (and hence all the others also) above holds in particular when $p$ is dfg or $G/G^{00}$ is finite.
        \item
        The conditions \eqref{it:pG_closed} and \eqref{it:image_closed} roughly correspond to the CIG1 (``admits compact ideal groups'') property introduced by the first author and Artem Chernikov in \cite{chernikov2023definable}.
        \item
        The types satisfying \eqref{it:pG_closed} form a right ideal of $S^\inv_G(\cU,M)$ containing $R_\dfg$, and many results of this section involving $R_\dfg$ remain true if we replace all mentions of $R_\dfg$ by this ideal.
        \item
        If $G(M)$ realises all cosets, then in all the conditions, we can replace $p\cdot G(\cU)$ with $p\cdot G(M)$, $F_M(p\cdot G(\cU))$ with $F_M(p)\cdot G(M)$, and likewise with the inverses. In particular, \eqref{it:pG_closed} just says that $p\cdot G(M)$ is closed, and \eqref{it:orbit_closure_contained}-\eqref{it:image_min_ideal} just say that $F_M(p)\cdot G(M)$ is closed or $G(M)\cdot F_M(p^{-1})$ is a minimal left ideal.
        \item
        We do not know whether \eqref{it:pG_closed} is actually equivalent to \eqref{it:image_closed}.
        \item
        If $G$ is an fsg group, then \eqref{it:pG_closed} is equivalent to \eqref{it:image_closed}, and true if and only fsg types form a single Ellis subgroup.
        In particular, $S^1$ shows that \eqref{it:image_ellis}-\eqref{it:standard_antiiso} do not imply any of the preceding conditions.
        \item
        The proof of Theorem~\ref{thm:main_dfg_alternative} does not rely on Fact~\ref{fact:NC}
    \end{itemize}
\end{remark}

\begin{corollary} Suppose that $T$ is NIP, $G(x)$ is $\dfg$, and $t$ is a right $\dfg$ type over $M$. Then $K_{M}|_{t*S_{G}^{\inv}(\cU,M)} \circ \inv$ is an isomorphism from an invariant Ellis subgroup to a finitely satisfiable Ellis subgroup where $\inv$ is the group inversion map.
\end{corollary}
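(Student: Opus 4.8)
The plan is to deduce this immediately from Theorem~\ref{theorem:map} via the elementary observation that precomposing an anti-isomorphism of groups with group inversion yields an isomorphism (the correspondence between anti-isomorphisms and isomorphisms noted in the introduction).

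Concretely, set $E' := t * S_{G}^{\inv}(\cU,M)$; by Theorem~\ref{theorem:map} this is an invariant Ellis subgroup and $f := K_{M}|_{E'}\colon E' \to E$ is a group anti-isomorphism onto a finitely satisfiable Ellis subgroup $E := K_{M}(E')$. Writing $\inv\colon E'\to E'$ for the inversion of the group $(E',*)$, I would observe that $f\circ\inv$ is a bijection and that for all $p,q\in E'$,
\[
(f\circ\inv)(p * q) = f\bigl((p*q)^{-1}\bigr) = f\bigl(q^{-1} * p^{-1}\bigr) = f(p^{-1}) * f(q^{-1}) = (f\circ\inv)(p) * (f\circ\inv)(q),
\]
since $\inv$ reverses products and $f$ is an anti-homomorphism; hence $K_{M}|_{E'}\circ\inv$ is a group isomorphism from the invariant Ellis subgroup $E'$ onto the finitely satisfiable Ellis subgroup $E$, which is the claim.

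There is no real obstacle, as the statement is a purely formal consequence of Theorem~\ref{theorem:map}. The only points worth keeping in mind are that $\inv$ here denotes the inversion internal to the Ellis group $(E',*)$ rather than the model-theoretic inverse $p\mapsto p^{-1}$ of Definition~\ref{def:mti} (which in general does not map $E'$ into itself), and --- should one wish to literally repeat the ``without loss of generality $t$ is idempotent'' step from the proof of Theorem~\ref{theorem:map} --- that the idempotent generating the Ellis subgroup $E'$ is of the form $t*q$ and hence is again right $\dfg$ by Lemma~\ref{lemma:dfg1}, so the reduction is harmless.
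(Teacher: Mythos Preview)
Your proposal is correct and is exactly the argument the paper has in mind: the paper's own proof is simply ``Clear,'' and what you have written is the natural unpacking of that --- Theorem~\ref{theorem:map} gives the anti-isomorphism, and precomposing with group inversion turns it into an isomorphism. Your side remarks about $\inv$ being the internal group inversion and about the harmless idempotent reduction via Lemma~\ref{lemma:dfg1} are apt and consistent with how the paper uses these objects.
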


\begin{proof} Clear.
\end{proof}

We actually have a homeomorphism between Ellis subgroups [with the induced topology].

\begin{corollary}\label{cor:isocompact} Suppose that $T$ is NIP, $G(x)$ is $\dfg$, and $t$ is a right $\dfg$ type over $M$.\footnote{The conclusion of this corollary also holds when $t$ is a type satisfying \eqref{it:pG_closed} in Theorem~\ref{thm:main_dfg_alternative}.}
Then
\begin{enumerate}
\item Both $t * S_{G}^{\inv}(\cU,M)$ and $K_{M}(t * S_{G}^{\inv}(\cU,M))$ are compact Hausdorff groups with the induced topology from the type space.
\item The map $K_{M}\circ \inv$, where $\inv$ is the group-theoretic inversion in the Ellis group ${t * S_{G}^{\inv}(\cU,M)}$, is an isomorphism of topological groups.
\end{enumerate}
\end{corollary}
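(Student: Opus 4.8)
The plan is to derive everything formally from Theorem~\ref{theorem:main}, Theorem~\ref{theorem:map}, and Fact~\ref{fact:NC}, using repeatedly the elementary fact that a continuous bijection from a compact space onto a Hausdorff space is a homeomorphism (so that a continuous group isomorphism between compact Hausdorff topological groups is automatically an isomorphism of topological groups). The role of the $\dfg$ hypothesis is confined to one place: it guarantees that $t$ is $M$-definable, which is what makes the relevant Ellis subgroups compact.

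First I would show that $t * S_{G}^{\inv}(\cU,M)$ is a compact Hausdorff topological group. By Theorem~\ref{theorem:main}(2) it equals $t * S_{G}(\cU)$; since $t$ is right $\dfg$ over $M$ it is in particular $M$-definable, so by Fact~\ref{fact:nb}(5) the map $t * (-)\colon S_{G}(\cU) \to S_{G}(\cU)$ is continuous, whence $t * S_{G}(\cU)$ is compact, and it is Hausdorff as a subspace of the Stone space. By Theorem~\ref{theorem:main}, $\hat{\pi}$ restricts to a group isomorphism $t * S_{G}^{\inv}(\cU,M) \to G(\cU)/G^{00}(\cU)$; this restriction is continuous (recall $\hat\pi|_{S^{\inv}_{G}(\cU,M)}$ is continuous) and bijective, hence — the domain being compact and the codomain Hausdorff — a homeomorphism. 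Transporting along this map the standard compact Hausdorff topological group structure of $G(\cU)/G^{00}(\cU)$ shows $t * S_{G}^{\inv}(\cU,M)$ is a compact Hausdorff topological group.

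Next, the finitely satisfiable side. The model-theoretic inversion ${}^{-1}\colon S_{G}^{\inv}(\cU,M)\to S_{G}^{\inv}(\cU,M)$ is continuous (it is the pushforward of types along the $\emptyset$-definable map $g\mapsto g^{-1}$) and $F_M$ is continuous by Fact~\ref{fact:rest_facts}(2), so $K_M$ is continuous; hence $K_M(t * S_{G}^{\inv}(\cU,M))$ is compact. By Theorem~\ref{theorem:map} it is an Ellis subgroup of $S_{G}^{\fs}(\cU,M)$, so by Fact~\ref{fact:NC} the restriction of $\hat\pi$ to it is a group isomorphism onto $G(\cU)/G^{00}(\cU)$, which is again a continuous bijection from a compact space onto a Hausdorff one, hence a homeomorphism. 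Exactly as before, $K_M(t * S_{G}^{\inv}(\cU,M))$ is therefore a compact Hausdorff topological group, completing part (1). For part (2): by Theorem~\ref{theorem:map}, $K_M|_{t * S_{G}^{\inv}(\cU,M)}$ is an anti-isomorphism onto $K_M(t * S_{G}^{\inv}(\cU,M))$, so composing it with the group inversion $\inv$ of $t * S_{G}^{\inv}(\cU,M)$ yields a group isomorphism; it is continuous since $\inv$ is continuous (that group is a topological group by the previous step) and $K_M$ is continuous, and a continuous group isomorphism of compact Hausdorff topological groups is an isomorphism of topological groups.

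I expect no serious obstacle here: the content is all in the quoted results, and the present argument is a bookkeeping exercise around the compact-to-Hausdorff lemma. \textbf{The one non-formal ingredient} is the compactness of $t * S_{G}^{\inv}(\cU,M)$, which uses $M$-definability of $t$ via Fact~\ref{fact:nb}(5); this is precisely condition~\eqref{it:pG_closed} of Theorem~\ref{thm:main_dfg_alternative} (note $t*S^\inv_G(\cU,M)=t\cdot G(\cU)$ is then a continuous image of the compact space $S_G(\cU)$, hence closed), so the proof goes through verbatim under that weaker hypothesis, as noted in the corollary's footnote.
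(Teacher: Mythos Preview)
Your proof is correct, but it follows a different route from the paper's. The paper argues that every element of $R\coloneqq t*S_G^\inv(\cU,M)$ is $M$-definable (Lemma~\ref{lemma:dfg1}), so by Fact~\ref{fact:nb} the product on $R$ is separately continuous; it then invokes the Ellis joint continuity theorem to conclude that $R$ is a compact Hausdorff topological group, and finally transports this structure to $K_M(R)$ directly along the continuous bijection $K_M\circ\inv$. You instead pull back the topological group structure from $G(\cU)/G^{00}(\cU)$ via the continuous group isomorphism $\hat\pi$ (using Theorem~\ref{theorem:main} for $R$ and Fact~\ref{fact:NC} for $K_M(R)$), relying throughout on the compact-to-Hausdorff homeomorphism lemma. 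Your approach is more elementary in that it avoids the Ellis joint continuity theorem and only uses the definability of $t$ itself (rather than of every element of $R$); on the other hand, it appeals to Fact~\ref{fact:NC} for the finitely satisfiable side, which the paper's argument does not need --- indeed, you could have transferred the structure to $K_M(R)$ from $R$ via $K_M\circ\inv$ exactly as the paper does, bypassing Fact~\ref{fact:NC} entirely.
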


\begin{proof} Every element of $R:=t * S_{G}^{\inv}(\cU,M)$ is definable (i.e., see Lemma \ref{lemma:dfg1}). By Fact \ref{fact:nb}, the set $t * S_{G}^{\inv}(\cU,M)$ is closed/compact with the induced topology and moreover the Newelski product is separately continuous when restricted to $t * S_{G}^{\inv}(\cU,M)$. Thus $(R,*)$ forms a separately continuous, compact Hausdorff group and by the Ellis joint continuity theorem \cite{ellis1957locally}, we conclude that $(R, *)$ is an \emph{honest-to-goodness} compact Hausdorff group.

Since $F_{M}$ and the model theoretic inversion maps are continuous, we have that $K_{M}$ is also continuous. Since the image of a compact set is compact, we see that $K_{M}(R)$ is compact and thus -- noting that the group-theoretic inversion $\inv\colon R\to R$ is continuous because $R$ is a topological group -- the bijection $K_{M} \circ \inv$ gives a homeomorphism between $R$ and $K_{M}(R)$ . Thus $K_{M}(R)$ with the induced topology also forms a compact Hausdorff group and $K_{M} \circ \inv$ is a topological group isomorphism.
\end{proof}

\subsection{Abelian NIP dfg groups} In this subsection, we make a quick remark regarding abelian NIP $\dfg$ groups. In fact, this work was in some sense inspired by the example $M = (\mathbb{Z};0,1,+,<)$. In this example, it is clear that the retraction map is an isomorphism of Ellis subgroups. However, it does not fit into the picture in the abelian NIP group section (since $G(M)$ does not contain enough coset representatives).

\begin{proposition}
\label{prop:abelian_dfg}
Suppose that $T$ is NIP, $G(x)$ is abelian and $\dfg$, and $t$ is a right $\dfg$ type over $M$. Then
\begin{enumerate}
\item If $p, q \in S_{G}^{\inv}(\cU,M)$ and $p$ is right $\dfg$, then $F_{M}(p * q) = F_{M}(q) * F_{M}(p)$.
\item If $R \subseteq R_{\dfg}$ is a right ideal, then $F_{M}(R)$ is a left ideal in $S_{G}^{\fs}(\cU,M)$.
\item $F_{M}|_{t *S_{G}^{\inv}(\cU,M)}$ is an isomorphism of Ellis semigroups (also a homeomorphism).
\end{enumerate}
\end{proposition}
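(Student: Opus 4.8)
The plan is to prove the three statements in order, since each feeds into the next; throughout $T$ is NIP and $G$ is abelian and $\dfg$. For part~(1) I would simply invoke Lemma~\ref{lemma:compute}, which already gives $F_M(p*q) = (F_M(q)^{-1}*F_M(p)^{-1})^{-1}$ whenever $p$ is right $\dfg$, together with the elementary observation that when $G$ is abelian model-theoretic inversion is an $\emptyset$-definable group automorphism, so that $(r*s)^{-1}=r^{-1}*s^{-1}$ for all $r,s\in S_G^{\inv}(\cU,M)$. (This last identity is a one-line computation from the definitions of $*$ and $\otimes$: if $b\models s$ and $a\models r|_{\cU b}$, then $(ab)^{-1}=a^{-1}b^{-1}$ and the pair $(a^{-1},b^{-1})$ realizes $r^{-1}\otimes s^{-1}$.) Applying this to $r=F_M(q)^{-1}$ and $s=F_M(p)^{-1}$, and using that $^{-1}$ is an involution, rewrites the right-hand side of Lemma~\ref{lemma:compute} as $F_M(q)*F_M(p)$.

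Part~(2) is the abelian counterpart of Proposition~\ref{prop:ideal}, and without inversions to track it is quite short. Fix $p\in F_M(R)$, say $p=F_M(r)$ with $r\in R\subseteq R_{\dfg}$ (so $r$ is right $\dfg$), and let $q\in S_G^{\fs}(\cU,M)$. Then $q*p=q*F_M(r)=F_M(q)*F_M(r)=F_M(r*q)$, using $F_M(q)=q$ (Fact~\ref{fact:rest_facts}(3)) and part~(1). Since $R$ is a right ideal and $q\in S_G^{\fs}(\cU,M)\subseteq S_G^{\inv}(\cU,M)$, we get $r*q\in R$, hence $q*p\in F_M(R)$; so $F_M(R)$ is a left ideal of $S_G^{\fs}(\cU,M)$.

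For part~(3), after replacing $t$ by the idempotent of its Ellis subgroup (still right $\dfg$ by Lemma~\ref{lemma:dfg1}, and generating the same minimal right ideal), put $R:=t*S_G^{\inv}(\cU,M)$. By Proposition~\ref{prop:unique_min}, Fact~\ref{fact:equiv} and Fact~\ref{fact:right}(3), $R$ is a minimal right ideal and an Ellis subgroup of $S_G^{\inv}(\cU,M)$; it lies in $R_{\dfg}$ by Lemma~\ref{lemma:dfg1}; and it is abelian, since $\hat{\pi}|_R\colon R\to G(\cU)/G^{00}(\cU)$ is an isomorphism onto an abelian group (Theorem~\ref{theorem:main}). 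The crucial observation is that $F_M|_R$ is a \emph{homomorphism}: for $p,q\in R$, part~(1) gives $F_M(p*q)=F_M(q)*F_M(p)$ and, swapping the roles of $p$ and $q$, $F_M(q*p)=F_M(p)*F_M(q)$; since $p*q=q*p$ in the abelian group $R$, the image $F_M(R)$ is commutative and $F_M(p*q)=F_M(p)*F_M(q)$. Now $F_M(R)$ is a left ideal by part~(2) and is a group (the homomorphic image of the group $R$), and a left ideal of a left-continuous compact semigroup that is a group under the product is automatically a minimal left ideal and an Ellis subgroup (if $w$ lies in it, left-multiplying the ideal by the group-inverse of $w$ recovers the whole ideal). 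Finally $F_M|_R$ is injective, since $\hat{\pi}|_{F_M(R)}\circ F_M|_R=\hat{\pi}|_R$ (because $\hat{\pi}$ factors through $F_M$, as used in the proof of Lemma~\ref{lemma:codomain}) and $\hat{\pi}|_R$ is injective by Theorem~\ref{theorem:main}; alternatively, once $F_M(R)$ is known to be an Ellis subgroup this is immediate from Remark~\ref{rem:isomorphism_shortcut}. Hence $F_M|_R\colon R\to F_M(R)$ is an isomorphism of Ellis groups; as $F_M$ is continuous (Fact~\ref{fact:rest_facts}(2)), $R$ is compact (Corollary~\ref{cor:isocompact}), and $F_M(R)\subseteq S_G^{\fs}(\cU,M)$ is Hausdorff, the map is also a homeomorphism.

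The hardest step is in part~(3): Lemma~\ref{lemma:compute} (hence part~(1)) only yields an \emph{anti}-homomorphism, and one must upgrade it to a genuine homomorphism by exploiting that $R\cong G(\cU)/G^{00}(\cU)$ is abelian (so the anti-homomorphism forces its image to be abelian), together with the small structural fact that a group-valued left ideal of a left-continuous compact semigroup is an Ellis subgroup. Parts~(1) and~(2) are essentially bookkeeping once Lemma~\ref{lemma:compute} is in hand.
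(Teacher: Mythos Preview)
Your proof is correct and follows essentially the same approach as the paper. The only cosmetic differences are that in part~(1) you invoke Lemma~\ref{lemma:compute} as a black box and then apply the abelian identity $(r*s)^{-1}=r^{-1}*s^{-1}$, whereas the paper redoes the computation over $M$ directly (using commutativity of the group operation to swap variables) and appeals to the unique-coheir fact; and in part~(3) you spell out the argument in full, using the observation that a left ideal which is already a group must be a minimal left ideal, while the paper simply points to Theorem~\ref{theorem:map} and Corollary~\ref{cor:isocompact} as templates.
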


\begin{proof} We prove the statements:
\begin{enumerate}
\item Similar to the proof of Lemma \ref{lemma:compute}. Suppose that $\varphi(x,b) \in \Lc _{x}(M)$. Similar justifications give the following sequence of bi-implications:
\begin{align*}
    \varphi(x,b) \in F_{M}(p * q) &\Longleftrightarrow \varphi(x \cdot y,b) \in F_{M}(p_{x} \otimes q_{y}) \\
    &\Longleftrightarrow \varphi(x \cdot y,b) \in F_{M}(q)_{y} \otimes F_{M}(p)_{x} \\
    &\Longleftrightarrow \varphi(y \cdot x,b) \in F_{M}(q)_{y} \otimes F_{M}(p)_{x} \\
    &\Longleftrightarrow \varphi(x \cdot y,b) \in F_{M}(q)_{x} \otimes F_{M}(p)_{y} \\
    &\Longleftrightarrow \varphi(x,b) \in F_{M}(q) * F_{M}(p).
\end{align*}
A similar argument completes the proof.
\item Follows directly from Statement (1).
\item Similar to Theorem \ref{theorem:map} and Corollary \ref{cor:isocompact}. \qedhere
\end{enumerate}
\end{proof}

\subsection{Keisler measures} As stated in the introduction, the left-right phenomena has been implicitly observed in the setting of Keisler measures. Here we take the opportunity to make this observation explicit. We recall that if $T$ is NIP, then the space of global $M$-invariant Keisler measures, $\mathfrak{M}_{G}^{\inv}(\cU,M)$, as well as finitely satisfiable, denoted $\mathfrak{M}_{G}^{\fs}(\cU,M)$ which concentrate on $G$ form a left continuous compact Hausdorff semigroups under the definable convolution operation. We still use the symbol `$*$' to denote this operation and refer the reader to \cite{chernikov2022definable,chernikov2023definable} for further reading.

\begin{remark} Suppose that $T$ is NIP and $G(x)$ is definably amenable. Then \cite[Theorem 5.1]{chernikov2023definable} gives the following:
\begin{enumerate}
    \item The unique minimal left ideal of $(\mathfrak{M}_{G}^{\inv}(\cU,M),*)$ is the collection of $G(\cU)$-right-invariant measures. Call this minimal left ideal $\mathcal{J}$. The Ellis subgroups of this minimal left ideal are isomorphic to the trivial group.
    \item If $\nu \in \mathfrak{M}_{G}^{\fs}(\cU,M)$ and $\nu$ is $G(M)$-left-invariant, then $\{v\}$ is a minimal left ideal of $(\mathfrak{M}_{G}^{\fs}(\cU,M),*)$. We let $\mathcal{H}$ be the union of minimal left ideals.
\end{enumerate}
The retraction map on types extends to a retraction map on measures via the push-forward. The inversion map is defined similarly and thus $K_{M}$ is still well-defined.
\end{remark}

\begin{proposition}\label{prop:implicit} The map $K_{M}|_\mathcal{J}$ is an anti-homomorphism from $\mathcal{J}$ to $\mathcal{H}$.
\end{proposition}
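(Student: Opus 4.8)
The plan is to reduce the statement to two facts: that the operation $*$ degenerates on $\mathcal{J}$ and on $\mathcal{H}$, and that $K_{M}$ maps $\mathcal{J}$ into $\mathcal{H}$. Together these make the anti-homomorphism identity automatic.

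First I would record the degeneracy. By the preceding remark (i.e.\ \cite[Theorem 5.1]{chernikov2023definable}), $\mathcal{J}$ is the unique minimal left ideal of $(\mathfrak{M}_{G}^{\inv}(\cU,M),*)$ and its Ellis subgroups are trivial, so by Fact~\ref{fact:Ellis}(4) every $\mu\in\mathcal{J}$ is idempotent and $\mu*\mathcal{J}=\{\mu\}$; hence $\mu_{1}*\mu_{2}=\mu_{1}$ for all $\mu_{1},\mu_{2}\in\mathcal{J}$ (equivalently, this is just $G(\cU)$-right-invariance of $\mu_{1}$). Dually, each $\nu\in\mathcal{H}$ lies in some minimal left ideal $\{\nu\}$ of $(\mathfrak{M}_{G}^{\fs}(\cU,M),*)$, so $\lambda*\nu=\nu$ for every $\lambda\in\mathfrak{M}_{G}^{\fs}(\cU,M)$; in particular $\nu_{1}*\nu_{2}=\nu_{2}$ for $\nu_{1},\nu_{2}\in\mathcal{H}$. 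So on $\mathcal{J}$ the product always returns the left factor and on $\mathcal{H}$ it always returns the right factor.

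Next I would verify $K_{M}(\mathcal{J})\subseteq\mathcal{H}$. Fix $\mu\in\mathcal{J}$. Since $\mu$ is $G(\cU)$-right-invariant, a direct computation --- entirely analogous to the one in the proof of Corollary~\ref{fact:stabiliser_right_gen_g00}, now read for measures --- shows that the model-theoretic inverse $\mu^{-1}$ is $G(\cU)$-left-invariant. The measure retraction, like its type counterpart, maps $M$-invariant measures to measures finitely satisfiable in $M$ and commutes with pushforwards along $M$-definable functions (the measure analogue of Fact~\ref{fact:rest_facts}(4)); applying the latter to left translation by $g\in G(M)$ gives $g\cdot F_{M}(\mu^{-1})=F_{M}(g\cdot\mu^{-1})=F_{M}(\mu^{-1})$. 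Hence $K_{M}(\mu)=F_{M}(\mu^{-1})$ is finitely satisfiable in $M$ and $G(M)$-left-invariant, so by the preceding remark $\{K_{M}(\mu)\}$ is a minimal left ideal of $(\mathfrak{M}_{G}^{\fs}(\cU,M),*)$ and $K_{M}(\mu)\in\mathcal{H}$. Note that inversion is genuinely needed here: $F_{M}(\mu)$ alone is only $G(M)$-\emph{right}-invariant, which does not force it into $\mathcal{H}$ --- this is the left-right phenomenon in miniature.

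Combining the two steps: for $\mu_{1},\mu_{2}\in\mathcal{J}$ we get $K_{M}(\mu_{1}*\mu_{2})=K_{M}(\mu_{1})$ from the first step, and $K_{M}(\mu_{2})*K_{M}(\mu_{1})=K_{M}(\mu_{1})$ from the second step together with $K_{M}(\mu_{1})\in\mathcal{H}$; these agree, so $K_{M}|_{\mathcal{J}}$ is an anti-homomorphism into $\mathcal{H}$. (It is not a homomorphism: that would require $K_{M}(\mu_{1})=K_{M}(\mu_{2})$, which fails in general.) The argument is essentially formal; the only point that needs to be spelled out with a little care --- and the one I would expect to be the main obstacle --- is the equivariance of the measure retraction under $G(M)$-translation used in the second step, which should be immediate from whichever concrete construction of the measure retraction one adopts, as is the routine transfer of the ``$\mu^{-1}$ swaps left/right invariance'' computation from types to measures.
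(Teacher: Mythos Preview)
Your proof is correct and follows essentially the same approach as the paper's: show $K_M(\mathcal J)\subseteq\mathcal H$ by noting that inversion sends $G(\cU)$-right-invariance to $G(\cU)$-left-invariance and that the retraction preserves $G(M)$-left-invariance, then deduce the anti-homomorphism identity from the degeneracy $\mu_1*\mu_2=\mu_1$ on $\mathcal J$ and $\nu_1*\nu_2=\nu_2$ on $\mathcal H$. Your version is in fact slightly more explicit than the paper's (which compresses the argument into a single displayed line), particularly in spelling out the $G(M)$-equivariance step for the measure retraction.
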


\begin{proof} Notice that if $\mu \in \mathfrak{M}_{G}^{\inv}(\cU,M)$ and $\mu$ is $G(\cU)$-right-invariant then $\mu^{-1}$ is $G(\cU)$-left-invariant and $F_{M}(\mu^{-1})$ remains $G(M)$-left-invariant. Thus $K_{M}(J) \subseteq \mathcal{H}$. Note that $K_{M}$ is a homomorphism since
\begin{equation*}
    F_{M}(\mu * \nu) = F_M(\mu) = F_{M}(\nu) * F_{M}(\mu).
\end{equation*}
The first equality follows from the fact that the Ellis subgroups of $\mathcal{J}$ is isomorphic to the trivial group while the second equality follows from the fact that $\{F_{M}(\mu)\}$ is a minimal left ideal of $\mathfrak{M}_{G}^{\fs}(\cU,M)$.
\end{proof}

\section{Limiting Examples} The purpose of this section is to provide several limiting examples. We first argue that the retraction map need not restrict to an isomorphism of Ellis subgroups (in fact, it may not even map Ellis subgroups to groups). The example we provide is NIP $\dfg$. We then argue that there exists groups such that the inverted retraction map need not restrict to anti-isomorphisms of Ellis subgroups; again, this map does not map Ellis subgroups to groups. Here, the example is NIP $\fsg$. These examples both come from a similar construction, i.e., considering an abelian group with the semidirect product of $\mathbb{Z}/2\mathbb{Z}$. Thus we take the time to consider some general theory regarding these kinds of groups. Finally, we then give an example of a group where the invariant Ellis subgroup and the finitely satisfiable Ellis subgroups are not (abstractly) isomorphic. This is an abelian $\dfg$ example with the independence property.

\subsection{Generalized dihedral groups}

Fix a structure $M$ and let $A = (A;+,0,\dots)$ be a definable abelian group. Then $G = A \rtimes \{-1,1\}$ is also a definable group where $\{-1,1\}$ are two distinguished points and group multiplication on the Cartesian product $A \times \{1,-1\}$ is given by
\begin{equation*}
(a,i) \cdot (b,j) =
\left\{
\begin{array}{ll}
(a+b, j) & \text{if } i = 1, \\
(a - b, -j) & \text{if } i = -1.
\end{array}
\right.
\end{equation*}
We consider $A$ as a subgroup of $G$, slightly abusing notation, so that for $a\in A$ we write $a=(a,1)$. Likewise, we consider $S_A(\cU)$ as a subspace of $S_G(\cU)$, writing $p=(p,1)$ for $p\in S_A(\cU)$. Also, if $p \in S_{A}(\cU)$, we often write $p^{-1}$ as $-p$. Since all abelian groups are amenable and any extension of an amenable group is amenable, we have that $G = A \rtimes \{\pm 1\}$ is discretely amenable and thus also definably amenable.

    \begin{remark}
        In $G$, we have $(a,1)^{-1}=(-a,1)$ and $(a,-1)^{-1}=(a,-1)$.
    \end{remark}

    \begin{proposition}\label{prop:semi}
        Suppose $t\in S^{\inv}_G(\cU,M)$ and is idempotent [not necessarily strong right $f$-generic], and let $i_1,i_2,j_1,j_2\in \{-1,1\}$. Then
        \begin{enumerate}
            \item
            $t\in S_A(\cU)$.
            \item For $p,q\in S_A^{\inv}(\cU,M)$, we have $(p*q)^{-1}=(-p)*(-q)$.
            \item
            $-t$ is idempotent.
            \item For $p,q \in S_{A}^{\inv}(\cU,M)$, then
            $(i_1 \cdot p ,j_1)*(i_2 \cdot q,j_2)=((i_1 \cdot p)*(j_1i_2 \cdot q),j_1j_2)$.
            \item $\{(p,\pm 1): p \in \mathcal{F}_{r}^{A}\}$ is the unique minimal left ideal in $S_{G}^{\inv}(\cU,M)$.
            \item If $t*(-t)=t$, then $(-t) * t = -t$ and $\{(t,1),(t,-1)\}$ and $\{(-t,1),(-t,-1)\}$ are groups.
            \item If $t * (-t)= -t$, then $(-t) * t = t$ and $\{(t,1),(-t,-1)\}$ and $\{(t,-1),(-t,1)\}$ are groups.
        \end{enumerate}
    \end{proposition}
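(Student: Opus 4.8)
The plan is to push everything down to the abelian group $A$, using two structural observations. First, the projection $\rho\colon G\to\{\pm1\}$, $(a,i)\mapsto i$, is an $\emptyset$-definable surjective homomorphism, so the induced $\hat\rho\colon S_G^\inv(\cU,M)\to\{\pm1\}$ is a semigroup homomorphism; its fibres are $S_A(\cU)$ and its coset $\{(p,-1):p\in S_A(\cU)\}$ (each identified with $S_A(\cU)$ via the $\emptyset$-definable bijection $a\mapsto(a,i)$), so every type in $S_G(\cU)$ has the form $(p,i)$. Second, the inclusion $A\hookrightarrow G$ is an $\emptyset$-definable group embedding, hence $S_A^\inv(\cU,M)$ embeds as a subsemigroup of $S_G^\inv(\cU,M)$ and $p*_G q=p*_A q$ for $p,q\in S_A^\inv(\cU,M)$ (realizations of types concentrated on $A$ multiply inside $A$). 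Statement (1) is then immediate: if $t$ is idempotent, $\hat\rho(t)$ is an idempotent of the group $\{\pm1\}$, hence $\hat\rho(t)=1$, so $t$ concentrates on $\ker\rho=A$.

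For (2) I would argue on realizations: if $b\models q$ and $a\models p|_{\cU b}$ then $a+b\models p*q$, so $-(a+b)=(-a)+(-b)\models(p*q)^{-1}$; since $z\mapsto -z$ is an $\emptyset$-definable involution, $-b\models -q$ and $-a\models(-p)|_{\cU(-b)}$, so $(-a)+(-b)\models(-p)*(-q)$, giving $(p*q)^{-1}=(-p)*(-q)$. Statement (3) then follows: by (1), $t\in S_A(\cU)$ and it remains idempotent there, so (2) with $p=q=t$ gives $(-t)*(-t)=(t*t)^{-1}=t^{-1}=-t$.

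For (4), write $r=i_1\cdot p$ and $s=i_2\cdot q$; for $b_0\models s$ and $a_0\models r|_{\cU b_0}$ one computes $(a_0,j_1)\cdot(b_0,j_2)=(a_0+j_1\cdot b_0,\ j_1j_2)$, and---since $j_1\cdot b_0\models j_1\cdot s$ and $a_0\models r|_{\cU(j_1\cdot b_0)}$ (again because $z\mapsto j_1z$ is $\emptyset$-definable)---the first coordinate realizes $r*(j_1\cdot s)=(i_1\cdot p)*((j_1i_2)\cdot q)$; hence $(i_1\cdot p,j_1)*(i_2\cdot q,j_2)=((i_1\cdot p)*((j_1i_2)\cdot q),\ j_1j_2)$. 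For (5), by Proposition~\ref{prop:unique_min} (applied to $G$) it is enough to identify the strong right $f$-generics of $S_G^\inv(\cU,M)$; a direct computation as in the previous step gives $(p,i)\cdot G(\cU)=\{(p\cdot_A c,\varepsilon):c\in A(\cU),\ \varepsilon\in\{\pm1\}\}$, so this right orbit is small iff the $A(\cU)$-orbit of $p$ is small, i.e.\ (Fact~\ref{fct:fgen_small_orbit}, using that $A$ is abelian) iff $p\in\mathcal{F}_{r}^{A}$; thus the unique minimal left ideal of $S_G^\inv(\cU,M)$ is exactly $\{(p,\pm1):p\in\mathcal{F}_{r}^{A}\}$.

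Finally, (6) and (7) are bookkeeping on top of (2)--(4). Assuming $t*(-t)=t$, (2) gives $(-t)*t=(t*(-t))^{-1}=t^{-1}=-t$; feeding the sign data $t*t=t$, $(-t)*(-t)=-t$, $t*(-t)=t$, $(-t)*t=-t$ into the formula of (4) produces the full multiplication tables of $\{(t,1),(t,-1)\}$ and $\{(-t,1),(-t,-1)\}$, and in each the first listed element is an identity and the second an involution, so (associativity being inherited from $*$) each is a group. Assuming instead $t*(-t)=-t$, (2) gives $(-t)*t=(-t)^{-1}=t$, and the same computation shows $\{(t,1),(-t,-1)\}$ and $\{(t,-1),(-t,1)\}$ are groups. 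The only real obstacle is keeping the bookkeeping in (4) straight---carefully separating the ``scalar'' action $i\cdot p$ on types over $A$ from the $\{\pm1\}$-coordinate as it passes through the semidirect multiplication---and making the realization arguments in (2) and (4) precise, the recurring point being that $z\mapsto -z$, $z\mapsto j_1 z$, and the coordinate inclusions are $\emptyset$-definable bijections, hence commute with taking types and with $M$-invariance.
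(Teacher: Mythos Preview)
Your proof is correct and follows essentially the same approach as the paper's own (very terse) proof, which for (1)--(4) gives the same realization-level computations and for (5)--(7) simply says ``directly from (4)''. The only minor difference is in (5), where you identify the strong right $f$-generics of $G$ via the right-orbit computation and Fact~\ref{fct:fgen_small_orbit} rather than working purely through (4) and the minimality of $\mathcal{F}_r^A$ in $S_A^{\inv}(\cU,M)$; both routes are short and equivalent.
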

    \begin{proof} We prove the statements.
    \begin{enumerate}
        \item Notice that $A \cdot A \subseteq A$ and $(G \backslash A) \cdot (G \backslash A) \subseteq A$. Thus for every $p \in S_{G}^{\inv}(\cU,M)$, $p * p \in S_{A}^{\inv}(\cU,M)$. Hence, $t = t * t \in S_{A}^{\inv}(\cU,M)$.
        \item Let $b \models q|_{M}$ and $a \models p|_{Mb}$. Then
        \begin{equation*}
            (p * q)^{-1} = ((a,1) \cdot (b,1))^{-1} = (a + b, 1)^{-1} = (-a -b,1) = (-a,1) \cdot (-b,1) = (-p) * (-q).
        \end{equation*}
        \item Follows directly from statements $(1)$ $\&$ $(2)$.
        \item Straightforward calculation.
        \end{enumerate}
        $(5)-(7)$ directly from $(4)$.
    \end{proof}

\begin{example}\label{example:dfg} Consider $G = \mathbb{R} \rtimes \{-1,1\}$ as a group definable in the real closed field $M = (\mathbb{R};+,\times,0,1)$. Let $t_{+\infty} := \{x > a: a \in \cU\}$ and $t_{-\infty} := \{x < a: a \in \cU\}$. We let $t_{+}$ be the type corresponding to the cut right above $\mathbb{R}$ and $t_{-}$ to the cut right below $\mathbb{R}$. Then
\begin{enumerate}
    \item By Proposition \ref{prop:semi}(5), the unique minimal left ideal of $S_{G}^{\inv}(\cU,M)$ is precisely $\{(t_{\pm\infty},\pm 1)\}$.
    \item The Ellis subgroups of $\mathcal{F}_{r}$ are precisely $\{(t_{+ \infty}, \pm 1) \}$ and $\{(t_{-\infty}, \pm 1)\}$.
    \item The space $S_{G}^{\fs}(\cU,M)$ has two minimal left ideals, namely $\{(t_{+},1), (t_{-},-1)\}$ and $\{(t_{-},1),(t_{+},-1)\}$. Both minimal left ideals are also Ellis subgroups.
    \item Notice that $F_{M}(\{(t_{+\infty},\pm1)\} = \{(t_{+},\pm 1)\}$ and $F_{M}(\{(t_{-\infty},\pm1)\} = \{(t_{-},\pm 1)\}$. The images of the Ellis subgroups are not groups, e.g., $(t_{+},-1) * (t_{+},-1) = (t_{-},1)$.
    \item By Theorem \ref{theorem:map}, the map $K_{M}$ restricts to an anti-isomorphism from $\{(t_{+\infty},\pm 1)\}$ to $\{(t_{-},1),(t_{+},-1)\}$ and from $\{(t_{-\infty},\pm 1)\}$ to $\{(t_{+},1),(t_{-},-1)\}$. Since the Ellis subgroups are abelian, this anti-isomorphism is an isomorphism.
\end{enumerate}
\end{example}

\begin{example}\label{example:fsg} Consider $G = S^{1} \rtimes \{-1,1\}$ as a group definable in the real closed field $M = (\mathbb{R};+,\times,0,1)$. For any $a \in S^{1}(\mathbb{R})$ we let $p_{a}^{-}$ be the global finitely satisfiable type in $S^{1}(\mathbb{R})$ with standard part $a$ and for any $b \in S^{1}(\mathbb{R})$, $b < x < a \in p_{a}^{-}$ (in the sense of the natural circular ordering on $S^1$). Likewise, we let $p_{a}^{+}$ be the global finitely satisfiable type in $S^{1}(\mathbb{R})$ with standard part $a$ and for any $b \in S^{1}(\mathbb{R})$, $a < x < b \in p_{a}^{+}$ (in the sense of the natural circular ordering on $S^1$). Then
\begin{enumerate}
    \item By Proposition \ref{prop:semi}(5) and \cite[Example 4.2]{chernikov2022definable}, the unique minimal left ideal of $S_{G}^{\inv}(\cU,M)$ is precisely $\{(p_{a}^{\pm},\pm1): a \in S^{1}(\mathbb{R})\}$.
    \item The Ellis subgroups of $\mathcal{F}_{r}$ are $H_1:= \{((p_{a}^{+},\pm 1): a \in S^{1}(\mathbb{R})\}$ and $H_2 :=\{(p_{a}^{-}, \pm 1): a \in S^{1}(\mathbb{R})\}$.
    \item Since $G$ is $\fsg$, the unique minimal left ideal of $S_{G}^{\fs}(\cU,M)$ coincides with the unique minimal left ideal of $S_{G}^{\inv}(\cU,M)$. Thus, they have the same Ellis subgroups.
    \item Notice that
    \begin{equation*} K_{M}(H_1) = H_1^{-1} = \{(p_{a}^{-},1): a \in S^{1}(\mathbb{R})\} \cup \{(p_{a}^{+}, -1) : a \in S^{1}(\mathbb{R})\},
    \end{equation*}
    and,
    \begin{equation*}
        K_{M}(H_2) = H_2^{-1} = \{(p_{a}^{+},1): a \in S^{1}(\mathbb{R})\} \cup \{(p_{a}^{-}, -1) : a \in S^{1}(\mathbb{R})\}.
    \end{equation*}
    Clearly, neither of the above are Ellis subgroups (since the only Ellis subgroups of $S^{\fs}_{G}(\cU,M)$ are $H_1$ and $H_2$).
\end{enumerate}
\end{example}

\begin{remark} Both examples above demonstrate that Ellis subgroups are not preserved under model theoretic inversion. Example~\ref{example:fsg} also shows that even if a given type is bi-generic (i.e.\ both left and right $f$-generic), the left and the right orbits may not coincide.
\end{remark}

\begin{example}\label{example:product} Consider $G = (S^{1} \rtimes \{-1, 1\}) \times (\mathbb{R} \rtimes \{-1,1\})$ as a definable subgroup of the real closed field $M = (\mathbb{R};+,\times,0,1)$ with both the circular ordering and standard ordering in the respective coordinates. From the previous examples, it is clear that neither the retraction map $F_{M}$ nor the inverted retraction map $K_{M}$ maps Ellis subgroups to Ellis subgroups.
\end{example}

Finally, we make a quick remark regarding minimal two-sided ideals.

\begin{remark} Recall Example \ref{example:Heisenberg}. Note that since $[0,0,1]\in G(M)$ is not in the right stabiliser of $p|_M$, $[0,0,1]^{-1}=[0,0,-1]\in G(M)$ is not in the left stabilizer of $p^{-1}|_M$, and so it is not in the left stabilizer of $F_M(p^{-1})$. Hence, $F_M(p^{-1})$ is the retract of a $\dfg$ minimal idempotent in $S^\inv_G(\cU,M)$ which is itself not in the minimal ideal. This shows that in particular, in general, the $F_M$-retract of the minimal (two-sided) ideal of $S^\inv_G(\cU,M)$ is not contained in the minimal (two-sided) ideal of $S^\fs_G(\cU,M)$.
\end{remark}

\subsection{Invariant Ellis subgroups may not be isomorphic to finitely satisfiable Ellis subgroups} We provide an example of an abelian $\dfg$ group (outside of the NIP setting) such that the isomorphism type of the invariant Ellis subgroups is different from the isomorphism type of the finitely satisfiable Ellis subgroups.

Let $M$ be an atomless Boolean algebra in the language $\Lc _{Bool} = \{\cap,\cup, \leq,~^\complement, 0 , 1\}$. Then $G(x) := \text{`}x = x\text{'}$ is an abelian group with group multiplication defined by symmetric difference -- denoted $+$, group identity $0$, and group inverse $^\complement$. We recall that the theory admits quantifier elimination.

It is clearly abelian, and we will see promptly that it is $\dfg$ -- even extremely amenable. We will then show that the Ellis subgroup for invariant types is trivial, while the isomorphism type of the Ellis subgroups for finitely satisfiable types is not -- proving our claim.

\begin{fact}
    \label{fct:random_type}
    The set $\{x \cap a\neq 0\neq a\setminus x : a\in \cU\setminus \{0\}\}$ is consistent and extends to a unique global $\emptyset$-definable type $p_r$.
\end{fact}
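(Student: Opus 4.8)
The plan is to obtain consistency by a direct finite-satisfiability argument, and then to extract both the uniqueness of the completion and its $\emptyset$-definability from quantifier elimination, via a normal form for terms. Write $\Sigma$ for the partial type $\{x\cap a\neq 0\neq a\setminus x : a\in\cU\setminus\{0\}\}$.

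First, for consistency, I would check finite satisfiability inside $\cU$. Given nonzero $a_1,\dots,a_n\in\cU$, pass to the (finite) subalgebra they generate and let $b_1,\dots,b_k$ be its atoms; since $\cU$ is atomless, split each $b_\ell=b_\ell'\sqcup b_\ell''$ into nonzero pieces and set $c:=\bigcup_\ell b_\ell'$. Each $a_i$ is a nonempty join of the $b_\ell$, so $c\cap a_i\supseteq b_\ell'\neq 0$ and $a_i\setminus c\supseteq b_\ell''\neq 0$ for some $\ell$ with $b_\ell\leq a_i$. Thus $c$ realises the relevant finite fragment of $\Sigma$, and by compactness $\Sigma$ is consistent.

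The key structural input is a normal form. By Boole's expansion every $\Lc_{Bool}$-term $t(x,\bar y)$ is provably equal to $(x\cap t(1,\bar y))\cup(x^\complement\cap t(0,\bar y))$, and a short computation then shows that $t_1(x,\bar y)\leq t_2(x,\bar y)$ is equivalent to ``$x\cap w_1(\bar y)=0 \ \wedge\ w_2(\bar y)\leq x$'' for suitable terms $w_1,w_2$ in $\bar y$ alone (and $t_1=t_2$ reduces to the conjunction of two such inequalities). Invoking quantifier elimination for atomless Boolean algebras, every $\Lc_{Bool}$-formula $\varphi(x,\bar y)$ is therefore equivalent to a Boolean combination of formulas of the two shapes ``$x\cap w(\bar y)=0$'' and ``$w(\bar y)\leq x$''; crucially this reduction is performed purely syntactically, hence uniformly in the parameter tuple $\bar y$.

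Finally I would read off the two conclusions. Any completion $p\supseteq\Sigma$ must contain ``$x\cap a=0$'' exactly when $a=0$, and must contain ``$a\leq x$'' (equivalently, ``$a\setminus x=0$'') exactly when $a=0$; by the normal form and quantifier elimination these decisions pin down the quantifier-free type of $x$ over $\cU$, hence all of $p$, so the completion $p_r$ is unique. For $\emptyset$-definability, given $\varphi(x,\bar y)$, take its normal form and replace each occurrence of ``$x\cap w(\bar y)=0$'' and of ``$w(\bar y)\leq x$'' by ``$w(\bar y)=0$'': the resulting parameter-free formula $\psi(\bar y)$ satisfies $\varphi(x,\bar c)\in p_r \iff \cU\models\psi(\bar c)$ for all $\bar c\in\cU$, so it is a $\varphi$-definition over $\emptyset$. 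There is no serious obstacle here; the bulk of the work is the normal-form computation, and the one point needing genuine care is that this reduction stays syntactic, so that the definition $\psi$ really involves no parameters — everything else is routine Boolean-algebra bookkeeping.
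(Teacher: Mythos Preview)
Your proof is correct and follows essentially the same approach as the paper's: the paper's sketch invokes quantifier elimination together with the observation that every atomic formula is equivalent to a conjunction of formulas $t=0$ with $t$ built from $\cap$ and ${}^\complement$, and your Boole-expansion normal form (reducing to $x\cap w(\bar y)=0$ and $w(\bar y)\leq x$) is precisely a refinement of that observation specialized to the single variable $x$. You simply spell out the consistency, uniqueness, and $\emptyset$-definability steps that the paper leaves implicit.
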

\begin{proof}
    Straightforward, using quantifier elimination and the observation that any atomic formula is equivalent to a conjunction of formulas of the form $t=0$, where $t$ is a term using only $\cap$ and ${}^\complement$.
\end{proof}

\begin{corollary}
    $G$ is dfg.
\end{corollary}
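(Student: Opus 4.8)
\emph{Proof proposal.} The plan is to show that the $\emptyset$-definable type $p_r$ furnished by Fact~\ref{fct:random_type} is invariant under the full group $G(\cU)=\cU$ acting by translation. Since the group is abelian, this makes $p_r$ a left and right $\dfg$ type over $M$: it is $\emptyset$-definable, hence $M$-definable, and every one of its global translates coincides with $p_r$ itself, hence is $M$-definable. That is precisely the assertion that $G$ is $\dfg$ (in fact, it also shows that $G$ is definably extremely amenable, the promised strengthening).

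First I would unwind what translation-invariance means in this setting. The group operation is symmetric difference $\triangle$, so for $c\in\cU$ the left translate $c\cdot p_r$ is the type with $\varphi(x)\in c\cdot p_r$ iff $\varphi(c\triangle x)\in p_r$; thus it suffices to show that if $d\models p_r$ then $c\triangle d\models p_r$. By the uniqueness clause of Fact~\ref{fct:random_type}, this reduces in turn to checking that $c\triangle d$ satisfies the ``generic'' partial type over $\cU$, i.e.\ that $(c\triangle d)\cap a\neq 0\neq a\setminus(c\triangle d)$ for every nonzero $a\in\cU$.

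The verification of that last point is the only genuine computation, and it is routine Boolean bookkeeping: decompose $a=(a\setminus c)\sqcup(a\cap c)$, at least one summand being nonzero, and observe that on $a\setminus c$ the element $c\triangle d$ restricts to $d$, while on $a\cap c$ it restricts to the relative complement of $d$; applying the genericity of $d$ to whichever summand is nonzero produces a nonzero piece of $(c\triangle d)\cap a$ and a nonzero piece of $a\setminus(c\triangle d)$. I do not anticipate any real obstacle here — the only thing one has to notice is that translation by a fixed element carries generic elements to generic elements, after which uniqueness in Fact~\ref{fct:random_type} forces $c\cdot p_r=p_r$. Finally I would record that $p_r$ concentrates on $G$ and is $\emptyset$-invariant, so $p_r\in S^{\inv}_{G}(\cU,M)$, and by the above it is $\dfg$ over $M$, which completes the proof.
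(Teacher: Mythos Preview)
Your proposal is correct and follows exactly the paper's approach: the paper's proof is the single sentence ``Clearly, $p_r$ is translation invariant and $\emptyset$-definable,'' and you have supplied the Boolean computation that justifies the word ``clearly.'' Your decomposition $a=(a\setminus c)\sqcup(a\cap c)$ and the observation that $c\triangle d$ restricts to $d$ on the first piece and to the relative complement of $d$ on the second is precisely the content of the later Proposition in the paper (the one showing $a+b\models p_r|M'$ whenever $a\models p_r|M'$), so nothing is missing.
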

\begin{proof}
    Clearly, $p_r$ is translation invariant and $\emptyset$-definable.
\end{proof}

\begin{fact} For any $a \in \cU$ such that $a \neq 0,1$, we have $p_r\vdash x\not\leq a\not\leq x$.
\end{fact}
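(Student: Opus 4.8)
The plan is to unwind the definitions of the order relation in a Boolean algebra and apply the defining property of $p_r$ from Fact~\ref{fct:random_type} twice. Recall that in $\cU$, for elements $u,v$ one has $u\leq v$ if and only if $u\setminus v = 0$, where $u\setminus v := u\cap v^\complement$; equivalently, $u\leq v$ iff $u\cap v^\complement = 0$ (the same statement written two ways). Since $p_r$ is a complete global type, for each of the two inequalities it suffices to exhibit an explicit formula of $p_r$ contradicting it.

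First I would handle $x\not\leq a$. Note that $x\leq a$ is equivalent to $x\cap a^\complement = 0$. As $a\neq 1$, the element $a^\complement$ is nonzero, so the generating set $\{\,x\cap c\neq 0\neq c\setminus x : c\in\cU\setminus\{0\}\,\}$ of Fact~\ref{fct:random_type} contains the formula $x\cap a^\complement\neq 0$ (take $c=a^\complement$). Hence $p_r\vdash x\cap a^\complement\neq 0$, i.e.\ $p_r\vdash x\not\leq a$.

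Next I would handle $a\not\leq x$. Here $a\leq x$ is equivalent to $a\setminus x = 0$. As $a\neq 0$, taking $c=a$ in the generating set shows that $a\setminus x\neq 0$ belongs to $p_r$, so $p_r\vdash a\setminus x\neq 0$, i.e.\ $p_r\vdash a\not\leq x$. Combining the two yields $p_r\vdash x\not\leq a\not\leq x$, as claimed.

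There is no genuine obstacle here: the only thing requiring care is the translation of the Boolean-algebra order $\leq$ into the equational idiom used to describe the generating set of $p_r$ (via $u\leq v\iff u\setminus v = 0$), together with the bookkeeping observation that the two non-degeneracy hypotheses $a\neq 0$ and $a\neq 1$ are exactly what let us invoke the two halves ($c\setminus x\neq 0$ with $c=a$, and $x\cap c\neq 0$ with $c=a^\complement$) of that generating set. Quantifier elimination is not needed for this statement; it was used only to establish existence and $\emptyset$-definability of $p_r$ in Fact~\ref{fct:random_type}.
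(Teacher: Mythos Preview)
Your proof is correct and follows essentially the same approach as the paper: both translate $\leq$ into the equational form $u\setminus v=0$ and invoke the two halves of the generating set of $p_r$ with $c=a$ and $c=a^\complement$, using $a\neq 0$ and $a\neq 1$ respectively. The only cosmetic difference is that the paper phrases the argument contrapositively (deriving $a=0$ or $a=1$ from $p_r\vdash a\leq x$ or $p_r\vdash x\leq a$), whereas you argue directly.
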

\begin{proof}
     Indeed, if $p_r\vdash a\leq x$, then $p_r\vdash a\setminus x=0$, so $a=0$, and if $p_r\vdash x\leq a$, then $p_r\vdash x\setminus a=x\cap a^\complement=0$, so $a^\complement=0$, i.e.\ $a=1$.
\end{proof}

\begin{fact}
    \label{fct:ultrafilter_type}
    Let $M$ be an atomless Boolean algebra, and let $U\subseteq M$ be an ultrafilter. Then the set $\{0<x<c: c\in U\}$ extends uniquely to a type in $S_{x}(M)$.
\end{fact}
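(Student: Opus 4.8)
The plan is to reduce the statement to quantifier elimination for atomless Boolean algebras, via the observation that a complete $1$-type $q\in S_{x}(M)$ is completely determined by its \emph{downward part} $D_{q}:=\{a\in M:(x\leq a)\in q\}$ and its \emph{upward part} $V_{q}:=\{b\in M:(b\leq x)\in q\}$. To justify this, I would first record the atomic reduction: using the identity $t(x)=(x\cap t(1))\cup(x^{\complement}\cap t(0))$, an easy induction on terms shows that every $\Lc_{Bool}(M)$-term $t(x)$ equals $(x\cap a)\cup(x^{\complement}\cap b)$ for suitable $a,b\in M$; a short Boolean computation then rewrites each atomic formula $s(x)\leq t(x)$ (and hence $s(x)=t(x)$) as a conjunction of formulas of the shape $x\leq e$ and $e\leq x$ with $e\in M$. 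Combined with quantifier elimination, this gives that $q$ is determined by the pair $(D_{q},V_{q})$.

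Next I would check consistency of $\Sigma(x):=\{0<x<c:c\in U\}$: given $c_{1},\dots,c_{n}\in U$, the meet $c:=c_{1}\cap\dots\cap c_{n}$ lies in $U$, hence $c\neq 0$, and since $M$ is atomless there is $y$ with $0<y<c$, which realizes the finite fragment (if $y=c_{i}$ then $c\leq c_{i}=y<c$, absurd). So $\Sigma$ is consistent and has at least one completion in $S_{x}(M)$. Finally, for an arbitrary completion $q$ of $\Sigma$ I would compute $D_{q}$ and $V_{q}$ and observe they depend only on $U$: clearly $U\subseteq D_{q}$; conversely, if $a\notin U$ then $a^{\complement}\in U$, so $(x\leq a^{\complement})\in q$, and $(x\leq a)\in q$ would force $q\vdash x\leq a\cap a^{\complement}=0$, contradicting $0<x$, whence $D_{q}=U$. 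Likewise $0\in V_{q}$, and for $b>0$: if $b\in U$ then $(x<b)\in q$ excludes $(b\leq x)\in q$, while if $b\notin U$ then $b^{\complement}\in U$ gives $(x\leq b^{\complement})\in q$, so $(b\leq x)\in q$ would force $q\vdash b\leq b^{\complement}=0$; hence $V_{q}=\{0\}$. Since $(D_{q},V_{q})=(U,\{0\})$ for every completion $q$ of $\Sigma$, the completion is unique.

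The only mildly technical point is the atomic reduction of the first paragraph — verifying that every atomic $\Lc_{Bool}(M)$-formula in $x$ is, modulo the theory, a conjunction of one-sided inequalities $x\leq e$ and $e\leq x$. I expect no genuine obstacle there: it is a routine manipulation of Boolean identities, already implicit in the proof of Fact~\ref{fct:random_type}, and the rest of the argument is simply a repackaging of quantifier elimination together with the defining properties of an ultrafilter and of atomlessness.
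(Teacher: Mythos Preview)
Your proposal is correct and follows essentially the same approach as the paper, which simply writes ``Direct by quantifier elimination.'' You have carefully unpacked what this means: the atomic reduction you describe (every atomic formula is a conjunction of one-sided inequalities $x\leq e$ and $e\leq x$) is exactly the content behind the paper's one-line proof, and is the same reduction implicit in the paper's proof of Fact~\ref{fct:random_type} (where atomic formulas are reduced to conjunctions of $t=0$ with $t$ a $\cap,{}^\complement$-term, which amounts to the same thing). Your consistency check and your computation of $(D_q,V_q)=(U,\{0\})$ are accurate and make the uniqueness transparent.
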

\begin{proof}
    Direct by quantifier elimination.
\end{proof}

\begin{proposition}
    \label{prop:ultrafilter_dichotomy}
    If $b\in \cU$ realizes a type as in Fact~\ref{fct:ultrafilter_type} and $p\in S_{x}^{\fs}(\cU,M)$, then $p\vdash x \cap b=0$ or $p\vdash x\geq b$.
\end{proposition}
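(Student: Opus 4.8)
The plan is to combine the ``ultrafilter dichotomy'' encoded in $\tp(b/M)$ with finite satisfiability of $p$ in $M$. The key preliminary observation is that, for every $m\in M$, either $b\le m$ or $b\cap m=0$. Indeed, writing $U\subseteq M$ for the ultrafilter such that $\tp(b/M)$ is generated by $\{0<x<c:c\in U\}$ (Fact~\ref{fct:ultrafilter_type}): if $m\in U$, then $x<m$ is among the generating formulas, so $b\le m$; and if $m\notin U$, then $m^\complement\in U$, so $b\le m^\complement$, i.e.\ $b\cap m=0$. Equivalently, there is \emph{no} $m\in M$ with both $m\cap b\neq 0$ and $b\setminus m\neq 0$.

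With this in hand, I would argue as follows. Since $p$ is a complete type, it decides each of the formulas $x\cap b=0$ and $b\setminus x=0$, so it suffices to rule out the possibility that $p$ contains both $x\cap b\neq 0$ and $b\setminus x\neq 0$. Suppose it does. Then $(x\cap b\neq 0)\wedge(b\setminus x\neq 0)$ is a single $\Lc_{Bool}(\cU)$-formula lying in $p$, so by finite satisfiability of $p$ in $M$ there is $m\in M$ with $m\cap b\neq 0$ and $b\setminus m\neq 0$ --- contradicting the preliminary observation. Hence $x\cap b=0\in p$ or $b\setminus x=0\in p$, which is exactly the dichotomy claimed (recalling that $b\setminus x=0$ says precisely $x\ge b$).

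I do not anticipate a genuine obstacle. Once the two consequences of the generating set $\{0<x<c:c\in U\}$ are spelled out --- and these are immediate from the ultrafilter property $m\notin U\iff m^\complement\in U$ --- the rest is a one-line use of finite satisfiability applied to a single formula with parameter $b$. The only point worth stating carefully is that the reduction invokes completeness of $p$ twice (to decide $x\cap b=0$, and then $b\setminus x=0$) before appealing to finite satisfiability; no appeal to quantifier elimination beyond what is already used in Fact~\ref{fct:ultrafilter_type} is needed.
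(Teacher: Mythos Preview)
Your proof is correct and follows the same approach as the paper: establish that every $m\in M$ satisfies $b\le m$ or $m\cap b=0$ via the ultrafilter dichotomy, then use finite satisfiability of $p$ in $M$ to conclude. The paper's proof is even terser, recording only the first step and leaving the finite satisfiability argument implicit, but your version is otherwise essentially identical.
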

\begin{proof}
    For any $c\in M$ either $c\in U$, in which case $b\leq c$, or $c^\complement\in U$, in which case $b\leq c^\complement$ and $c\cap b =0$.
\end{proof}

\begin{corollary}
    $p_r$ is not finitely satisfiable in any small model.
\end{corollary}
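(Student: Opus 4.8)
The plan is to deduce this from the dichotomy of Proposition~\ref{prop:ultrafilter_dichotomy}, applied over an arbitrary small model rather than just over the fixed base $M$. So fix a small $N\prec\cU$; I want to show $p_r$ is not finitely satisfiable in $N$. The first step is the bookkeeping observation that $N$ is elementarily equivalent to $M$, hence is itself an atomless Boolean algebra, and that Fact~\ref{fct:ultrafilter_type} and Proposition~\ref{prop:ultrafilter_dichotomy} hold verbatim with $N$ in place of $M$: their proofs use only quantifier elimination, finite satisfiability over the base, and atomlessness of the base structure, never anything specific to $M$.

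Next I would choose any ultrafilter $U$ on $N$ (one exists by the Boolean prime ideal theorem). The partial type $\{0<x<c : c\in U\}$ over $N$ is consistent: a finite subset mentions $c_1,\dots,c_n\in U$, whose meet lies in $U$ and is therefore nonzero, and $N$ being atomless contains an element strictly between $0$ and that meet. By saturation of $\cU$ this partial type is realized by some $b\in\cU$, and in particular $b\neq 0$.

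Finally, Fact~\ref{fct:random_type} gives that $p_r$ contains both $x\cap b\neq 0$ and $b\setminus x\neq 0$, since $b\in\cU\setminus\{0\}$. The first membership rules out $p_r\vdash x\cap b=0$, and the second rules out $p_r\vdash b\leq x$ (as $b\leq x$ forces $b\setminus x=0$). Were $p_r$ finitely satisfiable in $N$, the $N$-version of Proposition~\ref{prop:ultrafilter_dichotomy} applied to $p_r$ and $b$ would force one of these two inclusions — a contradiction. Hence $p_r$ is finitely satisfiable in no small model. The argument is essentially immediate once the preceding results are in hand; the only real point of care — the ``main obstacle,'' such as it is — is precisely that transfer of the dichotomy from the fixed base $M$ to an arbitrary small $N$, which is why I would flag at the outset that the relevant proof is insensitive to the choice of atomless Boolean algebra as base.
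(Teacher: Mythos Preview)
Your proof is correct and follows exactly the approach the paper intends: the Corollary is stated without proof immediately after Proposition~\ref{prop:ultrafilter_dichotomy}, and your argument is precisely the one-line deduction the reader is expected to supply, together with the (correct and necessary) observation that the proposition's proof works over any small atomless Boolean algebra $N$, not just the fixed $M$.
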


\begin{proposition}
    If $b\in\cU$ is arbitrary and $A\subseteq \cU$, then taking $M'$ to be the Boolean subalgebra generated by $Ab$, for any $a\models p_r|M'$ we have $a+b\models p_r|M'$.
\end{proposition}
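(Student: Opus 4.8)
The plan is to reduce to an explicit description of $p_r|_{M'}$ coming from quantifier elimination and then carry out a short computation with symmetric differences.

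First I would record that $p_r|_{M'}$ is exactly the complete type over $M'$ axiomatised by the \emph{splitting conditions}
\[
\{\, c\cap x\neq 0 : c\in M'\setminus\{0\}\,\}\cup\{\, c\setminus x\neq 0 : c\in M'\setminus\{0\}\,\}.
\]
These are contained in $p_r|_{M'}$ by Fact~\ref{fct:random_type}, and they are complete over $M'$: by quantifier elimination any atomic $\Lc_{Bool}$-formula with parameters in $M'$ is equivalent to a conjunction of statements ``$s=0$'' for terms $s$ in $\cap,\cup,{}^\complement,0,1$; putting $s$ in disjunctive normal form in the ``variables'' $x,m_1,\dots,m_n$ reduces each such statement to statements of the form ``$w=0$'' or ``$w\cap x=0$'' or ``$w\cap x^\complement=0$'' with $w\in M'$, and all of these are decided once we know the quantifier-free type of the parameters and that, for $w\in M'$, we have $w\cap x\neq 0$ and $w\setminus x\neq 0$ precisely when $w\neq 0$.

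Next, since $b\in M'$ (as $M'$ is generated by $Ab$), it suffices to verify the splitting conditions for $a+b$: for every nonzero $c\in M'$ I must show $c\cap(a+b)\neq 0$ and $c\setminus(a+b)\neq 0$. Put $c_0:=c\cap b^\complement$ and $c_1:=c\cap b$; these lie in $M'$ and $c_0\cup c_1=c\neq 0$, so $c_0\neq 0$ or $c_1\neq 0$. Using $a+b=(a\cap b^\complement)\cup(a^\complement\cap b)$ and $(a+b)^\complement=a+b^\complement=(a\cap b)\cup(a^\complement\cap b^\complement)$, one computes
\[
c\cap(a+b)=(c_0\cap a)\cup(c_1\setminus a),\qquad c\setminus(a+b)=(c_1\cap a)\cup(c_0\setminus a).
\]
Since $a\models p_r|_{M'}$, by the first step $a$ splits $c_0$ and $c_1$ whenever they are nonzero, i.e.\ $c_i\cap a\neq 0$ and $c_i\setminus a\neq 0$. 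Hence whichever of $c_0,c_1$ is nonzero contributes a nonzero summand to \emph{each} of the two displayed unions, so both $c\cap(a+b)$ and $c\setminus(a+b)$ are nonzero. Thus $a+b$ satisfies all the splitting conditions over $M'$, and by the first step $a+b\models p_r|_{M'}$.

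The only step needing care is the reduction in the first paragraph, namely verifying via quantifier elimination that the splitting conditions genuinely axiomatise a complete type over $M'$; the remainder is a routine Boolean manipulation. (Note also that this is strictly finer than the global translation-invariance of $p_r$: the latter would give the conclusion only when $a$ realises the full global type $p_r$, not merely its restriction $p_r|_{M'}$.)
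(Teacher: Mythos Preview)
Your proof is correct and takes essentially the same approach as the paper: both verify the splitting conditions $c\cap(a+b)\neq 0$ and $c\setminus(a+b)\neq 0$ for nonzero $c\in M'$ by using $b\in M'$ and that $a$ splits every nonzero element of $M'$. Your decomposition $c=(c\setminus b)\cup(c\cap b)$ is a slightly more uniform rephrasing of the paper's case split on $c\leq b$ versus $c\not\leq b$, and you are more explicit about why the splitting conditions axiomatise $p_r|_{M'}$ (which the paper leaves implicit via Fact~\ref{fct:random_type}).
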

\begin{proof}
    If $c\in M'$ is arbitrary, then:
    \begin{itemize}
        \item
        if $c\not \leq b$, then $(a+b)\cap c\geq (a+b)\cap(c\setminus b)=a\cap (c\setminus b)\neq 0$, and
        \item
        if $c\leq b$, then $(a+b)\cap c=(a\cap c)+(b\cap c)=(a\cap c)+c=c\setminus a\neq 0$.
    \end{itemize}

    Likewise,
    \begin{itemize}
        \item
        if $c\not\leq b$, then
        $c\setminus (a+b)\geq (c\setminus b)\setminus(a+b)=(c\setminus b)\setminus a\neq 0$, and
        \item
        if $c\leq b$, then $c\setminus(a+b)=c\cap a\neq 0$.\qedhere
    \end{itemize}
\end{proof}

\begin{corollary}
    For any $q \in S_{G}^{\inv}(\cU,M)$, $p_{r} * q = p_{r}$.
\end{corollary}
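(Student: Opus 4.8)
The plan is to reduce the statement to the Proposition immediately preceding it, which says that $p_r$ is invariant under translation over any small Boolean subalgebra of $\cU$. Unwinding the definition of the Newelski product: to decide whether a formula $\varphi(x,d)$, with $d$ a tuple from $\cU$, lies in $p_r * q$, one picks a realisation $b \models q|_{Md}$ inside $\cU$ (which exists since $Md$ is small and $q$ is $M$-invariant) and asks whether $\varphi(x \cdot b, d) \in p_r$; and in $G$ the product $x \cdot b$ is just the symmetric difference $x + b$. So proving $p_r * q = p_r$ amounts to showing, for every such $\varphi$, $d$, and $b$, that $\varphi(x + b, d) \in p_r$ if and only if $\varphi(x, d) \in p_r$.

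To establish this, I would fix a realisation $a \models p_r$ (in some monster extension of $\cU$) and let $M'$ be the small Boolean subalgebra of $\cU$ generated by $M \cup d \cup \{b\}$, so that $b$ and $d$ both lie in $M'$. Since $a \models p_r|_{M'}$, the preceding Proposition, applied with $A = M \cup d$, gives $a + b \models p_r|_{M'}$ as well; hence $a \equiv_{M'} a + b$. As $\varphi(x, d)$ is an $\Lc(M')$-formula, this yields $\models \varphi(a, d)$ if and only if $\models \varphi(a + b, d)$, which is precisely the equivalence $\varphi(x, d) \in p_r \iff \varphi(x + b, d) \in p_r$ required above. Feeding this back into the Newelski-product computation gives $\varphi(x,d) \in p_r * q \iff \varphi(x,d) \in p_r$, and since $\varphi$ and $d$ were arbitrary, $p_r * q = p_r$.

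There is essentially no obstacle here: all the content is carried by the preceding Proposition, and the only care needed is the bookkeeping of parameters --- ensuring $Md$ is small (so that $q|_{Md}$ is realised in $\cU$), that $M'$ is small, and that $M'$ is chosen to contain $M$, $d$, and $b$ so that the translation-invariance statement applies and $\varphi(\cdot, d)$ has all its parameters inside $M'$.
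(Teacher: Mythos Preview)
Your proposal is correct and is exactly the unwinding the paper intends: the paper states this as a corollary of the preceding Proposition with no further proof, and your argument is the natural way to spell out that implication via the definition of the Newelski product. One cosmetic point: the existence of $b\models q|_{Md}$ in $\cU$ comes from saturation (since $Md$ is small), not from $M$-invariance of $q$; invariance of $p_r$ is what makes $p_r*q$ independent of the choice of $b$, but this does not affect your argument.
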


\begin{corollary}\label{cor:inv_ba}
    The Ellis subgroup of $S_{G}^{\inv}(\cU,M)$ is trivial.
\end{corollary}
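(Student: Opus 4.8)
The plan is to read off the result directly from the preceding corollary, which says that $p_r * q = p_r$ for every $q \in S_G^{\inv}(\cU,M)$; this exhibits $p_r$ as a two-sided-idempotent element that absorbs on the right, and triviality of the ideal group follows by pure Ellis theory.

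First I would take $q = p_r$ to get $p_r * p_r = p_r$, so $p_r$ is an idempotent of $(S_G^{\inv}(\cU,M),*)$ (note $p_r \in S_G^{\inv}(\cU,M)$ since it is $\emptyset$-definable, hence $M$-invariant, by Fact~\ref{fct:random_type}). More importantly, the preceding corollary says precisely that $\{p_r\} * S_G^{\inv}(\cU,M) = \{p_r\}$, i.e.\ the singleton $\{p_r\}$ is a right ideal of $S_G^{\inv}(\cU,M)$; being a singleton, it is a minimal right ideal (or invoke Fact~\ref{fact:right}(4)).

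Next I would apply Fact~\ref{fact:right}(6): since $p_r$ is an idempotent with $p_r * S_G^{\inv}(\cU,M) = \{p_r\}$ a minimal right ideal, $p_r$ is a minimal idempotent lying in some minimal left ideal $I$, and $\{p_r\} * p_r = \{p_r\}$ is an Ellis subgroup of $I$. Thus $S_G^{\inv}(\cU,M)$ has an Ellis subgroup equal to the trivial group. Finally, by Fact~\ref{fact:Ellis}(5)--(6) all Ellis subgroups of $S_G^{\inv}(\cU,M)$ are abstractly isomorphic, so every Ellis subgroup is trivial, which is the assertion.

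There is essentially no obstacle here: the only thing to ``see'' is that right-absorption of $p_r$ already pins down the minimal right ideal $\{p_r\}$ through $p_r$, and then the structural facts recalled in Section~\ref{fact:right} do the rest; the substantive work was in the chain of propositions establishing $p_r * q = p_r$.
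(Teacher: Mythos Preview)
Your proof is correct and follows essentially the same approach as the paper: both observe that the preceding corollary gives $p_r*S_G^{\inv}(\cU,M)=\{p_r\}$, so $\{p_r\}$ is a minimal right ideal, and then invoke a structural fact from Fact~\ref{fact:right} to conclude $\{p_r\}$ is an Ellis subgroup (you use item~(6), the paper uses item~(11)).
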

\begin{proof}
    We have that $p_r*S_{G}^{\inv}(\cU,M)=\{p_r\}$ so by Fact~\ref{fact:right}(11) we see that $\{p_r\}$ is an Ellis group.
\end{proof}

\begin{proposition}\label{prop:fs_ba}
    The Ellis subgroup of $S_{G}^{\fs}(\cU,M)$ is nontrivial.
\end{proposition}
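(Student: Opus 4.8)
The plan is to build, out of the ultrafilter data already set up, a surjective semigroup homomorphism $v_b\colon S_G^{\fs}(\cU,M)\to\Zz/2\Zz$ which is nontrivial on at least one Ellis subgroup; since all Ellis subgroups are abstractly isomorphic (Fact~\ref{fact:Ellis}), this forces the ideal group to have order at least $2$, which is exactly the claim. Morally, $v_b$ records one $\Zz/2\Zz$-coordinate of the Boolean group, in the spirit of the $\Zz/2\Zz$-coordinates appearing in the generalized dihedral examples above.

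First I would fix an ultrafilter $U$ on the (atomless, hence infinite) Boolean algebra $M$, let $q_U\in S_x(M)$ denote the type supplied by Fact~\ref{fct:ultrafilter_type}, and choose $b\in\cU$ realizing $q_U$; note $b\neq 0$. By Proposition~\ref{prop:ultrafilter_dichotomy}, every $p\in S_G^{\fs}(\cU,M)$ satisfies exactly one of $p\vdash x\cap b=0$ or $p\vdash b\le x$ (exactly one, as $b\neq 0$), so we get a well-defined $v_b\colon S_G^{\fs}(\cU,M)\to\Zz/2\Zz$ with $v_b(p)=0$ in the first case and $v_b(p)=1$ in the second. It is surjective, since $\tp(0/\cU),\tp(1/\cU)\in S_G^{\fs}(\cU,M)$ (as $0,1\in M$), while $v_b(\tp(0/\cU))=0$ and $v_b(\tp(1/\cU))=1$.

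The main computation is that $v_b$ is a homomorphism. Fix $p,q\in S_G^{\fs}(\cU,M)$ and take $a\models q|_{Mb}$; by the definition of the Newelski product (here group multiplication is symmetric difference, which we write $+$), the formula ``$x\cap b=0$'' lies in $p*q$ if and only if ``$(x+a)\cap b=0$'' lies in $p$. Now $(x+a)\cap b=(x\cap b)+(a\cap b)$ in the Boolean ring, and $a\cap b=0$ when $v_b(q)=0$ while $a\cap b=b$ when $v_b(q)=1$; feeding this in, ``$(x+a)\cap b=0$'' lies in $p$ iff ``$x\cap b=0$'' lies in $p$ (when $v_b(q)=0$), respectively iff ``$b\le x$'' lies in $p$ (when $v_b(q)=1$), which in both cases says exactly $v_b(p*q)=v_b(p)+v_b(q)$.

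To finish, let $I$ be a minimal left ideal of $S_G^{\fs}(\cU,M)$ (Fact~\ref{fact:Ellis}). For any $p\in I$ the element $\tp(1/\cU)*p$ is again in $I$, and $v_b(\tp(1/\cU)*p)=1+v_b(p)\neq v_b(p)$, so $v_b$ is not constant on $I$. Writing $I=\bigsqcup_{u\in\id(I)}u*I$, the restriction of $v_b$ to each Ellis subgroup $u*I$ is a group homomorphism into $\Zz/2\Zz$, hence has image $\{0\}$ or all of $\Zz/2\Zz$; if every such image were $\{0\}$, then $v_b$ would be constantly $0$ on $I$, contradicting the previous sentence. Hence some Ellis subgroup $u*I$ has at least two elements, and as all Ellis subgroups are isomorphic the ideal group of $S_G^{\fs}(\cU,M)$ is nontrivial. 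The only real choice in the argument is the decision to use $v_b$; the homomorphism verification and the pigeonhole over the Ellis subgroups of $I$ are then routine.
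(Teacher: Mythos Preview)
Your proof is correct and follows essentially the same idea as the paper: both use Proposition~\ref{prop:ultrafilter_dichotomy} to build a $\Zz/2\Zz$-valued semigroup homomorphism (your $v_b$; implicitly in the paper via the observation that $p*q\vdash x>b$ iff exactly one of $p,q$ does) and then show it is nontrivial on an Ellis subgroup. The paper exhibits two distinct elements $u$ and $u*p_U*u$ in $u*S_G^\fs(\cU,M)*u$ directly, whereas you use $\tp(1/\cU)$ as the element of odd parity and a pigeonhole over the Ellis-subgroup decomposition of a minimal left ideal; your choice of $\tp(1/\cU)$ is arguably cleaner, since it makes $v_b=1$ immediate without needing to check which side of the dichotomy $p_U$ falls on.
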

\begin{proof}
    Let $U$ be an ultrafilter on $M$, let $p_U$ be an arbitrary global coheir of a type as in Fact~\ref{fct:ultrafilter_type}, and let $b\models p_U|M$.

    Notice that by Proposition~\ref{prop:ultrafilter_dichotomy}, given any $p,q\in S_{G}^{\fs}(\cU,M)$, we have that $p*q\vdash x>b$ if and only if for exactly one of $p\vdash x>b,q\vdash x>b$ holds.

    It follows that any idempotent $u\in S_{G}^{\fs}(\cU,M)$ must satisfy $u\vdash x\cap b=0$, and yet $u*p_U*u\vdash x>b$, whence $u\neq u*p_U*u$. Thus if we take $u$ to be a minimal idempotent in $S^\fs_G(\cU,M)$, then $u$ and $u*p_U*u$ are distinct elements of the Ellis group $u*S^\fs_G(\cU,M)*u$.
\end{proof}

\begin{corollary}\label{cor:different} The isomorphism types of the Ellis subgroups of $S_{G}^{\inv}(\cU,M)$ and $S_{G}^{\fs}(\cU,M)$ are different.
\end{corollary}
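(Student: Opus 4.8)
The plan is to simply combine the two immediately preceding results. By Corollary~\ref{cor:inv_ba}, the Ellis subgroup of $S_{G}^{\inv}(\cU,M)$ is the trivial group $\{p_r\}$, whereas by Proposition~\ref{prop:fs_ba} the Ellis subgroup of $S_{G}^{\fs}(\cU,M)$ contains at least two distinct elements (namely $u$ and $u*p_U*u$ for a suitable minimal idempotent $u$). A trivial group is not isomorphic to any nontrivial group, so the two Ellis subgroups cannot be isomorphic, which is exactly the assertion.

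To phrase this cleanly I would first invoke Fact~\ref{fact:Ellis}(5)--(6), which guarantees that the isomorphism type of the Ellis subgroup is an invariant attached to each left-continuous compact Hausdorff semigroup, independent of the choice of minimal left ideal and of idempotent; this is what licenses speaking of \emph{the} isomorphism type in each of the two cases. The two cited results then pin these invariants down: trivial on the invariant side, nontrivial on the finitely satisfiable side, and the conclusion is immediate.

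There is no real obstacle left in this corollary itself; all the work has already been done in the preparatory statements. The substance on the finitely satisfiable side is the ultrafilter dichotomy of Proposition~\ref{prop:ultrafilter_dichotomy}, which forces every idempotent $u\in S_{G}^{\fs}(\cU,M)$ to satisfy $u\vdash x\cap b=0$ while $u*p_U*u\vdash x>b$, so that $u\neq u*p_U*u$; and the substance on the invariant side is the computation $p_r*q=p_r$ for all $q\in S_{G}^{\inv}(\cU,M)$, which exhibits $\{p_r\}$ as a (minimal, two-sided) ideal and hence as the whole Ellis subgroup. Given these, the corollary follows in one line.
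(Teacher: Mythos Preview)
Your proposal is correct and is essentially identical to the paper's own proof, which simply cites \cref{cor:inv_ba} and \cref{prop:fs_ba}. Your additional invocation of Fact~\ref{fact:Ellis}(5)--(6) to justify speaking of \emph{the} isomorphism type is a reasonable clarification but not strictly needed beyond what the paper already assumes.
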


\begin{proof} Direct by \cref{cor:inv_ba} and \cref{prop:fs_ba}.
\end{proof}

\section{ Analysis of semidirect products}

In this final section, we apply our intuition obtained from the previous sections to study the Ellis theory of semidirect products of $\fsg$ and $\dfg$ groups. One motivation to study groups of this form is the open question asking whether in NIP, or at least distal theories, every definably amenable group is the extension of a dfg group by an fsg group -- of which the semidirect product is a special case where the extension is split. (Cf.\ e.g.\ \cite[Proposition 4.6]{CP12}, \cite[Question 1.19]{pillay2016minimal} and \cite{johnson2025abelian} for context and some partial results.)
Our main theorem (Theorem \ref{thm:main_semidirect}) describes invariant Ellis subgroups as the Newelski product of invariant Ellis subgroups of each component. Similarly, we can also describe the finitely satisfiable Ellis subgroups by inverting, retracting and swapping the order of multiplication. This is consistent with our intuition from the previous section of the paper; on the $\fsg$ portion, we \emph{do nothing} and on the $\dfg$ portion, we \emph{invert and retract}.

Throughout this entire section, we assume NIP. As usual, we have a fixed first order theory $T$ where $\cU$ is a monster model of $T$ and $M$ is a small elementary submodel of $\cU$.

Some of the lemmas we prove are stated in a more general form than required for just the semidirect product -- with the hope that they can perhaps be adapted to use in a more general case of non-split extensions of definably amenable groups.

\begin{remark}
    Let $G,H,K$ be abstract groups. Recall that:
    \begin{enumerate}
        \item
        Given a left action $\alpha$ of $K$ on $H$ by automorphisms, the \emph{external semidirect product} $H\rtimes_\alpha K$ is a group with the underlying set $H\times K$ and multiplication given by
        \[
            (h_1,k_1)\cdot (h_2,k_2)=(h_1\cdot\alpha(k_1,h_2),k_1\cdot k_2)
        \]
        \item
        We say that $G$ is the \emph{internal semidirect product of $H$ and $K$}, written (abusing the notation) $G=H\rtimes K$, if $H\unlhd G$, $K\leq G$, $H\cap K=\{1\}$ and $H\cdot K=G$.
        \item
        Equivalently, $G$ is the internal semidirect product of $H$ and $K$ if $H\unlhd G$, $K\leq G$ and for the action $\alpha\colon K\times H\to H$ given by $\alpha(k,h)=h^{k^{-1}}=khk^{-1}$ we have an isomorphism $H\rtimes_\alpha K\to G$ given by the formula $(h,k)\mapsto h\cdot k$.
        \item
        Equivalently, $G$ is the semidirect product of $H$ and $K$ if we have a short exact sequence $H\to G\to K$ of groups which is split in the sense that the right arrow has a section $K\to G$ which is a homomorphism.
        \item
        Conversely, given an isomorphism $\phi\colon H\rtimes_\alpha K\to G$, $G$ is the internal semidirect product of $\phi(H)$ and $\phi(K)$, and $\alpha(k,h)=\phi^{-1}(\phi(h)^{\phi(k)^{-1}})$.
        \item
        In the definition of the internal semidirect product, we can also write $G=K\cdot H$ instead of $G=H\cdot K$ (simply by noting that $(K\cdot H)^{-1}=H^{-1}\cdot K^{-1}=H\cdot K$). This corresponds to the anti-isomorphism $H\rtimes_\alpha K\cong K\ltimes_\beta H$ given by the formula $(h,k)\mapsto (k^{-1},h^{-1})$, where $K\ltimes_\beta H$ is $K\times H$ with multiplication given by $(k_1,h_1)\cdot (k_2,h_2)=(k_1\cdot k_2, \beta(h_1,k_2)\cdot h_2)$, where $\beta$ is the right action of $K$ on $H$ given by $\beta(h,k)=\alpha(k^{-1},h^{-1})^{-1}$ (internally, $(h,k)\mapsto k^{-1}hk$).
    \end{enumerate}
\end{remark}

\begin{remark}
\label{rem:def_semidirect_product}
Let $G,H,K$ be $\emptyset$-definable groups with $H,K\leq G$. It is not hard to see that the following are equivalent:
    \begin{itemize}
        \item
        $G(M)$ is the internal semidirect product of $H(M)$ and $K(M)$,
        \item
        we have a definable split short exact sequence $H\to G\to K$ (i.e.\ all arrows and the splitting homomorphisms are $\emptyset$-definable) where $H\to G$ and the splitting homomorphism $K\to G$ are inclusions,
        \item
        $G$ is naturally (i.e.\ via the multiplication map) $\emptyset$-definably isomorphic to $H\rtimes_\alpha K$, where $\alpha$ is the conjugation action.
    \end{itemize}
    In this case, we say that $G$ \emph{is the definable semidirect product of $H$ and $K$} or \emph{is the definable semidirect product $H\rtimes K$}. Slightly abusing the notation, we will sometimes identify $G$ with $H\rtimes_\alpha K$, and identify $H$ and $K$ with the corresponding subgroups of $H\rtimes_\alpha K$.
\end{remark}

    \begin{remark}
        \label{rem:conn_component_normal}
        Suppose that $G(x)$ is an $\emptyset$-definable group and $H(x)$ is an $\emptyset$-definable normal subgroup. Then for each $g\in G(\cU)$,
        $H^{00}(\cU)^g$ is a type-definable subgroup of $H(\cU)$ of bounded index in $H(\cU)$, hence contained in $H^{00}(\cU)$, so $H^{00}(\cU)$ is a normal subgroup of $G(\cU)$.
    \end{remark}

    \begin{fact}
        \label{fct:semidirect_component}
        If $G$ is the definable semidirect product of $H$ and $K$, then $G^{00}(\cU)$ is the (internal) semidirect product of $H^{00}(\cU)$ and $K^{00}(\cU)$ (i.e.\ $G^{00}(\cU)=H^{00}(\cU)\rtimes K^{00}(\cU)$, equivalently by Remark~\ref{rem:conn_component_normal}, $G^{00}(\cU)=H^{00}(\cU)\cdot K^{00}(\cU)$) and $G(\cU)/G^{00}(\cU)=(H(\cU)/H^{00}(\cU))\rtimes (K(\cU)/K^{00}(\cU))$.
    \end{fact}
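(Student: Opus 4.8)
The plan is to reduce everything to the group-theoretic identity
\[
G^{00}(\cU)=H^{00}(\cU)\cdot K^{00}(\cU),
\]
and then read off both assertions. For the inclusion $H^{00}(\cU)\cdot K^{00}(\cU)\subseteq G^{00}(\cU)$, note first that $H^{00}(\cU)\unlhd G(\cU)$ by Remark~\ref{rem:conn_component_normal}, so the left-hand side is a subgroup; it therefore suffices to check that each factor is contained in $G^{00}(\cU)$. This is immediate from minimality: working under NIP (so that all the connected components are $\emptyset$-type-definable), $G^{00}(\cU)\cap H(\cU)$ is an $\emptyset$-type-definable subgroup of $H(\cU)$ of index at most $[G(\cU):G^{00}(\cU)]$, hence of bounded index, so it contains $H^{00}(\cU)$; the same argument gives $K^{00}(\cU)\subseteq G^{00}(\cU)$.

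For the reverse inclusion, I would show that $H^{00}(\cU)\cdot K^{00}(\cU)$ is itself an $\emptyset$-type-definable subgroup of $G(\cU)$ of bounded index, so that $G^{00}(\cU)\subseteq H^{00}(\cU)\cdot K^{00}(\cU)$ by minimality of $G^{00}(\cU)$. This is where the \emph{split} hypothesis is used. Since $G$ is the definable semidirect product of $H$ and $K$, there are $\emptyset$-definable coordinate maps $\pi_H\colon G\to H$ and $\pi_K\colon G\to K$ with $g=\pi_H(g)\cdot\pi_K(g)$, $\pi_H(g)\in H$, $\pi_K(g)\in K$, this factorization being unique (and $\pi_K$ a homomorphism with kernel $H$). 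Uniqueness of the factorization gives $H^{00}(\cU)\cdot K^{00}(\cU)=\pi_H^{-1}(H^{00}(\cU))\cap\pi_K^{-1}(K^{00}(\cU))$, which is $\emptyset$-type-definable. For the index, the set map
\[
G(\cU)\longrightarrow\bigl(H(\cU)/H^{00}(\cU)\bigr)\times\bigl(K(\cU)/K^{00}(\cU)\bigr),\qquad g\mapsto\bigl(\pi_H(g)H^{00}(\cU),\,\pi_K(g)K^{00}(\cU)\bigr),
\]
is surjective, and --- using normality of $H^{00}(\cU)$ in $G(\cU)$ together with uniqueness of the factorization --- its fibres are exactly the cosets of $H^{00}(\cU)\cdot K^{00}(\cU)$; hence $[G(\cU):H^{00}(\cU)K^{00}(\cU)]=[H(\cU):H^{00}(\cU)]\cdot[K(\cU):K^{00}(\cU)]$, which is bounded.

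Granting $G^{00}(\cU)=H^{00}(\cU)\cdot K^{00}(\cU)$, the internal semidirect product description of $G^{00}(\cU)$ is immediate: $H^{00}(\cU)\unlhd G(\cU)$ (hence $\unlhd G^{00}(\cU)$), $K^{00}(\cU)\le G^{00}(\cU)$, and $H^{00}(\cU)\cap K^{00}(\cU)\subseteq H(\cU)\cap K(\cU)=\{1\}$. For the quotient, write $\pi\colon G(\cU)\to G(\cU)/G^{00}(\cU)$ for the projection. Using the identity for $G^{00}(\cU)$ and uniqueness of the factorization once more, one checks $H(\cU)\cap G^{00}(\cU)=H^{00}(\cU)$ and $K(\cU)\cap G^{00}(\cU)=K^{00}(\cU)$, so $\pi$ induces isomorphisms $H(\cU)/H^{00}(\cU)\cong\pi(H(\cU))$ and $K(\cU)/K^{00}(\cU)\cong\pi(K(\cU))$; moreover $\pi(H(\cU))\unlhd G(\cU)/G^{00}(\cU)$, $\pi(H(\cU))\cdot\pi(K(\cU))=\pi(G(\cU))=G(\cU)/G^{00}(\cU)$, and $\pi(H(\cU))\cap\pi(K(\cU))=\{e\}$ by the same factorization trick. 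Thus $G(\cU)/G^{00}(\cU)$ is the internal semidirect product of these two subgroups, and transporting the conjugation action of $\pi(K(\cU))$ on $\pi(H(\cU))$ along the displayed isomorphisms recovers exactly the conjugation action of $K$ on $H$ reduced modulo the connected components --- in particular that action is well defined --- giving $G(\cU)/G^{00}(\cU)=(H(\cU)/H^{00}(\cU))\rtimes(K(\cU)/K^{00}(\cU))$.

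The step I expect to be the crux is the $\emptyset$-type-definability of $H^{00}(\cU)\cdot K^{00}(\cU)$: products of type-definable sets need not be type-definable, and it is precisely the definable splitting (the maps $\pi_H,\pi_K$) that exhibits this product as a type-definable set; the same splitting, together with normality of $H^{00}(\cU)$ in $G(\cU)$, is also what drives the otherwise elementary index count.
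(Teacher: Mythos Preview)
Your argument is correct. The paper's own proof is quite different in spirit: for the identity $G^{00}(\cU)=H^{00}(\cU)\cdot K^{00}(\cU)$ it simply invokes Remark~\ref{rem:conn_component_normal} together with an external result (\cite[Corollary 4.11]{GJK23}) on connected components in short exact sequences, and for the quotient it gives only the one-line observation $H(\cU)\cap(H^{00}(\cU)\cdot K^{00}(\cU))=H^{00}(\cU)$. You instead give a fully self-contained proof, and your key move---writing $H^{00}(\cU)\cdot K^{00}(\cU)=\pi_H^{-1}(H^{00}(\cU))\cap\pi_K^{-1}(K^{00}(\cU))$ via the definable coordinate maps to get $\emptyset$-type-definability directly---is exactly the right way to exploit the split hypothesis and avoids the black box. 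The trade-off is that the cited result in the paper applies to arbitrary (not necessarily split) definable extensions, whereas your argument genuinely needs the splitting for the type-definability step, as you correctly flag at the end; since the Fact is only stated for semidirect products, this is no loss here.
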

    \begin{proof}
        The first part is an immediate consequence of Remark~\ref{rem:conn_component_normal} and \cite[Corollary 4.11]{GJK23}.
        For the second part, notice that $H(\cU)\cap (H^{00}(\cU)\cdot K^{00}(\cU))=H^{00}(\cU)$, so by the first part, $H(\cU)/G^{00}(\cU)=H(\cU)/H^{00}(\cU)$, and likewise for $K$.
    \end{proof}

    \begin{proposition}
        \label{prop:enough_for_H00_cosets}
        Fix a $\emptyset$-definable short exact sequence of definable groups $H\to G\overset{\pi_K}{\to} K$. Identify $H$ with its image in $G$. Suppose either of the following holds:
        \begin{itemize}
            \item
            $H^{00}(\cU) =G^{00}(\cU) \cap H(\cU)$,
            \item
            we have an $M$-definable section $K\to G$ (not necessarily homomorphic).
        \end{itemize}
        Then whenever $g_1,g_2\in G(\cU)$ are such that $\pi_K(g_1)=\pi_K(g_2)$ and $g_1\equiv_{M} g_2$ (recalling that $M$ is a fixed small model), we have $g_2g_1^{-1}\in H^{00}(\cU)$.
    \end{proposition}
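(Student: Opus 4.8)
The plan is to deduce both cases from a single fact about connected components: for any $\emptyset$-definable group $\Gamma$ and any $a,b\in\Gamma(\cU)$, if $a\equiv_M b$ then $ba^{-1}\in\Gamma^{00}(\cU)$. To see this, note that since $T$ is NIP, $\Gamma^{00}(\cU)$ is $\emptyset$-type-definable of bounded index, so the relation $E$ on $\Gamma(\cU)$ with $E(x,y)$ iff $yx^{-1}\in\Gamma^{00}(\cU)$ is a bounded $\emptyset$-type-definable equivalence relation; any such relation is coarser than equality of Lascar strong types over $\emptyset$, and since $M$ is a model, $a\equiv_M b$ already forces $a$ and $b$ to have the same Lascar strong type over $\emptyset$, hence $E(a,b)$. (Alternatively, one can argue that an automorphism of $\cU$ fixing $M$ pointwise induces a continuous group automorphism of the compact group $\Gamma(\cU)/\Gamma^{00}(\cU)$ fixing the dense image of $\Gamma(M)$, and is therefore the identity.) I would also note at the outset that in the situation of the proposition $\pi_K(g_2g_1^{-1})=\pi_K(g_2)\pi_K(g_1)^{-1}$ is trivial, so $g_2g_1^{-1}\in\ker\pi_K=H(\cU)$ regardless of which hypothesis is assumed.

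Given this, the first case is immediate: applying the fact to $G$ gives $g_2g_1^{-1}\in G^{00}(\cU)$, and combining with $g_2g_1^{-1}\in H(\cU)$ yields $g_2g_1^{-1}\in G^{00}(\cU)\cap H(\cU)=H^{00}(\cU)$. For the second case, I would use the $M$-definable section $s$ of $\pi_K$ to build an $M$-definable projection onto $H$, namely $\rho\colon G(\cU)\to G(\cU)$ with $\rho(g):=g\cdot s(\pi_K(g))^{-1}$. This $\rho$ is $M$-definable, being composed from $\pi_K$, $s$ and the $\emptyset$-definable group operations; and since $\pi_K\circ s=\id$ we have $\pi_K(\rho(g))$ trivial, so $\rho$ in fact maps into $H(\cU)$. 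Writing $k:=\pi_K(g_1)=\pi_K(g_2)$ we get $g_i=\rho(g_i)\cdot s(k)$, hence $g_2g_1^{-1}=\rho(g_2)\rho(g_1)^{-1}$; and since $\rho$ is $M$-definable and $g_1\equiv_M g_2$, also $\rho(g_1)\equiv_M\rho(g_2)$. Applying the fact to $H$ (with $a=\rho(g_1)$ and $b=\rho(g_2)$) then gives $\rho(g_2)\rho(g_1)^{-1}=g_2g_1^{-1}\in H^{00}(\cU)$.

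The only genuinely nontrivial ingredient here is the fact in the first paragraph; everything afterwards is bookkeeping with the exact sequence and the section. I expect the one subtlety to be the justification of that fact — this is precisely where the hypothesis that $M$ is a model enters — which I would handle via the standard theory of $\emptyset$-type-definable bounded equivalence relations (equivalently, via Lascar strong types).
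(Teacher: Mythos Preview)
Your proof is correct and follows essentially the same approach as the paper's: both cases hinge on the fact that $a\equiv_M b$ implies $ba^{-1}\in\Gamma^{00}(\cU)$ (which the paper invokes without comment and you justify via Lascar strong types), and in the section case you project into $H$ via $g\mapsto g\cdot s(\pi_K(g))^{-1}$ where the paper uses the essentially equivalent $g\mapsto s(\pi_K(g))\cdot g^{-1}$. The only differences are cosmetic packaging and your added justification of the key fact.
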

    \begin{proof}
        Let $k\coloneqq \pi_K(g_1)=\pi_K(g_2)$.

        Note that $\pi_K(g_2g_1^{-1})=e_K$, so by exactness of our sequence we have that $g_2g_1^{-1}\in H(\cU)$. Moreover, since $g_1\equiv_{M} g_2$, it follows that $g_2g_1^{-1}\in G^{00}(\cU)$. Thus if $G^{00}(\cU) \cap H(\cU)=H^{00}(\cU)$, the conclusion is true.

        On the other hand, if we have a definable section, write $g$ for the image of $k$ under the definable section map. Then, as $k=\pi_K(g_1)=\pi_K(g_2)$ and $g\in\operatorname{dcl}(Mk)$, we have that $(g_1,g)\equiv_{M} (g_2,g)$, hence $g g_1^{-1}\equiv_{M} gg_2^{-1}$. As $gg_2^{-1}, g g_1^{-1}\in H(\cU)$, it follows that $( gg_2^{-1})^{-1}(g g_1^{-1})=g_2g_1^{-1}\in H^{00}(\cU)$.
    \end{proof}

    \begin{remark}
        Note that the conclusion of Proposition~\ref{prop:enough_for_H00_cosets} is not true in general, even when the groups are abelian and stable. For example, working in the eq-expansion of $\mathrm{ACF}_0$, if we take for $G$ the multiplicative group of the field, $H=\{1,-1\}$ and $M$ is arbitrary, then for any element $a\notin M$ we have $a\equiv_M -a$ and $\pi_K(a)=\{a,-a\}=\pi_K(-a)$, but $(-a)\cdot a^{-1}=-1\notin H^{00}(\cU)=\{1\}$. Both variant hypotheses of Proposition~\ref{prop:enough_for_H00_cosets} (and hence also the conclusion) hold for definable semidirect products -- the first one by Fact~\ref{fct:semidirect_component}, and the second one essentially by definition.
    \end{remark}

    \begin{proposition}
        \label{prop:fgen_in_ext}
        Suppose we have a definable short exact sequence of groups $H\to G\overset{\pi_K}\to K$ satisfying the conclusion of Proposition~\ref{prop:enough_for_H00_cosets}, and suppose that $H,K$ are definably amenable with $p_H\in S^\inv_H(\cU,M)$, $p_K\in S^\inv_K(\cU,M)$ strongly right $f$-generic.
        Let $q_0\in S_G(\cU)$
        be such that $\pi_K(q_0)=p_K$. Then, identifying $H$ with its image in $G$:
        \begin{itemize}
            \item
            if $q_1,q_2\in S_G(\cU)$ are such that $q_1|_{M}=q_2|_{M}$ and $\pi_K(q_1)=\pi_K(q_2)$, then $p_H*q_1=p_H*q_2$,
            \item
            if $g\equiv_M g'$, then we have $(p_H*q_0)\cdot g=(p_H*q_0)\cdot g'$,
            \item
            $p_H*q_0\in S^\inv_G(\cU,M)$ and is strongly right $f$-generic
        \end{itemize}
    \end{proposition}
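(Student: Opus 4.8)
The plan is to prove the first item as the technical core, and then get the other two almost for free (so, despite the order in the statement, I would prove item~(1), then item~(3), then item~(2)).

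\emph{First item (the main point).} Fix an $\Lc$-formula $\varphi(x,d)$ with $d$ from $\cU$; it suffices to show $\varphi(x,d)\in p_H*q_1$ iff $\varphi(x,d)\in p_H*q_2$. Realise $q_1|_{Md}$ by some $b_1\in G(\cU)$ (possible by saturation, with no need to pass to a larger model), so that $\varphi(x,d)\in p_H*q_1$ iff $\varphi(x\cdot b_1,d)\in p_H$. Put $k:=\pi_K(b_1)$; then $k\models\pi_K(q_1)|_{Md}=\pi_K(q_2)|_{Md}$. Pick any $b_2^0\in G(\cU)$ realising $q_2|_{Md}$; then $\pi_K(b_2^0)$ also realises $\pi_K(q_2)|_{Md}$, so $\pi_K(b_2^0)\equiv_{Md}k$, and by strong homogeneity of $\cU$ some $\sigma\in\Aut(\cU/Md)$ sends $\pi_K(b_2^0)$ to $k$; set $b_2:=\sigma(b_2^0)$, so $b_2\models q_2|_{Md}$ and $\pi_K(b_2)=k$. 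Now $b_1\equiv_M b_2$ (both realise $q_1|_M=q_2|_M$) and $\pi_K(b_1)=\pi_K(b_2)$, so the standing hypothesis — the conclusion of Proposition~\ref{prop:enough_for_H00_cosets} — gives $b_2b_1^{-1}\in H^{00}(\cU)$, hence also $b_1^{-1}b_2\in H^{00}(\cU)$ since $H^{00}(\cU)\unlhd G(\cU)$ by Remark~\ref{rem:conn_component_normal} (note $H=\ker\pi_K$ is a normal $\emptyset$-definable subgroup of $G$). Because $p_H$ concentrates on $H$, its right stabiliser in $G(\cU)$ equals its right stabiliser in $H(\cU)$, which is $H^{00}(\cU)$ by Corollary~\ref{fact:stabiliser_right_gen_g00}; therefore $p_H\cdot b_1^{-1}=p_H\cdot b_2^{-1}$, i.e.\ $\varphi(x\cdot b_1,d)\in p_H$ iff $\varphi(x\cdot b_2,d)\in p_H$. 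This yields $\varphi(x,d)\in p_H*q_1$ iff $\varphi(x,d)\in p_H*q_2$, and since $\varphi,d$ were arbitrary, $p_H*q_1=p_H*q_2$.

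\emph{Third item.} Unwinding the Newelski product, one checks directly that $(p_H*q_0)\cdot g=p_H*(q_0\cdot g)$ for every $g\in G(\cU)$ (if $b$ realises $q_0|_{Mdg}$ then $bg^{-1}$ realises $(q_0\cdot g)|_{Md}$, and both sides contain $\varphi(x,d)$ precisely when $\varphi(x\cdot bg^{-1},d)\in p_H$). Also $\pi_K(q_0\cdot g)=\pi_K(q_0)\cdot\pi_K(g)=p_K\cdot\pi_K(g)$, which is $M$-invariant because $p_K$ is strong right $f$-generic. Hence, for $\sigma\in\Aut(\cU/M)$, the types $q_0\cdot g$ and $\sigma(q_0\cdot g)$ agree over $M$ and have the same $\pi_K$-pushforward ($\sigma$ fixes the $M$-invariant type $p_K\cdot\pi_K(g)$); by the first item, $p_H*(q_0\cdot g)=p_H*\sigma(q_0\cdot g)=\sigma\bigl(p_H*(q_0\cdot g)\bigr)$. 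Thus every right translate $(p_H*q_0)\cdot g$ is $M$-invariant; in particular ($g=e$) $p_H*q_0\in S^\inv_G(\cU,M)$, and $p_H*q_0$ is strong right $f$-generic.

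\emph{Second item.} Since $p_H*q_0$ is strong right $f$-generic, $\stab_r(p_H*q_0)=G^{00}(\cU)$ by Corollary~\ref{fact:stabiliser_right_gen_g00}. If $g\equiv_M g'$ then $g(g')^{-1}\in G^{00}(\cU)$ (exactly as in the proof of Proposition~\ref{prop:enough_for_H00_cosets}), and therefore $(p_H*q_0)\cdot g=(p_H*q_0)\cdot g'$.

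\emph{Main obstacle.} The only step with genuine content is the first item. Its crux is that $q_1$ and $q_2$ can be realised by elements of $G(\cU)$ over the small set $Md$ and then aligned — via an automorphism fixing $Md$ — to have the same image under $\pi_K$; this is exactly the configuration to which the conclusion of Proposition~\ref{prop:enough_for_H00_cosets} applies, and combined with $\stab_r(p_H)=H^{00}(\cU)$ it forces $p_H\cdot b_1^{-1}=p_H\cdot b_2^{-1}$. Everything else is bookkeeping with the Newelski product and with $\Aut(\cU/M)$, once the first item is in hand.
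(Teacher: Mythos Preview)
Your proof is correct. The first item is handled essentially as in the paper: realise $q_1,q_2$ by elements with the same $\pi_K$-image, invoke the conclusion of Proposition~\ref{prop:enough_for_H00_cosets}, and use $\stab_r(p_H)=H^{00}(\cU)$. Two small typos do not affect the argument: in the parenthetical for $(p_H*q_0)\cdot g=p_H*(q_0\cdot g)$ the realiser of $(q_0\cdot g)|_{Md}$ is $bg$, not $bg^{-1}$; and the displayed equality should read $p_H\cdot b_1=p_H\cdot b_2$ rather than $p_H\cdot b_1^{-1}=p_H\cdot b_2^{-1}$ (both equations are true by normality of $H^{00}$, so the logic is unaffected).

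Where you diverge is in the order and mechanism for items (2) and (3). The paper proves (2) directly from (1) --- observing $q_0\cdot g|_M=q_0\cdot g'|_M$ and $\pi_K(q_0\cdot g)=\pi_K(q_0\cdot g')$ --- and then obtains (3) by first replacing $q_0$ with an $M$-invariant $q_1$ (so $p_H*q_0=p_H*q_1\in S_G^\inv(\cU,M)$) and then using (2) together with Fact~\ref{fct:fgen_small_orbit} to conclude the right orbit is small. You instead prove (3) first, applying (1) to $q_0\cdot g$ and $\sigma(q_0\cdot g)$ for $\sigma\in\Aut(\cU/M)$ to see each right translate is $M$-invariant directly, and then get (2) for free from $\stab_r(p_H*q_0)=G^{00}(\cU)$. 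Your route avoids constructing the auxiliary $q_1$ and the appeal to Fact~\ref{fct:fgen_small_orbit}; the paper's route is perhaps more modular in that (2) is proved independently of (3). Both are short and natural.
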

    \begin{proof}
        Fix an arbitrary $g\in G(\cU)$ and small models $N_1,N_2$ such that $M \preceq^{+} N_1 \preceq^{+} N_2$ with $g\in N_1$.

        For the first bullet, fix $k\models \pi_K(q_1)|_{N_1}$ in $N_2$, some $g_1,g_2\in G(N_2)$ such that $g_j\models q_j|_{N_1}$ and $\pi_K(g_j)=k$ for $j=1,2$, as well as $h\models p_H|_{N_2}$ in $H(\cU)$. Then $hg_j\models (p_H*q_j)|_{N_1}$, so we only need to show that $hg_1\equiv_{N_1} hg_2$. Setting $h'\coloneqq g_2g_1^{-1}$ -- so that $hg_2=hh'g_1$ -- by the conclusion of Proposition~\ref{prop:enough_for_H00_cosets}, we have $h'\in H^{00}(N_1)$. But then -- since $p_H$ is right $f$-generic -- $hh'\models (p_H\cdot h')|_{N_2}=p_H|_{N_2}$, so $hg_2=hh'g_1\models (p_H*q_1)|N_1=\tp(hg_1/N_1)$.

        For the second bullet, note that if $g\equiv_M g'$, then $q_0 \cdot g|_M = q_0 \cdot g'|_M$ and $\pi_K(g)K^{00}=\pi_K(g')K^{00}$, so $\pi_K(q_0\cdot g)=p_K\cdot \pi_K(g)=p_K\cdot \pi_K(g')=\pi_K(q_0\cdot g')$. By the first bullet we conclude that
        \[
            (p_H*q_0)\cdot g=p_H*(q_0\cdot g)=p_H*(q_0\cdot g')=(p_H*q_0)\cdot g'.
        \]

        For the third bullet, note that there is a type $q_1\in S_G^\inv(\cU,M)$ such that $q_1|_{M} = q_0|_M$ and $\pi_K(q_0)=\pi_K(q_1)$. Indeed, $q_0|_M\cup \{\varphi(\pi_K(x)): \varphi(x)\in p_K\}$ is a partial $M$-invariant type and any $q_1\in S_G^\inv(\cU,M)$ extending it suffices. Then by the first bullet $p_H*q_0=p_H*q_1\in S_G^\inv(\cU,M)$. Fixing a small model $M'$ such that $M \preceq^{+} M'$, by the second bullet, we have that $(p_H*q_0)\cdot G(\cU)=(p_H*q_0)\cdot G(M')$, so the $G(\cU)$-orbit of $p_H*q_0$ is small, so by Fact~\ref{fct:fgen_small_orbit} it is indeed right $f$-generic.
    \end{proof}

    \begin{corollary}\label{corollary:constuct}
        Suppose we have a definable short exact sequence $H\to G\overset{\pi_K}\to K$ of definably amenable groups satisfying the one of the variant hypotheses of Proposition~\ref{prop:enough_for_H00_cosets}. Fix Ellis subgroups $\widehat{E_H}$ and $E_K$ in $S_H^\inv(\cU,M)$ and $S_K^\inv(\cU,M)$ respectively. Let $\widehat E_K\subseteq S_G(\cU)$ be such that $\pi_K[\widehat{E_K}]=E_K$. Then $\widehat{E_H}*\widehat{E_K}$
        is an Ellis subgroup in $S_G^\inv(\cU,M)$.
    \end{corollary}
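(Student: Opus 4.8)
The plan is to produce a single strong right $f$-generic type $p\in S_G^\inv(\cU,M)$ for which $\widehat{E_H}*\widehat{E_K}=p*S_G^\inv(\cU,M)$; by Theorem~\ref{theorem:main} the latter is then an Ellis subgroup. Let $p_H\in\widehat{E_H}$ and $p_K\in E_K$ be the idempotents; by Proposition~\ref{prop:unique_min} they are strong right $f$-generic, and by Theorem~\ref{theorem:main} (applied to $H$ and to $K$) we have $\widehat{E_H}=p_H\cdot H(\cU)=p_H*S_H(\cU)$ and $E_K=p_K\cdot K(\cU)$. Fix $q_0\in\widehat{E_K}$ with $\pi_K(q_0)=p_K$, and let $q_0'\in S_G^\inv(\cU,M)$ be an $M$-invariant lift of $p_K$ with $q_0'|_M=q_0|_M$ (constructed as in the proof of the third bullet of Proposition~\ref{prop:fgen_in_ext}). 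By the first bullet of Proposition~\ref{prop:fgen_in_ext}, $p_H*q_0=p_H*q_0'$; set $p:=p_H*q_0$. Then $p\in S_G^\inv(\cU,M)$ and is strong right $f$-generic by Proposition~\ref{prop:fgen_in_ext}, so $p*S_G^\inv(\cU,M)=p\cdot G(\cU)$ is an Ellis subgroup, and $p=p_H*q_0\in\widehat{E_H}*\widehat{E_K}$.

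For $\widehat{E_H}*\widehat{E_K}\subseteq p\cdot G(\cU)$: given $r\in\widehat{E_H}$ and $s\in\widehat{E_K}$, write $r=p_H\cdot h=p_H*\tp(h/\cU)$ with $h\in H(\cU)$ (Theorem~\ref{theorem:main} and Lemma~\ref{lemma:apply} for $H$); then by associativity (Fact~\ref{fact:nb}(6)) and $\tp(h/\cU)*s=h\cdot s$ we get $r*s=p_H*(h\cdot s)$. Since $h\in\ker\pi_K$, we have $\pi_K(h\cdot s)=\pi_K(s)\in E_K$. I would then choose $g\in G(\cU)$ with $\pi(g)=\pi(h)\cdot\hat{\pi}(s)$: a short computation with the induced continuous homomorphism $\bar{\pi}_K\colon G(\cU)/G^{00}(\cU)\to K(\cU)/K^{00}(\cU)$, using that $\hat{\pi}(s)$ and $\pi_K(s)$ have the same image in $K(\cU)/K^{00}(\cU)$ and that $\stab_r(p_K)=K^{00}(\cU)$ (Corollary~\ref{fact:stabiliser_right_gen_g00} for $K$), gives $\pi_K(q_0'\cdot g)=p_K\cdot\pi_K(g)=\pi_K(s)=\pi_K(h\cdot s)$. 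Using the hypothesis of Proposition~\ref{prop:enough_for_H00_cosets} one can moreover arrange $(q_0'\cdot g)|_M=(h\cdot s)|_M$, and then the first bullet of Proposition~\ref{prop:fgen_in_ext} yields $r*s=p_H*(h\cdot s)=p_H*(q_0'\cdot g)=(p_H*q_0')\cdot g=p\cdot g\in p\cdot G(\cU)$.

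For the reverse inclusion I would show that $\hat{\pi}$ maps $\widehat{E_H}*\widehat{E_K}$ onto $G(\cU)/G^{00}(\cU)$; since $\widehat{E_H}*\widehat{E_K}\subseteq p\cdot G(\cU)$ and $\hat{\pi}$ restricted to $p\cdot G(\cU)$ is a bijection (Theorem~\ref{theorem:main}), this forces equality. As $\hat{\pi}$ is a semigroup homomorphism, $\hat{\pi}[\widehat{E_H}*\widehat{E_K}]=\hat{\pi}[\widehat{E_H}]\cdot\hat{\pi}[\widehat{E_K}]$. Realizations of the idempotent $p_H$ lie in $H^{00}(\cU)\subseteq G^{00}(\cU)$, so $\hat{\pi}(p_H)=e$ and hence $\hat{\pi}[\widehat{E_H}]=\pi(H(\cU))$; on the other hand $\pi_K[\widehat{E_K}]=E_K$ together with the surjectivity of the canonical map $E_K\to K(\cU)/K^{00}(\cU)$ shows that $\bar{\pi}_K$ carries $\hat{\pi}[\widehat{E_K}]$ onto $K(\cU)/K^{00}(\cU)$. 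By Fact~\ref{fct:semidirect_component} (in the definable semidirect-product case) one has $\ker\bar{\pi}_K=\pi(H(\cU))$, whence a diagram chase gives $\hat{\pi}[\widehat{E_H}*\widehat{E_K}]=G(\cU)/G^{00}(\cU)$, as required.

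I expect the crux to be the last step of the first inclusion: producing one $g\in G(\cU)$ that matches both $\pi_K(h\cdot s)$ and $(h\cdot s)|_M$ simultaneously. This is precisely where the variant hypothesis of Proposition~\ref{prop:enough_for_H00_cosets} (a definable section $K\to G$, or $H^{00}(\cU)=G^{00}(\cU)\cap H(\cU)$) is needed: it lets the correct $K$-coset be represented by an element over which the $M$-restriction is controlled, and it underlies the identity $\ker\bar{\pi}_K=\pi(H(\cU))$ used for surjectivity. A secondary point requiring care is the non-invariance of a general lift $\widehat{E_K}$, handled by keeping all expressions in the form $p_H*(-)$ and reducing them, via Proposition~\ref{prop:fgen_in_ext}, to their dependence on $M$-restrictions and $\pi_K$-images, exactly as in the choice of $q_0'$ above.
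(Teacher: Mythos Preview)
Your strategy --- produce a single right $f$-generic $p=p_H*q_0$ and show $\widehat{E_H}*\widehat{E_K}=p\cdot G(\cU)$, then invoke Theorem~\ref{theorem:main} --- is exactly the paper's. The real gap is the step you yourself flag: ``using the hypothesis of Proposition~\ref{prop:enough_for_H00_cosets} one can moreover arrange $(q_0'\cdot g)|_M=(h\cdot s)|_M$''. That proposition runs the \emph{other} way: from matching $M$-types and matching $\pi_K$-images it concludes $g_2g_1^{-1}\in H^{00}$; it gives no mechanism to \emph{produce} a $g$ so that $q_0'\cdot g$ has a prescribed $M$-restriction. Since $s\in\widehat{E_K}$ is an arbitrary (possibly non-$M$-invariant) lift, there is no reason $(h\cdot s)|_M$ is ever of the form $(q_0'\cdot g)|_M$. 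The paper sidesteps this entirely: given $q_1\in\widehat{E_K}$, $g\in G(\cU)$, and any $q_2\in\widehat{E_K}$ with $\pi_K(q_2)=\pi_K(q_1\cdot g)$, it chooses realizations $g_1\models q_1|_{Mg}$, $g_2\models q_2|_{Mg}$ with $\pi_K(g_1g)=\pi_K(g_2)$, and uses the hypothesis (in its intended direction) to show $g_2^{-1}g_1g$ lies in a fixed coset $h_0H^{00}(\cU)$; right $f$-genericity of $p_H$ then gives $(p_H*q_1)\cdot g=(p_H\cdot h_0)*q_2$. This handles both inclusions without ever comparing $M$-restrictions of the lifts.

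A secondary gap: your surjectivity argument for $\hat\pi$ relies on $\ker\bar\pi_K=\pi(H(\cU))$, which you draw from Fact~\ref{fct:semidirect_component}. That fact is stated only for definable semidirect products; for a general short exact sequence satisfying merely one of the hypotheses of Proposition~\ref{prop:enough_for_H00_cosets} (e.g.\ a non-homomorphic $M$-definable section), it is not clear that $\pi_K(G^{00}(\cU))=K^{00}(\cU)$. The paper's realization argument establishes the reverse inclusion directly and never appeals to the structure of $G(\cU)/G^{00}(\cU)$, so it works at the full stated generality.
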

    \begin{proof}
        Note that by Proposition~\ref{prop:fgen_in_ext}, we know that $\widehat{E_H}*\widehat{E_K}$ consists of $f$-generic types.
        By Theorem~\ref{theorem:main}, we need to show that it is a single $G(\cU)$-orbit.

        Fix $p_H\in \widehat{E_H}$ and $q_1\in \widehat{E_K}$, take any $g\in G(\cU)$. Since $\pi_K(q_1\cdot g)=\pi_K(q_1)\cdot \pi_K(g)\in E_K$ (because, e.g.\ by Theorem~\ref{theorem:main}, $E_K=\pi_K(q_1)\cdot K(\cU)$), we have that by hypothesis, there is some $q_2\in \widehat{E_K}$ such that $\pi_K(q_2)=\pi_K(q_1\cdot g)$.

        Note that if $g_1,g_1'\models q_1|_{Mg}$ and $g_2,g_2'\models q_2|_{Mg}$ satisfy $\pi_K(g_1g)=\pi_K(g_2), \pi_K(g_1'g)=\pi_K(g_2')$, then $(g_2')^{-1}g_1'g\in g_2^{-1}g_1gH^{00}(\cU)$.

        Indeed, if $H^{00}(\cU)=G^{00}(\cU)\cap H(\cU)$, then this is true because $g_2G^{00}(\cU)=g_2'G^{00}(\cU)$ and $g_1G^{00}(\cU)=g_1'G^{00}(\cU)$, and if we have a definable section $s\colon K\to H$, then this is true because, setting $k\coloneqq \pi_K(g_2),k'=\pi_K(g_2')$, we have that $(g_2,k)\equiv_M(g_2',k')$, so $s(k)^{-1}g_2 \equiv_M s(k')^{-1}g_2'$, hence $s(k)^{-1}g_2\in s(k')^{-1}g_2'H^{00}(\cU)$, and similarly, since $k=\pi_K(g_1g)$ and $k'=\pi_K(g_1'g)$:
        \[s(k)^{-1}g_1g\in s(k')^{-1}g_1'gs(k')^{-1}H^{00}(\cU),\]
        whence (e.g.\ using Remark~\ref{rem:conn_component_normal}):
        \begin{align*}
            g_2^{-1}g_1g&=g_2^{-1}s(k)s(k)^{-1}g_1=(s(k)^{-1}g_2)^{-1}s(k)^{-1}g_1g\\
            &\in (s(k')^{-1}g_2'H^{00}(\cU))^{-1}s(k')^{-1}g_1'gH^{00}(\cU)\\
            &\in (g_2')^{-1}s(k')s(k')^{-1}g_1'g_2H^{00}(\cU)\\
            &\in (g_2')^{-1}g_1'g_2H^{00}(\cU).
        \end{align*}

        It follows that there is a $h_0\in H(\cU)$ such that for all $g_1\models q_1|_{Mg}$ and $g_2\models q_2|_{Mg}$ such that $\pi_K(g_1g)=\pi_K(g_2)$ we have $g_1g\in h_0H^{00}(\cU)g_2$. Since $p_H$ is right $f$-generic, a straightforward argument shows that
        \[
            (p_H*q_1)\cdot g=p_H*(q_1\cdot g)=(p_H\cdot h_0)*q_2\in \widehat{E_H}*\widehat{E_K}.
        \]

        Conversely, for any $p_H'\in \widehat{E_H}$ and $q'\in \widehat{E_K}$ we want to show that $p_H'*q'$ is in the $G(\cU)$-orbit of $p_H*q_1$. Let $g_0\in G(\cU)$ be such that $\pi_K(q_1)\pi_K(g_0)=\pi_K(q')$ (this exists because $\pi_K(q_1),\pi_K(q_2)\in E_K$, which is a single $K(\cU)$-orbit). Then arguing as above we see that $(p_H*q_1)\cdot g_0=p_H''*q'$ for some $p_H''\in \widehat{E_H}$. Letting $h'$ be such that $p_H''\cdot h'=p_H'$, suppose $g'\models q'|_{Mh'}$ and let $h''=g'h'(g')^{-1}$. Then $\tp(h''/M)$, and hence $H^{00}(\cU)$ does not depend on the choice of $g'$, from which it follows that
        \[
            p_H'*q'=(p_H''\cdot h')*q'=p_H''*(q'\cdot h'')=(p_H*q)\cdot(g_0h'').\qedhere
        \]
    \end{proof}

    \begin{corollary}
        \label{cor:ellis_inv_semidirect}
        Suppose $G=H\rtimes K$ with $H, K$ definably amenable. Let $E_K, \widehat{E_H}$ be Ellis subgroups in $S_K^\inv(\cU,M)$ and $S_H^\inv(\cU,M)$ respectively. Then $\widehat{E_H}*E_K$ is an Ellis subgroup in $S_G^\inv(\cU,M)$. Furthermore, every element of this Ellis group is expressed uniquely as $p*q$ with $p\in \widehat{E_H}$ and $q\in E_K$.
    \end{corollary}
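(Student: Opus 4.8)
The plan is to deduce both assertions from Corollary~\ref{corollary:constuct}, Theorem~\ref{theorem:main} (applied to $H$ and to $K$), and Fact~\ref{fct:semidirect_component}. First I would observe that a definable semidirect product presents $G$ as a definable split short exact sequence $H\to G\overset{\pi_K}{\to} K$, where $\pi_K$ is the projection and the inclusion $K\leq G$ is an $M$-definable (even $\emptyset$-definable) section; by the remark following Proposition~\ref{prop:enough_for_H00_cosets}, both variant hypotheses of that proposition hold here. Taking $\widehat{E_K}$ to be $E_K$ itself, now regarded as a subset of $S^\inv_G(\cU,M)$ via the embedding $S_K(\cU)\hookrightarrow S_G(\cU)$ induced by $K\leq G$, we have $\pi_K[\widehat{E_K}]=E_K$ (because $\pi_K$ restricted to the section $K$ is the identity), so Corollary~\ref{corollary:constuct} immediately gives that $\widehat{E_H}*E_K$ is an Ellis subgroup of $S^\inv_G(\cU,M)$.

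For the \emph{furthermore} clause, existence of an expression $p*q$ is just the definition of the set product $\widehat{E_H}*E_K$, so only uniqueness needs an argument, and for that I would push everything through the semigroup homomorphism $\hat{\pi}_G\colon S^\inv_G(\cU,M)\to G(\cU)/G^{00}(\cU)$. By Fact~\ref{fct:semidirect_component}, $G^{00}(\cU)=H^{00}(\cU)\rtimes K^{00}(\cU)$ and $G(\cU)/G^{00}(\cU)=(H(\cU)/H^{00}(\cU))\rtimes(K(\cU)/K^{00}(\cU))$, with the two factors being the images of the natural injection $H(\cU)/H^{00}(\cU)\hookrightarrow G(\cU)/G^{00}(\cU)$ and the splitting $K(\cU)/K^{00}(\cU)\hookrightarrow G(\cU)/G^{00}(\cU)$; in particular $G^{00}(\cU)\cap H(\cU)=H^{00}(\cU)$ and $G^{00}(\cU)\cap K(\cU)=K^{00}(\cU)$. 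Consequently, for $p\in\widehat{E_H}\subseteq S^\inv_H(\cU,M)$ the class $\hat{\pi}_G(p)$ lies in the $H/H^{00}$-factor and is identified with $\hat{\pi}_H(p)$, and for $q\in E_K\subseteq S^\inv_K(\cU,M)$ the class $\hat{\pi}_G(q)$ lies in the $K/K^{00}$-factor and is identified with $\hat{\pi}_K(q)$. By Theorem~\ref{theorem:main} applied to $H$ and to $K$, the restrictions $\hat{\pi}_H|_{\widehat{E_H}}$ and $\hat{\pi}_K|_{E_K}$ are group isomorphisms, hence injective.

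Now suppose $p_1*q_1=p_2*q_2$ with $p_i\in\widehat{E_H}$ and $q_i\in E_K$. Applying the homomorphism $\hat{\pi}_G$ and using the identifications above gives $\hat{\pi}_H(p_1)\cdot\hat{\pi}_K(q_1)=\hat{\pi}_H(p_2)\cdot\hat{\pi}_K(q_2)$ inside the semidirect product $(H(\cU)/H^{00}(\cU))\rtimes(K(\cU)/K^{00}(\cU))$. Projecting onto the second factor (a group homomorphism) forces $\hat{\pi}_K(q_1)=\hat{\pi}_K(q_2)$, hence $q_1=q_2$ by injectivity; cancelling then gives $\hat{\pi}_H(p_1)=\hat{\pi}_H(p_2)$, hence $p_1=p_2$. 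This yields the uniqueness.

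I do not expect a genuine obstacle here: the only points requiring care are the two identifications $\hat{\pi}_G|_{S^\inv_H(\cU,M)}=\hat{\pi}_H$ and $\hat{\pi}_G|_{S^\inv_K(\cU,M)}=\hat{\pi}_K$ (valid precisely because $G^{00}(\cU)\cap H(\cU)=H^{00}(\cU)$ and $G^{00}(\cU)\cap K(\cU)=K^{00}(\cU)$ by Fact~\ref{fct:semidirect_component}), and checking that the embeddings $S_H(\cU)\hookrightarrow S_G(\cU)$ and $S_K(\cU)\hookrightarrow S_G(\cU)$ really do allow us to invoke Corollary~\ref{corollary:constuct} with $\widehat{E_K}=E_K$; everything else is bookkeeping with the semidirect product structure of $G/G^{00}(\cU)$.
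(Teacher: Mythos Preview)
Your proposal is correct and follows essentially the same route as the paper: the paper also invokes Corollary~\ref{corollary:constuct} via the observation that the inclusion $K\hookrightarrow G$ is a definable section as in Proposition~\ref{prop:enough_for_H00_cosets}, and for uniqueness it likewise uses that $\hat\pi(p*q)=\hat\pi(p)\cdot\hat\pi(q)$ together with Theorem~\ref{theorem:main} and Fact~\ref{fct:semidirect_component} to conclude injectivity of $(p,q)\mapsto\hat\pi(p)\cdot\hat\pi(q)$ on $\widehat{E_H}\times E_K$. Your write-up simply spells out the identifications $\hat\pi_G|_{S^\inv_H}=\hat\pi_H$, $\hat\pi_G|_{S^\inv_K}=\hat\pi_K$ and the project-then-cancel step more explicitly than the paper does.
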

    \begin{proof}
        The inclusion of $K$ into $G$ is a definable section as in the hypothesis of Proposition~\ref{prop:enough_for_H00_cosets}. To see the uniqueness, note that $\hat \pi(p*q)=\hat\pi(p)\cdot \hat\pi(q)$ and $(p,q)\mapsto\hat\pi(p)\cdot \hat\pi(q)$ is injective on $\widehat{E_H}\times E_K$ by Theorem~\ref{theorem:main} and Fact~\ref{fct:semidirect_component}.
    \end{proof}

    \begin{lemma}
        \label{lem:fsg_and_left_ideal}
        Suppose $G$ is a definable group which is the definable semidirect product $H\rtimes K$, where $K$ is fsg. Let $L$ be a left ideal in $S_H^\fs(\cU,M)$, and let $E_K$ be an Ellis subgroup in $S_K^\fs(\cU,M)$. Then $E_K*L=E_K*S_G^\fs(\cU,M)*L$.
        In particular, $E_K*L$ is a subsemigroup of $S_G^\fs(\cU,M)$.
    \end{lemma}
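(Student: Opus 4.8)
My plan is to split the claimed equality into the trivial inclusion and the substantive one, reduce the substantive one to a single ``factorization'' statement, and prove that factorization by a type-realization computation that leans on $K$ being fsg.

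\emph{Setup and the easy direction.} Write $H\unlhd G$ for the normal factor and $K\le G$ for the complement (as in Remark~\ref{rem:def_semidirect_product}), fix the $\emptyset$-definable retraction $\pi_K\colon G\to K$ (a homomorphism with $\ker\pi_K=H$) and the $\emptyset$-definable map $\sigma\colon G\to H$, $\sigma(g):=\pi_K(g)^{-1}g$, so that $g=\pi_K(g)\cdot\sigma(g)$ is the unique factorization with $\pi_K(g)\in K(\cU)$, $\sigma(g)\in H(\cU)$. The inclusion $E_K*L\subseteq E_K*S_G^\fs(\cU,M)*L$ is immediate: if $u$ is the identity of the Ellis group $E_K$, then $u\in S_K^\fs(\cU,M)\subseteq S_G^\fs(\cU,M)$ and $e*\ell=(e*u)*\ell$ for $e\in E_K,\ell\in L$. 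So the content is $E_K*S_G^\fs(\cU,M)*L\subseteq E_K*L$, and I would deduce it from the single inclusion
\[E_K*S_G^\fs(\cU,M)\subseteq E_K*S_H^\fs(\cU,M).\qquad(\star)\]
Indeed, granting $(\star)$ and using that $L$ is a left ideal of $S_H^\fs(\cU,M)$, we get $E_K*S_G^\fs(\cU,M)*L\subseteq E_K*S_H^\fs(\cU,M)*L=E_K*\big(S_H^\fs(\cU,M)*L\big)\subseteq E_K*L$. The ``in particular'' then comes for free, since $(E_K*L)*(E_K*L)=E_K*(L*E_K)*L\subseteq E_K*S_G^\fs(\cU,M)*L=E_K*L$.

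\emph{Reducing $(\star)$ to a factorization.} Given $e\in E_K$ and $q\in S_G^\fs(\cU,M)$, put $s_K:=(\pi_K)_*(q)\in S_K^\fs(\cU,M)$ and $r:=\sigma_*(e*q)\in S_H^\fs(\cU,M)$ (both finitely satisfiable in $M$, as $\pi_K,\sigma$ are $\emptyset$-definable). I claim $e*q=(e*s_K)*r$. Granting this, note that since $K$ is fsg, $S_K^\fs(\cU,M)$ has a unique minimal left ideal (Proposition~\ref{prop:fsgmain}), so by Fact~\ref{fact:equiv} the Ellis subgroup $E_K$ is a minimal \emph{right} ideal of $S_K^\fs(\cU,M)$; hence $e*s_K\in E_K*S_K^\fs(\cU,M)=E_K$, and the claim gives $e*q=(e*s_K)*r\in E_K*S_H^\fs(\cU,M)$, which is exactly $(\star)$. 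To prove the claim it suffices to check $(e*q)|_{MD}=\big((e*s_K)*r\big)|_{MD}$ for every small $D\subseteq\cU$. Realize $b\models q|_{MD}$, set $b_K:=\pi_K(b)$, $b_H:=\sigma(b)$ (so $b=b_Kb_H$), choose an auxiliary $\beta\models s_K|_{Mb_HD}$, and then $c\models e|_{Mb\beta D}$ (possible since $e$ is finitely satisfiable in $M$). Then $cb=cb_Kb_H$ has $\pi_K$-part $cb_K$ and $\sigma$-part $b_H$; as $c\models e|_{MbD}$ we have $cb\models(e*q)|_{MD}$, whence $r|_{MD}=\sigma_*(e*q)|_{MD}=\tp(b_H/MD)$. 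So once we know
\[cb_K\models(e*s_K)|_{Mb_HD},\qquad(\dagger)\]
taking $b_H$ as the realization of $r|_{MD}$ and $cb_K$ as the realization of $(e*s_K)|_{Mb_HD}$ in the Newelski product $(e*s_K)*r$ yields $\big((e*s_K)*r\big)|_{MD}=\tp\big((cb_K)b_H/MD\big)=\tp(cb/MD)=(e*q)|_{MD}$, as desired.

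\emph{Proving $(\dagger)$.} The type $s_K$ determines a single coset of $K^{00}(\cU)$; since $b_K\models s_K|_{MD}$ and $\beta\models s_K|_{Mb_HD}$ both realize restrictions of $s_K$, they lie in that coset, so $b_K\beta^{-1}\in K^{00}(\cU)$. As $e$ is strongly right $f$-generic in $K$ (being fsg), Corollary~\ref{fact:stabiliser_right_gen_g00} gives $\stab_r(e)=K^{00}(\cU)$, hence $e\cdot b_K=e\cdot\beta$. Now $c\models e|_{Mb_Kb_H\beta D}$, so by the definition of the right translate $\tp(cb_K/Mb_Kb_H\beta D)=(e\cdot b_K)|_{Mb_Kb_H\beta D}=(e\cdot\beta)|_{Mb_Kb_H\beta D}=\tp(c\beta/Mb_Kb_H\beta D)$; restricting to $Mb_HD$ gives $\tp(cb_K/Mb_HD)=\tp(c\beta/Mb_HD)$. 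Finally, since $\beta\models s_K|_{Mb_HD}$ and $c\models e|_{Mb_H\beta D}$, this last type is $(e*s_K)|_{Mb_HD}$, which is $(\dagger)$; letting $D$ vary over all small sets completes the claim and hence the Lemma.

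\emph{Main obstacle.} The one genuinely delicate point is $(\dagger)$. The naive attempt --- decomposing $e*q$ coordinatewise into (its $K$-part) $*$ (its $H$-part) --- fails for a general $q\in S_G^\fs(\cU,M)$, because inside a realization $b$ of $q$ the components $\pi_K(b)$ and $\sigma(b)$ are correlated (this is also why $(\star)$ genuinely needs the left factor to come from $E_K$). What makes it work is the interplay of two fsg features of $e$: being finitely satisfiable in $M$ lets us realize it over everything, including the auxiliary $\beta$ realizing $s_K$ over $Mb_HD$; and right translates of the strongly right $f$-generic $e$ depend only on the $K^{00}(\cU)$-coset, which is what decouples $cb_K$ from the correlation carried by $b$. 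Everything else is bookkeeping: that $(\pi_K)_*$ and $\sigma_*$ land in the asserted fsg type spaces, that $E_K$ really is a minimal right ideal of $S_K^\fs(\cU,M)$ (using $K$ fsg via Proposition~\ref{prop:fsgmain} and Fact~\ref{fact:equiv}), and the manipulations of $K\cdot H$-decompositions of products.
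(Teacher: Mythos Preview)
Your proof is correct and follows essentially the same strategy as the paper's: decompose the middle $G$-factor along the semidirect product, absorb the $K$-part into $E_K$ using that fsg types are strongly right $f$-generic (so right translates depend only on the $K^{00}$-coset), and push the $H$-part into the $H$-side. The only differences are organizational. You first isolate and prove the intermediate inclusion $(\star)\colon E_K*S_G^\fs(\cU,M)\subseteq E_K*S_H^\fs(\cU,M)$ and then invoke that $L$ is a left ideal, whereas the paper handles the full product $q*r*p$ in one pass by realizing the triple $(k,g,h)\models q\otimes r\otimes p|_N$ simultaneously and writing $q*r*p=(q\cdot k')*(p'*p)$ directly. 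Correspondingly, where the paper's ``big'' realization automatically has $k\models q$ over everything needed, you compensate for the correlation between $b_K$ and $b_H$ by introducing the auxiliary $\beta\models s_K|_{Mb_HD}$ and using $e\cdot b_K=e\cdot\beta$; this is a clean device and arguably makes the role of right $f$-genericity more transparent. Your factorization through $(\star)$ is slightly more modular (and immediately yields the ``in particular'' clause), while the paper's version is marginally shorter.
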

    \begin{proof}
        Fix arbitrary $p\in L, q\in E_K, r\in S_G^\fs(\cU,M)$ and $M \preceq^{+} N$, and consider $(k,g,h)\models q\otimes r\otimes p|N$, so that $kgh\models q*r*p|_N$

        By hypothesis, we can write $g$ as $k'h'$ -- so that in particular, $\tp(h'/N)$ is finitely satisfiable in $M$ and $g$ is interdefinable with $(k',h')$. Letting $q'\coloneqq q\cdot k'$ and $p'=\tp(h'/N)|_\cU$, we have that $kk'\models q'|Nk'h'h$ and $h'h\models p'*p$. Since $q$ is fsg, it is $f$-generic, so $q'\in E_K$, and since $L$ is a left ideal, $p'*p\in L$. It is not hard to see that $q'$ and $p'$ do not depend on the choices we made (the $K^{00}(\cU)$-coset of $k'$ and $p'$ both depend only on $r$). It follows that $q*r*p=q'*p'*p$ is in $E_K*L$.
    \end{proof}

    \begin{corollary}
        \label{cor:ellis_fs_semidirect}
        Suppose $G$ is the definable semidirect product $H\rtimes K$, where $K$ is fsg and $H$ is dfg or, more generally, $H$ is definably amenable with a unique minimal right ideal in $S_H^\fs(\cU,M)$ (equivalently, by the dual of Fact~\ref{fact:equiv}, such that some (equivalently, every) Ellis group in $S_H^\fs(\cU,M)$ is a left ideal -- in particular, if there is a type which satisfies any of the conditions (1)-(7) in Theorem~\ref{thm:main_dfg_alternative}).

        Then if $E_K,E_H$ are Ellis subgroups in $S^\fs_K(\cU,M),S^\fs_H(\cU,M)$ respectively, then $E_K*E_H$ is an Ellis subgroup in $S_G^\fs(\cU,M)$. Furthermore, every element of this Ellis group is expressed uniquely as $q*p$ with $q\in E_K$ and $p\in {E_H}$.
    \end{corollary}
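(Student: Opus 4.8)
The plan is to build $E_K*E_H$ up from Lemma~\ref{lem:fsg_and_left_ideal}, identify its image under the Newelski-continuous homomorphism to $G(\cU)/G^{00}(\cU)$ using the semidirect decomposition, and then upgrade ``abstract group'' to ``Ellis subgroup'' by locating it inside the minimal ideal of $S_G^\fs(\cU,M)$.

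First I would observe that $E_H$ is a \emph{left} ideal of $S_H^\fs(\cU,M)$: by the dual of Fact~\ref{fact:equiv}, the hypothesis that $S_H^\fs(\cU,M)$ has a unique minimal right ideal is equivalent to every minimal left ideal being a single Ellis subgroup, so $E_H$ is itself a minimal left ideal. Applying Lemma~\ref{lem:fsg_and_left_ideal} with $L=E_H$ then gives $E_K*E_H=E_K*S_G^\fs(\cU,M)*E_H$; in particular $E_K*E_H$ is a subsemigroup of $S_G^\fs(\cU,M)$.

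Next, the group structure and uniqueness. By Fact~\ref{fct:semidirect_component}, $G(\cU)/G^{00}(\cU)$ is the internal semidirect product of $H(\cU)/H^{00}(\cU)$ and $K(\cU)/K^{00}(\cU)$, so the multiplication map $(\bar k,\bar h)\mapsto \bar k\cdot\bar h$ from $K(\cU)/K^{00}(\cU)\times H(\cU)/H^{00}(\cU)$ to $G(\cU)/G^{00}(\cU)$ is a bijection, and $\hat\pi_G$ restricts on $S_K^\fs(\cU,M)$ and $S_H^\fs(\cU,M)$ to $\hat\pi_K$ and $\hat\pi_H$. Hence $(q,p)\mapsto\hat\pi_G(q*p)=\hat\pi_K(q)\cdot\hat\pi_H(p)$ is the composite of $(q,p)\mapsto(\hat\pi_K(q),\hat\pi_H(p))$ — a bijection $E_K\times E_H\to K(\cU)/K^{00}(\cU)\times H(\cU)/H^{00}(\cU)$ by Fact~\ref{fact:NC} applied to $K$ and to $H$ — with the above multiplication bijection, and so is itself a bijection onto $G(\cU)/G^{00}(\cU)$. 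Consequently $(q,p)\mapsto q*p$ is injective on $E_K\times E_H$, which is the uniqueness clause, and $\hat\pi_G$ restricts to a bijective semigroup homomorphism $E_K*E_H\to G(\cU)/G^{00}(\cU)$; since a bijective homomorphism from a semigroup onto a group is a group isomorphism, $E_K*E_H$ is a group, with identity the idempotent $u:=u_K*u_H$ (here $u_K,u_H$ are the identities of $E_K,E_H$, and $\hat\pi_G(u)=e$).

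It remains to see that the group $R:=E_K*E_H$ is an Ellis subgroup, i.e.\ that $u$ is a minimal idempotent; this is the main obstacle. I would show that $u*S_G^\fs(\cU,M)$ is a minimal right ideal of $S_G^\fs(\cU,M)$: once this is known, Fact~\ref{fact:right}(6) yields that $u*S_G^\fs(\cU,M)*u$ is an Ellis subgroup of some minimal left ideal, it contains $R=u*R*u$, and since $\hat\pi_G$ is injective with full image on both $R$ (by the previous paragraph) and on $u*S_G^\fs(\cU,M)*u$ (by Fact~\ref{fact:NC}), we conclude $R=u*S_G^\fs(\cU,M)*u$; in particular $R$ lies in the minimal ideal $I$ of $S_G^\fs(\cU,M)$. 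The minimality of $u*S_G^\fs(\cU,M)$ is exactly where the fsg hypothesis on $K$ enters, by an argument parallel to the proof of Lemma~\ref{lem:fsg_and_left_ideal}: writing elements of $G$ as $h\cdot k$, using that right translates of the fsg type $u_K$ remain finitely satisfiable in $M$ while $u_H$ lies in the minimal ideal of $S_H^\fs(\cU,M)$, one checks that every principal right subideal of $u*S_G^\fs(\cU,M)$ already contains $u*S_G^\fs(\cU,M)$. The difficulty relative to the invariant case (Corollary~\ref{corollary:constuct}) is precisely that we lack the clean description of Ellis subgroups provided by Theorem~\ref{theorem:main} on the finitely satisfiable side, so minimality of this idempotent must be extracted directly from the semidirect and fsg structure rather than read off from ``being a single orbit''.
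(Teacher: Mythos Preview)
Your first two steps are correct and match the paper: using Lemma~\ref{lem:fsg_and_left_ideal} with $L=E_H$ (a left ideal by hypothesis) to get $E_K*E_H=E_K*S_G^\fs(\cU,M)*E_H$, and then using Fact~\ref{fct:semidirect_component} together with Fact~\ref{fact:NC} for $H$ and $K$ to see that $\hat\pi_G$ is a bijection from $E_K*E_H$ onto $G(\cU)/G^{00}(\cU)$, which also gives the uniqueness clause.

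The gap is in your final step. You correctly identify that showing $u=u_K*u_H$ is a minimal idempotent is ``the main obstacle,'' but your argument for the minimality of $u*S_G^\fs(\cU,M)$ is only a sketch (``one checks that every principal right subideal \dots''), and it is not clear how to carry it out along the lines you suggest. Writing $g=h\cdot k$ and using that right translates of $u_K$ are finitely satisfiable tells you something about $u_K$, but to show $u*r*S_G^\fs(\cU,M)\supseteq u*S_G^\fs(\cU,M)$ for arbitrary $r$, you would need control over how $u_H$ interacts with the $K$-part of $r$ on the right, and Lemma~\ref{lem:fsg_and_left_ideal} does not give you this directly.

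The paper sidesteps this obstacle entirely. Rather than proving minimality of $u$ head-on, it chooses a minimal right ideal $R\subseteq E_K*S_G^\fs(\cU,M)$ and a minimal left ideal $L\subseteq S_G^\fs(\cU,M)*E_H$; then Lemma~\ref{lem:fsg_and_left_ideal} gives $R*L\subseteq E_K*S_G^\fs(\cU,M)*E_H=E_K*E_H$, and Fact~\ref{fact:right}(10) says $R*L$ is automatically an Ellis subgroup. Now both $R*L$ and $E_K*E_H$ are mapped bijectively by $\hat\pi_G$ onto $G(\cU)/G^{00}(\cU)$ (the former by Fact~\ref{fact:NC}, the latter by your own argument), so $R*L=E_K*E_H$. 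This replaces your unproved minimality claim with a one-line application of general Ellis theory: the point is to exhibit a known Ellis subgroup \emph{inside} $E_K*E_H$, rather than to argue that $E_K*E_H$ sits inside the minimal ideal from the outside.
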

    \begin{proof}
        Note that if $H$ is dfg, then any dfg type satisfies \eqref{it:pG_closed} in Theorem~\ref{thm:main_dfg_alternative}, and if such a type exists, then \eqref{it:image_min_ideal} in that Theorem yields an Ellis group in $S_H^\fs(\cU,M)$ which is a left ideal (which by Fact~\ref{fact:right} implies that there is a unique minimal right ideal and that all Ellis groups are left ideals). Thus in all cases we have that $E_H$ is a left ideal in $S_H^\fs(\cU,M)$.

        Let $R$ be a minimal right ideal of $S_G^\fs(\cU,M)$ contained in $E_K*S_G^\fs(\cU,M)$, and let $L$ be a minimal left ideal contained in $S_G^\fs(\cU,M)*E_H$. Note that then, by Lemma~\ref{lem:fsg_and_left_ideal}, we have that
        \[
            R*L\subseteq E_K*S_G^\fs(\cU,M)*S_G^\fs(\cU,M)*E_H=E_K*E_H.
        \]

        By Fact~\ref{fact:right}, $R*L$ is an Ellis group in $S_G^\fs(\cU,M)$, and Fact~\ref{fact:NC} along with Fact~\ref{fct:semidirect_component} imply that $\hat \pi$ induces a bijection $R*L\to G(\cU)/G^{00}(\cU)$ and bijections $E_K\to K(\cU)/K^{00}(\cU)$ and $E_H\to H(\cU)/H^{00}(\cU)$. Since -- also by Fact~\ref{fct:semidirect_component} -- $G(\cU)/G^{00}(\cU)=H(\cU)/H^{00}(\cU)\rtimes K(\cU)/K^{00}(\cU)$, we conclude that $\hat \pi$ restricted to $E_K*E_H$ is also bijective with $G(\cU)/G^{00}(\cU)$, so $E_K*E_H=R*L$.

        To see the uniqueness, note that $\hat \pi(q*p)=\hat\pi(q)\cdot \hat\pi(p)$ and $(q,p)\mapsto\hat\pi(q)\cdot \hat\pi(p)$ is injective on $E_K\times E_H$ by Fact~\ref{fact:NC} and Fact~\ref{fct:semidirect_component}.
    \end{proof}

    \begin{remark}
        In Corollaries~\ref{cor:ellis_inv_semidirect} and \ref{cor:ellis_fs_semidirect}, one might be tempted to say that $\widehat{E_H}* {E_K}$ -- respectively $E_K*E_H$ -- is the internal semidirect product of $\widehat{E_H}$ and ${E_K}$ (resp.\ $E_H$ and $E_K$) but this is not true, since in general, $\widehat{E_H},{E_K}$ (resp.\ $E_H,E_K$) are not contained in $\widehat{E_H}*{E_K}$ (resp.\ $E_K*E_H$).

        Of course, in both cases, the Ellis group is isomorphic to the external semidirect product of the corresponding groups, and is the internal semidirect product of the appropriate subgroups, but this essentially follows immediately from Theorem~\ref{theorem:main} and Facts~\ref{fact:NC} and \ref{fct:semidirect_component}.
    \end{remark}

The following theorem summarizes the observations of this section for semidirect products of dfg and fsg groups, and gives somewhat explicit formulas for isomorphisms between Ellis groups of invariant and finitely satisfiable types in this case.
\begin{theorem}
\label{thm:main_semidirect}
Suppose that $T$ is NIP and $G$ is the definable semidirect product $H\rtimes K$, where $K$ is fsg and $H$ is dfg (see Remark~\ref{rem:def_semidirect_product}). Let $E_K$ be any Ellis group in $S_K^\fs(\cU,M)$ and let $\widehat{E_H}\subseteq S_H^\inv(\cU,M)$ be an Ellis group of dfg types. Set $E_H\coloneqq K_M(\widehat{E_H})=F_M(\widehat{E_H})^{-1}$.

Then
\begin{enumerate}
    \item
    $E_H$ is an Ellis group in $S_H^\fs(\cU,M)$,
    \item
    $\widehat{E_H}*E_K$ is an Ellis group in $S_G^\inv(\cU,M)$, and every element is expressed uniquely as $p*q$ with $p\in \widehat{E_H}$ and $q\in E_K$,
    \item
    $E_K*E_H$ is an Ellis group in $S_G^\fs(\cU,M)$, and every element is expressed uniquely as $q*p$ with $q\in E_K$ and $p\in E_H$,
    \item
    $p*q\mapsto q*K_M(p)$ defines a bijection $\widehat{E_H}*E_K\to E_K*E_H$,
    \item
    $\nu_M\colon p*q\mapsto q*\iota(\alpha(q^{-1}\otimes K_M(p))*u_H)$, defines an isomorphism $\widehat{E_H}*E_K\to E_K*E_H$, where $\iota\colon E_H\to E_H$ is the group inversion, $\alpha$ is the left action $K\times H\to H$ given by $\alpha(k,h)=khk^{-1}$, and $u_H\in E_H$ is the idempotent,
    \item
    $\nu'_M\colon p*q\mapsto u_K*q^{-1}*K_M(p)$, where $u_K\in E_K$ is the idempotent, defines an anti-isomorphism $\widehat{E_H}*E_K\to E_K*E_H$.
\end{enumerate}
\end{theorem}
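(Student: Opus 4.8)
The plan is to prove (1)--(6) in order, each time reducing to facts already established for the factors $H$ and $K$ together with the $\hat\pi$-criteria of Remark~\ref{rem:isomorphism_shortcut} and Lemma~\ref{lemma:codomain}. Throughout I write $\bar h=\hat\pi(p)$, $\bar k=\hat\pi(q)$ and use the identification $G(\cU)/G^{00}(\cU)=(H(\cU)/H^{00}(\cU))\rtimes(K(\cU)/K^{00}(\cU))$ from Fact~\ref{fct:semidirect_component}.

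\emph{Claims (1)--(3) (the Ellis groups).} Since $\widehat{E_H}$ is an Ellis group of $\dfg$ types, its idempotent $t$ is a right $\dfg$ type and $\widehat{E_H}=t*S^\inv_H(\cU,M)$ (Theorem~\ref{theorem:main}, Lemma~\ref{lemma:dfg1}); by Theorem~\ref{theorem:map} applied inside $H$, $K_M$ restricts to an anti-isomorphism of $\widehat{E_H}$ onto the finitely satisfiable Ellis group $E_H=K_M(\widehat{E_H})$, and from the proof of that theorem $E_H$ is in fact a minimal left ideal of $S^\fs_H(\cU,M)$ with idempotent $u_H=K_M(t)$; this is (1). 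Because $K$ is $\fsg$, the minimal left ideals of $S^\fs_K(\cU,M)$ and $S^\inv_K(\cU,M)$ coincide (Proposition~\ref{prop:fsgmain}), so $E_K$ is also an Ellis group of $S^\inv_K(\cU,M)$, and Corollary~\ref{cor:ellis_inv_semidirect} gives (2). Since $H$ is $\dfg$, $E_H$ is in particular a left ideal of $S^\fs_H(\cU,M)$, so Corollary~\ref{cor:ellis_fs_semidirect} gives (3).

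\emph{Claims (4)--(6) (the maps).} All three maps are well-defined by the unique decomposition in (2), and each lands in $E_K*E_H$: this is immediate for $p*q\mapsto q*K_M(p)$; for $\nu'_M$ one uses that $q^{-1}$ is $\fsg$ (the left--right collapse for $\fsg$ types), hence in the minimal left ideal of $S^\fs_K(\cU,M)$, so $u_K*q^{-1}\in E_K$; for $\nu_M$ one uses that $\alpha(q^{-1}\otimes K_M(p))$ is the pushforward of a finitely satisfiable type under the $\emptyset$-definable action $\alpha$, hence finitely satisfiable in $M$, whence $\alpha(q^{-1}\otimes K_M(p))*u_H\in S^\fs_H(\cU,M)*E_H\subseteq E_H$ and $\iota$ of it stays in $E_H$. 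I then compute $\hat\pi$ of the images, using $\hat\pi\circ F_M=\hat\pi$ (so $\hat\pi(K_M(p))=\bar h^{-1}$), $\hat\pi(u_H)=\hat\pi(u_K)=e$, that $\hat\pi$ commutes with $\alpha$ and with Morley products coordinatewise, and Fact~\ref{fct:semidirect_component}: short computations in the semidirect product give $\hat\pi(q*K_M(p))=\bar k\,\bar h^{-1}$, $\hat\pi(\nu_M(p*q))=\bar k\,(\bar k^{-1}\bar h^{-1}\bar k)^{-1}=\bar h\,\bar k=\hat\pi(p*q)$, and $\hat\pi(\nu'_M(p*q))=\bar k^{-1}\bar h^{-1}=\hat\pi(p*q)^{-1}$. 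These conclude the proof: for (4), $\hat\pi$ is a bijection on both $\widehat{E_H}*E_K$ and $E_K*E_H$ (Theorem~\ref{theorem:main}, Fact~\ref{fact:NC}, Fact~\ref{fct:semidirect_component}) and $\bar h\bar k\mapsto\bar k\bar h^{-1}$ is a bijection of $G(\cU)/G^{00}(\cU)$, so $p*q\mapsto q*K_M(p)$ is a bijection; for (5), $\hat\pi\circ\nu_M=\hat\pi$, so Remark~\ref{rem:isomorphism_shortcut} makes $\nu_M$ a group isomorphism; for (6), $\hat\pi\circ\nu'_M=\inv\circ\hat\pi$, so $\nu'_M$ is a composite of an isomorphism, the inversion anti-automorphism of $G(\cU)/G^{00}(\cU)$, and an isomorphism, hence an anti-isomorphism (exactly as in Lemma~\ref{lemma:codomain}(2)). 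The topological refinements of (5)--(6), if desired, follow as in Corollary~\ref{cor:isocompact}, since all maps in sight are continuous.

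\emph{Main obstacle.} The genuinely delicate point is the compatibility of $\hat\pi$ with the definable conjugation action and the Morley product, i.e.\ that $\hat\pi(\alpha(r_x\otimes s_y))=\bar\alpha(\hat\pi(r),\hat\pi(s))$ for invariant $r\in S^\inv_K(\cU,M)$, $s\in S^\inv_H(\cU,M)$, where $\bar\alpha$ is the action induced on the bounded quotients (well-defined because conjugation preserves connected components, Remark~\ref{rem:conn_component_normal}, using $H^{00}(\cU)=G^{00}(\cU)\cap H(\cU)$). This is routine, but it is where the left/right and semidirect-product bookkeeping must be carried out carefully so that the final identities in $(H(\cU)/H^{00}(\cU))\rtimes(K(\cU)/K^{00}(\cU))$ come out as stated; everything else amounts to assembling Theorem~\ref{theorem:map}, Corollaries~\ref{cor:ellis_inv_semidirect} and~\ref{cor:ellis_fs_semidirect}, Fact~\ref{fact:NC}, and the $\hat\pi$-criteria.
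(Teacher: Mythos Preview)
Your proposal is correct and follows essentially the same route as the paper: items (1)--(3) are reduced to Theorem~\ref{theorem:map}, Corollary~\ref{cor:ellis_inv_semidirect}, and Corollary~\ref{cor:ellis_fs_semidirect} respectively, and items (4)--(6) are handled by checking that the maps land in $E_K*E_H$ and then computing $\hat\pi$ of both sides to invoke Remark~\ref{rem:isomorphism_shortcut} (or its anti-isomorphism variant). Your write-up is in fact more explicit than the paper's in justifying why each map lands in $E_K*E_H$ (the paper simply asserts this is ``clear''), and your highlighted ``main obstacle'' about $\hat\pi$ commuting with $\alpha$ is a fair point, though the paper absorbs it into the direct computation without comment.
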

\begin{proof}
    (1) follows from Theorem~\ref{theorem:map}. (2) follows from Corollary~\ref{cor:ellis_inv_semidirect}, (3) from Corollary~\ref{cor:ellis_fs_semidirect}. In (4), surjectivity is trivial, and injectivity follows from Theorem~\ref{theorem:main}, the fact that $\hat\pi\circ F_M=\hat\pi$ and Fact~\ref{fct:semidirect_component}.

    For (5), note that the image of $\nu_M$ is clearly contained in $E_K*E_H$, so by (2), (3) and Remark~\ref{rem:isomorphism_shortcut}, it is enough to show that $\hat \pi = \hat\pi\circ \nu_M$ on $\widehat{E_H}*E_K$.

    Set $g_1=\hat \pi(q),g_2=\hat\pi(p)$, so that $\hat\pi(p*q)=g_2g_1$. Then:
    \[
        \hat \pi(q*\iota(\alpha(q^{-1}\otimes K_M(p))*\hat u_H))=g_1\cdot ((g_1^{-1}g_2^{-1}g_1)\cdot 1)^{-1}=g_1\cdot(g_1^{-1}g_2g_1)=g_2g_1.
    \]

    The proof of (6) is analogous to (5) -- we only need to observe that on $\widehat{E_H}*E_K$, the composition $\hat \pi\circ \nu_M'$ is the (group-theoretic) inverse of $\hat\pi$.
\end{proof}

\begin{remark}
    In Theorem~\ref{thm:main_semidirect}, instead of assuming that $H$ is dfg and $\widehat{E_H}$ consists of dfg types, we can more generally assume that $H$ is definably amenable and $\widehat{E_H}$ is a closed Ellis group, or -- perhaps even more generally -- that there is a type $p\in S_H^\inv(\cU,M)$ satisfying the conditions (2)-(7) in Theorem~\ref{thm:main_dfg_alternative} and $\widehat{E_H}=p\cdot H(\cU)$ for such $p$. The proof of this more general case is essentially the same, only we apply Theorem~\ref{thm:main_dfg_alternative} instead of Theorem~\ref{theorem:map}.
\end{remark}

\begin{remark}
    Example~\ref{example:dfg} shows that the order of factors in Theorem~\ref{thm:main_semidirect} and Corollaries~\ref{cor:ellis_inv_semidirect} and \ref{cor:ellis_fs_semidirect} is crucial (and in particular, the roles of $H$ and $K$ are not at all symmetric in the conclusion). Indeed, note that in that example, $H,K$ are abelian, $H$ is dfg and $K$ is finite (hence fsg and dfg), but for $E_K=\{1,-1\}$, $\widehat{E_H}=\{t_{+\infty}\}$ and $E_H=\{t_-\}$ we have that $E_K*\widehat{E_H}$ and $E_H*E_K$ are not Ellis groups even under these rather strong additional assumptions. Similarly, Example~\ref{example:fsg} (in which (1) and (3) of Theorem~\ref{thm:main_semidirect} fail) shows that in Corollary~\ref{cor:ellis_fs_semidirect}, we cannot replace the hypothesis about $H$ with the hypothesis that it is also fsg and $K$ is finite (hence both fsg and dfg), and in particular, assuming only that $H$ and $K$ are definably amenable is not sufficient.
\end{remark}

\begin{question}
    \label{qu:can_we_omit_uh}
    Can $u_H$ be omitted in Theorem~\ref{thm:main_semidirect}(5)? In other words, is it true under the assumptions of the theorem that $\alpha(q^{-1}\otimes K_M(p))\in E_H$? (Equivalently, that $\alpha(q^{-1}\otimes K_M(p))*u_H=\alpha(q^{-1}\otimes K_M(p))$.) If not, are there some natural sufficient conditions for that to hold?
\end{question}

\begin{remark}
    The formula for $\nu_M$ can also be written in various other ways, for example, as
    \[
        \nu_M\colon p*q\mapsto q*\iota(\beta(K_M(p)\otimes q)*u_H),
    \]
    where $\beta$ is the right action $(h,k)\mapsto k^{-1}hk$ (to see that this formula is correct just notice that the composition with $\hat \pi$ is the same). We can also ask a variant of Question~\ref{qu:can_we_omit_uh} here -- can we omit $u_H$ in this formula?
\end{remark}

\begin{remark}
    It is not hard to check that if $K$ is abelian and we omit $u_K$ in (6), then we get an isomorphism $\widehat E_H *E_K\to E_K^{-1}*E_H$, and in this case $E_K^{-1}$ is an Ellis group. In contrast, if we take $G=K=S^1\rtimes \{1,-1\}$ and trivial $H$, Example~\ref{example:fsg} shows that in (6), $u_K$ cannot be omitted even when $H$ is trivial and $K$ is very tame, since generally $E_K^{-1}*E_H$ may not be an Ellis group. On the other hand, it is not hard to see that $u_K$ can be replaced by any minimal idempotent in $S_K^\fs(\cU,M)$.
\end{remark}

\printbibliography

\end{document}